\newtheorem{question}{Question}[section]
\newtheorem{lemma}[question]{Lemma}
\newtheorem{theorem}[question]{Theorem}
\newcommand{\leqnomode}{\tagsleft@true}
\newcommand{\reqnomode}{\tagsleft@false}
\def\dd{\hbox{-}}
\def\cc{\hbox{-}\cdots\hbox{-}}
\def\ll{,\ldots,}
\DeclareMathOperator{\tw}{tw}
\DeclareMathOperator{\width}{width}
\DeclareMathOperator{\Hub}{Hub}
\DeclareMathOperator{\hdim}{hdim}
\DeclareMathOperator{\Core}{Core}
\DeclareMathOperator{\Proj}{Proj}
\newcounter{tbox}
\newcommand{\sta}[1]{\vspace*{0.3cm}\refstepcounter{tbox}\noindent{ \parbox{\textwidth}{(\thetbox) \emph{#1}}}\vspace*{0.3cm}}
\newcommand{\mylongtitle}[1]{%
  \ifodd\value{page}%
    \protect\parbox{0.97\linewidth}{#1}\hfill%
  \else%
    \hfill\protect\parbox{0.97\linewidth}{#1}%
  \fi%
}
\newcommand{\otherlabel}[2]{\protected@edef\@currentlabel{#2}\label{#1}}
\mathchardef\mh="2D
\title{Tree Independence Number IV. \\ Even-hole-free Graphs}
\author{
Maria Chudnovsky\thanks{$^{\dagger}$ Princeton University, Princeton, NJ, USA. 
Supported by  NSF-EPSRC Grant DMS-2120644 and by AFOSR grant FA9550-22-1-0083.}
\and
Peter Gartland\thanks{University of California, Santa Barbara, USA. 
}
\and
Sepehr Hajebi\thanks{University of Waterloo, Waterloo, Ontario, Canada. 
We acknowledge the support of the Natural Sciences and Engineering Research Council of Canada (NSERC), [funding reference number RGPIN-2020-03912].
Cette recherche a \'et\'e financ\'ee par le Conseil de recherches en sciences naturelles et en g\'enie du Canada (CRSNG), [num\'ero de r\'ef\'erence RGPIN-2020-03912]. This project was funded in part by the Government of Ontario. This research was conducted while Spirkl was an Alfred P. Sloan Fellow.}
\and
Daniel Lokshtanov\footnotemark[2]
\and
Sophie Spirkl\footnotemark[3]
}
\begin{document}

\maketitle

\begin{abstract}
We prove that the tree independence number of every even-hole-free graph is at most polylogarithmic in its number of vertices. More explicitly, we prove that there exists a constant $c>0$ such that for every integer $n>1$ every $n$-vertex even-hole-free graph has a tree decomposition where each bag has stability (independence) number at most $c \log^{10} n$.  This implies that the \textsc{Maximum Weight Independent Set} problem, as well as several other natural algorithmic problems that are known to be \textsf{NP}-hard in general, can be solved in quasi-polynomial time if the input graph is even-hole-free.
\end{abstract}

\section{Introduction}
A graph $G$ {\em even-hole-free} if $G$ does not contain a cycle of even length as an induced subgraph.
Here an {\em induced subgraph} of a graph $G$ is a graph that can be obtained from $G$ by deleting vertices (and all edges incident to the deleted vertices), the {\em length} of a cycle (or path) is the number of edges in it, and
a {\em hole} in $G$ in an induced subgraph that is a cycle of length at least four. 
The class of even-hole-free graphs has attracted much attention due to its somewhat tractable, yet quite rich structure~\cite{bisimplicialnew,decompPart1,Kristinasurvey}.  In particular the structure of even-hole-free graphs has some similarities~\cite{Kristinasurvey} with the structure of perfect graphs, which by the strong perfect graph theorem~\cite{perfect} are precisely the graphs that are odd-hole-free and whose complement is odd-hole-free. 
In addition to their structure, much effort was put into designing efficient algorithms for even-hole-free graphs (to solve problems that are NP-hard in general). This is discussed in the survey \cite{Kristinasurvey}, while \cite{AAKST, CdSHV, CGP, Le} provide examples of more recent work. We now consider some of the problems that have received the most attention on even-hole free graphs.  

A vertex set $S$ of a graph $G$ is {\em stable} (or {\em independent}) if no two vertices of $S$ have an edge between them. A {\em clique} is a set $S$ if vertices such that every pair of vertices of $S$ has an edge between them. A {\em proper $k$-coloring} of $G$ is a partition of the vertex set of $G$ into (at most) $k$ independent sets. In the {\sc Maximum Weight Independent Set} ({\sc Maximum Weight Clique}) problem, the input is a graph $G$ and a weight function that assigns to each vertex an integer weight. The task is to find a stable set (clique) $S$ in $G$ of maximum weight. In the {\sc $k$-Coloring} problem, the input is a graph $G$, the task is to determine whether $G$ has a proper $k$-coloring. Finally, in the {\sc Coloring} problem, the input is a graph $G$ and the task determine the minimum $k$ such that $G$ has a proper $k$-coloring. All of the above mentioned problems are known to be \textsf{NP}-hard~\cite{GJNPC, Karp72}
On even-hole-free graphs, {\sc Maximum Weight Clique} is known to be polynomial-time solvable~\cite{Kristinasurvey}. The questions of whether or not there exist polynomial time algorithms for {\sc Maximum Weight Independent Set} and {\sc Coloring} remain open. This is in stark contrast to perfect graphs, for which polynomial time algorithms for these problems have been known since 1981~\cite{GLS81}. 
%

This discrepancy is somewhat surprising, to explain why (and to state our main result) we need to define tree decompositions and treewidth. For a graph $G = (V(G),E(G))$, a \emph{tree decomposition} $(T, \chi)$ of $G$ consists of a tree $T$ and a map $\chi: V(T) \to 2^{V(G)}$ with the following properties: 
\begin{itemize}\setlength\itemsep{-.7pt}
    
    \item For every $v_1v_2 \in E(G)$, there exists $t \in V(T)$ with $v_1, v_2 \in \chi(t)$.
    
    \item For every $v \in V(G)$, the subgraph of $T$ induced by $\{t \in V(T) \mid v \in \chi(t)\}$ is non-empty and connected.
\end{itemize}

The \emph{width} of a tree decomposition $(T, \chi)$, denoted by $\width(T, \chi)$, is $\max_{t \in V(T)} |\chi(t)|-1$. The \emph{treewidth} of $G$, denoted by $\tw(G)$, is the minimum width of a tree decomposition of $G$. Bounded treewidth is a useful graph property from both a structural \cite{RS-GMXVI} and an algorithmic \cite{Bodlaender1988DynamicTreewidth} perspective.
For example {\sc Maximum Weight Independent Set}, {\sc Maximum Weight Clique}, {\sc $k$-Coloring} (for every fixed $k$) and a host of other problems are known to admit $O(c^tn)$ time algorithms on graphs of treewidth at most $t$~\cite{cyganBook,dfbook,fgbook}, while {\sc Coloring} is known to admit $O(t^{t+ O(1)}n)$ time algorithms on graphs of treewidth at most $t$.

From the perspective of tree decompositions and treewidth, perfect graphs appear much more intractable than even-hole-free graphs. On one hand there exist triangle-free (a triangle is a clique on $3$ vertices) perfect graphs whose treewidth is {\em linear} in the number of vertices in the graph: the complete bipartite graph $K_{t,t}$, consisting of two stable sets $L$ and $R$ of size $t$ with an edge connecting every vertex in $L$ with every vertex in $R$, is an example. On the other hand, triangle-free even-hole-free graphs have constant treewidth \cite{CdSHV}. 
Sintiari and Trotignon~\cite{mainconj} give a construction of arbitrarily large $K_4$-free ($K_t$ is the clique on $t$ vertices) even-hole-free graphs whose treewidth is {\em logaritmic} in the number of vertices.
%
This led Sintiari and Trotignon~\cite{mainconj} to conjecture that for every $t$ there exists a constant $c_t$ such that every $n$-vertex $K_t$-free and even-hole-free graph has treewidth at most $c_t \log n$. This conjecture was very recently confirmed by Chudnovsky et al.~\cite{TWXV}. The logarithmic treewidth bound of Chudnovsky et al.~\cite{TWXV} immediately implies that $k$-{\sc Coloring} can be solved in polynomial time for every fixed $k$ on even-hole-free graphs.

An early step of the proof of Chudnovsky et al.~\cite{TWXV} is to prove that even-hole-free graphs admit ``dominated balanced separators''. More precisely they show that there exists a constant $c$ such that every even-hole-free graph contains a vertex set $S$ of size at most $c$ such that every connected component of $G-N[S]$ has at most $n/2$ vertices. Here $N[S]$ is the {\em closed neighborhood} of $S$, namely the set of all vertices in $S$ and all vertices with at least one neighbor in $S$.
A fairly direct consequence of this result (based on an argument of Chudnovsky et al.~\cite{qptases}) is that for every $\epsilon > 0$ there exists a $(1+\epsilon)$-approximation algorithm for {\sc Maximum Weight Independent Set} which runs in quasi-polynomial time. Here an algorithm runs in {\em quasi-polynomial} time if the running time is upper bounded by $2^{O(\log^c n)}$ for some constant $c$. Nevertheless, this is fully consistent with  {\sc Maximum Weight Independent Set} on even-hole-free graphs being \textsf{NP}-hard, and so the complexity of  {\sc Maximum Weight Independent Set} and {\sc Coloring} on even-hole-free graphs remain open. 

\smallskip
\noindent
{\bf Our results.} We prove a structural result which implies that  {\sc Maximum Weight Independent Set}, as well as a host of other problems, admit quasi-polynomial time algorithms on even-hole-free graphs. 
To state our main result we need one more notion, that of tree independence number. This is a relatively new width parameter, defined by Dallard, Milani\v{c} and \v{S}torgel \cite{dms2}, in the second of a series of papers~\cite{dfgkm, dms4, dms1,dms2,dms3} aiming to  identify graphs whose large treewidth can be completely explained by the presence of a large clique.
The {\em tree independence number} of a tree decomposition $(T, \chi)$ is the maximum over all $t \in V(T)$ of the maximum stable set size of the subgraph $G[\chi(t)]$ of $G$ induced by $\chi(t)$. The tree independence number of a graph $G$ is the minimum tree independence number of a tree decomposition of $G$. We are now ready to state our main result.

\begin{theorem} \label{main}
  There exists a constant $c$ such that for every integer $n>1$ every
  $n$-vertex even-hole-free graph has tree independence number 
  at most $c \log^{10}  n$.
\end{theorem}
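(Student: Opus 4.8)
The plan is to prove a ``local'' strengthening and then feed it into the standard separator-to-decomposition machinery of Dallard, Milani\v{c} and \v{S}torgel. Precisely, I would show that there is a constant $c_0$ such that for every $n$-vertex even-hole-free graph $G$ and every $W\subseteq V(G)$ there is a set $X\subseteq V(G)$ with $\alpha(G[X])\le c_0\log^{10} n$ such that no component of $G-X$ contains more than $|W|/2$ vertices of $W$. Since the even-hole-free graphs form a hereditary class, applying this to every induced subgraph and using the correspondence between such ``$\alpha$-balanced separators'' and tree independence number --- which, just as in the treewidth analogue, costs only a constant factor --- would give Theorem~\ref{main}. So everything reduces to producing these separators.

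For the construction I would recurse, using two results as black boxes: the \emph{dominated balanced separator} theorem (every even-hole-free graph has a set $S$ with $|S|=O(1)$ such that every component of $G-N[S]$ is below the required threshold) and the \emph{logarithmic treewidth} bound for even-hole-free graphs of bounded clique number (which in particular yields balanced separators of size, hence independence number, at most $f(\omega)\log n$ for some function $f$, polynomial in $\omega$, say). The set $N[S]$ is already balanced, but its independence number is uncontrolled --- $N(v)$ can be a large stable set --- so the heart of the matter is a \emph{cleaning} step replacing $N[S]$ by a set with the same separating power but polylogarithmic independence number. This is where even-hole-freeness must be used structurally: beyond being $C_4$-free (so the common neighborhood of any two non-adjacent vertices is a clique), even-hole-free graphs contain no theta and no prism. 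I would use these restrictions to argue that, after deleting a bounded-$\alpha$ part, the relevant neighborhoods are governed by a few cliques together with a subgraph whose clique number is polylogarithmic in $n$ --- to which the bounded-clique-number bound then applies --- and that otherwise one exhibits a large theta, a large prism, or an explicit even hole. The exponent $10$ would come from the dependence on $\omega$ in the bounded-clique-number bound, combined with the reduction to polylogarithmic clique number and a bounded number of further polylogarithmic losses in the recursion; I would not expect it to be tight.

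The decisive obstacle --- and the reason tree independence number, not treewidth, is the correct parameter here --- is precisely this cleaning step: bounding the \emph{independence number}, rather than the size, of a balanced separator. Bounded-size balanced separators are simply unavailable for even-hole-free graphs (large cliques; the layered wheels of Sintiari and Trotignon), so the argument must genuinely exploit that a large stable set cannot be ``spread out'' in an even-hole-free graph --- a large stable set lying in a separator, together with the components it separates, would create an even hole (or a theta, or a prism). Making this quantitative, uniformly over all choices of $W$, and ensuring it composes with the recursion --- in particular, that some parameter strictly decreases at each recursive call and that the bounded-clique-number subroutine is invoked only on graphs whose clique number is already polylogarithmic --- is where essentially all of the difficulty lies.
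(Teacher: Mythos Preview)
Your high-level scaffolding --- dominated balanced separators plus a ``cleaning'' step to bound the independence number of the separator, then the standard separator-to-tree-independence-number conversion --- matches the paper's. The decisive gap is in your black boxes. You write that the logarithmic treewidth bound for $K_t$-free even-hole-free graphs gives balanced separators of size ``$f(\omega)\log n$ for some function $f$, polynomial in $\omega$, say.'' This is not known: the constant $c_t$ in the bound of~\cite{TWXV} is \emph{not} polynomial in $t$, and the paper explicitly says improving it to polynomial does not look easy. Proving (a variant of) exactly this --- that in a $K_t$-free even-hole-free graph any two non-adjacent vertices can be separated by a set of size polynomial in $t$ and $\log n$ (Theorem~\ref{bananaclique}) --- is the main technical content of the paper and occupies Sections~\ref{pyramids}--\ref{sec:bananaclique}. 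It requires the pyramid/hub analysis, the local-to-global separator theorem (which needs only $C_4$-freeness), the central bag method, and a ``pivot'' step; none of this is a routine application of forbidding thetas and prisms.

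Your cleaning step is also not the paper's and, as stated, is too vague to evaluate. The paper does \emph{not} directly massage $N[S]$; instead it proves a pair-separator theorem (Theorem~\ref{banana}: any two non-adjacent vertices are separated by a set with $\kappa\le c\log^8 n$), and then invokes the machinery of~\cite{TI2} (Theorem~\ref{domtosmall}) to combine this with dominated balanced separators. The reduction to polylogarithmic clique number is achieved via the clique max-flow/min-cut theorem of~\cite{KLSSX}: either you already have a small-$\kappa$ separator, or you get many $a$--$b$ paths that together induce a graph of clique number $O(\log n)$, to which Theorem~\ref{bananaclique} then applies to derive a contradiction. Your sentence ``the relevant neighborhoods are governed by a few cliques together with a subgraph whose clique number is polylogarithmic in $n$'' gestures at this outcome without indicating any mechanism for producing it; absent the polynomial-in-$t$ separator bound, even if you reached a polylog-clique subgraph you would still be stuck.
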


Since the only construction of even-hole-free graphs with large treewidth known to date is the construction of \cite{mainconj},  where all graphs have clique number at most four and have treewidth logarithmic in the number of vertices,  we do not know if the bound of Theorem~\ref{main} is asymptotically tight, or whether the exponent of $\log n$ can be reduced.
Dallard et al.~\cite{dfgkm} gave an algorithm that takes as input a graph $G$ and integer $k$, runs in time $2^{O(k^2)}n^{O(k)}$ and either outputs a tree decomposition of $G$ with independence number at most $8k$, or determines that the tree-independence number of $G$ is larger than $k$. Using this algorithm, Theorem~\ref{main} can be made constructive in the sense that there exists an algorithm which takes as input an even-hole-gree graph, runs in time $2^{O(\log^{20}n)}$ and computes a tree decomposition of $G$ with tree independence $O(\log^{10} n)$.

Theorem~\ref{main} implies the main result of~\cite{TWXV} with a $O(\log^{10} n)$ instead of logarithmic bound on the treewidth. Indeed, let $G$ be a $K_t$-free even-hole free graph, and let $(T, \chi)$ be the tree decomposition obtained from Theorem~\ref{main}. We claim that this decomposition has width $O(\log^{10} n)$. This follows from the fact that every even-hole-free graph on at least $2 \alpha t$ vertices either has a clique of size at least $t$ or a stable set of size at least $\alpha$~\cite{bisimplicialnew}. Thus every bag of the decomposition must have size at most $c \log^{10} n \cdot 2t$. 

A number of problems can be solved efficiently when a tree decomposition of the input graph of low independence number is given as input. This is discussed in more detail in \cite{TI2} and \cite{lima2024tree}. We will not repeat the discussion here, but only list some of the problems that can be solved in quasi-polynomial time in the class of even-hole-free graphs as a direct consequence of Theorem~\ref{main}, the above mentioned approximation algorithm of Dallard et al.~\cite{dfgkm}, and existing algorithms when a tree decomposition of the input graph of low independence number (or low width) is given as input.





\begin{theorem}\label{mainApplications}
For every integer $k \geq 0$, the following problems admit quasi-polynomial time time algorithms on even-hole free graphs:
\begin{itemize}\setlength\itemsep{-.7pt}
  \item \textsc{Maximum Weight Independent Set},
  \item \textsc{Weighted Feedback Vertex Set}, 
  \item \textsc{Weighted Odd cycle transversal},
  \item \textsc{Maximum Weight Induced Path},
  \item  \textsc{Maximum Weight Induced Matching},
  \item \textsc{Maximum Weight Induced Subgraph of Treewidth at most $k$},
  \item \textsc{$k$-coloring}. 
\end{itemize}
\end{theorem}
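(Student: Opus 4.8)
\noindent The plan is to decouple the algorithm into two phases: first, compute a tree decomposition of the input graph whose independence number is polylogarithmic; second, for each problem on the list, run a known dynamic programming routine whose running time is of the form $n^{f(w)}$ (or $2^{\mathrm{poly}(w)}\cdot n^{O(w)}$) when it is handed a tree decomposition of independence number $w$. Substituting the polylogarithmic bound on $w$ turns each such running time into $2^{O(\mathrm{polylog}\,n)}$, i.e.\ quasi-polynomial, and the theorem follows by composing the two phases and taking the maximum over the finitely many running times involved.

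\smallskip\noindent
For the first phase, set $w:=\lceil c\log^{10} n\rceil$, where $c$ is the constant of Theorem~\ref{main}, and run the algorithm of Dallard et al.~\cite{dfgkm} on the input graph $G$ with parameter $k:=w$. By Theorem~\ref{main} the tree-independence number of $G$ is at most $w$, so this algorithm cannot report ``tree-independence number larger than $w$''; hence it outputs a tree decomposition $(T,\chi)$ of $G$ of independence number at most $8w=O(\log^{10}n)$. Its running time is $2^{O(w^2)}n^{O(w)}=2^{O(\log^{20}n)}$, which is quasi-polynomial. Crucially, this does \emph{not} give a bound on the \emph{width} of $(T,\chi)$ (the bags may be large), so in the second phase we must appeal to algorithms parameterized by the independence number of the decomposition rather than by its width.

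\smallskip\noindent
For the second phase we invoke the existing toolbox for tree decompositions of bounded independence number. For \textsc{Maximum Weight Independent Set}, given $(T,\chi)$ the problem is solvable in time $n^{O(w)}$ \cite{dms2, TI2}; with $w=O(\log^{10}n)$ this is $2^{O(\log^{11}n)}$. For \textsc{Weighted Feedback Vertex Set}, \textsc{Weighted Odd Cycle Transversal}, \textsc{Maximum Weight Induced Path}, \textsc{Maximum Weight Induced Matching}, and \textsc{Maximum Weight Induced Subgraph of Treewidth at most $k$}, we use the meta-algorithms of \cite{TI2, lima2024tree}, each of which, given $(T,\chi)$, runs in time $n^{g(w)}$ for a function $g$ depending on the problem (and on $k$ in the last case); substituting $w=O(\log^{10}n)$ again yields a quasi-polynomial bound. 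For \textsc{$k$-Coloring} one runs the corresponding dynamic program over $(T,\chi)$, again of running time $n^{h(k,w)}$ \cite{TI2, lima2024tree}, which is quasi-polynomial for each fixed $k$.

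\smallskip\noindent
Essentially all of the content here is imported: Theorem~\ref{main} provides the structural input, \cite{dfgkm} provides the decomposition-computing algorithm, and \cite{TI2, lima2024tree} provide the problem-specific dynamic programs. The only point requiring genuine care — and the ``hard part'' such as it is — is to verify that each problem on the list actually appears in, or is covered by a meta-theorem of, that literature \emph{with a running time whose dependence is on the independence number of the supplied tree decomposition, not on its width}, since, as noted above, our polylogarithmic bound on tree independence number does not by itself bound treewidth. Matching the seven problems to the available algorithms and then propagating the $O(\log^{10}n)$ bound through their running times is thus the substance of the argument; no new graph theory about even-hole-free graphs is needed beyond Theorem~\ref{main}.
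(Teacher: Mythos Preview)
Your two-phase plan and its instantiation for \textsc{Maximum Weight Independent Set}, \textsc{Feedback Vertex Set}, \textsc{Induced Path}, \textsc{Induced Matching}, and \textsc{Maximum Weight Induced Subgraph of Treewidth at most $k$} match the paper's approach essentially verbatim: Theorem~\ref{main} plus the algorithm of~\cite{dfgkm} gives the decomposition, and the meta-theorem of~\cite{TI2,lima2024tree} (stated here as Theorem~\ref{thm:alg_general}) handles each of these problems because each asks for a maximum-weight induced subgraph of bounded treewidth satisfying a CMSO$_2$ property.

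There are, however, two genuine gaps. First, for \textsc{$k$-Coloring} you cite~\cite{TI2,lima2024tree} for a dynamic program parameterized by the independence number of the decomposition, but no such algorithm appears there: those meta-theorems are about finding induced subgraphs of bounded treewidth, and $k$-colorability is not of that form. The paper instead argues as follows: a $k$-colorable graph is $K_{k+1}$-free, and for $K_{k+1}$-free even-hole-free graphs the tree decomposition of Theorem~\ref{main} has bags of size $O(k\log^{10}n)$ (using that every even-hole-free graph on more than $2\alpha t$ vertices contains a $K_t$ or a stable set of size $\alpha$~\cite{bisimplicialnew}); hence the \emph{treewidth} is polylogarithmic, and the standard $O(k^{t+O(1)}n)$ treewidth DP applies. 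You need this extra step converting bounded tree independence number to bounded treewidth under the $K_{k+1}$-free hypothesis.

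Second, for \textsc{Odd Cycle Transversal} the meta-theorem does not apply directly: a maximum induced bipartite subgraph need not have bounded treewidth. The paper observes that in even-hole-free graphs every induced bipartite subgraph is a forest (a bipartite graph with no even hole has no induced cycle at all), so \textsc{Odd Cycle Transversal} coincides with \textsc{Feedback Vertex Set} on this class, and the latter \emph{does} fit the framework with $\ell=1$. Without this observation your appeal to~\cite{TI2,lima2024tree} for OCT is unjustified.
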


Resolving the complexity status of {\sc Maximum Weight Independent Set} on even-hole-free graphs has been stated as an open problem a number of times~\cite{adlerRank,CdSHV, CGP,husicFPT, Le}, and the complexity of \textsc{Feedback Vertex Set} on even-hole-free graphs has been posed at least once~\cite{fvsEven1}.  
Even though Theorem~\ref{mainApplications} does not fully resolve these open problems, it offers a partial resolution in the following sense. If an NP-hard problem has a quasi-polynomial-time algorithm then every problem in NP has a quasi-polynomial-time algorithm. Thus Theorem~\ref{mainApplications} implies that, unless a highly unexpected complexity theoretic collapse occurs, none of the above problems are \textsf{NP}-hard in even-hole-free graphs. It remains an intriguing and challenging open problem to design {\em polynomial} time algorithms for the problems in Theorem~\ref{mainApplications} 
(with the exception of \textsc{$k$-coloring}, for which a polynomial time algorithm was recently found~\cite{TWXV})
on even-hole-free graphs.
Polynomial time algorithms for these problems would require significant new ideas. Indeed, even if the exponent of $\log n$ in Theorem~\ref{main} were reduced to 1, the resulting algorithms would still take quasi-polynomial (rather than polynomial) time.

It is worth noting that \textsc{Feedback Vertex Set}, \textsc{Maximum Weight Induced Path}, and \textsc{Maximum Weight Induced Matching} are all \textsf{NP}-hard on bipartite graphs, and therefore on perfect graphs. This partially confirms the intuition that even-hole-free graphs should be more algorithmically tractable than perfect graphs.  

All of the results of Theorem~\ref{mainApplications} (except for $k$-{\sc Coloring}) can be derived from the following theorem.  The theorem uses Counting Monadic Second Order Logic (CMSO$_2$), which is a useful formalism to express properties of graphs and vertex and edge sets~\cite{bpt92,lima2024tree}. We refer the reader to Lima et al.~\cite{lima2024tree} for an introduction to CMSO$_2$ logic.

\begin{theorem}\label{thm:alg_general}
For every integer $\ell$ and CMSO$_2$ formula $\phi$, there exists an algorithm that takes as input an even-hole-free graph $G$  and a weight function $w : V(G) \rightarrow \mathbb{N}$, runs in time $(f(\phi,\ell)n)^{O(\ell \log^{10} n)}$ and outputs a maximum weight vertex subset $S$ such that $G[S]$ has treewidth at most $\ell$ and $G[S] \models \phi$.
\end{theorem}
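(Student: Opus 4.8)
The plan is to reduce, via Theorem~\ref{main}, to a dynamic programming problem over a tree decomposition of small independence number, and then run the dynamic programming for CMSO$_2$-definable bounded-treewidth induced-subgraph problems in the style of Lima et al.~\cite{lima2024tree} (see also the discussion in \cite{TI2}). Since $G$ is even-hole-free, Theorem~\ref{main} says its tree independence number is at most $k:=c\log^{10}n$; running the algorithm of Dallard et al.~\cite{dfgkm} with this value of $k$ therefore returns, in time $2^{O(\log^{20}n)}$, a tree decomposition $(T,\chi)$ of $G$ with independence number at most $8k=O(\log^{10}n)$ (alternatively, such a decomposition is extracted directly from the proof of Theorem~\ref{main}). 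Convert $(T,\chi)$ in polynomial time into a nice tree decomposition with $O(n^2)$ nodes and the same independence number, and run a bottom-up dynamic program on it.

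The observation that makes the dynamic program affordable, despite the bags of $(T,\chi)$ being arbitrarily large, is a degeneracy argument. If $S\subseteq V(G)$ satisfies $\tw(G[S])\le\ell$, then $G[S]$ is $\ell$-degenerate, hence $(\ell+1)$-colorable, and hence so is every induced subgraph $G[S\cap\chi(t)]$. Since each color class is a stable set of $G[\chi(t)]$ and $\alpha(G[\chi(t)])\le 8k$, this forces $|S\cap\chi(t)|\le 8k(\ell+1)=:b=\Theta(\ell\log^{10}n)$ for every $t\in V(T)$; equivalently, $(T,\chi|_S)$ with $\chi|_S(t)=S\cap\chi(t)$ is a genuine tree decomposition of $G[S]$ of width less than $b$. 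Hence it suffices to search over the ways a solution can trace through the bags: at each node $t$ there are only $\sum_{i\le b}\binom{|\chi(t)|}{i}\le n^{O(b)}=n^{O(\ell\log^{10}n)}$ candidate traces $X=S\cap\chi(t)$.

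We then run the usual Courcelle-style dynamic program over $(T,\chi)$, with the single twist that the relevant ``width'' is $b$ rather than a constant. First fold the treewidth requirement into the logic: ``treewidth at most $\ell$'' is expressible by an MSO$_2$ (hence CMSO$_2$) sentence $\theta_\ell$, because by the graph minor theorem it amounts to excluding a finite family $\mathcal F_\ell$ of graphs as minors, and the property of not containing a fixed graph as a minor is MSO$_2$-expressible; set $\psi:=\phi\wedge\theta_\ell$. A state at a node $t$ is a pair $(X,\rho)$ with $X\subseteq\chi(t)$, $|X|\le b$, and $\rho$ the CMSO$_2$-type, relative to $\psi$ and the boundary $X$, of the partial graph $G[S_{\le t}]$ (where $S_{\le t}$ is the part of the guessed solution lying in bags of the subtree at $t$) equipped with its width-$<b$ tree decomposition $(T_{\le t},\chi|_{S_{\le t}})$; for each state we store the maximum total weight of a partial solution realizing it and a back-pointer. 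Introduce, forget and join transitions update $X$ in the usual consistency-preserving way and update $\rho$ via the CMSO$_2$ type-composition operators for boundaries of size $\le b$, the number of such types being bounded by a quantity depending only on $\phi$, $\ell$ and $b$. At the root (empty boundary) we reconstruct a maximum-weight state whose type satisfies $\psi$. Correctness follows from the degeneracy observation — the true optimum is among the enumerated traces — together with the fact that $(T,\chi|_S)$ really is a width-$<b$ tree decomposition of $G[S]$, so the type machinery correctly decides $G[S]\models\psi$, i.e. $\tw(G[S])\le\ell$ and $G[S]\models\phi$.

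For the running time, producing $(T,\chi)$ costs $2^{O(\log^{20}n)}$ (or polynomial time, when extracted from the proof of Theorem~\ref{main}), and the dynamic program costs $O(n^2)$ times its per-node cost, which is $n^{O(b)}=n^{O(\ell\log^{10}n)}$ up to a factor depending only on $\phi$ and $\ell$; absorbing the cost of producing $(T,\chi)$, the total is $(f(\phi,\ell)\,n)^{O(\ell\log^{10}n)}$. I expect the main obstacle to be the third step: carrying out a CMSO$_2$ dynamic program whose ``width'' $b$ grows with $n$ rather than being constant (so the number of types is no longer bounded), and bookkeeping the $b$-dependence of the type count sharply enough to land the stated exponent. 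This is exactly the content of the meta-theorem of Lima et al.~\cite{lima2024tree}, so in practice this step amounts to invoking their result; the genuinely new input is the structural Theorem~\ref{main}.
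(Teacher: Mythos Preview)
Your proposal is correct and follows essentially the same approach as the paper, which simply states that Theorem~\ref{thm:alg_general} ``is obtained from Theorem~\ref{main} in (exactly) the same way as Chudnovsky et al.~\cite{TI2} obtain their algorithmic consequences (Theorem~8.2 of~\cite{TI2}) from their bound on the tree independence number.'' You have spelled out the degeneracy argument bounding $|S\cap\chi(t)|$ and the invocation of the CMSO$_2$ dynamic program from~\cite{lima2024tree}, which is precisely the content the paper defers to~\cite{TI2}; the only new ingredient is indeed Theorem~\ref{main}.
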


Theorem~\ref{thm:alg_general} is obtained from Theorem~\ref{main} in (exactly) the same way as Chudnovsky et al.~\cite{TI2} obtain their algorithmic consequences (Theorem 8.2 of~\cite{TI2}) from their bound on the tree independence number (Theorem 1.2 of~\cite{TI2}) of $3$PC-free graphs.
The algorithm for \textsc{Odd Cycle Transversal} in Theorem~\ref{mainApplications} follows from Theorem~\ref{thm:alg_general} because on even-hole-free graphs, \textsc{Odd Cycle Transversal} and \textsc{Feedback Vertex Set} are the same problem. 

The quasi-polynomial time algorithm of Theorem~\ref{mainApplications} for $k$-{\sc Coloring} follows from the fact that $K_{k+1}$ is not $k$-colorable, the $O(k^{t+O(1)}n)$ time algorithm for $k$-{\sc Coloring} on graphs of treewidth $t$~\cite{cyganBook}, and the $O(\log^{10} n)$ treewidth bound on $K_t$-free even-hole free graphs which follows from Theorem~\ref{main})

The list of problems in Theorem~\ref{mainApplications} for which Theorem~\ref{thm:alg_general} implies a quasi-polynomial-time algorithm is by no means exhaustive. For an example Theorem~\ref{thm:alg_general} also yields a quasi-polynomial time algorithm to recognize even-hole-free graphs (since we can encode in CMSO$_2$ that $G[S]$ is a cycle of even length). We refer the reader to \cite{TI2} and \cite{lima2024tree}.

\smallskip
\noindent
{\bf Comparison with the Algorithmic Consequences of~\cite{TWXV}:}
It is worth comparing the algorithmic consequences of Theorem~\ref{main} with those of the logarithmic treewidth bound for $K_t$-free graphs in~\cite{TWXV}. 
The logarithmic treewidth bound for $K_t$-free even-hole-free graphs in~\cite{TWXV} typically leads to {\em polynomial} time algorithms for problems on $K_t$-free even-hole-free graphs. With the exception of $k$-{\sc Coloring}, for which this leads to a polynomial time algorithm on even-hole-free graphs, for other problems the results on  $K_t$-free even-hole-free graphs only lead to polynomial (or quasi-polynomial) time approximation schemes on even-hole-free graphs.
On the other hand, Theorem~\ref{main} readily leads to quasi-polynomial time {\em exact} algorithms on even-hole-free graphs (and hence shows that the considered problems are unlikely to be \textsf{NP}-hard), but does not appear to give any meaningful polynomial time algorithms (neither exact nor approximation).


\smallskip
\noindent
{\bf Organization of the paper.} In Section~\ref{sec:prelim} we define the notations and basic definitions used in the paper. In Section~\ref{sec:outline} we give a brief outline of the proof of Theorem~\ref{main}. The remainder of the paper concerns the proof of Theorem~\ref{main}.

\section{Preliminaries}\label{sec:prelim}
All graphs in this paper are finite and simple and all logarithms are base $2$. We begin with some standard definitions (see, for example, \cite{TWXV}). 
Let $G = (V(G),E(G))$ be a graph. In this paper, we use induced subgraphs and their vertex sets interchangeably. For a graph $G$ and vertex set $S$, the graph $G \setminus S$ is the graph obtained from $G$ by deleting all vertices in $S$ and all edges incident to at least one vertex in $S$. The subgraph of $G$ {\em induced by} $S$ is denoted by $G[S]$ and defined as $G[S] = G \setminus (V(G) \setminus S)$. For graphs $G, H$ we say that $G$ {\em contains $H$} if $H$ is isomorphic to an induced subgraph of $G$. We say that $G$ is {\em $H$-free} if $G$ does not contain $H$. For a set $\mathcal{H}$ of graphs, $G$ is {\em $\mathcal{H}$-free} if $G$ is $H$-free for every $H \in \mathcal{H}$.



Let $v \in V(G)$. Let $X \subseteq V(G)$. We denote by $N_G(X)$ the set of all vertices in $V(G) \setminus X$ with at least one neighbor in $X$. We also define $N_G[X]=N_G(X) \cup X$.  When $X = \{v\}$, we write $N_G(v)$ for $N_G(\{v\})$ and $N_G[v]$ for $N_G[\{v\}]$. If there is no danger of confusion, we omit the subscript $G$. If $H$ is an induced subgraph of $G$, then $N_H(X)=N(X) \cap H$ and $N_H[X]=N_H(X) \cup X$. Let $Y \subseteq V(G)$ be disjoint from $X$. We say $X$ is \textit{complete} to $Y$ if every vertex in $X$ is adjacent to every vertex in $Y$ in $G$, and $X$ is \emph{anticomplete}
to $Y$ if there are no edges between $X$ and $Y$. 

    
    


A {\em path} in a graph is an {\em induced} subgraph that is a path. Given a path $P$ with ends $a,b$, the {\em interior} of $P$, denoted by $P^*$, is the set $P \setminus \{a,b\}$. 
The {\em length} of a path or a hole is the number of edges in it. A hole is \emph{even} if its length is even. A graph is {\em even-hole-free} it contains no even holes.




The {\em stability (or independence) number} $\alpha(G)$ of $G$ is the maximum size of a stable set in $G$. A related parameter, the {\em clique cover number} $\kappa(G)$ of $G$, is the smallest number of cliques whose union equals $V(G)$. 


Next, we define a slight generalization of even-hole-free graphs; we need the following definitions (see, for example, \cite{TWXV}). 
A \emph{theta} is a graph consisting of three internally vertex-disjoint paths $P_1 = a \dd \cdots \dd b$, $P_2 = a \dd \cdots \dd b$, and
$P_3 = a \dd \cdots \dd b$, each of length at least 2, such that $P_1^*, P_2^*, P_3^*$ are pairwise anticomplete. We call $a$ and $b$ the
{\em ends} of the theta and $P_1, P_2, P_3$ the \emph{paths} of the theta.
A {\em near-prism} is a graph consisting of two triangles $\{a_1,a_2,a_3\}$ and  $\{b_1, b_2, b_3\}$, and three paths $P_i$ from $a_i$ to $b_i$
for $1 \leq i \leq 3$, and such that  $P_i \cup P_j$ is a hole for every $i,j \in \{1,2,3\}$. It follows 
that $P_1^*, P_2^*, P_3^*$ are pairwise disjoint and anticomplete to each other,
$|\{a_1,a_2,a_3\} \cap \{b_1,b_2,b_3\}| \leq 1$, and if $|\{a_1,a_2,a_3\} \cap \{b_1,b_2,b_3\}| = 1$, then two of the paths have length at least 2. 
Moreover, the only edges between $P_i$ and $P_j$ are 
$a_ia_j$ and $b_ib_j$.
A {\em prism} is a near-prism whose triangles are disjoint. 
A \emph{wheel}  in $G$ is a pair $(H, x)$ where $H$ is a hole and $x$ is a vertex with at least three neighbors in $H$. The vertex $x$ is called the {\em center} of the wheel $(H,x)$. A wheel $(H,x)$ is {\em even} if $x$ has an even number of neighbors on $H$.
Let $\mathcal{C}$ be the class of ($C_4$, theta, prism, even wheel)-free graphs (these are sometimes called ``$C_4$-free odd-signable graphs''). Every even-hole-free graph belongs to $\mathcal{C}$.
For every integer $t \geq 1$, let $\mathcal{C}_t$ be the class of all graphs in $\mathcal{C}$ with no clique of size $t$. 

A wheel $(H,x)$ is {\em proper} if $\alpha(N(x) \cap H)$ is at least three. 
A {\em pyramid} is a graph consisting of a vertex $a$ and a triangle $\{b_1, b_2, b_3\}$, and three paths $P_i$ from $a$ to $b_i$ for $1 \leq i \leq 3$, such that  $P_i \cup P_j$ is a hole for every $i,j \in \{1,2,3\}$.
It follows that $P_1 \setminus a, P_2 \setminus a , P_3 \setminus a$ are pairwise disjoint, and the only edges between them are of the form $b_ib_j$. It also follows that at most one of $P_1, P_2, P_3$ has length exactly 1. We say that $a$ is the {\em apex} of the pyramid and that $b_1b_2b_3$ is the {\em base} of the pyramid.

\section{Proof Outline}\label{sec:outline}
We give here the main ideas of the proof. We will not concern ourselves with the exponent of $\log n$ in our bound and simply show that the tree independence number of $G$ is polylogarithmic in $n$. 
The high level scaffolding of the proof is quite similar to the proof of~\cite{TWXV} that $K_t$-free even-hole-free graphs have logarithmic treewidth. However most of the individual high level pieces of the proof differ substantially from the corresponding step in~\cite{TWXV} and require significant new ideas. 

The proof of~\cite{TWXV} that every $K_t$-free graph has treewidth $O(\log n)$ goes as follows: first it is proved that even-hole-free graphs admit ``dominated balanced separators''. In particular there exists a constant $c$ such that every even-hole-free graph contains a vertex set $S$ of size at most $c$ such that every connected component of $G-N[S]$ has at most $n/2$ vertices. This fact is then used to show that, in order to obtain the treewidth bound, it is sufficient to show that every pair of non-adjacent vertices $a$, $b$ in a $K_t$-free even-hole free graph can be separated from each other by a set of size at most $c_t \log n$ for a constant $c_t$ depending only on $t$. The most technical part of the argument is then to show the existence of such small $a$-$b$ separators.

The first step of the proof of~\cite{TWXV}, that even-hole-free graphs admit dominated balanced separators, does not use the assumption that $G$ is $K_t$-free and hence we can use it here. 
Using the dominated balanced separator bound we then show an analog of the second step of~\cite{TWXV} tailored to tree independence number rather than treewidth: we show that in order to obtain the tree independence number bound, it is sufficient to prove that every pair of non-adjacent vertices $a$, $b$ in an even-hole-free graph can be separated from each other by a set with independence number $O(\log n^{O(1)})$.
Here we cannot re-use the proof from~\cite{TWXV} because that proof crucially uses the assumption that $G$ is $K_t$-free. However we can directly apply a similar step from~\cite{TI2}. Thus, ``all'' that remains is to prove the following statement:

\begin{theorem} \label{banana}
  There exists a constant $c$ with the following property. Let $G$ be an even-hole-free graph  with $|V(G)|=n$, and let $a,b \in V(G)$ be non-adjacent. Then there is a set $X \subseteq V(G) \setminus \{a,b\}$ with $\kappa(X) \leq  c \log^8 n$ and such that every component of $G \setminus X$ contains at most one of $a,b$.
\end{theorem}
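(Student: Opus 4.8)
The plan is to prove Theorem~\ref{banana} by finding a small-weight "dominating set plus separator" structure between $a$ and $b$, exploiting the fact that $G$ lies in the class $\mathcal{C}$ of $(C_4, \text{theta}, \text{prism}, \text{even wheel})$-free graphs. The natural starting point is the collection $\mathcal{P}$ of all induced $a$–$b$ paths in $G$; a set $X$ separating $a$ from $b$ is precisely a set meeting every such path (in its interior). The key structural phenomenon in $\mathcal{C}$ is that two induced $a$–$b$ paths cannot be "too independent" of each other without creating a theta, a prism, or a pyramid-like configuration — and thetas and prisms are forbidden outright. So I would first analyze the structure of $\bigcup \mathcal{P}$, or rather of a well-chosen sub-family, and argue that it is "narrow" in a clique-cover sense.

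**First I would** set up the recursion used in such separator proofs: take a shortest induced $a$–$b$ path $P$. If $P$ already has length $O(\log^{?} n)$ we are tempting to just take $X = P^*$, but $P^*$ can have unbounded clique cover number only if it is long, so the real work is bounding $|P|$ — except it need not be short. Instead, the standard approach (as in~\cite{TWXV}) is to use the dominated balanced separator theorem: there is a bounded-size $S$ such that every component of $G \setminus N[S]$ is small. One then recurses inside the (small) component containing, say, $b$, after adding $N[S]$ together with a separator between $a$ and that component; the point is that $N[S]$ has bounded clique cover number because $|S|$ is bounded and each $N(s)$ is a union of boundedly many cliques in even-hole-free graphs (a neighborhood with a stable set of size $3$ would, together with a vertex, begin to build a proper wheel — one must check $\alpha(N(v))$ is bounded, which is a known fact for graphs in $\mathcal{C}$, or at least use that $G$ has no $C_4$ so $N(v)$ has bounded independence number is false in general, hence we use $\kappa$ not $|\cdot|$ and rely on the structural control). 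The recursion multiplies the clique-cover bound by a $\log n$ factor per level and there are $O(\log n)$ levels, which is where the large exponent $8$ comes from; I will not track the exact bookkeeping.

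**The hard part will be** the "local" structural step: showing that between $a$ and $b$, after we have localized to a region controlled by a bounded set $S$ and its neighborhood, there is an $a$–$b$ separator of bounded clique cover number inside the remaining graph. Concretely, I expect one needs a lemma of the following shape: if $G \in \mathcal{C}$ and $a, b$ are non-adjacent with no "small" dominating-type separator, then $G$ contains a large theta, prism, or proper even-wheel-like obstruction — contradicting membership in $\mathcal{C}$. The even-hole-free (or $\mathcal{C}$) hypothesis is used precisely here: induced $a$–$b$ paths of very different "shapes" in a $\mathcal{C}$-graph are forced to interact, because three pairwise-disjoint pairwise-anticomplete induced paths between the same pair of (near-)endpoints form a theta. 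Turning this into a clean statement — and handling the near-prism/pyramid cases where the relevant sets are cliques rather than single vertices, which is exactly why the conclusion is phrased with $\kappa(X)$ rather than $|X|$ — is the technical heart. I would isolate this as a separate lemma and prove it by a minimal-counterexample argument: take $X$ a separator minimizing $\kappa(X)$ (or a suitable potential), extract two induced $a$–$b$ paths avoiding most of $X$, and derive one of the forbidden subgraphs.

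**Finally I would** assemble the pieces: combine the bounded clique-cover bound on $N[S]$ (from the dominated balanced separator, plus the bound on $\kappa(N(v))$ for $v$ in a $\mathcal{C}$-graph), the local separator lemma, and the $O(\log n)$-depth recursion, to get $\kappa(X) = c \log^8 n$ overall. The main obstacle, to reiterate, is the local structural lemma producing a bounded-clique-cover $a$–$b$ separator; everything else is the by-now-standard recursive scaffolding shared with~\cite{TWXV} and~\cite{TI2}.
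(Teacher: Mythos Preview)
Your proposal has a genuine gap that makes the approach unworkable as stated. You rely crucially on the claim that $\kappa(N(v))$ is bounded for every vertex $v$ in a graph $G \in \mathcal{C}$ (you invoke ``the bound on $\kappa(N(v))$ for $v$ in a $\mathcal{C}$-graph'' in your final paragraph, and earlier you need $N[S]$ to have bounded clique cover number). This is false: a vertex $v$ complete to a long odd hole $C_{2k+1}$ gives a graph in $\mathcal{C}$ (it is $C_4$-free, theta-free, prism-free, and the wheel $(C_{2k+1},v)$ is odd), yet $\kappa(N(v)) = k+1$. The bisimplicial theorem only guarantees that \emph{some} vertex has $\kappa(N(v)) \leq 2$, not all of them. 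Without this, your recursion has no way to control the clique cover number of the separator you accumulate, and the whole scaffolding collapses. You also seem to be conflating the proof of Theorem~\ref{banana} with the (separate, later) step that goes from Theorem~\ref{banana} to the tree-independence bound; the dominated balanced separator is used for the latter, not the former.

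The paper's actual proof of Theorem~\ref{banana} is entirely different and uses a tool you do not mention: a ``clique Menger'' theorem of Korchemna, Lokshtanov, Saurabh, Surianarayanan, and Xue. Applied with $f \approx \log^6 n$, this gives either an $a$--$b$ separator $X$ with $\kappa(X) \le f \log^2 n \approx \log^8 n$ (done), or a family of $\Theta(f \log n)$ induced $a$--$b$ paths such that no clique meets more than $O(\log n)$ of them. In the second case, the subgraph $G''$ induced by these paths is $K_t$-free for $t = O(\log n)$, and the paper applies a separately proved and very substantial result, Theorem~\ref{bananaclique}: in a $K_t$-free even-hole-free graph, any two non-adjacent vertices can be separated by a set of $\operatorname{poly}(t,\log n)$ cliques. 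This gives a separator in $G''$ that is too small to hit all the paths, a contradiction. The bulk of the work---the central bag method, the hub/loaded-pyramid analysis, the local-to-global separator theorem, and the tree strip system machinery---all lives inside the proof of Theorem~\ref{bananaclique}, and none of it resembles the ``two induced paths force a theta'' heuristic you sketch.
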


In Theorem~\ref{banana}, the clique cover number $\kappa$ of the separator $X$ is bounded rather than the independence number $\alpha$ because that is what naturally comes out of the proof. Obtaining a separator $X$ with polylogarithmic independence number would also have been sufficient. Let us now not worry too much about the exponent $8$ in the statement of Theorem~\ref{banana}, and simply aim for a polylogarithmic bound on $\kappa$.

Towards this goal we employ a recent tool of Korchemna et al.~\cite{KLSSX}, who provide a ``max flow-min cut'' like theorem for clique separators. Applying this theorem to even-hole-free graphs (and using that even-hole-free graphs are $K_{2,2}$-free and therefore have only polynomially many maximal cliques~\cite{Farber}) we get one of two outcomes. Either we get an $a$-$b$ separator $X$ with a polylogaritmic upper bound on $\kappa(X)$, this is the desired outcome, or we get a set of $f$ paths $P_1, \ldots, P_f$ from $a$ to $b$ such that no clique of $G-\{a,b\}$ intersects more than $O(\log n)$ of the paths. Here $f$ can be chosen to be an arbitrarily large polylogarithmic function of $n$ (and the larger we choose $f$, the worse bound we get on $\kappa(X)$). 

Observe that the graph $G'$ induced by all of these $a$-$b$ paths $P_1, \ldots, P_f$ is $K_t$-free, where $t=O(\log n)$. Indeed, every vertex of a clique $K$ in $G'$ must be on some $a$-$b$ path (by the definition of $G'$), and no path can contain more than $2$ vertices of $K$ (since the path is induced), so $K$ must intersect at least $|K|/2$ paths. It follows that $G'$ is $K_t$-free where $t=O(\log n)$.

If only the constant $c_t$ in the $c_t \log n$ bound on the size of an $a$-$b$ separator from~\cite{TWXV} depended polynomially on $t$ we would be done! Indeed, we could then have chosen $f$ to be so large that $f \geq (c_t \cdot \log n) \cdot \log^2 n$, and get an $a$-$b$ separator $X$ in $G'$ of size at most $c_t \cdot \log n$. But then some vertex of $X$ intersects at least $\log^2 n$ of the paths, contradicting that no clique intersects more than $O(\log n)$ of them. 

Unfortunately the constant $c_t$ in the proof of~\cite{TWXV} does not depend polynomially on $t$, and it does not look like an easy task to improve the dependence to a polynomial in $t$. However, the argument above works out just fine even if $c_t$ is not a constant but rather a polynomial in $t$ and $\log n$. Hence, in order to prove Theorem~\ref{banana} it is sufficient to prove that, for every $K_t$-free graph, every pair $a, b$ of vertices can be separated by a set $X$ of size polynomial in $t$ and $\log n$. This is precisely the approach that we take, this is the most involved part of our arguments. 

\paragraph{Separating Two Vertices}
We now sketch the main ideas of the proof of Theorem~\ref{bananaclique}: that for every $K_t$-free graph every pair $a$ and $b$ of vertices can be separated by a set $X$ of size polynomial in $t$ and $\log n$. We start by sketching how we would have liked the proof to work, point to where this approach breaks, and then outline how the proof actually works. 

We wish to separate $a$ from $b$. We will concentrate on the neighborhood $N(a)$ of $a$. Let $D$ be the component of $G-N[a]$ that contains $b$, we can safely ignore the vertices in $N(a) \setminus N(D)$ and focus on the vertices in $N(D)$. As long as $N(D)$ contains a clique that covers at least $0.1\%$ of $N(D)$ we can add this clique to $X$ since we only can do this step $O(\log n)$ many times. After this step, ``many'' (i.e., at least $99\%$) of vertex triples $x_1$, $x_2$, $x_3$ in $N(a) \cap N(D)$ are stable (this follows from, e.g.,~\cite{bisimplicialnew}). 

For a stable triple $x_1, x_2, x_3$ let $D'$ be an inclusion minimal connected subset of $D$ that contains neighbors of $x_1$, $x_2$ and $x_3$, and let $H = \{a,x_1,x_2,x_3\} \cup D'$. A simple case analysis shows that $\{a,x_1,x_2,x_3\} \cup D'$ is either a pyramid or a wheel. 
%
We show in Section~\ref{pyramids} that if $H$ is a pyramid, then either there is a clique $K$ in $D$ that separates at least two vertices of $\{x_1, x_2, x_3\}$ from $b$ in $(\{x_1, x_2, x_3\} \cup D)$, or there is a clique $K$ in $D$ that has an ``almost as good'' separation effect (the precise formulation of this ``almost as good'' effect is cumbersome, and we skip it here). Additionally there are two exceptional cases for which we are not able to obtain this outcome: $H$ could grow to a {\em loaded pyramid} or an {\em extended near-prism} (see Section~\ref{pyramids} for definitions). 



Suppose that at least $1\%$ of the stable triples $x_1, x_2, x_3$ in $N(a) \cap N(D)$ there is a clique $K_{x_1,x_2,x_3}$ that separates at least two of them from $b$ in $(\{x_1, x_2, x_3\} \cup D)$ (or does the morally equivalent job). 
One of the main structural insights in this paper is that in this case we can conclude that there is a single set $K$ in $D$ such that $\kappa(K)$ is constant and no component of $D - K$ sees more than $99\%$ of $N(D)$. 
This kind of local-to-global transition is usually very hard to force when one works with families of graphs defined by forbidden induced subgraphs. Our arguments here only rely on $G$ being $C_4$-free, so we expect for this technique to be applicable in other contexts in the future. 

Whenever we get a $K$ as above we win -- we can just add it to our separator $X$, and again we will only do this $O(\log n)$ many times before $|N(D)|$ drops to $0$ and $a$ is separated from $b$.  
A similar outcome can be derived using structural arguments from~\cite{bisimplicialnew} when $H$ is a pyramid that grows to an extended near-prism for a sufficiently large proportion of stable triples in $N(D)$.

The problem with this approach is that it gets stuck whenever $99\%$ of the stable triples $\{x_1, x_2, x_3\}$ satisfy that $H$ is a wheel or grows to a loaded pyramid.
%
When this problem occurs a large fraction of the vertices of $N(D)$ are {\em hubs}. We say that a vertex $v$ of $G$ is a {\em hub} if $v$ is the center of a proper wheel, or the ``corner'' of a loaded pyramid (again,  see Section~\ref{pyramids}) and we denote by $\Hub(G)$ the set of all hubs of $G$. We remark that our definition of hubs is not precisely the same as the definition of hubs in~\cite{TWXV}, although the role hubs play in~\cite{TWXV} and here are similar. 
When {\em none} of the vertices in $N(a)$ (and therefore $N(D)$) are hubs an argument quite similar to the one outlined above works, and we are able to obtain an $a$-$b$ separator $X$ with polylogarithmic $\kappa$. We call this the ``hub-free'' case. 

The remainder of the proof then consists of reducing the general case to the hub-free case. The reduction is based on the ``central bag'' method, developed in \cite{TWI} and \cite{TWIII} and also used in \cite{TWXV}. 
%
Since $G$ is $C_4$-free it follows that $N(a) \cap N(b)$ is a clique, and we may add  $N(a) \cap N(b)$ to $X$ at the cost of increasing $|X|$ by $t$ (recall that $G$ is $K_t$-free). From now on we assume that $N(a) \cap N(b)$ is empty. 

Since every even-hole-free graph has a vertex whose neighborhood is the union of two cliques, it follows that every induced subgraph of $G$ has average degree upper bounded by $O(t)$.
Thus, the set $\Hub(G)$ contains a stable set $S_1$ of size at least $\Omega(\frac{|\Hub(G)|}{t})$ such that the degree of each vertex in $S_1$ in $G[\Hub(G)]$ is at most $O(t)$. 
We show that for every hub $v$ in a graph in ${\cal C}$, $G - N[v]$ is disconnected. When $v$ is a wheel center this was already known, for loaded pyramid corners we prove it in Section~\ref{pyramidcutsets} (we skip this proof in the overview). 

%

For each $v \in S_1$, since $v \notin N(a) \cap N(b)$ there is a component $D_v$ of $G \setminus N[v]$ that contains at least one of $\{a,b\}$.
We claim that $N[D_v]$ must contain both $\{a, b\}$ or we are already done! Indeed, suppose  $N[D_v]$ contains $b$ but not $a$. Then a set $X$ that separates $v$ from $b$ also separates $a$ from $b$, but $v$ only has $O(t)$ hubs in its neighborhood. So adding these $t$ hubs to $X$ leaves us with the task of separating $v$ from $b$, but now $v$ has no hubs in its neighborhood and we are in the hub-free case (and therefore done). 

Thus, if we are not done yet, then for every vertex $v$ in $S_1$ there is a component $D_v$ such that $\{a,b\} \subseteq N[D_v]$. We define the {\em central bag} to be $\beta = \cap_{v \in S_1} (N[D_v] \cup \{v\})$ (the actual definition of the central bag is subtly different). Observe that both $a$ and $b$ are in the central bag $\beta$.

There are now two key observations behind the central bag method. The first (and easy) one is that no vertex $v \in S_1$ can be a hub in $\beta$. 
Indeed $v$ cannot be a hub in $N[D_v]$ since $N[D_v] - N(v) = D_v$ is connected, contradicting that the neighborhood of every hub is a separator. Since $\beta \subseteq D_v$ it follows that $v$ cannot be a hub in $\beta$ either. When the more nuanced definition is used, the proof is slightly more involved.

This observation means that the number of hubs in $\beta$ is smaller by a linear fraction than the number of hubs in $G$. Thus, by induction on $\log(|\Hub(G)|)$, we can find an $a$-$b$ separator $Y$ in the central bag $\beta$ of polylogarithmic size. If we can grow $Y$ to an $a$-$b$ separator $X$ in $G$ incurring an {\em additive} polylogarithmic cost, then the induction goes through and we are able to upper bound the total size of $X$ by $(\log n)^{O(1)}$. In particular the depth of the induction is logaritmic in $n$, so if $|X|$ grows by an additive term of $(\log n)^{O(1)}$ in each inductive step this is ok, but if $X$ grows by a factor $1.01 $ in each step then $|X|$ ends up being polynomial in $n$. 

The second (and more involved) component of the central bag method is to show that $Y$ can indeed be grown to $X$ as prescribed above. 
This incurs a cost which is proportional to (essentially) $|Y \cap S_1|$, because for each vertex $s$ in $|Y \cap S_1|$ we add all of its $O(t)$ hub neighbors to $X$ and then separate $s$ from either $a$ or $b$ using the hub-free case.

Unfortunately the inductive step which gives us $Y$ does not give us any guarantees on the size of  $|Y \cap S_1|$, so in the worst case  $Y \cap S_1$ could be almost as big as $Y$ itself. Then the cost of turning $Y$ into a separator $X$ in $G$ would incur at least a constant {\em multiplicative} cost, which would be too expensive. We therefore apply an additional ``pivot'' step where $Y$ is changed so that $|Y \cap S_1|$ is small. This pivot step again relies on the hub-free case as well as a second application of the local-global transition mentioned above. This concludes the proof outline. 

We note that most of the  results are proved for the slightly more general class of graphs $\mathcal{C}$, rather than even-hole-free graphs. However, Section~\ref{strips} deals with even-hole-free graphs only. It is very likely that the proofs there do in fact work
in the more general setting, but we did not verify the details. Theorem~\ref{bisimplicialnew} is another fact we need that has only been proved for even-hole-free graphs (and not for $C_4$-free odd-signable graphs); once again it is likely to generalize, but we  have not checked it carefully. These are the two reasons for the fact that our main theorem applies to even-hole-free graphs only.

\subsection{Organization of the Proof}
The pieces of the proof appear in a different order than in the outline. Specifically each piece is proved before it is used. 
%
%
In Section~\ref{pyramids} we prove Theorem~\ref{pyramid_separate}, which allows us to generate clique separators from pyramids that do not grow to an extended near-prisms.
In Sections \ref{wheelcutsets} and \ref{pyramidcutsets} we define hubs and prove Theorem~\ref{hubstarcutset}, that every for every hub $v$, $N[v]$ separates the graph (in a particular way).
In Section~\ref{strips} we prove Theorem~\ref{treestructplus} which allows us to decompose graphs that contain pyramids that {\em do} grow to an extended near-prisms.
In Section~\ref{sec:localglobal} we prove Theorem~\ref{localglobal}: that if we have sufficiently many clique separators of the type that are the output of Theorem~\ref{pyramid_separate}, then there is a bounded $\kappa$ size set whose removal substantially separates the graph. 
%
In Section~\ref{sec:dangerous} we prove Theorem~\ref{dangerous} which shows how to deal with the case where we have a vertex $a$, and many of the stable triples $x_1, x_2, x_3$ in the neighborhood of the component $D$ of $G-N(a)$ that contains $b$ are contained in a pyramid that grows to an extended near-prism. 
%
In Section~\ref{sec:sepab_nohubs} we handle the hub-free case and prove Theorem~\ref{ablogn}, that if $a$ does not have any hub neighbors, then $a$ and $b$ can be separated with $O(\log n)$ cliques. 
%
Section~\ref{sec:centralbag_banana} contains all of the elements needed for the central bag method, with the exception of the ``pivot'' step where the separator $Y$ is changed to (mostly) avoid $S_1$.
In Section~\ref{sec:boundhubs} we prove Theorem~\ref{boundhubs}, which does the aforementioned pivot step. 
In Section~\ref{sec:bananaclique} we apply the central bag method, putting together the results from Sections~\ref{sec:sepab_nohubs},~\ref{sec:centralbag_banana} and~\ref{sec:boundhubs} to prove Theorem~\ref{bananaclique}, that every pair of non-adjacent vertices in a $K_t$-free even-hole-free graph can be separated by $(t \log n)^{O(1)}$ cliques. 
In Section~\ref{sec:banana} we prove Theorem~\ref{sec:banana}, that every pair of non-adjacent vertices in an even-hole-free graph can be separated by $(\log n)^{O(1)}$ cliques. 
Finally, in Section~\ref{sec:domsep} we use Theorem~\ref{sec:banana} to prove Theorem~\ref{main}.

\section{Jumps on pyramids}
\label{pyramids}


The goal of this section is to prove Theorem~\ref{pyramid_separate},
which asserts the existence of well-structured cutsets that separate the neighbors of the apex of a pyramid.
Theorem~\ref{pyramid_separate} is then used to produce the ``local cutsets'' in
Theorem~\ref{localglobal}.

We start with some definitions.
Let $G \in \mathcal{C}$ and let
 $\Sigma$ be a pyramid in $G$  with apex $a$, base $b_1b_2b_3$ and
paths $P_1,P_2,P_3$.
We say that $X \subseteq \Sigma$ is {\em local (in $\Sigma$)} if $X \subseteq P_i$  for some $i \in \{1,2,3\}$, or $X \subseteq \{b_1,b_2,b_3\}$. 
Let $P=p_1 \dd\cdots \dd p_k$ be a path with $P \cap \Sigma = \emptyset$. $P$ is a {\em corner path for $b_1$} if $p_1$ is adjacent to $b_2,b_3$,
$p_k$ has a neighbor in $P_1 \setminus b_1$,
and there are no other edges from $\Sigma \setminus b_1$ to $P$.
A corner path for $b_2$ and $b_3$ is defined similarly.
We say that $P$ is a {\em corner path for $\Sigma$} if $P$ is a corner path
for $b_1$, $b_2$ or $b_3$. If $v \in G \setminus \Sigma$, and $v$ is not a corner path for $\Sigma$, and $N_{\Sigma}(v)$ is not local, we say that $v$ is {\em major (for $\Sigma$)}.

A \textit{loaded pyramid} in a graph $G$ is a pair $\Pi=(\Sigma,P)$ where $\Sigma$ is pyramid with apex $a$, base
$b_1b_2b_3$ and paths $P_1, P_2, P_3$, $a$ is adjacent to $b_2$ (hence $|P_2|=2$), and $P=p_1 \dd \cdots \dd p_k$ is a path such that
\begin{itemize}
\item $p_1$ is adjacent to $b_2$;
\item $p_k$ has a neighbor in $P_1^*$;
\item $P_3$ is anticomplete to $P$ (and in particular $a$ is anticomplete to $P$);
  \item $b_2$ is anticomplete to $P \setminus p_1$; and
  \item $P_1 \setminus b_1$ is anticomplete to $P \setminus p_k$.
    \end{itemize}
In this case, we say $b_2$
is a \textit{loaded pyramid corner}, and denote the loaded pyramid
as a pair $(\Pi, b_2)$. We also use the notation $\Pi$ to denote the vertex set $P\cup \Sigma$.

Recall that a  \emph{wheel} $(H, x)$ in $G$ is a pair where $H$ is a hole and $x$ is a vertex with at least three neighbors in $H$; the vertex $x$ is called the \emph{center} of the wheel.
A wheel $(H,x)$ is {\em proper} if $\alpha(N_H(x)) \geq 3$.
We say that a vertex $v$ of $G$ is a {\em hub} if $v$ is
a proper wheel center or a loaded  pyramid corner, and we 
denote by $\Hub(G)$ the set of all hubs of $G$.

An {\em extended  near-prism}, defined in \cite{bisimplicialnew},
is a graph obtained from a  near-prism by adding one extra edge, as follows. 
Let $P_1,P_2,P_3$ be as in the definition
of a near-prism, and let $a\in P_1^*$ and $b\in P_2^*$; and add an edge $ab$. (It is important that $a,b$ do not belong 
to the triangles.) If the two triangles of the extended near-prism are
disjoint, we also call it an {\em extended prism}.
We call $ab$ the {\em cross-edge} of the extended near-prism (or of the extended prism). 
We start with  two lemmas:

\begin{theorem}
  \label{major}
Let  $G \in \mathcal{C}$.
  Let $\Sigma$ be a pyramid in $G$ with  apex $a$, base $b_1b_2b_3$ and
  paths $P_1,P_2,P_3$. Let $p$ be a major vertex for $\Sigma$. Then
  one of the following holds: 
  \begin{enumerate}
    \item $p$ is adjacent to $a$ and at least two of the neighbors of $a$ in
      $\Sigma$;
    \item $p$ is adjacent to $a$, and $p$ is a hub;
    \item $\Sigma \cup p$ is an extended prism whose cross-edge contains $a$; or
\item $(\Sigma, p)$ is a loaded pyramid, and so one of $b_1,b_2,b_3$ is a loaded pyramid corner.
  \end{enumerate}
  \end{theorem}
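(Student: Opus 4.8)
The plan is to analyze the structure around a major vertex $p$ for a pyramid $\Sigma$ by carefully examining the edges between $p$ and the three paths $P_1, P_2, P_3$ of $\Sigma$, using the fact that $G$ is $(C_4, \text{theta}, \text{prism}, \text{even wheel})$-free throughout. Since $p$ is major, $N_\Sigma(p)$ is not local and $p$ is not a corner path, so $p$ has neighbors in a way that is ``spread out'' across the pyramid.

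\medskip

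\noindent\textbf{Step 1: Reduce to the case that $p$ is adjacent to $a$.} First I would consider whether $p$ is adjacent to the apex $a$. Suppose $p$ is \emph{not} adjacent to $a$. I claim that then we reach a contradiction, so that in fact outcomes (1) and (2) (which both require adjacency to $a$) are the only ``spread'' possibilities, with (3) and (4) being the alternatives. If $p \notin N(a)$, then for each $i$, since $P_i \cup P_j$ is a hole containing $a$, and $p$ has neighbors in $\Sigma$ but not at $a$, we can look at which holes $P_i \cup P_j$ contain a neighbor of $p$. The key tool here is the standard fact (from the theory of $C_4$-free odd-signable graphs, e.g. in \cite{decompPart1,bisimplicialnew}) that a vertex with a neighbor in a hole of such a graph, if it is not a corner-path-like attachment, must attach to the hole in a very restricted pattern. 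I would case on whether $N_{P_i}(p)$ is empty/a single vertex/an edge/nonlocal for each $i$, and use the hole $P_i \cup P_j$ plus $p$ to force either a theta (three disjoint paths between two branch vertices), a prism, or an even wheel $(P_i \cup P_j, p)$, each of which is excluded. This pins down that if $p$ is not adjacent to $a$, the only surviving configurations are that $p$ is a corner path or $N_\Sigma(p)$ is local — contradicting that $p$ is major. The more delicate sub-case is when $p$ has a single neighbor on each of two paths near the base; that is exactly what can create a loaded pyramid, but a loaded pyramid corner is $b_2 \in \Sigma$, not $p$ itself, and the path $P$ of the loaded pyramid would be $p$ together with more vertices — so a single major vertex $p$ with $p \notin N(a)$ giving outcome (4) requires $P = p_1 = p$ with $p$ adjacent to $b_2, b_3$ (wait: re-examine the loaded pyramid definition — $p_1$ adjacent to $b_2$, $p_k$ has a neighbor in $P_1^*$). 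So I must be careful: outcome (4) genuinely can occur with $p$ not adjacent to $a$ when $\Sigma \cup p$ forms a loaded pyramid with $P$ a one-vertex path. I would handle this by noting that once $p$ has exactly the loaded-pyramid attachment pattern, we land in (4); otherwise the attachment is too rich and produces a forbidden subgraph.

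\medskip

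\noindent\textbf{Step 2: Assume $p \in N(a)$ and split on how many neighbors of $a$ in $\Sigma$ are adjacent to $p$.} If $p$ is adjacent to at least two of $a$'s neighbors in $\Sigma$ (these lie on the $P_i$'s, one per path), we are in outcome (1) and done. So assume $p$ is adjacent to $a$ and to at most one of $a$'s neighbors in $\Sigma$. Now consider the holes $P_i \cup P_j$ again; $p$ is adjacent to $a$ which lies on all of them. For each such hole $H = P_i \cup P_j$, the pair $(H, p)$ is either not a wheel (at most two neighbors of $p$ on $H$) or a wheel, and if it is a wheel it must be an odd, non-proper wheel (since even wheels are forbidden and proper wheels would make $p$ a hub, giving outcome (2)). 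Being a non-proper wheel center means $\alpha(N_H(p)) \le 2$, i.e. $N_H(p)$ is covered by two cliques; combined with $H$ being an induced cycle, $N_H(p)$ consists of at most two ``intervals'' each of size at most $2$. I would use this to show that on each path $P_i$, the neighborhood $N_{P_i}(p)$ is essentially an interval of bounded size near one end. Then, combining the information across the three paths and using $C_4$-freeness at the base triangle $b_1b_2b_3$ and near $a$, I would show that the only consistent global pictures are: (a) $p$'s attachments to two of the paths ``sync up'' to form an extended prism with cross-edge through $a$ (outcome (3)) — this happens when $p$ is adjacent to $a$, to interior vertices of two paths forming the two triangles of a near-prism, and the edge at $a$ becomes the cross-edge; or (b) $p$ together with a sub-path gives a loaded pyramid (outcome (4)), with one of the $b_i$ as corner; or we again hit a forbidden theta/prism/even-wheel.

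\medskip

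\noindent\textbf{Step 3: Assemble the extended-prism and loaded-pyramid configurations precisely.} The real content is showing that when $p \in N(a)$, $p$ adjacent to at most one neighbor of $a$ in $\Sigma$, and $p$ not a hub, the attachment pattern is forced into either the extended-prism shape or the loaded-pyramid shape. For the extended prism: $a \in P_1^*$ after relabeling (since $a$ is the apex, one subtlety is that $a$ is not interior to a path of $\Sigma$, so here the ``$P_1, P_2, P_3$'' of the extended near-prism are \emph{new} paths, built from parts of $\Sigma$ together with $p$); I would explicitly exhibit the two triangles and three paths. For instance, $p$ adjacent to $a$ and to a vertex $q$ on $P_1$ with $q \ne b_1$, and the sub-path of $P_1$ from $q$ to $b_1$, the sub-path of $P_2$ (which together with $a$–$p$ plays the role of the cross-edge), and $P_3$, assemble into an extended near-prism whose cross-edge is $ap$ — one must check the anticomplete conditions, which follow from $N_\Sigma(p)$ being non-local but ``one-sided'' as established in Step 2, and $P_1^*, P_2^*, P_3^*$ being pairwise anticomplete in $\Sigma$. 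For the loaded pyramid: after relabeling so $a$ is adjacent to $b_2$ (need $|P_2| = 2$; I'd argue this forced adjacency arises precisely when $p$'s neighbor among $a$'s neighbors is the one on $P_2$, or from a short path), $P = $ the path from $p$ to its neighbor in $P_1^*$ lies in $\{p\} \cup (\text{stuff})$ — actually since we only have the single vertex $p$ outside $\Sigma$, $P$ is the one-vertex path $p$, and we need $p$ adjacent to $b_2$ and $p$ with a neighbor in $P_1^*$ and $P_3, b_2\setminus\{p_1\}$ etc. anticomplete — again read off from Step 2.

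\medskip

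\noindent\textbf{Main obstacle.} The hard part will be the bookkeeping in Step 2: ruling out all the ``intermediate'' attachment patterns of a non-hub, apex-adjacent $p$ that are neither local, nor corner-path, nor extended-prism, nor loaded-pyramid, by exhibiting in each a forbidden theta, prism, or even wheel. The number of sub-cases (based on which paths $p$ attaches to, where on each path, and whether $p$ sees $b_i$'s) is substantial, and one must be careful that $a$ being the apex (and thus a high-degree branch vertex of all three holes, and \emph{not} interior to any path) changes the parity/structure of the candidate wheels compared to the generic case. A secondary subtlety is the precise interplay with the ``corner path'' definition: a major vertex is by definition not a corner path, but a single vertex can be a corner path ($k=1$), so some configurations are excluded not by a forbidden subgraph but by the hypothesis that $p$ is major — I will need to invoke that hypothesis at the right moments rather than deriving a contradiction.
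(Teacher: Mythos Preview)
Your overall plan---split on whether $p$ is adjacent to $a$, then constrain $N_\Sigma(p)$ using theta-, prism-, even-wheel-, and $C_4$-freeness---matches the paper's approach. However, you have the case structure inverted, and this is a genuine gap.

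The key point you are missing is that outcomes (3) and (4) can \emph{only} occur when $p$ is \emph{not} adjacent to $a$. For outcome (4) this is immediate from the definition of a loaded pyramid: the path $P$ (here the single vertex $p$) must be anticomplete to $P_3$, and in particular to $a\in P_3$. For outcome (3), if $\Sigma\cup p$ is an extended prism then $p$ must lie in one of its two triangles (the only triangle in $\Sigma$ is $b_1b_2b_3$, so the second triangle must use $p$); the cross-edge joins two \emph{interior} vertices of two different paths, so it is an edge $at$ with $t\in\Sigma$, not $ap$ as you suggest in Step~3. In the paper's argument this configuration arises precisely when $p$ is non-adjacent to $a$, has a unique neighbour $t\in P_1$ with $t$ adjacent to $a$, and two adjacent neighbours in $P_2$.

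Consequently, your Step~2 is aimed at the wrong targets. Once $p$ is adjacent to $a$ and outcomes (2)--(4) are assumed to fail, the paper does \emph{not} look for extended prisms or loaded pyramids; instead it shows directly that outcome (1) holds. The argument is short: assuming $p$ is anticomplete to $N_{P_1\cup P_2}(a)$, one first uses that $(P_1\cup P_2,p)$ is neither a theta nor a proper wheel to pin $N_{P_1\cup P_2\setminus a}(p)$ down to an adjacent pair inside (say) $P_1$; then majorness forces a neighbour of $p$ in $P_3\setminus a$, and analysing the wheel $(P_1\cup P_3,p)$ yields an even wheel, a contradiction. Your Step~1 is closer to the real work: that is where you must allow (3) and (4) to arise, and where the delicate sub-cases live (in particular the claim that $N_{P_i}(p)\neq\{b_i\}$, which is exactly where the loaded-pyramid outcome is invoked).
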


\begin{proof}
   We may assume that the last three outcomes of Theorem \ref{major}
  do not hold.
First we prove that $p$ is adjacent to $a$.  Suppose not.
  Assume first that  $p$ has a neighbor in each of
  $P_1,P_2,P_3$. Since
  $p$ is not a corner path for $\Sigma$ and $N_\Sigma(p)$ is not local, it follows that $p$ has neighbors in the interiors of at least two of $P_1, P_2, P_3$. Consequently, there is a theta
  with ends $a$ and $p$ whose paths are subpaths of $P_1,P_2,P_3$,
  a contradiction.
 Thus we may assume that  $p$ is anticomplete
 to at least one of $P_1, P_2,P_3$,
 say $p$ is anticomplete to $P_3$.
 \\
 \\
  \sta{If $p$ is non-adjacent to $a$, then for $i=1,2$, we have $N_{P_i}(p) \neq \{b_i\}$. \label{notbi}}

Suppose $N_{P_1}(p)=\{b_1\}$. Since $p$ is major, $p$ has a neighbor in $P_2 \setminus b_2$.
Since $p$ is non-adjacent to $a$, and $\Sigma \cup p$ is not a loaded pyramid, it follows that
 $b_1$ is non-adjacent to $a$. But now we get a theta with ends $b_1, a$ and paths
 $b_1 \dd P_1 \dd a$, $b_1 \dd p \dd P_2 \dd a$ and $b_1 \dd b_3 \dd P_3 \dd a$, a contradiction.
This proves \eqref{notbi}.
\\
\\
By \eqref{notbi} and since $p$ is major and we have assumed that $p$ is non-adjacent to $a$, $p$ has both a neighbor in $P_1 \setminus b_1$,
and a neighbor in $P_2 \setminus b_2$.
Let $H$ be the hole formed by $P_1$ and $P_2$.
\\
\\
 \sta{If $p$ is non-adjacent to $a$, then $p$ has exactly three neighbors in $H$, and two of them are
 consecutive. \label{3nbrs}}

Suppose that $p$ has two non-adjacent neighbors in
$P_1$. Then there exists a path $P_1'$ from $p$ to $a$, and a path $P_1''$ from
$p$ to $b_1$, both with interior in $P_1$ and such that
$P_1' \setminus p$ is anticomplete to $P_1'' \setminus p$.
Now we get a theta with ends $p,a$ and paths $p \dd P_1' \dd a$,
$p \dd P_1'' \dd b_1 \dd b_3 \dd P_3 \dd a$ and a path from $p$ to
$a$ with interior in $P_2 \setminus b_2$, a contradiction. This proves that
$p$ has either one or two consecutive neighbors in
$P_1$, and the same for $P_2$. Since $(H,p)$ is not an even wheel, and $H \cup p$ is not
a theta, \eqref{3nbrs} follows.
\\
\\
  We may assume that 
 $N_{P_1}(p)=\{t\}$,  $N_{P_2}(p)=\{q,r\}$, where
  $P_2$ traverses $a,q,r,b_2$ is this order. By \eqref{notbi}, $t \neq b_1$.
  It follows from \eqref{3nbrs} that $q$ is adjacent to $r$.
 Since $\Sigma \cup p$ is not an extended prism, it follows that  $t$ is non-adjacent to
 $a$. But now  there is a theta with ends
 $t,a$ and paths $t \dd P_1 \dd a$, $t \dd p \dd q \dd P_2 \dd a$ and 
 $t \dd P_1 \dd b_1 \dd b_3 \dd P_3 \dd a$, a contradiction.
 This proves that $p$ is adjacent to $a$.

To complete to proof of \ref{major}, assume that $p$ is anticomplete to $N_{P_1 \cup P_2}(a)$.
Since $p$ is major, $p$ has a neighbor in $H \setminus a$. Since
$p \not \in \Hub(G)$ and $H \cup p$ is not a theta, it follows that
$p$ has exactly two neighbors in $H \setminus a$, and they are adjacent. Since $p$ is not a corner path
for $b_3$, it follows that $N_H(p) \neq \{a,b_1,b_2\}$, and so we may assume that $N_H(p) \subseteq P_1$. Since $p$ is major, we deduce that $p$ has a neighbor
in $P_3 \setminus a$. Let $H'$ be the hole $P_1 \cup P_3$.
Since $p  \not \in \Hub(G)$, it follows that
$(H',p)$ is not a proper wheel. Consequently, $N_{P_3}(p)=\{a\} \cup N_{P_3}(a)$.
But now $(H',p)$ is an even wheel, a contradiction.
 \end{proof}

\begin{theorem}\label{pyramid2}
Let $G \in \mathcal{C}$.
  Let $\Sigma$ be a pyramid in $G$ with apex $a$,
    base $b_1b_2b_3$ and
  paths $P_1,P_2,P_3$. Assume that $N_G(a) \subseteq \Sigma$.
    Let $P$ be a path in $G \setminus \Sigma$.
    Then one of the following holds.
    
\begin{enumerate}
\item $N_{\Sigma}(P)$ is local in $\Sigma$;
\item $P$ contains a major vertex for $\Sigma$;
\item $P$ contains a  corner path for $\Sigma$;
\item There exist distinct $i,j \in \{1,2,3\}$ and a subpath $Q=q_1 \dd \cdots \dd q_m$ of $P$ such that
  \begin{itemize}
  \item $N_{\Sigma}(q_1) \subseteq P_i$;
    \item  $q_1$ has a unique neighbor in $P_i$ and  $N_{P_i}(q_1)=N_{P_i}(a)$;
              \item $N_{\Sigma}(q_m) \subseteq P_j$; 
    \item $q_m$ has exactly two neighbors $x,y$ in $P_j$; $x$ is adjacent to  $y$, and $a \not \in \{x,y\}$; and
    \item there are no other edges between $\Sigma$ and $Q$;
      \end{itemize}
    In particular, $a$ is contained in the cross-edge of an extended prism.
\item There is an $i \in \{1,2, 3\}$ and a subpath $Q=q_1 \dd \cdots \dd q_m$ of $P$ such that $(\Sigma, P)$ is a
  loaded pyramid with loaded pyramid corner $b_i$.
  \end{enumerate}
\end{theorem}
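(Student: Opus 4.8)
The plan is to walk along the path $P = p_1 \dd\cdots\dd p_\ell$ and track how its vertices attach to $\Sigma$, using Theorem~\ref{major} as the main engine whenever a "bad" attachment is detected. First I would dispose of the trivial case: if every vertex of $P$ has local neighborhood in $\Sigma$ and there is at most one ``active'' path $P_i$ involved along the whole of $P$, then outcome (1) holds (one needs here that $N_\Sigma(P)$ being contained in a union of local sets that are pairwise ``incompatible'' — e.g.\ meeting two different $P_i\setminus\{b_i\}$ — would force a theta or a connection one can reroute, so connectivity of $P$ plus the pyramid structure pins $N_\Sigma(P)$ down to a single local set). So assume $N_\Sigma(P)$ is not local. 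Then either some $p_s$ is itself non-local, or $P$ has two vertices with local-but-incompatible neighborhoods; in the latter case, taking a minimal subpath between such vertices, the internal vertices are anticomplete to $\Sigma$ and the two endpoints attach to different parts, which is exactly the skeleton of outcomes (3), (4), (5). So the real work is: classify what happens when $P$ contains a vertex that is non-local for $\Sigma$, or when it ``travels'' between two incompatible local attachments.

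Next I would take a minimal subpath $Q = q_1 \dd\cdots\dd q_m$ of $P$ that already witnesses non-locality: either $q_1 = q_m$ is a single non-local vertex, or $q_1$ and $q_m$ have local neighborhoods meeting distinct ``sides'' of $\Sigma$ and $q_2\dd\cdots\dd q_{m-1}$ is anticomplete to $\Sigma$. If $Q$ contains a major vertex we are in outcome (2), so assume no vertex of $Q$ is major; then every vertex of $Q$ is either local, or a corner path vertex in the degenerate one-vertex sense, or (if a non-local vertex appears) it falls under Theorem~\ref{major}. Here is where I would invoke Theorem~\ref{major} on each relevant vertex: since $N_G(a) \subseteq \Sigma$, any vertex adjacent to $a$ has its neighborhood confined, and the four outcomes of Theorem~\ref{major} translate into: (i) adjacency to $a$ and to $\geq 2$ neighbors of $a$ — this is a local-type configuration or forces $q_1$'s attachment pattern in outcome (4); (ii) $p$ a hub adjacent to $a$ — but a hub adjacent to $a$ in a pyramid with $N_G(a)\subseteq\Sigma$ forces a proper wheel or loaded-pyramid corner inside $\Sigma\cup p$, which I claim pushes us to outcome (5) or is absorbed by (4) after re-examining which $P_i$ carries the attachment; (iii) $\Sigma\cup p$ an extended prism with cross-edge through $a$ — this is precisely the ``in particular'' clause of outcome (4), and I'd then need to extract the subpath $Q$ with the stated endpoint conditions from the extended-prism structure; (iv) $(\Sigma,p)$ a loaded pyramid — outcome (5) directly.

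The bulk of the case analysis — and the step I expect to be the main obstacle — is the ``travelling'' case where no single vertex of $Q$ is non-local but $q_1$ and $q_m$ attach to incompatible local sets, with the interior anticomplete to $\Sigma$. Here $\Sigma \cup Q$ is a three-paths-between-$a$-and-something configuration, and I would argue that it must be a theta, a prism/near-prism, a pyramid, or an extended (near-)prism — then rule out theta and the even-wheel-forcing and $C_4$-forcing configurations using $G \in \mathcal{C}$, leaving the extended-prism case, from which outcome (4) is read off. The delicate points are: (a) showing the endpoint $q_1$ has a \emph{unique} neighbor on $P_i$ equal to $N_{P_i}(a)$ — this uses that $N_G(a) \subseteq \Sigma$ so $q_1$ cannot see $a$, hence by minimality and Theorem~\ref{major}-type arguments (no theta with ends $a, q_1$) its attachment on $P_i$ must coincide with $a$'s; (b) showing the other endpoint $q_m$ has exactly two \emph{adjacent} neighbors on $P_j$ avoiding $a$ — again a theta/even-wheel exclusion; and (c) verifying ``no other edges between $\Sigma$ and $Q$'', which follows because any extra edge would either make an interior vertex non-local (contradicting minimality after passing to a shorter witness) or create a forbidden subgraph. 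The corner-path outcome (3) arises as the sub-case where $q_1$ attaches to $\{b_1,b_2,b_3\}$ (seeing two of the $b_i$'s) and the far end reaches into some $P_k \setminus b_k$; I would separate this off early as a named sub-case of the travelling analysis. Throughout, the only ambient hypotheses used are $G \in \mathcal{C}$ (to kill thetas, prisms, $C_4$'s and even wheels) and $N_G(a) \subseteq \Sigma$ (to prevent $Q$ from looping back through $a$), so the argument is a finite but somewhat intricate enumeration of how three internally disjoint $a$-to-base routes can be glued together.
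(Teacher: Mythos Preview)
Your overall strategy---pass to a minimal non-local subpath, assume no major vertex and no corner path, then analyse the endpoint attachments via theta/prism/even-wheel exclusion---is the same as the paper's. Two points need correction.

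First, the paragraph invoking Theorem~\ref{major} is misplaced. Since $N_G(a)\subseteq\Sigma$ and $P\subseteq G\setminus\Sigma$, no vertex of $P$ is adjacent to $a$; hence outcomes (1)--(3) of Theorem~\ref{major} are vacuous for vertices of $P$, and outcome (4) of Theorem~\ref{major} on a single vertex $p$ would make $(\Sigma,p)$ a loaded pyramid, i.e.\ your outcome (5) directly. So once you assume outcome (2) of the present theorem fails, every vertex of $P$ has local neighbourhood in $\Sigma$ (or is a one-vertex corner path, giving outcome (3)), and Theorem~\ref{major} plays no further role. The ``travelling'' case is the only case.

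Second, and more substantively, your claim that in the minimal subpath ``the internal vertices are anticomplete to $\Sigma$'' is too fast. The correct minimality is that every proper subpath of $P$ has local neighbourhood in $\Sigma$; from this you get only that $\Sigma\setminus\{b_1\}$ is anticomplete to $P^*$ (after normalising so that $p_1$ attaches to $P_1\setminus b_1$ and $p_k$ attaches to $P_2\setminus a$ or to $\{b_1,b_2,b_3\}$). You still have to rule out (i) $p_k$ adjacent to $b_3$, (ii) $N_{P_2}(p_k)=\{b_2\}$ (which is exactly where the loaded-pyramid outcome (5) arises, when $b_2$ is adjacent to $a$), and (iii) $b_1$ having a neighbour in $P\setminus p_1$. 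Only after these three sub-claims do you know $P^*$ is anticomplete to $\Sigma$ and $N_\Sigma(p_k)\subseteq P_2$, at which point your dichotomy in (a)/(b) (unique neighbour adjacent to $a$, versus two adjacent neighbours) goes through and yields either a theta, a prism, or the extended-prism configuration of outcome (4). The paper handles (i)--(iii) as three short claims; your proposal skips them, and (ii) in particular is where outcome (5) is actually produced, so it cannot be elided.
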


\begin{proof}
  Let $P=p_1 \dd \cdots \dd p_k$. Suppose for a contradiction that none of the outcomes hold. We may assume that $N_{\Sigma}(P)$ is not local, and that $N_{\Sigma}(X)$ is local for every proper subpath $X$ of $P$. 
  Since no vertex of $P$ is major, and no subpath  of $P$ is  a corner path,
  it follows that $k \geq 2$. 

  Since $N_{\Sigma}(P) \not \subseteq \{b_1,b_2,b_3\}$, by symmetry, we may assume that
  \begin{itemize}
      \item 
$N_{\Sigma}(p_1)  \subseteq P_1$ and $p_1$ has a neighbor in $P_1 \setminus b_1$; 
    \item $p_k$ has a neighbor in $P_2 \setminus a$, and either $N_{\Sigma}(p_k) \subseteq P_2$, or $N_{\Sigma}(p_k) \subseteq \{b_1,b_2,b_3\}$; and
    \item $\Sigma \setminus b_1$ is anticomplete to $P^*$. 
  \end{itemize}
   We first show: 
   \\
   \\
     \sta{We have $N_{\Sigma}(p_k) \subseteq P_2 \cup \{b_1\}$. \label{3.5}}
 
 Suppose  not. Then $N_{\Sigma}(p_k) \subseteq \{b_1,b_2,b_3\}$. Since
 $p_k$ has a neighbor in $P_2 \setminus a$, it follows that 
 $p_k$ is adjacent to $b_2$. Since $P \cup (P_1 \setminus b_1)$ contains a path from
 $p_k$ to $a$, and since $P$ does not contain a corner  path for $b_1$, it
 follows that   $p_k$ is
 non-adjacent to $b_3$. This proves \eqref{3.5}.
 \\
 \\
 \sta{$N_{P_2}(p_k)  \neq \{b_2\}$.
      \label{onlyb2}}

 Suppose that $N_{P_2}(p_k) = \{b_2\}$.
By \eqref{3.5} $p_k$ is non-adjacent to $b_3$.
    If $b_2$ is non-adjacent to $a$, then there is a theta in $G$ with ends $b_2, a$ and paths $P_2$, $b_2$-$b_3$-$P_3$-$a$ and $b_2$-$p_k$-$P$-$p_1$-$P_1$-$a$, a contradiction; so $b_2$ is adjacent to $a$. Now since $a$ has no neighbor in $P$,  $(\Sigma, P)$ is a loaded pyramid with base $b_1b_2b_3$, apex $a$, and paths $P_1, P_2, P_3$, and the fifth outcome holds, a
    contradiction.
        This proves \eqref{onlyb2}.
     \\
     \\
     \sta{There is no edge from $b_1$ to $P \setminus p_1$.\label{notb1nbrs}}

     Suppose for a contradiction that $b_1$ has a neighbor in $P \setminus p_1$. Since $N_{\Sigma}(P \setminus p_1)$ is local, it follows from \eqref{3.5} that
     $N_{\Sigma}(p_k) \subseteq \{b_1, b_2\}$, contrary to \eqref{onlyb2}, 
          and \eqref{notb1nbrs} follows. 
 \\
 \\
 By \eqref{notb1nbrs}, it follows that $\Sigma$ is anticomplete to $P^*$ and that
 $N_\Sigma(p_k) \subseteq P_2$. Traversing $P_1$ from $b_1$ to $a$, let $x_1$ be the first neighbor of $p_1$. Traversing $P_2$ from $b_2$ to $a$, let $x_2$ be the first neighbor of $p_k$. Then $H = p_1$-$P$-$p_k$-$x_2$-$P_2$-$b_2$-$b_1$-$P_1$-$x_1$-$p_1$ is a hole in $G$ (since it contains at least the four distinct vertices $b_1, b_2, p_1, p_k$). For $i = 1, 2$, let $z_i$ be the neighbor of $x_i$ in $P$ (and thus $z_i \in \{p_1, p_k\}$).
\\
\\
\sta{For $i = 1,2$, either there is a vertex $y_i$ in $P_i$ with $x_i$ adjacent to $y_i$ and $N(z_i) \cap P_i = \{x_i, y_i\}$;
    or $x_i$ is the only neighbor of $z_i$ in $P_i$ and $x_i$ is adjacent to $a$. \label{nbrsofa}}

Since $p_1$ has a neighbor in $P_1 \setminus b_1$,
and since by \eqref{3.5} and \eqref{onlyb2} $p_k$ has a neighbor in
$P_2 \setminus b_2$, it follows that
for $i = 1, 2$, there is a path from every vertex of $P$ to $a$ with interior in $(P \cup P_i) \setminus b_i$.

If $x_i$ is the only neighbor of $z_i$ in $P_i$, and $x_i$ is non-adjacent to $a$, then we find a theta with ends $x_i, a$ in $G$ and paths $x_i$-$P_i$-$a$, $x_i$-$P_i$-$b_i$-$b_3$-$P_3$-$a$, and a path whose interior is contained in $(P \cup P_{3-i}) \setminus b_{3-i}$ given by the claim of the previous paragraph applied to $z_{3-i}$,  a contradiction. 

Thus we may assume that $z_i$ has two non-adjacent neighbors in $P_i$. Let $y_i$ be the neighbor of $z_i$ along $P_i$ closest to $a$. Since $y_i \neq a$,
there is a theta with ends $z_i, a$ and paths $z_i$-$y_i$-$P_i$-$a$, $z_i$-$x_i$-$b_i$-$b_3$-$P_3$-$a$, and a path whose interior is contained in $(P \cup P_{3-i}) \setminus b_{3-i}$ given by the claim of the first  paragraph applied to $z_{3-i}$,  a contradiction. 
Now \eqref{nbrsofa} follows. 
\\
\\
If the first outcome of \eqref{nbrsofa} holds for both $i=1$ and $i=2$, we get a prism with triangles  $x_1y_1z_1$ and
$x_2y_2z_2$ and paths $y_1 \dd P_1 \dd a \dd P_2 \dd y_2$, $P$ and
$x_1 \dd P_1 \dd b_1 \dd b_2 \dd P_2 \dd x_2$, a contradiction.
If the second outcome of \eqref{nbrsofa} holds for both $i=1$ and $i=2$, then
we get a theta with ends $x_1,x_2$ and 
paths $x_1 \dd P_1 \dd a \dd P_2 \dd x_2$, $P$ and
$x_1 \dd P_1 \dd b_1 \dd b_2 \dd P_2 \dd x_2$, a contradiction.
Thus we may assume that the first outcome holds for $i=1$ and the third outcome holds for $i=2$. But now the fourth outcome of  Theorem \ref{pyramid2}  holds, a contradiction.
\end{proof}

We need an additional definition: given a graph $G$ $x, y, z \in V(G)$, a path $P$ from $x$ to $y$, and a non-empty set $A \subseteq P$, we define the \emph{$(P, y)$-last vertex of $A$} to be the vertex of $A$ which is closest to $y$ along $P$.

We now prove the main result of this section.

\begin{theorem}\label{pyramid_separate}
Let $G \in \mathcal{C}$.
  Let $\Sigma$ be a pyramid in $G$ with apex $a$, base $b_1b_2b_3$ and
  paths $P_1,P_2,P_3$. 
   For each $i$, let $Q_i$ be a subpath of $P_i$, with ends $a$ and $x_i$, such that 
    all internal vertices of $Q_i$ have degree two in $G$.
    Let $b \in V(G) \setminus (Q_1 \cup Q_2 \cup Q_3)$.
  Assume that
  \begin{itemize}
\item $N_G(a)=N_{\Sigma}(a)$;
\item $\{x_1,x_2,x_3\}$ is a stable set;
    \item $b$ is non-adjacent to $x_1, x_2, x_3$;
    \item $a$ does not belong to a cross-edge of  an extended near-prism in $G$; and
    \item $\Hub(G) \cap \{x_1,x_2,x_3\}=\emptyset$.
  \end{itemize}
   Let  $D=G \setminus (Q_1 \cup Q_2 \cup Q_3)$.
  Then there is a clique $K \subseteq D$  and   distinct  $i,j  \in \{1,2,3\}$ such that one of the following holds:
  (For $i \in \{1,2,3\}$ let
   $D_i$ be the  union of components of $D \setminus K$ such that
  $N(x_i) \cap D_i \neq \emptyset$.)
 
  \begin{enumerate}
  \item $b \not \in K \cup D_i \cup D_j$; or
\item   We have
  $b \not \in N[D_i]$. Moreover, there is a set $D_j' = D_j'(x_1x_2x_3)$ of vertices such that:
  \begin{itemize}
      \item $D_j'$ is not a clique; 
      \item $D_j'$ is complete
  to $K$; 
      \item There is a vertex $q= q(x_1x_2x_3)$ with the following property. Either $b \in K$ and $q = b$; or there exists $k \in \{1, 2, 3\} \setminus \{i\}$ such that $x_k$ is complete to $K \cup D_j'$ and $q = x_k$; 
      \item For every $v \in D_j'$, there is a path $P$ in $D \cup \{x_j\} $ from $b$ to $x_j$ such that $P \cap N(q) \neq \emptyset$ and the $(P, x_j)$-last vertex in $P \cap N(q)$ is $v$;
      and
      \item For every path $P$ in $D \cup \{x_j\} $ from $b$ to $x_j$, we have $P \cap N(q) \neq \emptyset$, and the $(P, x_j)$-last vertex of $P \cap N(q)$ is complete to $K$. 
\end{itemize}
\end{enumerate}

  \end{theorem}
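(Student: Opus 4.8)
The plan is to start from a pyramid $\Sigma$ whose apex $a$ has $N_G(a)=N_\Sigma(a)$, and to understand how the component structure of $D=G\setminus(Q_1\cup Q_2\cup Q_3)$ interacts with the three ``feet'' $x_1,x_2,x_3$. First I would look at the component $D_b$ of $D\setminus(\text{everything})$ containing $b$ and, more importantly, at the paths inside $D$ joining the $x_i$ to $b$. If $b$ is already separated from two of the $x_i$ by the empty set, we are in outcome~1 with $K=\emptyset$, so assume that for at least two indices there is a path in $D$ (avoiding the $Q_i$'s) from $x_i$ to $b$. The key object to produce is a clique $K\subseteq D$ that ``blocks'' these connections, and the natural candidate is a clique sitting on a shortest such path, or more precisely a clique arising from a minimal connected subset of $D$ attaching to two of the $x_i$ together with $b$.

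\smallskip

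The engine of the argument should be Theorem~\ref{pyramid2}: take a minimal path $P$ in $D$ realizing a connection that makes $N_\Sigma(P)$ non-local (e.g.\ a minimal connected set meeting $N(x_i)$ and $N(x_j)$, after contracting the $Q$'s so that $x_i,x_j$ play the role of $b_i,b_j$ — one has to re-setup the pyramid slightly, replacing each $P_i$ by the $x_i$-to-base portion so that the relevant ``ends'' are $x_i$). Theorem~\ref{pyramid2} gives five outcomes; the hypotheses of Theorem~\ref{pyramid_separate} are engineered precisely to kill the bad ones. Outcome~2 (a major vertex) is excluded because a major vertex $p$, by Theorem~\ref{major}, is adjacent to $a$ and either lies in two neighbors of $a$ (impossible since $N_G(a)=N_\Sigma(a)$ so $p\in\Sigma$, contradicting $p\in D$) or is a hub, or creates an extended prism with $a$ on the cross-edge (excluded by hypothesis), or makes a loaded pyramid; one has to check the hub / loaded-pyramid cases also force $a$ into a cross-edge of an extended near-prism or contradict $\Hub(G)\cap\{x_1,x_2,x_3\}=\emptyset$. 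Outcome~3 (corner path) and outcomes~4,~5 all conclude ``$a$ lies in the cross-edge of an extended (near-)prism'' or a loaded pyramid with corner $b_i$ — the first is directly forbidden, and for the loaded-pyramid conclusion one argues that $b_i=x_i$ would then be a hub, contradicting the hub-freeness of the $x_i$. So only outcome~1 — $N_\Sigma(P)$ local — can survive, meaning every minimal connection between two $x_i$'s inside $D$ actually has its $\Sigma$-attachment confined to a single $P_\ell$; this is the structural core that lets one find the separating clique.

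\smallskip

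Given that locality, I would then extract $K$: essentially $K$ should be the ``last clique'' on the $x_j$-side of the connecting structure, i.e.\ among the vertices of $D$ complete to the relevant attachment, take a maximal clique; $C_4$-freeness of $G$ is what forces the set of common neighbors realizing the obstruction to behave like a clique (this is where the paper's repeated use of ``$G$ is $C_4$-free, so $N(a)\cap N(b)$ is a clique'' type reasoning comes in). The two outcomes of the theorem correspond to whether, after removing $K$, the vertex $b$ ends up on the same side as at most one of $\{x_i,x_j\}$ (outcome~1: $b\notin K\cup D_i\cup D_j$) or whether $b$ is separated from $D_i$ but the $D_j$-side still ``reaches'' $b$ in a controlled way, which is exactly what the elaborate second outcome records: a non-clique set $D_j'$ complete to $K$, a distinguished vertex $q\in\{b,x_k\}$, and the property that every $b$-to-$x_j$ path in $D\cup\{x_j\}$ hits $N(q)$ with its $(P,x_j)$-last such vertex complete to $K$. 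Producing $D_j'$ and $q$ is bookkeeping once locality is known: $q$ is $b$ if the clique already catches $b$, otherwise $q$ is the third foot $x_k$, and $D_j'$ is the set of possible ``last $N(q)$-vertices'' along $x_j$-$b$ paths, which fails to be a clique precisely because two such vertices on a common path are non-adjacent (else the path wasn't induced) — giving a $C_4$ or worse with $q$ and $K$ unless everything is complete to $K$.

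\smallskip

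The main obstacle I expect is not the high-level dichotomy but the careful case management: correctly re-formulating the pyramid so that Theorem~\ref{pyramid2} applies with $x_i$ in the role of $b_i$ (the $Q_i$ have degree-two interiors, which is what guarantees no surprise attachments from inside $Q_i$), and then, in the surviving ``local'' case, chasing exactly which of the three feet gets separated and verifying that all the bulleted conditions in outcome~2 hold simultaneously — in particular the last two bullets, which are a ``for every path'' statement and require showing that the minimal-connection analysis applies uniformly to \emph{all} $b$-$x_j$ paths, not just the minimal one. Handling the degenerate possibilities (some $Q_i$ trivial, $b$ adjacent to $K$, $b$ equal to some base vertex, the three feet not all distinct from the base) will also need attention but should not change the structure of the proof.
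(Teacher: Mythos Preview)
Your plan has a genuine gap: the claim that outcomes 2--5 of Theorem~\ref{pyramid2} are all killed by the hypotheses is wrong. In particular, outcome~3 (a corner path) does \emph{not} say that $a$ lies on the cross-edge of an extended near-prism; a corner path for $b_i$ is just a path attaching to two base vertices and to $P_i\setminus b_i$, and nothing in the hypotheses of Theorem~\ref{pyramid_separate} forbids this. Similarly, outcome~5 produces a loaded pyramid whose corner is a \emph{base vertex} $b_i$ of the pyramid you are working with, not one of $x_1,x_2,x_3$, so the assumption $\Hub(G)\cap\{x_1,x_2,x_3\}=\emptyset$ does not help. Thus you cannot conclude locality from a single application of Theorem~\ref{pyramid2}.

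The paper's proof addresses exactly this by a maximality argument. It grows $\Sigma$ to a ``three-way strip'' structure: disjoint sets $B_1,B_2,B_3$ (pairwise complete) and $C_1,C_2,C_3$ (pairwise anticomplete), with $b_i\in B_i$ and $P_i\setminus\{a,b_i\}\subseteq C_i$, chosen so that $W=\{a\}\cup\bigcup_i(B_i\cup C_i)$ is maximal. For any component $D'$ of $G\setminus W$ with non-local attachment, one picks a path $P\subseteq D'$ and a pyramid $\Sigma'\subseteq W$ to which Theorem~\ref{pyramid2} applies; the hypotheses (including $N_G(a)=N_\Sigma(a)$) rule out the major-vertex and extended-prism outcomes, leaving the corner-path outcome --- and a corner path can be \emph{absorbed} into $W$ (its end $p_k$ joins some $B_i$, its interior joins $C_i$), contradicting maximality. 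So locality holds for all components of $G\setminus W$, not for a single path. The clique $K$ is then taken to be $B_i\cup B_j$ for suitable $i,j$ (or a subset thereof), and a case analysis on which $B_i$ are cliques and whether $B_i=\{x_i\}$ yields the two outcomes; the sets $D_j'$ and vertex $q$ in outcome~2 come out of this case analysis (e.g.\ $D_j'$ is a non-clique $B_\ell$ or the attachment set of the component of $b$), not from any ``last-clique'' construction on a single path.
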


\begin{proof}
  Let $B_1,C_1,B_2,C_2,B_3,C_3$ be pairwise disjoint subsets of $G \setminus a$ with the following properties:
  \begin{itemize}
  \item the sets $B_1,B_2,B_3$ are all pairwise complete to each other;
  \item the sets $C_1,C_2,C_3$ are pairwise anticomplete to each other;
  \item for distinct $i, j \in \{1, 2, 3\}$, the set $B_i$ is anticomplete to $C_j$: 
  \item for every $i$,  every vertex of $B_i$ is an end of  a path 
    to $x_i$ with interior in $C_i$;
  \item for every $i$, one of the following holds: 
  \begin{itemize}
      \item $x_i \in C_i$ and every vertex of $C_i \setminus Q_i$ is in the interior of a path 
    from some vertex of $B_i$ to $x_i$; or
    \item $x_i = b_i$, and $B_i = \{x_i\}$, and $C_i = Q_i^*$. 
  \end{itemize}
  \item For every $i$, we have $b_i \in B_i$ and $P_i \setminus \{b_i, a\} \subseteq C_i$.
  \end{itemize}
  Subject to these properties, we choose the sets with 
  $W=\{a\} \cup \bigcup_{i=1}^3 (B_i \cup C_i)$ maximal.
  \\
  \\
  \sta{Let $D$ be a component of $G \setminus W$.
    Then either $N(D) \subseteq B_1 \cup B_2 \cup B_3$, or there exists $i$
    such that $N(D) \subseteq B_i \cup C_i$. \label{localcomps}}

  Suppose not and let $D$ be a component of $G \setminus W$ violating the statement. Then there exist $i \in \{1, 2, 3\}$ and a path
  $P = p_1 \dd \cdots \dd p_k$ in $D$ such that $p_1$ has a neighbor in
  $C_i$ and $p_k$ has a neighbor in $W \setminus (B_i \cup C_i \cup \{a\})$. We may assume that $P$ is chosen with $k$ as small as possible
  and that $i=1$. Let $c_1 \in C_1$ be a neighbor of $p_1$.
  Let $P_1'$ be  a path from $b_1' \in B_1$ to $a$ with interior in
  $C_1$ and such that $c_1 \in P_1'$.
  Let $c_2 \in B_2 \cup C_2 \cup B_3 \cup C_3$ be a neighbor
  of $p_k$; choose $c_2 \not \in B_2 \cup B_3$ if possible.
  We may assume that $c_2 \in B_2 \cup C_2$.
  Let $P_2'$ be  a path from $b_2' \in B_2$ to $a$ with interior in
  $C_2$ and such that $c_2 \in P_2'$. Let $P_3'=P_3$.
  Now $\Sigma'=P_1' \cup P_2' \cup P_3' \cup \{a\}$  is  a pyramid with
  apex $a$ an base $b_1'b_2'b_3$. We apply Theorem~\ref{pyramid2} to
  $\Sigma'$ and $P$. 
  Since $P$ is not local for $\Sigma'$ and $N_G(a)=N_{\Sigma}(a)$ and $N_G(a) \cap \Hub(G) = \emptyset$ (because each neighbor of $a$ ethre has degree 2 in $G$ or is in $\{x_1, x_2, x_3\}$),
  and $a$ is not contained in a cross-edge of an extended near-prism in $G$, it follows that
  one of the following holds:
  \begin{itemize}
  \item $P$ contains a major vertex for $\Sigma'$; or
\item $P$ contains a  corner path for $\Sigma'$;
\end{itemize}
  Theorem~\ref{major} implies that $P$ does not contain a major vertex
  for $\Sigma'$, and therefore $P$ contains a corner path for $\Sigma'$.
  By the minimality of $k$, it follows that $P$ is a corner path
  for $\Sigma'$; consequently $N_{\Sigma' \setminus \{b_1'\}}(p_k)=\{b_2',b_3\}$,
  $N_{\Sigma' \setminus \{b_1'\}}(p_1) \subseteq B_1 \cup C_1$, and there are no
  other edges between $P$ and $\Sigma' \setminus b_1'$. In particular, $p_1$ has a neighbor in $C_1$, and so $x_1 \in C_1$. 

  We claim that $P$ is anticomplete to $C_2 \cup C_3$ and $P \setminus p_k$ is anticomplete to $B_2 \cup B_3$.
  By the minimality of $k$, it follows that $P \setminus p_k$ is anticomplete to $C_2 \cup C_3 \cup B_2 \cup B_3$. From the choice of $c_2$, it follows that $p_k$ is anticomplete to
  $C_2 \cup C_3$. This proves the claim.

  Now let $i \in \{2,3\}$  and let $\Sigma''$ be obtained for $\Sigma'$
  by replacing the path $P_i'$ by an arbitrary path from
  some $b_i'' \in B_i$ to $a$ with interior in $C_i$. Then $P$ is not
  local for $\Sigma''$. Applying Theorems~\ref{pyramid2} and \ref{major}
  to $\Sigma''$ and $P$, we deduce that $P$ is a corner path for
  $\Sigma''$. It follows that $p_k$ is adjacent to $b_i''$.
  Since $b_i''$ was chosen arbitrarily, we conclude that $p_k$
  is complete to $B_2 \cup B_3$. But now, we can replace $B_1$ by $B_1 \cup p_k$ and $C_1$ by $C_1 \cup (P \setminus p_k)$, contradicting the maximality of $W$.
  This proves~\eqref{localcomps}.
  \\
  \\
   Let $F$ be the union of
  the components $D$  of $G \setminus W$ with $N(D) \subseteq B_1 \cup B_2 \cup B_3$. 
  Let $F_i$ be the union of the components $D$ of $G \setminus (W \cup F)$ such that
  $N(D) \subseteq B_i \cup C_i$. By \eqref{localcomps}, $G \setminus W= F_1 \cup F_2 \cup F_3 \cup F$ and the sets $F_1,F_2,F_3,F$ are pairwise disjoint and anticomplete to each other. We may assume that $b \in B_3 \cup C_3 \cup F_3 \cup F$. If $K=B_1 \cup B_2$ is a clique and $x_1, x_2 \not\in B_1 \cup B_2$, then  for $i \in \{1,2\}$, we have
  $D_i=(C_i \cup F_i) \setminus Q_i$ and outcome (2)(a) holds. Thus we may assume that either $B_2$ is not a clique or $B_2 = \{x_2\}$. 

    \sta{We may assume that $B_2 = \{x_2\}$ or $B_3 = \{x_3\}$. \label{b2x2}}

  Suppose not; by symmetry, we may assume also that $B_1 \neq \{x_1\}$. Then $B_2$ is not a clique. Since $G$ is $C_4$-free, it follows that $B_1 \cup B_3$ is a clique. Let $K=B_1 \cup B_3$.  Now $D_1 = (C_1 \cup F_1) \setminus Q_1$
  and $D_2 \subseteq (C_2 \cup F_2 \cup B_2 \cup F) \setminus Q_2$ and $D_3=(C_3 \cup F_3) \setminus Q_3$. 
  
  If $b \in C_3 \cup F_3$, then outcome (2)(a) of the theorem holds with
  $i=1$ and $j=2$, and if $b \in F$, then outcome (2)(a) of the theorem holds with  $i=1$ and $j=3$. Thus we may assume that $b \in B_3$. Let $i=1$
  and $j=2$. Then $b$ is anticomplete to $D_1$.
    Since every path from $b$ to $x_2$ with interior in $D$   contains exactly one vertex
  of $B_2$ and exactly one vertex in $N(b)$, and $B_2$ is not a clique, and for every vertex $v$ in $B_2$ there is a path from $v$ to $x_2$ with interior in $C_2$, it follows that  outcome (2)(b) holds with $D_j' = B_2$ and $q = b$. This proves \eqref{b2x2}. 

  \vspace*{0.3cm}

    Since $b$ is non-adjacent to $a$, it follows that if $B_3 = \{x_3\}$, then $C_3 = Q_3^*$ and $F_3 = \emptyset$, and so $b \in F$ and in particular, $b \in B_2 \cup C_2 \cup F_2 \cup F$; so there is symmetry between $2$ and $3$ in this case. 

    Therefore, we may assume that $B_2 = \{x_2\}$. Since $\{x_1,x_2,x_3\}$ is a stable set, it follows that $x_1, x_3 \not\in B_1 \cup B_3$. Since $b$ is non-adjacent to $x_2$, it follows that $b \in C_3 \cup F_3 \cup F$.

    \sta{$B_1 \cup B_3$ is not a clique. \label{b1b3}}

    Suppose that $B_1 \cup B_3$ is a clique, and let $K = B_1 \cup B_3$. Then $D_1 = (C_1 \cup F_1) \setminus Q_1$ and $D_3 = (C_3 \cup F_3) \setminus Q_3$ and $D_2 \subseteq F$. If $b \in F$, then outcome (2)(a) of the theorem holds with $i = 1$ and $j = 3$. If $b \in C_3 \cup F_3$, then outcome (2)(a) of the theorem holds with $i = 1$ and $j = 2$. This proves \eqref{b1b3}. 
    \sta{$B_1$ is a clique. \label{case1hopefully}}

    Suppose not. It follows that $B_3$ is a clique. Let $K = B_3$. Then $D_3 = (C_3 \cup F_3) \setminus Q_3$ and $D_1, D_2 \subseteq (C_1 \cup B_1 \cup F_2 \cup F) \setminus (Q_1 \cup Q_2)$. If $b \in C_3 \cup F_3$, then outcome (2)(a) of the theorem holds with $i = 1$ and $j = 2$. It follows that $b \in F$. Let $R$ be the component of $F$ containing $b$. 
    Let $M = N(R) \cap B_1$. If $M$ is a clique, then outcome (2)(a) of the theorem holds with $K = B_3 \cup M$ and $i = 1$ and $j = 3$. 

    It follows that $M$ is not a clique. Now outcome (2)(b) of the theorem holds with $i = 3$ and $j = 1$ as well as $K = B_3$ and $D_1' = M$ and $q = x_2$. We show that the last two bullets of (2)(b) hold:
    \begin{itemize}
        \item Let $v \in M$. Let $P$ be a path from $b$ to $v$ in $R \cup \{v\}$. Let $Q$ be a path from $v$ to $x_1$ with $Q^* \subseteq C_1$. Then $P' = b \dd P \dd v \dd Q \dd x_1$ is a path from $b$ to $x_1$ in $D \cup \{x_1\}$. Then, $v$ is the $(P', x_1)$-last vertex in $P' \cap N(q)$, and the second-to-last bullet of (2)(b) holds.  
        \item Let $P$ be a path from $b$ to $x_1$ in $D \cup \{x_1\}$. Traversing $P$ from $b$ to $x_1$, let $v$ be the last vertex of $P$ which is not in $C_1 \cup F_1$. It follows that $v \in B_1$, and so $v$ is complete to $K$, and the last bullet holds. 
    \end{itemize}

\sta{$B_3$ is a clique. \label{b3clique}}

 Suppose not. It follows that $B_1$ is a clique. Let $K = B_1$. Then $D_1 = (C_1 \cup F_1) \setminus Q_1$ and $D_2, D_3 \subseteq (C_3 \cup B_3 \cup F_2 \cup F) \setminus (Q_2 \cup Q_3)$. Suppose first that $b \in F$. Then there is symmetry between $1$ and $3$, and the result follows from \eqref{case1hopefully}. It follows that $b \in F_3 \cup C_3$. Let $R$ be the component of $(F_3 \cup C_3) \setminus Q_3$ containing $b$. Then $M = N_D(R) \subseteq B_3$. If $M$ is a clique, then outcome (2)(a) holds with $K = B_1 \cup M$ and $i = 1$ and $j=2$. So we may assume that $M$ is not a clique.  Now we let $i = 1$, $j = 2$, $q = x_2$, and $D_2' = M$. It follows that outcome (2)(b) holds. This proves \eqref{b3clique}.

 \vspace*{0.3cm} 
 
 Together, \eqref{b1b3}, \eqref{case1hopefully}, and \eqref{b3clique} yield a contradiction; this concludes the proof.  
\end{proof}

\section{Star cutsets from wheels} \label{wheelcutsets}

The following well-known definitions appear, for example, in \cite{TWI}. A \emph{cutset} $C \subseteq V(G)$ of $G$ is a set of vertices such that $G \setminus C$ is disconnected. A {\em star cutset} in a graph $G$ is a cutset $S\subseteq V(G)$ such that  either $S=\emptyset$ or for some $x\in S$, $S\subseteq N[x]$.

Let $G$ be a graph and let $X,Y,Z \subseteq V(G)$. We say that $X$
{\em separates} $Y$ from $Z$ if no component of $G \setminus X$
meets both $Y$ and $Z$.
Recall that a \emph{wheel} $(H, x)$ of $G$ consists of a hole $H$ and a vertex $x$ that has at least three neighbors in $H$, and
a wheel is {\em proper} if $\alpha (N(x) \cap H) \geq 3$.
A \emph{sector} of $(H,x)$ is a path $P$ of $H$ whose ends are distinct and adjacent to $x$, and such that $x$ is anticomplete to $P^*$. A sector $P$ is a \emph{long sector} if $P^*$ is non-empty.
A wheel $(H, x)$ is a \emph{universal wheel} if $x$ is complete to $H$.
The following result was observed in \cite{TWI} based on results of \cite{Addario-Berry2008BisimplicialGraphs, daSilva2013Decomposition2-joins, daSilva2007TriangulatedGraphs} and stated in this form in \cite{TWXV}; it shows that proper wheels force star cutsets in graphs in $\mathcal{C}$.

\begin{theorem}[Abrishami, Chudnovsky, Vu\v{s}kovi\'c \cite{TWI}; see also \cite{TWXV}]\label{wheelstarcutset}
  Let $G \in \mathcal{C}$   and let $(H,v)$ be an proper  wheel in $G$.
  Then there is no component  $D$ of $G \setminus N[v]$
such that $H \subseteq N[D]$. 
\end{theorem}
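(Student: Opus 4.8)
The plan is to argue by contradiction: suppose $(H,v)$ is a proper wheel in $G\in\mathcal C$ and $D$ is a component of $G\setminus N[v]$ with $H\subseteq N[D]$. Since $(H,v)$ is proper, $\alpha(N_H(v))\geq 3$, so we can pick three vertices $y_1,y_2,y_3\in N_H(v)$ that are pairwise non-adjacent. Being non-adjacent and all on the hole $H$, the $y_i$'s lie in three distinct sectors; in fact between consecutive chosen vertices along $H$ there is a long sector (a sector with non-empty interior), and the interiors of these (up to) three sectors are pairwise anticomplete and anticomplete to $\{y_1,y_2,y_3\}$ except at the obvious sector-ends. The first step is therefore to extract from $H$ three internally disjoint paths $R_1,R_2,R_3$ joining the $y_i$'s around $H$, whose interiors avoid $N[v]$ and are pairwise anticomplete.

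Next I would use the component $D$ to build a connected ``routing gadget'' disjoint from $N[v]$ that links the three sectors. Since $H\subseteq N[D]$, every vertex of $H$ — in particular every $y_i$ and every interior vertex of every $R_i$ — has a neighbor in $D$, and $D$ is connected and anticomplete to $v$. Choose an inclusion-minimal connected subgraph $D'\subseteq D$ that contains a neighbor of each $R_i$ (equivalently, a neighbor of some vertex in the interior of each sector, or of each $y_i$); this is a subtree-like object. The key step is then to analyze the graph $\Sigma$ induced by $\{v\}\cup R_1\cup R_2\cup R_3\cup D'$ together with $D'$'s attachments. The rough picture is that $v$ plus the three sectors plus a connection through $D'$ should form either a theta (if $D'$ attaches in the sector interiors in a ``clean'' way) or a pyramid or a prism or an appropriate wheel — and in each case we either contradict that $G\in\mathcal C$ (which forbids $C_4$, thetas, prisms, and even wheels) or we contradict that $v$ is anticomplete to $D'$, or we find an even hole directly. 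One has to be a little careful: $D'$ may send several edges to a single $R_i$, so one first cleans up $D'$ to a path $Q$ whose ends have neighbors in two of the $R_i$'s and whose interior is anticomplete to $\bigcup R_i$, handling the attachments in the sector interiors versus at the $y_i$'s, and also tracking the parities forced by even-hole-freeness (odd-signability) when a hole gets a chord or a center.

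I expect the main obstacle to be exactly this case analysis on how $D'$ (or the cleaned-up path $Q$) attaches to the three sectors $R_1,R_2,R_3$: whether each attachment is a single vertex or a pair of consecutive vertices, whether it lands in a sector interior or at one of the $y_i$, and how $v$'s neighbors on $H$ interact with these attachments. Each configuration must be shown to produce a forbidden induced subgraph from the list defining $\mathcal C$ (theta, prism, $C_4$, or even wheel) or an even hole; the parity bookkeeping — using that in a graph in $\mathcal C$ every hole is "odd-signable," so no hole with a center has an even number of spokes and no hole with a single chord splits into two odd pieces — is where the delicate part lies. Since this is a known result quoted from \cite{TWI} (building on \cite{Addario-Berry2008BisimplicialGraphs, daSilva2013Decomposition2-joins, daSilva2007TriangulatedGraphs}), I would at this point simply cite those sources for the detailed verification rather than reproduce the full parity analysis here.
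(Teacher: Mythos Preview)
The paper does not actually prove Theorem~\ref{wheelstarcutset}: it is quoted as a known result, attributed to \cite{TWI} (building on \cite{Addario-Berry2008BisimplicialGraphs, daSilva2013Decomposition2-joins, daSilva2007TriangulatedGraphs}) and restated from \cite{TWXV}, with no argument given. The more detailed statement the paper needs is Theorem~\ref{lemma:proper_wheel_forcer}, which is likewise cited without proof. Since your proposal ultimately also defers to those same references for the detailed verification, you and the paper end up in the same place.

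That said, a couple of points in your sketch would need tightening if you did try to carry it out. First, after choosing three pairwise non-adjacent $y_1,y_2,y_3\in N_H(v)$, the arcs of $H$ between consecutive $y_i$'s need not be single sectors: there may be further neighbors of $v$ on $H$ sitting strictly between them, so your paths $R_i$ do not in general have interiors disjoint from $N[v]$, and you cannot immediately treat $\{v\}\cup R_1\cup R_2\cup R_3$ as a clean ``three-spoke'' configuration. Second, the actual argument in the cited sources does not proceed by routing through $D$ to build a theta/prism/pyramid; instead it works sector by sector, using parity (even-wheel-freeness) to show that a specific subset $N'\subseteq N(v)$ separates the interior of one long sector from the rest of $H$ (this is exactly the content of Theorem~\ref{lemma:proper_wheel_forcer}), and Theorem~\ref{wheelstarcutset} follows. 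Your ``minimal connected $D'$'' idea is closer to how the paper handles pyramids in Section~\ref{pyramids} than to how the wheel case is classically done.
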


In particular, we need the following: 
\begin{theorem}[Addario-Berry, Chudnovsky, Havet, Reed, Seymour \cite{Addario-Berry2008BisimplicialGraphs}, da Silva, Vu\v{s}kovi\'c \cite{daSilva2013Decomposition2-joins}]
  Let $G \in \mathcal{C}$ and let $(H,x)$ be a proper wheel in $G$ that is not
  a universal wheel.
    Let $x_1$ and $x_2$ be the endpoints of a long sector $Q$ of $(H, x)$. Let $W$ be the set of all vertices $h$ in $H \cap N(x)$
such that the subpath of $H \setminus \{x_1\}$ from $x_2$ to $h$ contains an even number of neighbors of $x$, and let $Z = H \setminus (Q \cup N(x))$. Let $N' = N(x)\setminus W$. Then, $N' \cup \{x\}$ is a cutset of $G$ that separates $Q^*$ from $W \cup Z$.
\label{lemma:proper_wheel_forcer}
\end{theorem}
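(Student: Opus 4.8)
The plan is to exploit that $G\in\mathcal{C}$ is \emph{odd-signable}: fix a labelling $\sigma:E(G)\to\{0,1\}$ for which every hole and every triangle of $G$ has odd total label. List the neighbours of $x$ on $H$ in the cyclic order that begins at $x_2$ and avoids $Q^*$ as $u_1=x_2,u_2,\ldots,u_k=x_1$, so $k=|N(x)\cap H|\ge 3$; since $(H,x)$ is a wheel but not an even wheel, $k$ is odd. Writing $S_1,\ldots,S_{k-1}$ for the sectors between consecutive $u_i$'s (so $Z=S_1^*\cup\cdots\cup S_{k-1}^*$) and $Q$ for the remaining sector, one reads off from the definitions that $W=\{u_i:i\text{ even}\}$, that $(N(x)\cap H)\setminus W=\{u_i:i\text{ odd}\}$ contains both $x_1$ and $x_2$, and that $N(x)\setminus H\subseteq N'$. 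Consequently $H\setminus(N'\cup\{x\})=Q^*\cup W\cup Z$, and already inside $H$ the set $Q^*$ is separated from $W\cup Z$, its only $H$-neighbours being $x_1,x_2\in N'$.

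Suppose for contradiction that $N'\cup\{x\}$ does not separate $Q^*$ from $W\cup Z$, and let $P=p_1\dd\cdots\dd p_m$ be a shortest path of $G$ avoiding $N'\cup\{x\}$ with $p_1\in Q^*$ and $p_m\in W\cup Z$. Minimality makes $P$ induced and forces $P^*\cap(Q^*\cup W\cup Z)=\emptyset$, hence $P^*\cap H=\emptyset$; and since $N(x)\setminus H\subseteq N'$ and $P$ avoids $N'$, the vertex $x$ has no neighbour on $P$ except possibly $p_m$ (and only when $p_m\in W$). The crucial claim is that $P^*$ is anticomplete to $H$ as well: if some $p_\ell\in P^*$ had a neighbour $v$ on $H$, then $v\in Q^*\cup W\cup Z$, and minimality forces $v$ to be an endpoint of $P$ unless $\ell\in\{2,m-1\}$; in each of the few surviving configurations, $P$ together with suitable arcs of $H$ (and, in the $W$-cases, the vertex $x$) can be assembled into a theta, a prism, a $C_4$, or an even wheel — and this is precisely where the parity of $k$ and the odd/even split in the definition of $N'$ get used — contradicting $G\in\mathcal{C}$.

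Once $P^*$ is anticomplete to $H\cup\{x\}$, let $A_1,A_2$ be the two arcs of $H$ between $p_1$ and $p_m$. Each $A_i$ has length at least $2$ (it leaves $Q$ through $x_1$ or $x_2$, and $p_m\notin Q\cup\{p_1\}$), so, since $P$ is induced and meets $H$ only in $p_1,p_m$ and $H$ is a hole, each $A_i\cup P$ is a chordless cycle, hence a hole or a triangle, so $\sigma(A_1\cup P)\equiv\sigma(A_2\cup P)\equiv 1$. On the other hand the two copies of $P$ cancel modulo $2$, so $\sigma(A_1\cup P)+\sigma(A_2\cup P)\equiv\sigma(A_1)+\sigma(A_2)=\sigma(H)\equiv 1$, which is impossible. (One may also avoid the signing: the neighbours of $x$ on $A_1\cup P$ and on $A_2\cup P$ sum to $k$ or $k+1$, so with $k$ odd one of these cycles is an even wheel unless $p_m$ lies near an end of $Q$ or $k=3$, and the leftover cases succumb to the same theta/prism analysis.) This contradiction completes the proof.

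The step I expect to be the real obstacle is the middle one — showing the chosen path $P$ can be taken with interior anticomplete to $H$. The rest is bookkeeping with the sector labelling and one parity identity; but ruling out a chord from $P^*$ to $H$ requires a careful enumeration tied to the exact recipe for $W$, and this is where the structural hypotheses on the wheel $(H,x)$ are exploited.
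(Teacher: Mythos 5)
First, note that the paper does not prove this statement at all: it is imported verbatim from the cited works of Addario-Berry, Chudnovsky, Havet, Reed, Seymour and of da Silva and Vu\v{s}kovi\'c, where it is obtained by a lengthy structural analysis of how outside paths can attach to a wheel. So the comparison is with those proofs, and against that standard your proposal has a genuine gap exactly at the step you flag as ``the real obstacle.'' Your reduction of that step rests on a false premise: you assert that if an interior vertex $p_\ell$ of $P$ has a neighbour $v$ on $H$, then $v\in Q^*\cup W\cup Z$. Minimality of $P$ only prevents vertices of $P^*$ from \emph{lying in} $Q^*\cup W\cup Z$, and the hypothesis on $P$ only prevents its vertices from \emph{lying in} $N'\cup\{x\}$; neither forbids a vertex of $P^*$ from being \emph{adjacent} to a vertex of $N'\cap H$ (for instance to $x_1$, $x_2$, or any odd-indexed $u_i$). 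These attachments to $N'\cap H$ are precisely the configurations that make the theorem hard, and your argument never addresses them. Even the cases you do acknowledge (a neighbour of $p_2$ in $Q^*$, a neighbour of $p_{m-1}$ in $W\cup Z$, the $W$-cases involving $x$) are dismissed with ``can be assembled into a theta, a prism, a $C_4$, or an even wheel''; that enumeration is the substance of the cited result, not bookkeeping, and nothing in the proposal carries it out.

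A further sign that the difficulty has been displaced rather than resolved: once you assume $P^*$ is anticomplete to $H\cup\{x\}$, the endpoints $p_1\in Q^*$ and $p_m\in W\cup Z$ are non-adjacent (inside $H$, the set $Q^*$ has no neighbours outside $Q$), so $H\cup P$ is already a theta and the contradiction is immediate from theta-freeness --- the odd-signing computation you perform is just the standard proof that thetas are not odd-signable, and the parity of $k$ and the even/odd split defining $W$ versus $N'$ are never actually used in your argument, even though the statement is false without them (with the ``wrong'' parity class one can reconnect $Q^*$ to the rest of $H$ through $x$'s neighbourhood). The correct setup (shortest path $P$ avoiding $N'\cup\{x\}$, interior disjoint from $H$, $x$ adjacent to no interior vertex) and the sector/parity bookkeeping are fine; what is missing is the entire analysis of paths whose interiors attach to $N'\cap H$ and of the multi-attachment endgame, which is where the cited papers do their work.
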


\section{Star cutsets from loaded  pyramids}
\label{pyramidcutsets}

The main theorem of this section is the following. 

\begin{theorem}\label{loadedcorner}
  Let $G \in \mathcal{C}$. 
  Suppose that $G$ contains a loaded pyramid $\Pi=(\Sigma,P)$ with $a, b_1,b_2,b_3,P_1,P_2,P_3$ as in the definition. Then there is no connected component $D$ of $G\setminus N[b_2]$ with $\Pi \subseteq N[D]$.
\end{theorem}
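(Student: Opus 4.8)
The plan is to argue by contradiction: suppose $D$ is a connected component of $G \setminus N[b_2]$ with $\Pi \subseteq N[D]$. The key observation is that $b_2$ is complete to $\{a, b_1, b_3, p_1\}$ and these are the only neighbors of $b_2$ in $\Pi$ (by the definition of a loaded pyramid: $b_2$ is anticomplete to $P_1 \setminus b_1$, anticomplete to $P_3 \setminus a$ except at $a$, anticomplete to $P \setminus p_1$, and $|P_2| = 2$ so $P_2 = a\dd b_2$). Thus $\Pi \setminus N[b_2] = (P_1 \setminus b_1) \cup (P_3 \setminus \{a, b_3\}) \cup (P \setminus p_1)$, and since $\Pi \subseteq N[D]$, every vertex of $\Pi$ either lies in $D$ or has a neighbor in $D$. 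First I would identify which vertices of $\Pi \setminus N[b_2]$ actually lie in $D$: since $\Pi \setminus N[b_2]$ is disconnected in $G$ (the pieces $P_1 \setminus b_1$ and $P_3 \setminus \{a,b_3\}$ are anticomplete, etc.), only certain subpaths can be inside the single component $D$; the rest must be "reached" by $D$ through an external vertex.

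The heart of the argument is to build a forbidden configuration (a theta, a prism, a $C_4$, or an even wheel) inside $G \in \mathcal{C}$. The natural move: because $D$ is connected and dominates $\Pi$, there is a path $R$ through $D$ connecting, say, the "far" parts of the pyramid — concretely a path in $D$ from a neighbor of some vertex deep in $P_1^*$ (or $P_3$) to a neighbor of another such vertex — while staying anticomplete to $b_2$. Combining $R$ with subpaths of $P_1, P_3, P$ and the vertex $a$ (which is nonadjacent to everything in $P$, and $P_3$ is anticomplete to $P$), I expect to produce either a theta with ends $a$ and $b_2$, or a larger structure. A clean way to organize this is to first handle the "easy" subcase where $b_2 \in D$ is impossible (it is, since $b_2 \notin G \setminus N[b_2]$), then show $a \notin D$ (since $a \in N(b_2)$), so $a$ has a neighbor in $D$; $a$'s only neighbors are $b_1, b_2$, and its neighbor in $P_3$ — of these $b_2 \notin D$ and $b_1 \in N[b_2]$, so the neighbor of $a$ on $P_3$, call it $a_3$, must lie in $D$ or be a neighbor of a vertex in $D$; iterating along $P_3$ toward $b_3$, and along $P_1$ from the neighbor of $b_1$ (call it $b_1'$, which is in $\Pi \setminus N[b_2]$) toward $a$, and along $P$ from the vertex adjacent to $p_1$, I can pin down that large chunks of these three paths lie in $D$, and then $D$ together with $\{b_2\}$ and $\{b_1\}$ (or $\{a\}$) contains a hole through $b_2$ with $b_2$ a center, or a theta.

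More precisely, I would follow the template of Theorem~\ref{wheelstarcutset}'s proof: since $\Pi \subseteq N[D]$ and $D \cap N(b_2) = \emptyset$, and since $b_2$ has $\geq 3$ neighbors $b_1, b_3, p_1$ (and $a$) forming a specific adjacency pattern — in particular $\alpha(\{b_1, b_3, p_1\})$: note $b_1 b_3$ is an edge (base triangle), $p_1$ is adjacent to $b_2$; is $p_1$ adjacent to $b_1$ or $b_3$? The definition says $P_3$ is anticomplete to $P$, so $p_1 b_3$ is a nonedge; and $P_1 \setminus b_1$ anticomplete to $P \setminus p_k$ doesn't forbid $p_1 b_1$ directly — but one can show generically that $\{b_3, p_1\}$ together with a vertex of $P_1^*$ near $b_1$ forms a stable set, making some wheel through $b_2$ proper, and then invoke Theorem~\ref{wheelstarcutset} to get the contradiction directly. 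The hard part will be the bookkeeping: ruling out the degenerate adjacencies (e.g. whether $p_1 \sim b_1$, whether $p_k \sim b_1$, whether the paths are short) so that the hole $H$ through $b_2$ one constructs inside $D \cup N[b_2]$ genuinely has $b_2$ as a \emph{proper} wheel center (or yields a theta/prism), and handling the special short cases where $P_1$ or $P$ has length making the construction collapse. I would dispatch these by the same theta-hunting arguments used repeatedly in the proofs of Theorems~\ref{major} and~\ref{pyramid2}.
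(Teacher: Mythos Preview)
Your high-level strategy matches the paper's: construct a proper wheel centered at $b_2$ using a path through $D$ together with pieces of $\Pi$, then invoke the proper-wheel star-cutset machinery. The paper packages the wheel construction as Theorem~\ref{mainloadedpyr} (proved via Theorems~\ref{loadedpyr1} and~\ref{loadedpyr2}) and then finishes with Theorem~\ref{lemma:proper_wheel_forcer} rather than Theorem~\ref{wheelstarcutset}, since the output of Theorem~\ref{mainloadedpyr} records sector information (one sector equals $P_3$, another contains the $P_1$-neighbour of $a$) rather than the containment $H\subseteq N[D]$; that sector information is exactly what pins $a'$ and $P_3^*$ into the same component $D$ and yields the contradiction.

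The genuine gap in your proposal is that the wheel construction \emph{is} the proof, and you have not supplied it. What you call ``bookkeeping'' and propose to ``dispatch by the same theta-hunting arguments'' occupies several pages in the paper and is organized around a key reduction you do not mention: one first normalises the loaded pyramid so that $N_{P_1}(p_k)$ is a clique (otherwise a shorter $P_1$ gives a smaller counterexample), and then splits into \emph{type~1} (where $N_{P_1}(p_k)\subseteq N_{P_1}[b_1]$) and \emph{type~2} (where $b_1$ is anticomplete to $P$ and $p_k$ has exactly two adjacent neighbours in $P_1$). Each type then needs its own chain of claims to force either a theta/prism/even wheel or the desired proper wheel; without this reduction the case explosion is unmanageable. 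Your sketch also contains a factual slip: $a$ is \emph{not} adjacent to $b_1$ (since $|P_2|=2$ forces $|P_1|\ge 3$), so the ``only neighbours of $a$'' list you use to trace which vertices of $\Pi$ fall into $D$ is off---$a$'s neighbour on $P_1$ is an interior vertex, which is in fact the vertex $a'$ that the paper's sector condition tracks. Finally, note that in a wheel $(H,b_2)$ the centre $b_2$ is \emph{not} on the hole $H$; your phrase ``the hole $H$ through $b_2$'' should be ``the hole $H$ on which $b_2$ has three pairwise nonadjacent neighbours''.
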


In order to prove  Theorem \ref{loadedcorner}, we first prove the following:

\begin{theorem}\label{mainloadedpyr}
  Let $G \in \mathcal{C}$.  Suppose that $G$ contains a loaded pyramid $\Pi=(\Sigma,P)$ with $a, b_1,b_2,b_3,P_1,P_2,P_3$ as in the definition. Moreover, assume that $D$ is a connected component of $G\setminus N[b_2]$ such that neither $N[D]\cap (\Pi\setminus P_3)$ nor $N[D]\cap (P_3\setminus \{a\})$ is empty. Then one of the following holds.
  \begin{itemize}
      \item We have $N[D]\cap (\Pi\setminus P_3)=\{b_1\}$ and $N[D]\cap (P_3\setminus \{a\})=\{b_3\}$; or
      \item There is a proper wheel $(H,b_2)$ in $G$ with two long sectors $\Gamma_1$ and $\Gamma_3$
        such that $\Gamma_1^*$ contains the neighbor of $a$ in $P_1$ and $\Gamma_3=P_3$.
  \end{itemize}
\end{theorem}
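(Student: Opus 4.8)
The plan is as follows. Write $y_1,y_3$ for the neighbours of $a$ on $P_1,P_3$; since $|P_2|=2$ the pyramid axioms force $P_1$ and $P_3$ to have length at least $2$, so $y_1\in P_1^*$ and $y_3\in P_3^*$. I would first determine the trace of $N[b_2]$ on $\Pi$: combining the fact that in a pyramid the only edges between distinct paths run through $\{b_1,b_2,b_3\}$ with the loaded-pyramid condition that $b_2$ is anticomplete to $P\setminus p_1$, one gets $N[b_2]\cap\Pi=\{a,b_1,b_3,p_1\}$. Hence $\Pi\setminus N[b_2]$ consists of exactly two connected, mutually anticomplete pieces, $A_1:=P_1^*\cup\{p_2,\dots,p_k\}$ (connected because $p_k$ has a neighbour $w\in P_1^*$) and $A_3:=P_3^*$. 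I would also record that $\{a,b_3,p_1\}$ is a stable set all of whose elements are adjacent to $b_2$ (the nonedge $ab_3$ because $P_1\cup P_3$ is a hole; $ap_1$ and $b_3p_1$ because $P_3$, which contains $a$ and $b_3$, is anticomplete to $P$). So to get the second outcome it is enough to build a hole $H\subseteq\Pi\cup D$ containing $P_3$ and the three vertices $a,b_3,p_1$, with $y_1$ in the interior of some sector of $(H,b_2)$: then $\alpha(N_H(b_2))\ge3$, so $(H,b_2)$ is a proper wheel; $P_3$ is automatically a long sector (its ends $a,b_3$ are adjacent to $b_2$, its interior $P_3^*\ne\emptyset$ is anticomplete to $b_2$); and $y_1$ lies in a long sector different from $P_3$.

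The next step is a reduction. As $D$ is a component of $G\setminus N[b_2]$ and $A_1,A_3$ are connected subsets of $G\setminus N[b_2]$, for each $i\in\{1,3\}$ either $A_i\subseteq D$ or $D$ is disjoint from and anticomplete to $A_i$. If $A_1\not\subseteq D$, $A_3\not\subseteq D$ and $p_1\notin N[D]$, then using $b_2\notin N[D]$ together with the non-emptiness hypotheses one gets $N[D]\cap(\Pi\setminus P_3)=\{b_1\}$ and $N[D]\cap(P_3\setminus\{a\})=\{b_3\}$, which is the first outcome. So we may assume $A_1\subseteq D$, or $A_3\subseteq D$, or $p_1\in N[D]$; and since $p_1$ has a neighbour in $A_1$ (namely $p_2$, or $w$ if $k=1$), the first implies the third. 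Two cases remain: $p_1\in N[D]$, and $A_3\subseteq D$.

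In the principal case $p_1\in N[D]$: with $w$ the neighbour of $p_k$ on $P_1$ closest to $a$, the concatenation $\Gamma_1$ of the $a$--$w$ subpath of $P_1$ with $P$ is an induced $a$--$p_1$ path with $y_1\in\Gamma_1^*$ and interior anticomplete to $b_2$ (the only possible chords, between $P$ and the part of $P_1$ before $w$, being killed by ``$P_1\setminus b_1$ anticomplete to $P\setminus p_k$'' and the choice of $w$). I would then close $\Gamma_1$ into a hole through $P_3$ by a $p_1$--$b_3$ route: either a shortest $p_1$--$b_3$ path with interior in $D$ (cleaned up, using the connectedness of $D\cap\Pi$ inside $D$, so that it avoids $\Pi$), or --- when $p_1\sim b_1$ --- the two-edge path $p_1,b_1,b_3$, after trimming $P$ so that $b_1$ has no neighbour in the interior and after disposing of the degenerate position $w\sim b_1$. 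Either way the resulting hole $H\subseteq\Pi\cup D$ contains $a,b_3,p_1$ and has $y_1$ in a sector interior, so by the first paragraph $(H,b_2)$ is the required proper wheel (with $b_1$ an extra spoke in the second routing).

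It remains to show that if no such $p_1$--$b_3$ route exists a contradiction follows. One argues that then $A_3\subseteq D$ (so $a,b_3\in N[D]$ and $D$ meets $P_3^*$), that $D$ touches $\Pi\setminus P_3$ only through $b_1$, and that $p_1\not\sim b_1$; taking a shortest path $\nu$ from $b_1$ into $P_3^*$ through $D$, with last internal vertex $\nu'$ and endpoint $z\in P_3^*$ chosen suitably among the $P_3^*$-neighbours of $\nu'$, one gets a theta in $G$ with ends $b_1$ and $z$, whose paths are: the $z$--$b_3$ subpath of $P_3$ followed by the edge $b_3b_1$; the $z$--$a$ subpath of $P_3$ followed by $P_1$; and $\nu$. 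The pyramid and loaded-pyramid anticompleteness conditions, plus the shortest-path choices in $D$, make the three interiors pairwise anticomplete, contradicting $G\in\mathcal C$; the leftover case $A_3\subseteq D$ with $p_1\notin N[D]$ is handled the same way. \textbf{The main obstacle} is exactly this case analysis: keeping track of how $D$ attaches to $\Pi$, and in each case either building an induced hole through the essentially unique usable spoke-triple $\{a,b_3,p_1\}$ --- which forces routing $p_1$ back to $b_3$ through $D$ and careful control of the interactions of $b_1$ with $P$ and of $a,b_1$ with $D$ --- or else extracting a theta, prism, or even wheel.
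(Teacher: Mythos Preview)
Your high-level idea --- build a hole $H$ through $P_3$ together with an $a$--$p_1$ path $\Gamma_1$ inside $\Pi$ and a $p_1$--$b_3$ route, so that $(H,b_2)$ is a proper wheel with the stable spoke-set $\{a,b_3,p_1\}$ --- is the right target, and it is also what the paper is ultimately aiming for. But the paper takes a different route to get there, and the difference matters.

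The paper first applies a minimality trick: among all loaded pyramids with the given apex $a$ and the given neighbour $a'$ of $a$ on $P_1$ that violate the conclusion, choose one with $|P_1|$ minimum. Two short arguments then force $N(p_k)\cap P_1$ to be a clique and $b_1$ to be anticomplete to $P$; this reduces the loaded pyramid to one of two concrete ``types'' (either $N_{P_1}(p_k)\subseteq N_{P_1}[b_1]$, or $p_k$ has exactly two adjacent neighbours in $P_1$ with $b_1$ anticomplete to $P$), each handled by a separate, fairly intricate lemma (Theorems~\ref{loadedpyr1} and~\ref{loadedpyr2}). In those lemmas a shortest path $Q$ through $D$ between the two sides of $\Pi$ is analysed claim by claim to produce either the desired wheel or a forbidden theta, prism, or even wheel.

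Your plan tries to short-circuit this reduction, and the gap is precisely at the step you yourself flag as the ``main obstacle''. In the principal case $p_1\in N[D]$, you need the interior of your $p_1$--$b_3$ route to be anticomplete to $\Gamma_1^*$ and to $P_3^*$ for $H$ to be a hole at all. But $\Gamma_1^*\subseteq A_1$, and whenever $A_1\subseteq D$ (which you note is the typical situation forcing $p_1\in N[D]$) the set $\Gamma_1^*$ lives inside the very component $D$ through which the route runs. ``Cleaning up so the route avoids $\Pi$'' only makes the interiors \emph{disjoint}, not anticomplete: a vertex of $D\setminus\Pi$ on the route can perfectly well be adjacent to a vertex of $P_1^*$ or of $P\setminus\{p_1\}$ --- nothing in the hypotheses prevents this --- and then $H$ has a chord. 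You likewise have not ruled out edges from the route's interior to $P_3^*$; nor, in the $b_1$-routing, have you justified that ``trimming $P$'' and ``disposing of $w\sim b_1$'' preserve the loaded-pyramid structure (this is exactly what the paper's minimality reduction buys). The theta at the end has the same flavour of gap: an internal vertex of $\nu$ may be adjacent to $b_3$ or to $a$ (both lie in $N[b_2]$, hence outside $D$, but vertices of $D$ may still see them), which creates chords between your three candidate theta-paths. Eliminating all of these possible attachments is where the real work lies, and it is why the paper's argument occupies two pages of case analysis even \emph{after} the type reduction.
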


In order to prove Theorem \ref{mainloadedpyr}, first we need to verify its assertion for two special types of loaded pyramids, as follows. For a loaded pyramid $(\Sigma,P)$ in a graph $G$, we say $(\Sigma,P)$ is of \textit{type 1} if
$N_{P_1}(p_k) \subseteq N_{P_1}[b_1]$. We 
say that $(\Sigma,P)$ is of \textit{type 2} if $b_1$ is anticomplete to $P$ and $p_k$ has exactly two neighbors $x,y \in P_1$, and $x$ is adjacent to
$y$. 

\begin{theorem}\label{loadedpyr1}
  Let $G \in \mathcal{C}$. Suppose that $G$ contains a loaded pyramid $\Pi=(\Sigma,P)$ of type 1 with $a,b_1,b_2,b_3,P_1,P_2,P_3,P$ as in the definition. Moreover, assume that $D$ is a connected component of $G\setminus N[b_2]$ such that neither $N[D]\cap (\Pi\setminus P_3)$ nor $N[D]\cap (P_3\setminus \{a\})$ is empty. Then one of the following holds.
  \begin{itemize}
      \item We have $N[D]\cap (\Pi\setminus P_3)=\{b_1\}$ and $N[D]\cap (P_3\setminus \{a\})=\{b_3\}$; or
      \item There is a proper wheel $(H,b_2)$ in $G$ with two long sectors $\Gamma_1$ and $\Gamma_3$
         such that $\Gamma_1^*$ contains the neighbor of $a$ in $P_1$ and $\Gamma_3=P_3$.
  \end{itemize}
\end{theorem}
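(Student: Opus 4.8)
The plan is, whenever the first outcome of the theorem fails, to build a proper wheel $(H,b_2)$ whose hole runs through all of $P_3$, through the neighbour $a_1$ of $a$ on $P_1$, and through $p_1$. Since $P_1$ and $P_3$ have length at least $2$ we have $a\not\sim b_1$ and $a\not\sim b_3$, and since $P$ is anticomplete to $\{a\}\cup P_3$ we have $p_1\not\sim a$ and $p_1\not\sim b_3$; so $\{a,b_3,p_1\}$ will be a stable set of neighbours of $b_2$ on $H$, making $(H,b_2)$ proper, and $P_3$ together with the sector through $a_1$ will be the two required long sectors.

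To set up, write $a_1,a_3$ for the neighbours of $a$ on $P_1,P_3$, write $b_1'$ for the neighbour of $b_1$ on $P_1$, and put $A:=P_1^*\cup(P\setminus p_1)$. From the pyramid and loaded-pyramid axioms together with type 1 --- which forces $p_kb_1'\in E(G)$ because $p_k$ has a neighbour in $P_1^*$ while $N_{P_1}(p_k)\subseteq\{b_1,b_1'\}$ --- one checks that $N[b_2]\cap\Pi=\{a,b_1,b_2,b_3,p_1\}$, that $A$ is connected (through $p_kb_1'$) and anticomplete to $b_2$, and that $\Pi\setminus N[b_2]$ is the disjoint union of the connected sets $A$ and $P_3^*$, which lie in different components of $\Pi\setminus N[b_2]$. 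Hence $D\cap\Pi\subseteq A\cup P_3^*$ and $N(D)\cap\Pi\subseteq\{a,b_1,b_3,p_1\}$; and, as $D$ is a full component of $G\setminus N[b_2]$ while $A$ and $P_3^*$ avoid $N[b_2]$, either $A\subseteq D$ or $D$ is anticomplete to $A$, and likewise for $P_3^*$; moreover $A\subseteq D$ forces both $D\sim p_1$ and $D\sim b_1$ (using $b_1'\in A$ and, when $k\ge 2$, $p_2\in A$). The two hypotheses thus read ``$D\sim b_1$ or $D\sim p_1$'' and ``$D\sim b_3$ or $P_3^*\subseteq D$'', and one verifies that under them the first outcome holds precisely when $D\not\sim p_1$ and $P_3^*\not\subseteq D$.

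I would then record the robust sector $\Gamma_1:=a\dd a_1\dd\cdots\dd b_1'\dd p_k\dd\cdots\dd p_1$ (along $P_1$ to $b_1'$, then along $P$ to $p_1$): it is induced --- each half is induced, $p_kb_1'$ is an edge, and all remaining potential chords are killed by ``$P_1\setminus b_1$ anticomplete to $P\setminus p_k$'', by $N_{P_1}(p_k)\subseteq\{b_1,b_1'\}$, and by $a$ anticomplete to $P$ --- its interior contains $a_1$, its ends $a,p_1$ lie in $N(b_2)$, and $\Gamma_1^*$ is anticomplete to $b_2$; so $\Gamma_1$ (and, likewise, $P_3$) is a long sector of any hole through it. Therefore it suffices to find a path $R$ from $p_1$ to $b_3$ with $R^*\cap\Pi=\emptyset$, $b_2\notin R^*$, and $R^*$ anticomplete to $A\cup P_3^*\cup\{a\}$: then $H:=\Gamma_1\cup R\cup P_3$ is a hole avoiding $b_2$ (its only possible chords run between $R^*$ and $(\Gamma_1\cup P_3)\setminus\{p_1,b_3\}\subseteq A\cup\{a\}\cup P_3^*$, or between the $P_1$-side and the $P_3$-side, whose only cross edges are of $b_1b_3$-type with $b_1\notin H$), and $(H,b_2)$ is the desired proper wheel. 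Using the reformulation, the first outcome failing means $D\sim p_1$ or $P_3^*\subseteq D$. If $D\sim p_1$, I route $R$ from $p_1$ through $D$, either to a neighbour of $b_3$ (when $D\sim b_3$) or to a vertex of $P_3^*$ and then along $P_3$ to $b_3$ (when $P_3^*\subseteq D$), choosing the part inside $D$ minimal so that it is anticomplete to $A\cup P_3^*\cup\{a\}$ --- and if $D$ also meets $A$ and no such route exists, the obstruction yields a theta. The only remaining case, $D\not\sim p_1$ and $P_3^*\subseteq D$ (so $D$ is anticomplete to $A$ and, by the first hypothesis, $D\sim b_1$), is impossible: taking a minimal path $S$ in $D$ from a neighbour of $b_1$ to $P_3^*$, with endpoint $c\in P_3^*$ chosen closest to $a$ along $P_3$, the three internally disjoint, internally anticomplete paths $P_1$, $a\dd P_3\dd c\dd S\dd b_1$, and $a\dd b_2\dd b_1$, each of length at least $2$, form a theta with ends $a$ and $b_1$ --- contradicting $G\in\mathcal C$.

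The main obstacle is this last step. Constructing $R$ and excluding the bad configurations require a careful case analysis in which one must choose endpoints and minimal paths so that every auxiliary path stays induced and the anticompleteness conditions hold --- in particular anticompleteness to $\{a\}$, which is delicate because $D$ may well be adjacent to $a$ (it contains $a_3\in P_3^*$ in the impossible case, and can reach $a$ through $P_1^*$ when it meets $A$) --- and one must pin down the precise theta witnessing each impossible configuration.
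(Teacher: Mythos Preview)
Your overall plan---assume the first outcome fails and build the proper wheel $(H,b_2)$ with $N_H(b_2)=\{a,b_3,p_1\}$ via a bridging path through $D$---is exactly the paper's strategy, and your setup (the analysis of $A$, the path $\Gamma_1$, and the reformulation ``first outcome $\Leftrightarrow D\not\sim p_1$ and $P_3^*\not\subseteq D$'') is correct. Your ``impossible'' case also works: if $D\not\sim p_1$, $P_3^*\subseteq D$, and $D\sim b_1$, there is a theta with ends $a,b_1$ and paths $P_1$, $a\dd b_2\dd b_1$, and a shortest $a$--$b_1$ path with interior in $D$. (Your route via $P_3$ and $S$ may fail to be induced if some interior vertex of $S$ is adjacent to $a$; taking a shortest $a$--$b_1$ path with interior in $D$ avoids this, since its interior is automatically anticomplete to $P_1^*\subseteq A$ and to $b_2$.)

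The real gap is the main case $D\sim p_1$, exactly where you flag it. The phrase ``choosing the part inside $D$ minimal so that it is anticomplete to $A\cup P_3^*\cup\{a\}$'' is not a valid procedure---you cannot choose a path to have a property without first proving one exists---and ``the obstruction yields a theta'' is not an argument; in fact the obstructions include even wheels, prisms, and near-prisms, not just thetas, and each must be excluded separately. The paper's device is to take a \emph{shortest} path $Q=q_1\dd\cdots\dd q_t$ in $G\setminus N[b_2]$ with $q_1$ attaching to $P_3\setminus\{a\}$, $q_t$ attaching to $\Pi\setminus P_3$, and at least one attachment non-trivial, and then run a chain of eight claims: if $q_1$'s only attachment on $P_3\setminus a$ is $b_3$, the hole through $\Gamma_1$, $Q$, and $P_3$ gives the desired wheel (or a theta if $p_1\notin H$); otherwise one successively forces $q_t$ to attach to $P$ (else a theta, or an even wheel centred at $b_1$), forces $a$ to be anticomplete to $Q$ (else an even wheel centred at $a$, or a prism), pins down $N(q_t)$ on the hole $(P_1\setminus b_1)\cup P=\Gamma_1$, and finally produces a theta or a near-prism with triangles $b_1b_2b_3$ and $q_1yz$. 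Those steps are the substance of the proof, and nothing in your proposal indicates how to replace them.
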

\begin{proof}
Suppose not. Then since the first bullet of Theorem \ref{loadedpyr1} does not hold, there exists a shortest  path $Q=q_1\dd \cdots \dd q_t$ in $G\setminus N[b_2]$ with $t\geq 1$ where
  \begin{itemize}
      \item $q_1$ has a neighbor in $P_3\setminus \{a\}$;
      \item $q_t$ has a neighbor in $\Pi\setminus P_3$; and 
      \item either $q_1$ has a neighbor in $P_3^*$ or $q_t$ has a neighbor in $P_1^* \cup P$.
  \end{itemize}
  From the minimality of $Q$, it follows that $Q^*$ is anticomplete to $(\Sigma \cup P)
\setminus \{a,b_1,b_3\}$.
Let $c$ be the neighbor of $b_1$ in $P_1$. Then
$c \neq a$,  and since $G$ is
$C_4$-free, $c$ is non-adjacent to $a$.
\\
\\
\sta{\label{lp11} $N(q_1)\cap P_3^*\neq \emptyset$.}

For otherwise we have $N(q_1)\cap (P_3\setminus \{a\})=\{b_3\}$. Consequently $q_t$ has a neighbor in $P_1^* \cup P$, and so by the minimality of $Q$, $Q\setminus \{q_1\}$ is anticomplete to $P_3\setminus \{a\}$. Traversing $Q$ starting at $q_1$, let $q$ be the the first vertex with a neighbor in $P'=(P_1\setminus b_1)\cup P$, and traversing the path $P'$ starting at $a$, let $x$ be the first vertex adjacent to $q$. Then $H=a \dd P' \dd x \dd q \dd Q \dd q_1 \dd b_3 \dd P_3$ is a hole in $G$. Note that
$\{a,b_3\}\subseteq N(b_2)\cap H\subseteq \{a,b_3,p_1\}$. Since $H\cup \{b_2\}$ is not a theta in $G$, it follows that $N(b_2)\cap H= \{a,b_3,p_1\}$. But then $(H,b_2)$ is a wheel in $G$ satisfying the second bullet of Theorem \ref{loadedpyr1}, a contradiction. This proves \eqref{lp11}.
\\
\\
\sta{\label{lp13} $N(b_1)\cap (Q\setminus q_t)=\emptyset$, and $q_t$ has a neighbor in $P_1^* \cup P$.}

Suppose not. Traversing $Q$ starting at $q_t$, let $q$ be the last vertex adjacent to $b_1$. Since \eqref{lp13} does not hold, it follows that $q$ has no neighbor in $P_1^* \cup P$. Note that by \eqref{lp11}, $q \dd Q \dd q_1 \cup (P_3\setminus b_3)$ is connected, and so contains an induced path $R$ from $q$ to $a$. But then $P_1\cup R\cup \{b_2\}$ is a theta with ends $a,b_1$  in $G$, which is impossible. This proves \eqref{lp13}.
\\
\\
\sta{\label{lp12} $N(Q)\cap (P\cup \{b_1\})\neq \emptyset$.}
Suppose not. Then $q_t$ has a neighbor in $P_1 \setminus b_1$. Traversing
$P_1\setminus b_1$ starting at $a$, let $x$ be the last vertex with a neighbor in $Q$. It follows that $x \neq a$. By \eqref{lp13} $x$ is adjacent to $q_t$ and anticomplete to $Q \setminus q_t$.
Traversing $P_3$ starting at $a$, let $y$ be the last vertex adjacent to $q_1$; hence $y\neq a$. Then both $H_1=(P_1\setminus b_1)\cup P$ and
$H_2=c \dd P_1 \dd x  \dd q_t \dd Q \dd q_1 \dd y \dd P_3 \dd b_3 \dd b_2 \dd p_1 \dd P \dd p_k \dd c$ are holes in $G$. Note that $b_1$ has at least two
non-adjacent neighbors in $H_1$ and $N(b_1)\cap H_2=(N(b_1)\cap H_1)\cup \{b_3\}$. So one of $(H_1,b_1)$ and $(H_2,b_1)$ is an even wheel in $G$, a contradiction. This proves \eqref{lp12}.
\\
\\
\sta{\label{lp14} $N(q_t)\cap P \neq \emptyset$.}
 
Suppose for a contradiction that $q_t$ is anticomplete to $P$. Recall that by the minimality of $t$, $Q\setminus q_t$ is also anticomplete to $P$. Thus, $Q$ is anticomplete to $P$, and so by \eqref{lp12}, we have $N(b_1)\cap Q\neq\emptyset$. This, together with \eqref{lp13}, implies that $q_t$ is adjacent to $b_1$. Since $q_t \dd b_1 \dd b_2 \dd a \dd q_t$ is not a $C_4$ in $G$,
$q_t$ is nonadjacent to $a$. Moreover, note that by \eqref{lp13}, it follows that $q_t$ has a neighbor in $P_1^*$. Traversing $P_1\setminus b_1$ starting at $c$, let $x$ be the last vertex adjacent to $q_t$. Also, by \eqref{lp11}, $(P_3\setminus b_3)\cup Q$ is connected, and so contains a path $R$ from $q_t$ to $a$ (note that $R$ has more than one edge, as $q_t$ is not adjacent to $a$). Now, if $x\neq c$, then we get a theta with ends $a,q_t$ and paths
$a \dd P_1 \dd x \dd q_t$, $R$ and $q_t \dd b_1 \dd b_2 \dd a$,
a contradiction.
Therefore, we have $x=c$. But now we get a theta with ends $c,a$ and paths
$c \dd P_1 \dd a$, $c \dd p_k \dd P \dd p_1 \dd b_2 \dd a$ and
$c \dd q_t \dd R \dd a$, again a contradiction.
This proves \eqref{lp14}.
\\
\\
In particular, \eqref{lp14} implies that $b_3$ is anticomplete to $Q \setminus q_1$. Henceforth, we denote the hole $(P_1\setminus b_1)\cup P$ in $G$ by $K$. Also, in view of
\eqref{lp14}, let $i \in \{1, \dots, k\}$ be minimum such that $q_t$ is adjacent to $p_i$.
Denote the neighbor of $a$ in $P_1$ by $w$. Then $w \neq c$.
\\
\\
\sta{\label{lp15} $|N(q_t)\cap K|\geq 2$.}

Suppose not. Then $p_i$ is the unique neighbor of $q_t$ in $K$.
By \eqref{lp11}, $(P_3\setminus b_3)\cup Q$ is connected, and so contains a path $R$ from $q_t$ to $a$. But now $K\cup R$ is a theta with ends $a,p_i$ in $G$, which is impossible. This proves \eqref{lp15}.
\\
\\
\sta{\label{lp16} $q_t$ has a neighbor in $K \setminus (N[a] \cup \{p_1\})$.}

Suppose not. Then $i=1$. Since $q_t \dd p_1 \dd b_2 \dd a \dd q_t$ is not a $C_4$
in $G$, it follows that $q_t$ is non-adjacent to $a$. Now $N_K(q_t)=\{p_1,w\}$
and so $K \cup q_t$ is a theta, a contradiction. This proves~\eqref{lp16}.
\\
\\
In view of \eqref{lp16} 
there is a path $P'$ from $q_t$ to $b_1$ with interior in $K \setminus \{b_2, a, p_1, w \}$. It follows from \eqref{lp13} and the minimality of $t$ that $q_t$ is
the only vertex of  $P'^*$ with a neighbor in $Q \setminus q_t$.
\\
\\
\sta{\label{lp18} $N(a) \cap Q = \emptyset$.}
 
Suppose not.  By \eqref{lp11}, $(P_3\setminus a)\cup \{q_1\}$ is connected, and so contains an induced path $R$ from $b_3$ to $q_1$.
 Now $H_1=Q\cup P' \cup R$ and $H_2=Q\cup b_2 \dd p_1 \dd P \dd p_i \cup R$
 are holes in $G$. Also, we have $N(a)\cap H_1=N(a)\cap (Q \cup R)$ and $N(a)\cap H_2=(N(a)\cap H_1)\cup \{b_2\}$. If $|N(a)\cap (Q \cup R)|\geq 3$, then either $(H_1,a)$ or $(H_2,a)$ is an even wheel in $G$, which is impossible. Also, if $|N(a)\cap (Q \cup R)|=1$, then $H_2\cup \{a\}$ is a theta in $G$, which is impossible. It follows that $|N(a)\cap (Q \cup R)|=2$.
 Since  $H_1\cup \{a\}$ is not a theta in $G$,
 the two neighbors of $a$ in $H_1$ are adjacent and contained in $(Q \cup R)\setminus \{b_3\}$.
But now
$H_1\cup \{a,b_2\}$ is a prism in $G$, which is impossible. This proves \eqref{lp18}.
\\
\\
If $q_1$ has exactly one neighbor $x$ in $P_3$, then by \eqref{lp11},
$x \in P_3^*$ and, using \eqref{lp18}  we get a theta with ends $x,b_2$ and paths
$x \dd P_3 \dd a \dd b_2$, $x \dd q_1 \dd Q \dd q_t \dd p_i \dd P \dd b_2$,
and $x \dd P_3 \dd b_3 \dd b_2$, a contradiction.
Thus, traversing $P_3$ starting at $b_3$, we may assume $y$ and $z$ to be the first and the last neighbor of $q_1$ in $P_3$, respectively, and $y \neq z$.
If $z$ is non-adjacent to $y$, using \eqref{lp18} we get a theta with ends $b_2,q_1$ and paths
$q_1 \dd z  \dd P_3 \dd a \dd b_2$, $q_1 \dd Q \dd q_t \dd p_i \dd P \dd b_2$,
and $q_1 \dd y  \dd P_3 \dd b_3 \dd b_2$, a contradiction.
So $z$ and $y$ are adjacent.
But now, again using \eqref{lp18},  we get a near-prism with triangles $b_1b_2b_3$ and $q_1zy$
and paths $b_1 \dd P' \dd q_t \dd Q \dd q_1$, $b_2 \dd a \dd P_3 \dd z$ and
$b_3 \dd P_3 \dd y$, a contradiction.
\end{proof}

\begin{theorem}\label{loadedpyr2}
  Let $G \in \mathcal{C}$. Suppose that $G$ contains a loaded pyramid $\Pi=(\Sigma,P)$ of type 2 with $a, b_1,b_2,b_3,P_1,P_2,P_3,P$ as in the definition.
  Moreover, assume that $D$ is a connected component of $G\setminus N[b_2]$ such that neither $N[D]\cap (\Pi\setminus P_3)$ nor $N[D]\cap (P_3\setminus \{a\})$ is empty. Then one of the following holds.
  \begin{itemize}
      \item We have $N[D]\cap (\Pi\setminus P_3)=\{b_1\}$ and $N[D]\cap (P_3\setminus \{a\})=\{b_3\}$; or
      \item There is a proper wheel $(H,b_2)$ in $G$ with two long sectors $\Gamma_1$ and $\Gamma_3$ such that $\Gamma_1^*$ contains the neighbor of $a$ in $P_1$ and $\Gamma_3=P_3$.
  \end{itemize}
\end{theorem}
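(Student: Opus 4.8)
The plan is to closely mirror the argument of Theorem~\ref{loadedpyr1}, exploiting the two features that distinguish a type-$2$ loaded pyramid from a type-$1$ one: $b_1$ is anticomplete to $P$, and $p_k$ meets $P_1$ in a triangle $\{p_k,x,y\}$ with $x,y\in P_1^*$ and $xy\in E(G)$, rather than in a single edge $p_kc$. Suppose that neither outcome of the theorem holds. Since the first bullet fails, there is a shortest path $Q=q_1\dd\cdots\dd q_t$ in $G\setminus N[b_2]$ with $t\ge 1$ such that $q_1$ has a neighbor in $P_3\setminus\{a\}$, $q_t$ has a neighbor in $\Pi\setminus P_3$, and either $q_1$ has a neighbor in $P_3^*$ or $q_t$ has a neighbor in $P_1^*\cup P$. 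By minimality $Q^*$ is anticomplete to $(\Sigma\cup P)\setminus\{a,b_1,b_3\}$; writing $c$ for the neighbor of $b_1$ in $P_1$ and $w$ for the neighbor of $a$ in $P_1$, $C_4$-freeness gives $c\neq a$ and $c\not\sim a$. Let $u_2$ be the one of $x,y$ closer to $a$ along $P_1$ and $u_1$ the other; then (using that $P_1\setminus b_1$ is anticomplete to $P\setminus p_k$, that $u_1$ lies strictly between $b_1$ and $u_2$ on $P_1$, and the pyramid edge rules) the cycle $a\dd P_1\dd u_2\dd p_k\dd P\dd p_1\dd b_2\dd a$ is a hole, call it $K$, which plays the role of the type-$1$ hole ``$(P_1\setminus b_1)\cup P$''.

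I would then reproduce the claim chain \eqref{lp11}--\eqref{lp18} of Theorem~\ref{loadedpyr1}. The analog of \eqref{lp11} states that $q_1$ has a neighbor in $P_3^*$: otherwise $N(q_1)\cap(P_3\setminus\{a\})=\{b_3\}$, and following $Q$ from $q_1$ to its first vertex with a neighbor on the path $a\dd P_1\dd u_2\dd p_k\dd P\dd p_1$ produces a hole $H$ on which $b_2$ has precisely the three pairwise non-adjacent neighbors $a,b_3,p_1$ (because $H\cup\{b_2\}$ is not a theta, and $b_2$ is anticomplete to $Q$, to $P\setminus p_1$, to $P_1\setminus b_1$, and to $P_3^*$); then $(H,b_2)$ is a proper wheel whose two long sectors are $P_3$ and the sector containing $w$, giving the second outcome. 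Since $b_1$ is anticomplete to $P$, the analog of \eqref{lp13} (``$b_1$ has no neighbor in $Q\setminus q_t$, and $q_t$ has a neighbor in $P_1^*\cup P$'') is obtained as in type $1$, via a theta with ends $a,b_1$ using $P_1$, $b_2$, and an induced $a$-$q$ path inside $(P_3\setminus b_3)\cup Q$. The analogs of \eqref{lp12} (``$N(Q)$ meets $P\cup\{b_1\}$'', which here reduces to ``$N(Q)$ meets $P$'') and \eqref{lp14} (``$q_t$ itself has a neighbor in $P$'') are then derived by the usual theta and even-wheel witnesses obtained by comparing two holes that differ only in whether $b_1$ or $b_3$ is appended; one also gets $q_t\not\sim a$ from $C_4$-freeness once $q_t\sim b_1$.

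With a neighbor of $q_t$ on $P$ secured, set $i$ minimal with $q_t\sim p_i$ and argue inside $K$. The analogs of \eqref{lp15}, \eqref{lp16} and \eqref{lp18} give, in turn, that $q_t$ has at least two neighbors on $K$ (else $K\cup\{q_t\}$ is a theta with ends $a,p_i$), that $q_t$ has a neighbor on $K\setminus(N[a]\cup\{p_1\})$ (else $N_K(q_t)=\{p_1,w\}$ and $K\cup\{q_t\}$ is a theta), and that $N(a)\cap Q=\emptyset$ (else, from the two holes built by rerouting the $b_2$-$p_1$ arc of $K$ through a $b_1$-free subpath of $Q$, one gets together with $a$ a theta, a prism, or an even wheel according to $|N(a)\cap(Q\cup R)|$ for the relevant connecting path $R$). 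Then, fixing a path $P'$ from $q_t$ to $b_1$ inside $K\setminus\{b_2,a,p_1,w\}$, I would finish with a case split on the attachment of $q_1$ to $P_3$: if $q_1$ has a unique neighbor $z$ in $P_3$ then $z\in P_3^*$ and one finds a theta with ends $z,b_2$; if $q_1$ has two non-adjacent neighbors on $P_3$ one finds a theta with ends $b_2,q_1$; and if $q_1$ has two adjacent neighbors $z,z'$ on $P_3$ one finds a near-prism with triangles $b_1b_2b_3$ and $q_1zz'$ and paths through $P'$ and $Q$, through $a$ and $P_3$, and along $P_3$. Each configuration is forbidden in $\mathcal{C}$, a contradiction.

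The main obstacle I anticipate is the analog of \eqref{lp12}--\eqref{lp14}: in type $1$ these even-wheel arguments can use neighbors of $b_1$ inside $P$, which is exactly what is unavailable here, so the even-wheel witnesses must be rebuilt --- plausibly centered at $b_1$ using the hole $K$ (whose only $P_1$-side neighbors of $p_k$ are the consecutive pair $u_1,u_2$), with the parity controlled by whether $b_3$ is appended --- and one must check that the triangle $\{p_k,u_1,u_2\}$ does not create unwanted chords in the candidate holes. A secondary difficulty is the final case analysis, where the triangle $\{p_k,x,y\}\subseteq P_1$ forces an additional sub-case (whether a vertex of $Q$, and in particular $q_t$, attaches to $x$, to $y$, or to both) that must still be shown to land in $C_4$, a theta, a prism, or an even wheel.
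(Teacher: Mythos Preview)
Your plan has a genuine gap in the final step, and it stems from a structural difference between types 1 and 2 that your hole $K$ papers over rather than exploits. Your $K=a\dd P_1\dd u_2\dd p_k\dd P\dd p_1\dd b_2\dd a$ omits the entire subpath $P_1'=b_1\dd P_1\dd x$; in particular $b_1\notin K$, and (since $b_1$ is anticomplete to $P$ in type 2 and $c\in P_1'$) the only neighbour of $b_1$ in $K$ is $b_2$. Hence there is no path $P'$ from $q_t$ to $b_1$ with interior in $K\setminus\{b_2,a,p_1,w\}$, and your concluding near-prism/theta case analysis collapses. The same omission also undermines the proposed analogue of \eqref{lp12}--\eqref{lp14}: in type 1 the even-wheel witness is centred at $b_1$ using neighbours of $b_1$ inside $P$, which is exactly what type 2 forbids, and you do not supply a replacement beyond ``plausibly centred at $b_1$ using $K$'' --- but $b_1$ has only one neighbour on $K$, so no wheel is available there.

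The paper does \emph{not} try to mimic the type-1 chain. The distinguishing feature of type 2 is that $\{p_k,x,y\}$ and $\{b_1,b_2,b_3\}$ already form the two triangles of a latent prism (with paths $P_1'$, $P\cup\{b_2\}$, and $P_1''\cup P_3$). The paper uses this immediately: its first claim (\eqref{qtPP1'}) shows, via a prism, that $q_t$ must have a neighbour in $P\cup P_1'$ --- precisely the fact you need for your $P'$ and never prove. From there the argument is new: after establishing $q_1$ has a neighbour in $P_3^*$ and $a$ is anticomplete to $Q$, the paper shows $N(q_t)\cap(P_1\cup P)$ is a clique of size two, then pins it down to $\{y,p_k\}$ (showing $q_t\sim y$ via a prism and $q_t\not\sim x$ via a theta), forces $y\sim a$, and finishes with a theta with ends $p_k,b_2$. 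The prism structure is used repeatedly; a type-1 style argument that tries to hide the triangle $p_kxy$ inside a single hole will not see it.
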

\begin{proof}
  Suppose not.
  Assume that $P_1$ traverses $b_1,x,y,a$ in this order, where $x$ and $y$ are the two neighbors of $q_t$ in $P_1$. Then  $b_1 \neq x$.
  Let $P'_1=b_1 \dd P_1 \dd x$,
  $P''_1=y \dd P_1 \dd a$. We write $P = p_1 \dd \cdots \dd p_k$ where $p_1$ is adjacent to $b_2$, and $p_k$ has a neighbor in $P_1^*$. 

 Since the first bullet of Theorem \ref{loadedpyr2} does not hold, there exists a shortest  path $Q=q_1\dd \cdots \dd q_t$ in $G\setminus N[b_2]$ with $t\geq 1$ where
  \begin{itemize}
      \item $q_1$ has a neighbor in $P_3\setminus \{a\}$;
      \item $q_t$ has a neighbor in $\Pi\setminus P_3$; and 
      \item either $q_1$ has a neighbor in $P_3^*$ or $q_t$ has a neighbor in $P_1^* \cup P$.
  \end{itemize}
  From the minimality of $Q$, it follows that $Q^*$ is anticomplete to $(\Sigma \cup P)
\setminus \{a,b_1,b_3\}$.
Let $c$ be the neighbor of $b_1$ in $P_1$. Then $c \neq a$, and since
$G$ is $C_4$-free, $c$ is non-adjacent to $a$.
\\
\\
\sta{$q_t$ has a neighbor in $P \cup P_1'$. \label{qtPP1'}}

Suppose not. Then $N_{P_1}(q_t) \subseteq P_1''$, and there is a path  $R$ from 
$y$ to $b_3$ with $R \subseteq P_1'' \cup Q \cup P_3$. Now we get a prism with
triangles $p_kxy$ and $b_1b_2b_3$ and paths $P$, $P_1'$ and $R$, a contradiction. This proves \eqref{qtPP1'}.
\\
\\
\sta{$q_1$ has a neighbor in $P_3^*$. \label{q1P3*}}

Suppose not. Then $N_{P_3}(q_1) \subseteq \{b_3,a\}$. Suppose first that either $a$ has a neighbor in $Q$, or $q_t$ has a neighbor in $(P_1 \setminus b_1) \cup (P \setminus p_1)$. Then there is a path
$R$  from $q_1$ to $a$ with $R^* \subseteq Q \cup (P_1 \setminus b_1) \cup (P \setminus p_1)$.
Now we get a theta
with ends $b_3,a$ and paths $P_3$, $b_3 \dd b_2 \dd a$ and $b_3 \dd q_1 \dd R \dd a$, a contradiction. 

It follows that $a$ is anticomplete to $Q$, and $N_{P_1 \cup P}(q_t) \subseteq \{p_1, b_1\}$. From the choice of $Q$, and since $G$ is $C_4$-free, it follows that $N_{P_1 \cup P}(q_t) = \{p_1\}$. Now $H = b_3 \dd Q \dd p_1 \dd P \dd y \dd P_1'' \dd a \dd P_3 \dd b$ is a hole, and so $(H, b_2)$ is a proper wheel satisfying the second outcome of the theorem. This proves~\eqref{q1P3*}.
\\
\\
It follows from \eqref{q1P3*} and the minimality of $t$ that
$b_1$ is anticomplete to $Q \setminus q_t$. Let $v$ be the neighbor of
$q_1$ in $P_3$ closest to $b_3$. Let $v'$ be the neighbor of
$q_1$ in $P_3$ closest to $a$.
\\
\\
\sta{$q_t$ has a neighbor in $P_1^* \cup P$. \label{qtPP1*}}

  Suppose not. Then $N_{P_1 \cup P}(q_t) \subseteq \{b_1,a\}$.
  By \eqref{q1P3*} there is a path $R$ from $q_t$ to $a$ with
  $R^* \subseteq Q \cup (P_3 \setminus b_3)$.
  Now we get a theta with ends $b_1,a$ and paths $P_1$,
  $b_1 \dd b_2 \dd a$ and $b_1 \dd q_t \dd R \dd a$, a contradiction.
  This proves \eqref{qtPP1*}.
  \\
  \\
  It follows from \eqref{qtPP1*} and the minimality of $t$ that
$b_3$ is anticomplete to $Q \setminus q_1$.
\\
\\
\sta{$a$ is anticomplete to $Q$. \label{aantiQ}}

Suppose not. Let  $R_1$ be a path  from $q_1$ to $b_1$
with $R_1^* \subseteq Q \cup P_1' \cup P$. If possible, choose $R_1$ so that
$p_1 \not \in R_1$. Observe that $Q \subseteq R_1$ and $a$ is anticomplete to
$R_1 \setminus Q$.
  Let $H_1$ be the hole $b_1 \dd R_1 \dd q_1 \dd v \dd P_3 \dd b_3 \dd b_1$.
  By \eqref{qtPP1*} there is a path $R_2$ from $b_2$ to $q_1$
  with $R_2^* \subseteq Q \cup P \cup (P_1' \setminus b_1)$.
  Observe that $Q \subseteq R_2$ and $a$ is anticomplete to $R_2 \setminus Q$.
  Let $H_2$ be the hole $b_2 \dd R_2 \dd q_1 \dd v \dd P_3 \dd b_3 \dd b_2$.
  Now $N_{H_2}(a) = N_{H_1}(a) \cup \{b_2\}$.
  Since neither of $(H_1,a)$, $(H_2,a)$ is a theta or an even wheel in $G$,
  it follows that $a$ has exactly two neighbors $uw$ in $Q$,
  and $u$ is adjacent to $w$. We may assume that $Q$ traverses $q_1,u,w,q_t$
  in this order.
  If $p_1 \not \in R_1$, we get a prism with triangles
  $auw$ and $b_1b_2b_3$ and paths $ab_2$, $u \dd Q \dd q_1 \dd v \dd P_3 \dd b_3$ and $w \dd R_1 \dd b_1$, a contradiction. This proves that $p_1 \in R_1$,
  and therefore $N_{P \cup P_1'}(q_t) = \{p_1\}$.
  Now let $H_3$ be the hole
  $b_1 \dd P_1' \dd x \dd p_k \dd P \dd p_1 \dd q_t \dd Q \dd w  \dd a  \dd P_3 \dd b_3 \dd b_1$. Then $N_{H_3}(b_2)=\{b_1,b_3,p_1,a\}$ and so
  $(H_3,b_2)$ is an even wheel, a contradiction.
  This proves \eqref{aantiQ}.
  \\
  \\
  \sta{$N(q_t) \cap (P_1 \cup P)$ is a clique of size at least 2.
    \label{qtnbrs}}
Suppose not. Since $G$ is $C_4$-free, $q_t$ is non-adjacent to at least one of $b_1$ and the neighbor $p_1$ of $b_2$ in $P$. 
  Suppose that there are two paths $R_1,R_2$ from $q_t$ to $a$,
  both with interior in $P_1 \cup P \cup b_2$ and such that
  $R_1^*$ is anticomplete to $R_2^*$. Then we get a theta with ends $q_t,a$
  and paths $R_1,R_2$ and $q_t \dd Q \dd v' \dd P_3 \dd a$, a contradiction.
  It follows that $q_t$ has a unique neighbor $w$ in $P_1 \cup P \cup b_2$.
  By \eqref{qtPP1'}, $w$ is non-adjacent to $a$. Now
  there are two path $R_1, R_2$ from $w$ to $a$ with interior in $P_1 \cup P \cup b_2$ and such that
  $R_1^*$ is anticomplete to $R_2^*$, and we get a theta with ends
  $w,a$ and paths $R_1,R_2$ and $w \dd q_t \dd Q \dd v' \dd P_3 \dd a$, a contradiction. This proves~\eqref{qtnbrs}.
  \\
  \\
  If follows from \eqref{qtPP1'} and \eqref{qtnbrs} that
  $N_{P_1''}(q_t) \subseteq \{y\}$.
  \\
  \\
  \sta{$q_t$ is adjacent to $y$. \label{qty}}
  Suppose not. Let $N_{P_1 \cup P}(q_t)=\{u,w\}$,
  and we may assume that $P_1' \cup P$ traverses $b_1,u,w,p_1$ in this order.
  Then there is a prism  with triangles $uwq_t$ and $b_1b_2b_3$,
  two of whose paths have interior in $P_1' \cup P$, and the third one is
  $q_t \dd Q \dd q_1 \dd v \dd P_3  \dd b_3$, a contradiction. This proves~\eqref{qty}.
  \\
  \\  
  \sta{$q_t$ is non-adjacent to $x$. \label{qtnotx}}

  Suppose $q_t$ is adjacent to $x$. Since we do not get a prism with triangles
  $xyq_t$ and $b_1b_2b_3$ and paths $x \dd P_1 \dd b_1$, $y \dd P_1 \dd a \dd b_2$ ad $q_t \dd Q \dd q_1 \dd v \dd P_3 \dd b_3$, it follows that $v = v'$ is adjacent to $a$.
  But now we get a theta with ends $b_2,v$ and paths $b_2 \dd a \dd v$,
  $b_2 \dd p_1 \dd P \dd p_k \dd x \dd q_t \dd Q \dd q_1 \dd v$ ($x$ is omitted if $p_k$ is adjacent to $q_t$) and
  $b_2 \dd b_3 \dd P_3 \dd v$. This  proves \eqref{qtnotx}.
  \\
  \\
  By \eqref{qtnbrs}, \eqref{qty}, and \eqref{qtnotx}, we deduce that
  $N_{P \cup P_1}(q_t)=\{y,p_k\}$.
  \\
  \\
%
Suppose that $y$ is non-adjacent to $a$. Then $G$ contains a theta with ends $a, y$ and paths $P_1''$, $y \dd P_1' \dd b_1 \dd b_2 \dd a$, and $y \dd q_t \dd Q \dd q_1 \dd v' \dd P_3 \dd a$,
 a contradiction. This proves that $y$ is adjacent to $a$. Since $G$ is $C_4$-free, it follows that $p_k$ is non-adjacent to $b_2$. Now, since
 by \eqref{q1P3*} $v' \neq b_3$, there is a theta with ends $p_k, b_2$ and paths $p_k \dd x \dd P_1' \dd b_1 \dd b_2$, $p_k \dd P \dd p_1 \dd b_2$ and $p_k \dd q_t \dd Q \dd q_1 \dd v' \dd P_3 \dd a \dd b_2$.
\end{proof}

Having proved Theorems \ref{loadedpyr1} and \ref{loadedpyr2}, we can give a proof of Theorem \ref{mainloadedpyr}, below.

\begin{proof}[Proof of Theorem \ref{mainloadedpyr}]
Suppose for a contradiction that there exists a loaded pyramid $\Pi=(\Sigma,P)$ in $G$ with $a, b_1,b_2,b_3,P_1,P_2,P_3, P$ as in the definition, and a connected component $D$ of $G\setminus N[b_2]$ for which neither $N[D]\cap (\Pi\setminus P_3)$ nor $N[D]\cap (P_3\setminus \{a\})$ is empty, such that neither of the two bullets of Theorem \ref{mainloadedpyr} hold. Also, let $a'$ be the neighbor of $a$ in $P_1$, and subject to the above properties and $a'$ being the neighbor of $a$ in $P_1$, let $|P_1|$ be minimal. We claim:
\\
\\
\sta{\label{gettype1or2} We have $N(p_k)\cap P_1$ is a clique.}

Suppose not. Traversing $P_1$ from $b_1$ to $a$, let $x$ and $y$ be the first and the last neighbor of $p_k$ in $P_1$, respectively. Then $x$ and $y$ are distinct and non-adjacent, and we have $y\neq a$. If $k=1$,
then $H=b_1 \dd P_1 \dd x \dd p_k \dd y \dd P_1  \dd a \dd P_3 \dd b_3 \dd b_1$
is a hole in $G$ and $b_2$ has exactly four neighbors in $H$, namely $a,b_1,b_3$ and $p_k$. But then $(H,b_2)$ is an even wheel in $G$, which is impossible. So $k>1$ and consequently $b_2$ is not adjacent to $p_k$. Now, replacing the path $P_1$ in the pyramid $\Sigma$ by the path $P_1'=b_1 \dd P_1 \dd x \dd p_k \dd y \dd P_1 \dd a$
we obtain a pyramid $\Sigma'$ in $G$, where $\Pi'=(\Sigma',P \setminus p_k)$ is a loaded pyramid in $G$ with $|P_1'|<|P_1|$ and $D$ is a connected component of $G\setminus N[b_2]$ for which neither $N[D]\cap (\Pi'\setminus P_3)$ nor $N[D]\cap (P_3\setminus \{a\})$ is empty, such that neither of the two bullets of Theorem \ref{mainloadedpyr} hold. This violates the choice of $\Pi$, and hence proves \eqref{gettype1or2}.
\\
\\
\sta{\label{b1anti} The vertex $b_1$ is anticomplete to $P$.}

Suppose not. Let $i$ be maximum such that $b_1$ is adjacent to $p_i$.
Also, traversing $P_1$ from $b_1$ to $a$, let $y$ be the last neighbor of $p_k$ in $P_1$. Then we have $y\in P_1\setminus \{a,b_1\}$. If $y$ is adjacent to $b_1$, then $N_{P_1}(p_k) \subseteq N_{P_1}(b_1)$ and 
$\Pi$ is a loaded pyramid of type 1, which together with Theorem \ref{loadedpyr1} implies that the one of the two bullets of Theorem \ref{mainloadedpyr} holds, a contradiction. Thus, $y$ is not adjacent to $b_1$.
Suppose that $i=1$.
Then $H=b_1 \dd p_1 \dd P \dd y \dd P_1 \dd a \dd P_3 \dd b_3 \dd b_1$
is a hole
in $G$ and $b_2$ has exactly four neighbors in $H$, namely $a,b_1,b_3$ and $p_1$. But then $(H,b_2)$ is an even wheel, which is impossible. So $i>1$,
and consequently $b_2$ is not adjacent to $p_i$. Now, replacing the path $P_1$ in the pyramid $\Sigma$ by the path $P_1'=b_1 \dd p_i \dd P \dd p_k \dd y \dd P_1 \dd a$
we obtain a pyramid $\Sigma'$ in $G$, where $\Pi'=(\Sigma', p_1 \dd P \dd p_{i-1})$ is a loaded pyramid in $G$ with $|P_1'|<|P_1|$, and $D$ is a connected component of $G\setminus N[b_2]$ for which neither $N[D]\cap (\Pi'\setminus P_3)$ nor $N[D]\cap (P_3\setminus \{a\})$ is empty, such that neither of the two bullets of Theorem \ref{mainloadedpyr} hold. This violates the choice of $\Pi$, and hence proves \eqref{b1anti}.
\\
\\
Now, if $p_k$ has exactly one neighbor $x$ in $P_1$, then
$x \neq b_1$ and so
by \eqref{b1anti}, $P_1\cup P \cup \{b_2\}$ is a theta with ends $b_2,x$ in $G$, a contradiction. Therefore, by \eqref{gettype1or2}, $p_k$ has exactly two neighbors in $P_1$, which are adjacent. Also, by \eqref{b1anti}, $b_1$ is anticomplete to $P$, and in particular $b_1$ is not adjacent to $p_k$. Therefore, we may assume that $N(p_k)\cap P_1=\{x,y\}\subseteq P_1\setminus \{a,b_1\}$
where $P_1$ traverses $b_1,x,y,a$ in this order.
But now $\Pi$ is loaded pyramid of type 2, which along with Theorem \ref{loadedpyr2} implies that the one of the two bullet of Theorem \ref{mainloadedpyr} holds, a contradiction. This completes the proof of Theorem \ref{mainloadedpyr}.
\end{proof}

We now prove Theorem~\ref{loadedcorner}.

\begin{proof}
  Suppose not, and let $D$ be a component of $G \setminus N[b_2]$
  such that $\Pi \subseteq N[D]$. By Theorem~\ref{mainloadedpyr} we deduce that
  there is a proper wheel $(H,b_2)$ in $G$ with two long sectors $\Gamma_1$ and
  $\Gamma_3$ such that $\Gamma_1^*$ contains the neighbor of $a$ in $P_1$ and $\Gamma_3=P_3$.
 But now we get a contradiction to Theorem~\ref{lemma:proper_wheel_forcer}.
  \end{proof}

From Theorem~\ref{wheelstarcutset} and Theorem~\ref{loadedcorner} we
deduce:

\begin{theorem}\label{hubstarcutset}
  Let $G \in \mathcal{C}$   and let $(H,v)$ be a proper  wheel  or a loaded
  pyramid in $G$.
  Then there is no component  $D$ of $G \setminus N[v]$
such that $H \subseteq N[D]$. 
\end{theorem}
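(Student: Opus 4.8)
The plan is to simply combine the two cut-set theorems already established. Let $G \in \mathcal{C}$ and let $(H,v)$ be either a proper wheel or a loaded pyramid in $G$, and suppose for contradiction that some component $D$ of $G \setminus N[v]$ satisfies $H \subseteq N[D]$. There are two cases according to the type of object $(H,v)$ is.

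First, if $(H,v)$ is a proper wheel, then $H$ is a hole and $v$ is a proper wheel center, and the hypothesis $H \subseteq N[D]$ for a component $D$ of $G\setminus N[v]$ is exactly the configuration ruled out by Theorem~\ref{wheelstarcutset}; this immediately gives a contradiction. Second, if $(H,v)$ is a loaded pyramid, then in the notation of the definition we write $v = b_2$ (the loaded pyramid corner) and $H = \Pi = (\Sigma, P)$ for the underlying vertex set $P \cup \Sigma$. The hypothesis that there is a component $D$ of $G \setminus N[b_2]$ with $\Pi \subseteq N[D]$ is precisely the hypothesis contradicted by Theorem~\ref{loadedcorner}, so again we reach a contradiction. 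In both cases the assumed component $D$ cannot exist, which proves the theorem.

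There is essentially no obstacle here: all of the content lives in Theorems~\ref{wheelstarcutset} and~\ref{loadedcorner}, and this statement is just the common packaging of the two. The only mild point to be careful about is the bookkeeping of notation---that a ``loaded pyramid'' in the statement is literally a pair whose distinguished vertex $v$ plays the role of $b_2$, and that the ``$H$'' in the statement refers to the full vertex set $\Pi = P \cup \Sigma$ of the loaded pyramid rather than merely the hole $\Sigma$'s rim---so that the hypothesis $H \subseteq N[D]$ matches the hypothesis $\Pi \subseteq N[D]$ appearing in Theorem~\ref{loadedcorner}. Once that identification is made explicit, the proof is a two-line case split.
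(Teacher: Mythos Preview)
Your proposal is correct and matches the paper's own treatment: the paper simply states that Theorem~\ref{hubstarcutset} is deduced from Theorem~\ref{wheelstarcutset} and Theorem~\ref{loadedcorner}, without writing out a separate proof. Your two-line case split is exactly the intended argument, and your remark about the bookkeeping of notation (identifying $v$ with $b_2$ and $H$ with $\Pi$) is the only thing worth saying.
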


\section{Tree strip systems}
\label{strips}
In this section we summarize results from \cite{bisimplicialnew}
that allow us to deal with cross-edges of
near-prisms in an even-hole-free graph. 
Recall that
an {\em extended near-prism} is a graph obtained from a near-prism by adding one extra edge, as follows. 
Let $P_1,P_2,P_3$ be as in the definition
of a near-prism, and let $a\in P_1^*$ and $b\in P_2^*$; and add an edge $ab$. 
We call $ab$ the {\em cross-edge} of the extended near-prism.
Next we explain a theorem from \cite{bisimplicialnew} that describes the structure of graphs with an extended near prism. We start with several definitions from
\cite{bisimplicialnew}.

Let $T$ be a tree with at least 3 leaves. 
A {\em leaf} of $T$ is a vertex of degree exactly one, and a {\em leaf-edge} is an edge incident with a leaf.
Let $(A',B')$ be a bipartition of $T$, and assume that 
for every $v\in V(T)$, there is at most one component $C$ of $T\setminus v$ such that $A'\cap C=\emptyset$,
and at most one such that $B'\cap C=\emptyset$. (Note that every component $C$ of $T\setminus v$ contains a leaf
of $T$ and therefore meets at least one of $A',B'$.) Since $|V(T)|\ge 3$, each leaf-edge
is incident with only one leaf; let $A$ be the set of leaf-edges incident with a leaf in $A'$, and define $B$
similarly.
Let $L(T)$ be the line-graph of $T$, that is the vertex set of $L(T)$ is the edge set of $T$, and two edges of $T$ are adjacent in $L(T)$ if they share an end in $T$. 
Add to $L(T)$ two more vertices $a,b$ and the edge $ab$, and make $a$ complete to $A$ and $b$ complete to $B$,
forming a graph $H(T)$ with vertex set $E(T)\cup \{a,b\}$.  We say that
$H(T)$ is an {\em extended tree line-graph}, and $ab$ is its {\em cross-edge}. 

Every extended near-prism is an extended tree line-graph, where the corresponding tree has four leaves and exactly two
vertices of degree three.

A {\em branch-vertex} of a tree is a vertex of degree different from two (thus, leaves are branch-vertices).
A {\em branch} of a tree $T$ is a path $P$ of $T$ with distinct ends $u,v$, both branch-vertices, 
such that every vertex of $P^*$ has degree
two in $T$. 
Every edge of $T$ belongs to a unique branch.

Let $T$ be a tree, and let $U$ be the set of branch-vertices of $T$;
and make a new tree $J$ with vertex set $U$  by making $u,v\in U$ adjacent in
$J$ if there
is a branch of $T$ with ends $u,v$. We call $J$ the {\em shape} of $T$. Thus $J$ has no vertices of degree two;
and $T$ is obtained from $J$ by replacing each edge by a path of positive length.

Let $A,B,C$ be subsets of $V(G)$, with $A, B\ne \emptyset$ and disjoint from $C$, and let $S=(A,B,C)$. A {\rm rung} of $S$, or
an {\em $S$-rung},
is a path $p_1\cc p_k$ of $G[A\cup B\cup C]$ such that $p_1\in A$, $p_k\in B$ and $p_2\ll p_{k-1}\in C$, and if
$k>1$ then $p_1\notin B$ and $p_k\notin A$. (If $A\cap B\ne \emptyset$, then $k=1$ is possible.)
If every vertex in $A\cup B\cup C$ belongs to an $S$-rung, we call $S$ a {\em strip}.

Let $J$ be a tree with at least three vertices.
$M$ is a {\em $J$-strip system} in a graph $G$ consists of:
\begin{itemize}
\item for each edge $e=uv$ of $J$,  a subset $M_{uv}=M_{vu}=M_e$ of $V(G)$; and
\item for each $v\in V(J)$,  a subset $M_v$ of $V(G)$
\end{itemize}
satisfying the following conditions:
\begin{itemize}
\item the sets $M_{e}\; (e \in E(J))$ are pairwise disjoint;
\item for each $u \in V(J)$, $M_u \subseteq \bigcup_{v \in N_J(u)} M_{uv}$;
\item for each $uv \in E(J)$, $(M_{uv}\cap M_u, M_{uv}\cap M_v, M_{uv}\setminus (M_u\cup M_v))$ is a 
strip;
\item if $uv,wx \in E(J)$ with $u,v,w,x$ all distinct, then there are
no edges between $M_{uv}$ and $M_{wx}$;
\item if $uv,uw \in E(J)$ with $v \ne w$, then $M_u \cap M_{uv}$ is complete to
$M_u \cap M_{uw}$, and there are no other edges between $M_{uv}$ and $M_{uw}$.
\end{itemize}
A rung of the strip $(M_{uv}\cap M_u, M_{uv}\cap M_v, M_{uv}\setminus (M_u\cup M_v))$ will be called an {\em $e$-rung}
or {\em $uv$-rung}. (the dependence on $M$ and $J$ is left implicit, for the sake of brevity.)
Let $V(M)$ denote the union of the sets $M_e\;(e\in E(J))$.

Let $J$ be a tree, let $M$ be a $J$-strip system in $G$, and let $(\alpha,\beta)$ be a partition of the set of 
leaves of $J$. We say an edge $ab$ of $G$ is a {\em cross-edge}
for $M$ with {\em partition $(\alpha,\beta)$} if:
\begin{itemize}
\item $J$ has no vertex of degree two, and at least three vertices;
\item for every vertex $s\in V(J)$, $s$ has at most one neighbor in $\alpha$, and at most one in $\beta$;
\item $a, b \not\in V(M)$; and 
\item $a$ is complete to $\bigcup_{u\in \alpha}M_u$, and $a$ has no other neighbors in $V(M)$;
$b$ is complete to $\bigcup_{u\in \beta}M_u$, and $b$ has no other neighbors in $V(M)$.
\end{itemize}
Under these circumstances, a leaf $v \in \alpha$ is called and {\em $a$-leaf}, and a leaf $v \in \beta$ is a {\em $b$-leaf}.

Let $M$ be a $J$-strip system in $G$ with cross-edge $ab$ and partition $(\alpha,\beta)$.
We say $X\subseteq V(M)\cup \{a,b\}$
is {\em local} if either:
\begin{itemize}
\item $X\subseteq M_e$ for some $e \in E(J)$; 
\item $X\subseteq M_u$ for some $u\in V(J)$; or
\item $X$ contains $a$ and not $b$, and $X\setminus \{a\}\subseteq M_{u}$ for some leaf $u\in \alpha$; 
  or $X$ contains $b$ and not $a$, and $X\setminus \{a\}\subseteq M_{u}$ for some leaf $u\in \beta$.
\end{itemize}

Next we describe two maximizations:
\begin{itemize}
\item
We start with an even-hole-free graph $G$, and an edge $ab$ of $G$, such that there is an extended tree line-graph $H(T)$
that is an induced subgraph of $G$, 
with cross-edge $ab$. Subject to this we choose $T$ with as many branches as possible, that is, such that its shape $J$
has $|E(J)|$ maximum.
\item Then we choose a $J$-strip system $M$ in $G$ with the same cross-edge $ab$, with $V(M)$ maximal.
\end{itemize}
In these circumstances $(J,M)$ is said to be {\em optimal for $ab$}.

We will need the following special case of Theorem~4.2 of \cite{bisimplicialnew} (here we have corrected a typo
that occurred in the statement in \cite{bisimplicialnew}):

\begin{theorem}[Chudnovsky, Seymour \cite{bisimplicialnew}]\label{treestructplusminus}
Let $ab$ be an edge of an even-hole-free graph $G$, and let $(J,M)$ be optimal for $ab$.
Assume that no vertex of $G$ is adjacent to both $a$ and $b$.
Then for every connected induced subgraph $F$ of $G\setminus (M \cup \{a,b\})$:
\begin{itemize}
\item if not both $a,b$ have neighbors in $V(F)$, then the set of vertices in $V(M) \cup \{a,b\}$
with a neighbor in $V(F)$ is local;
\item if both $a,b$ have neighbors in $V(F)$, then there exists a leaf $t$ of $J$ such that every vertex of $V(M)$
with a neighbor in $V(F)$ belongs to $M_t$.
\end{itemize}
\end{theorem}

Theorem~\ref{treestructplusminus} assumes that $G$ is even-hole-free,
rather than $G \in \mathcal{C}$. It is likely that the proof works
under the more general assumption, we but we have not verified the details.
The last result of this section is a slight strengthening of
Theorem~\ref{treestructplusminus}:

\begin{theorem}\label{treestructplus}
Let $ab$ be an edge of an even-hole-free graph $G$, and let $(J,M)$ be optimal for $ab$. Let $(\alpha, \beta)$ be the partition such that $ab$ is a cross-edge for $M$ with partition $(\alpha, \beta)$.
Assume that
\begin{itemize}
\item $|\beta| \geq 2$; 
  \item no vertex of $G$ is adjacent to both $a$ and $b$;
  \item  $a,b \not \in \Hub(G)$; and
  \item    $G \setminus N[a]$ is connected.
    \end{itemize}
Let $F \subseteq G \setminus (M \cup \{a,b\})$ be connected.
Then   $a$ is anticomplete to $F$,  and the set of vertices in
$M \cup \{b\}$
with a neighbor in $F$ is local.
\end{theorem}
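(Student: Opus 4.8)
The plan is to deduce Theorem~\ref{treestructplus} from Theorem~\ref{treestructplusminus} by ruling out the ``both $a,b$ have neighbors in $F$'' alternative and then, separately, ruling out that $a$ has any neighbor in $F$ at all. So let $F \subseteq G \setminus (M \cup \{a,b\})$ be connected, and apply Theorem~\ref{treestructplusminus} to $F$. If not both of $a,b$ have a neighbor in $V(F)$, then the set $N$ of vertices in $V(M) \cup \{a,b\}$ with a neighbor in $V(F)$ is local, and we would be essentially done once we also know $a$ is anticomplete to $F$. If both $a$ and $b$ have a neighbor in $V(F)$, then there is a leaf $t$ of $J$ with every vertex of $V(M)$ having a neighbor in $V(F)$ lying in $M_t$; I would like to derive a contradiction from this, using the hypotheses $|\beta| \geq 2$, $a,b \notin \Hub(G)$, no common neighbor of $a,b$, and $G \setminus N[a]$ connected.

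First I would argue that $a$ is anticomplete to $F$. Suppose $a$ has a neighbor in $F$. Since $G \setminus N[a]$ is connected and $F$ is connected and disjoint from $V(M) \cup \{a,b\}$, one can grow $F$ to a connected subgraph $F' \subseteq G \setminus (M \cup \{a,b\})$ that still has a neighbor of $a$ but additionally reaches ``far'' in $G \setminus N[a]$ — in particular, I want $F'$ to pick up neighbors of many of the sets $M_u$. The key point: $a$ is complete to $\bigcup_{u \in \alpha} M_u$ and has no other neighbors in $V(M)$, so if $a$ has a neighbor in $F$ and $F$ also touches the strip system in a non-local way, we would contradict the ``local'' conclusion of Theorem~\ref{treestructplusminus} applied to $F'$ (the vertices of $V(M) \cup \{a,b\}$ with a neighbor in $F'$ would then include $a$ together with vertices in at least two different $M_u$'s, or in some $M_e$ with $a \notin N[M_e]$, which is not local). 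The hypothesis $G \setminus N[a]$ connected is what lets me enlarge $F$ freely within $G \setminus N[a]$; the hypothesis that $a$ is not a hub is what prevents $a$ from being surrounded by its own neighborhood in a way that blocks this. Once $a$ is anticomplete to $F$, the ``both $a,b$ have neighbors in $F$'' case of Theorem~\ref{treestructplusminus} cannot occur, and the first bullet gives that the vertices of $V(M) \cup \{a,b\}$ with a neighbor in $F$ form a local set $N$; since $a \notin N$, in fact $N \subseteq V(M) \cup \{b\}$, so $N$ is a local subset of $M \cup \{b\}$, which is exactly the conclusion.

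The cleanest way to organize this is probably: (i) show $a$ anticomplete to $F$ first, by the enlargement/connectivity argument above, invoking Theorem~\ref{treestructplusminus} on the enlarged set and using that $a \notin \Hub(G)$; then (ii) re-apply Theorem~\ref{treestructplusminus} to $F$ itself: the second bullet is impossible since $a$ has no neighbor in $F$, so the first bullet applies and the set of vertices of $V(M) \cup \{a,b\}$ with a neighbor in $F$ is local; (iii) observe this set omits $a$ and hence is a local subset of $M \cup \{b\}$. The role of $|\beta| \geq 2$ and of ``no common neighbor of $a,b$'' is to ensure that the configuration we are handed genuinely is a cross-edge situation of the kind Theorem~\ref{treestructplusminus} covers (these are literally among its hypotheses, via optimality of $(J,M)$), and also to keep $b$ from behaving like an apex in the enlargement step.

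The main obstacle I anticipate is step (i): making rigorous the claim that if $a$ has a neighbor in $F$ then we can enlarge $F$, while staying outside $V(M) \cup \{a,b\}$, so as to force a non-local trace on the strip system and contradict Theorem~\ref{treestructplusminus}. The subtlety is that $F$ could be a small connected piece entirely inside $N(a) \setminus V(M)$, touching the strip system only locally (inside a single $M_t$ for a leaf $t \in \alpha$) — which is exactly the ``local'' case that is \emph{not} a contradiction. To rule this out I expect to need that $a \notin \Hub(G)$ together with $G\setminus N[a]$ connected: if $F$ sees only $M_t$ locally and $a$'s other neighbors, one should be able to route through $G \setminus N[a]$ (connected!) to reach another leaf's strip, enlarging $F$ to $F'$ with a non-local trace, and then Theorem~\ref{treestructplusminus} applied to $F'$ forbids $a$ from having a neighbor in $F' \supseteq F$. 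Handling the bookkeeping of which enlargements keep us outside $V(M)$, and confirming that $a$ still has a neighbor in the enlarged set, is where the real care is needed.
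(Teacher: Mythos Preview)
Your steps (ii) and (iii) are fine and match the paper. The problem is step (i), and it is a genuine gap rather than just missing bookkeeping.

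The enlargement idea cannot work. It suffices to prove the statement when $F$ is a full connected component of $G\setminus(M\cup\{a,b\})$, and once $F$ is a component there is no room to enlarge it while staying inside $G\setminus(M\cup\{a,b\})$. Your suggested fix, ``route through $G\setminus N[a]$ to reach another leaf's strip,'' necessarily passes through $V(M)$: almost all of $V(M)$ (everything except $\bigcup_{u\in\alpha}M_u$) lies in $G\setminus N[a]$, so any such route leaves $G\setminus(M\cup\{a,b\})$ and Theorem~\ref{treestructplusminus} no longer applies to the enlarged set. In other words, Theorem~\ref{treestructplusminus} by itself never produces a contradiction here; applied to the component $F$ it just hands you a leaf $t$ with $N(F)\cap V(M)\subseteq M_t$, and that is perfectly consistent.

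A symptom that something is missing: your argument never uses $|\beta|\ge 2$ or $b\notin\Hub(G)$, yet both are essential. The paper's proof proceeds very differently at this point. Assuming $a$ has a neighbour in $F$, one takes a path $P$ from $a$ through $F$ to some $x\in M_t$, argues via the connectivity of $G\setminus N[a]$ that $t$ must be a $b$-leaf, then concatenates rungs along a $J$-path from $t$ to an $a$-leaf to build a hole $H$ through $a$. The hypotheses that $b$ is not a proper wheel center and that $N(a)\cap N(b)=\emptyset$ pin down $N_H(b)$ precisely; then $|\beta|\ge 2$ supplies a second $b$-leaf $t''$, and rungs from $M_{t''}$ into $H$ assemble a loaded pyramid with corner $b$, contradicting $b\notin\Hub(G)$. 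So the contradiction comes from building an explicit forbidden substructure inside $V(M)\cup F\cup\{a,b\}$, not from a locality violation for an enlarged $F$.
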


\begin{proof}
  Let $F$ be a component of $G \setminus (M \cup \{a, b\})$; notice that it suffices to prove Theorem \ref{treestructplus} for such $F$. 
  If $a$ is anticomplete to $F$, the result follows from Theorem~\ref{treestructplusminus}.
  Thus suppose that $a$ has a neighbor in $F$.
  By Theorem~\ref{treestructplusminus} there exists a leaf
  $t$ of $J$ such that $N(F) \subseteq M_t \cup \{a,b\}$.
  Since $G \setminus N[a]$ is connected, it follows that
  $t$ is a $b$-leaf and $N(F) \cap M_t \neq \emptyset$.
    Let    $P$ be a path from $a$ to
  a vertex of $x \in M_t$ with $P^* \subseteq F$.
    Let $Q$ be a path of
    $J$ from  $t$ to an $a$-leaf $t'$.  Concatenating rungs corresponding to
    edges
    of $Q$ in $M$,
    we obtain a path $R$ from $x$ to a vertex $y \in M_t'$.
    Let $H$ be the hole $x \dd R \dd y \dd a \dd P \dd x$.
    Since $H \cup b$ is not a theta, it follows that
    $b$ has a neighbor in $P^*$. Since $(H,b)$ is not a proper wheel
    and $N(a) \cap N(b) = \emptyset$, it follows that
    $b$ has a unique neighbor $x' \in P^*$ and $x'$ is adjacent to $x$
    and non-adjacent to $a$. Let $t''$ be a $b$-leaf in $J$
    such that $t'' \neq t$, and let $S$ be a shortest path in $J$ from $t''$ to
    $Q$. Since $t$ is a leaf, it follows that $t \not\in S$. Concatenating rungs corresponding to
    edges    of $S$ in $M$,
    we obtain a path $T$ from $x'' \in M_{t''}$ to a
    vertex with two (consecutive) neighbors in $R$. But now $H \cup T \cup \{b\}$ is a loaded
    pyramid with loaded corner $b$ and apex $a$, contrary to the fact that $b \not \in \Hub(G)$. This proves Theorem~\ref{treestructplus}.
\end{proof}

\section{From local to global separators}
\label{sec:localglobal}

In this section we prove a theorem that is the heart of the proof of our main result. Qualitatively, the content of this theorem is the following. We
have a graph $G$ whose vertex set is partitioned into two subsets $D$ and $X$ where
$D$ is connected, and $X=N(D)$. We also have a dustinguished vertex $b \in D$.
We are   given a
collection of clique cutsets   separating  individual vertices   of $X$
from $b$ (and with additional properties). The theorem asserts that there is one
cutset, whose clique cover number is bounded from above by an absolute constant,
that separates a positive proportion of the vertices of $X$ from $b$.

Let us now delve into the details.
Let $D$ be graph and let $b \in D$.
Our first goal is to associate to each clique of $D$ a canonical separation.
First we handle cliques that do not contain $b$; in this case the definition
is similar in spirit to other papers in the series.
For every clique     $K  \subseteq D \setminus b$
    let $B(K)$ be the component of $D \setminus K$ with
    $b \in B(K)$.
        Let $C(K)=N(B(K))$ and 
        $A(K)=D \setminus (C(K) \cup B(K))$.
        We call $(A(K), C(K), B(K))$ the
        {\em $b$-canonical separation for $K$.}
                
        Now we  extend the definition of a $b$-canonical separation to all cliques of $D$. 
        Thus let $K \subseteq D$
    be a clique such that $b \in K$. 
    Let $B(K)$ be the union of the components $D'$ of $D \setminus K$ such
    that $b \in N[D']$.
        Let $C(K)=N(B(K))$ and let $A(K)=D \setminus (B(K) \cup C(K))$.
Note that $(A(K),C(K), B(K))=(A(K \setminus b), C(K \setminus b) \cup \{b\}, B(K \setminus b) \setminus \{b\})$.

    Let $\mathcal{K}$
        be the set of all cliques of $D$ with $A(K)$ maximal.
        Let $\beta(D,b)=\bigcap_{K \in \mathcal{K}} (B(K) \cup C(K)$).
        (Some readers may recognize $\beta(D,b)$ as the ``central bag''
        for a collection of separations defined in \cite{TWI}.)
        For every $v \in D \setminus \beta(D,b)$, let $F(v)$ be the
        component of $D \setminus \beta(D,b)$ such that $v \in F(v)$.
The next two lemmas describe some of the properties of $\beta(D,b)$.

        \begin{lemma} \label{laminar}
          We have $b \in \beta(D,b)$.
          For every component $F$  of $D \setminus \beta(D,b)$
          there exists $K \in \mathcal{K}$ such that $F \subseteq A(K)$.
In particular, $N(F(v))$ is a clique for every $v \in D \setminus \beta(D,b)$.
     \end{lemma}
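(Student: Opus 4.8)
The plan is to prove the three assertions of Lemma~\ref{laminar} in order, exploiting the maximality that defines $\mathcal{K}$ together with a laminarity property of $b$-canonical separations.

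First, $b \in \beta(D,b)$: for every clique $K$ of $D$, we have by construction $b \in B(K) \cup C(K)$. Indeed, if $b \notin K$ then $b \in B(K)$ by definition (the component of $D \setminus K$ containing $b$), and if $b \in K$ then $b \in C(K) = N(B(K))$ since $B(K)$ is nonempty (it is a union of components $D'$ with $b \in N[D']$, and such components exist unless $D = K$, in which case $B(K) = \emptyset$ but then $b \in K \subseteq C(K)$ — one should check this degenerate case, but it causes no trouble). Taking the intersection over all $K \in \mathcal{K}$ gives $b \in \beta(D,b)$.

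The heart of the argument is the second assertion. The plan is to first establish that the family $\{A(K) : K \in \mathcal{K}\}$ is \emph{laminar}: for $K_1, K_2 \in \mathcal{K}$, either $A(K_1) \subseteq A(K_2)$, or $A(K_2) \subseteq A(K_1)$, or $A(K_1) \cap A(K_2) = \emptyset$. The key observation toward laminarity is that if $K_1, K_2 \in \mathcal{K}$ and $A(K_1) \cap A(K_2) \neq \emptyset$, then one can consider a clique built from $K_1, K_2$ and the relevant neighborhoods and show, using that $B(K_i) \cup C(K_i)$ is ``closed'' in the sense that every path from $A(K_i)$ to $B(K_i)$ meets $C(K_i)$, that one of $A(K_1) \cup A(K_2)$ is again of the form $A(K)$ for some clique $K$; maximality of both $A(K_1)$ and $A(K_2)$ then forces $A(K_1) = A(K_2)$. (The precise clique to use is something like $C(K_1) \cap (B(K_2) \cup C(K_2))$ or a symmetric variant — identifying exactly which clique witnesses the union is where the $C_4$-freeness / structural input, if any, enters; but actually for this lemma I expect only elementary connectivity arguments are needed, since $\mathcal{K}$ is defined purely combinatorially.) Once laminarity is in hand, fix a component $F$ of $D \setminus \beta(D,b)$. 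Since $F \cap \beta(D,b) = \emptyset$ and $\beta(D,b) = \bigcap_{K \in \mathcal{K}}(B(K) \cup C(K))$, for every vertex $v \in F$ there is some $K_v \in \mathcal{K}$ with $v \notin B(K_v) \cup C(K_v)$, i.e.\ $v \in A(K_v)$. Because $F$ is connected and each $C(K)$ separates $A(K)$ from $B(K)$, a vertex of $F$ lying in $A(K)$ cannot be connected within $F$ to a vertex of $B(K) \cup C(K)$ without crossing $C(K) \subseteq \beta(D,b)$ — impossible since $F$ avoids $\beta(D,b)$. Hence $F \subseteq A(K)$ for \emph{every} $K \in \mathcal{K}$ with $F \cap A(K) \neq \emptyset$. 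Now pick any $v \in F$ and the associated $K_v$; then $F \subseteq A(K_v)$, which is the desired conclusion.

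Finally, the ``in particular'' statement is immediate: given $v \in D \setminus \beta(D,b)$, we have $F(v) \subseteq A(K)$ for some $K \in \mathcal{K}$ by the second assertion, so $N(F(v)) \subseteq C(K)$; but by the very definition of $C(K) = N(B(K))$ and the fact that $C(K)$ is the neighborhood of a component (or union of components) across a clique cutset $K$, together with whatever structure makes $C(K)$ a clique — here one uses that $C(K) = N(B(K)) \subseteq N[K]$ and, in a $C_4$-free graph, the neighborhood of a connected set that lies in the common neighborhood of the clique $K$ is again a clique; more simply, $N(F(v))$ is a subset of the clique $K$ since $F(v)$ is a single component of $D \setminus K$ whose neighbors in $K$ form a clique — so $N(F(v))$ is a clique.

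\textbf{Main obstacle.} The delicate point is establishing laminarity of $\{A(K) : K \in \mathcal{K}\}$ and, more precisely, showing that the union $A(K_1) \cup A(K_2)$ of two overlapping maximal members is realized as $A(K)$ for an honest clique $K$ of $D$ — one has to produce that clique explicitly and verify it is a clique (this is presumably where $C_4$-freeness of $G$, hence of $D$, is used) and that its $b$-canonical set $A(\cdot)$ is exactly the union. Everything downstream (the connectivity trapping of components $F$ inside a single $A(K)$, and the clique-ness of $N(F(v))$) is routine once laminarity and the separation property ``$C(K)$ separates $A(K)$ from $B(K)$'' are available.
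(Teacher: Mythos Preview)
Your overall structure is right—prove $b \in \beta(D,b)$, then show every component $F$ of $D \setminus \beta(D,b)$ lies in some $A(K)$, then deduce $N(F) \subseteq K$ is a clique—but there is a real gap, and it is not where you think it is.

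The key sentence in your argument is: ``a vertex of $F$ lying in $A(K)$ cannot be connected within $F$ to a vertex of $B(K) \cup C(K)$ without crossing $C(K) \subseteq \beta(D,b)$.'' Here you assert $C(K) \subseteq \beta(D,b)$ for $K \in \mathcal{K}$, i.e.\ $C(K_1) \cap A(K_2) = \emptyset$ for all $K_1,K_2 \in \mathcal{K}$. This is exactly the substantive step, and you have not proved it. Once you have it, the rest of your argument goes through verbatim and does not use laminarity at all. So laminarity is a red herring: it is stronger than what you need, your own argument bypasses it, and your sketch of how to obtain it (``produce a clique witnessing the union'') is both vague and harder than the direct route. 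Note also that laminarity of the $A(K)$'s by itself does \emph{not} obviously imply $C(K_1) \cap A(K_2) = \emptyset$ when $A(K_1) \cap A(K_2) = \emptyset$, so even a complete proof of laminarity would not close the gap.

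The paper proves $C(K_1) \cap A(K_2) = \emptyset$ directly by a short maximality argument: suppose $v \in C(K_1) \cap A(K_2)$; since $C(K_1)$ is a clique containing $v \in A(K_2)$, one gets $C(K_1) \subseteq A(K_2) \cup C(K_2)$, hence $C(K_1) \cap B(K_2) = \emptyset$; one checks $b \notin C(K_1)$, so $b \in B(K_1)$; then $\{b\} \cup B(K_2)$ is connected and disjoint from $C(K_1)$, forcing $B(K_2) \subseteq B(K_1)$ and hence $A(K_1) \subseteq A(K_2) \setminus \{v\}$, contradicting maximality of $A(K_1)$. No $C_4$-freeness is used anywhere in this lemma—the argument is pure connectivity plus maximality—so your speculation about where $C_4$-freeness enters is off.

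For the ``in particular'' clause, your final parenthetical is the right one: $F(v) \subseteq A(K)$ and $A(K)$ is anticomplete to $B(K)$, so $N(F(v)) \subseteq C(K) \subseteq K$, a clique. The earlier detour through ``common neighborhood of the clique $K$'' is unnecessary.
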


     \begin{proof}
We prove the first assertion of the lemma first.
       \\
       \\
      \sta{$b \in \beta(D,b)$.  \label{binBC}}

      Let  $K \in \mathcal{K}$. Then  $b \in C(K) \cup B(K)$, and
         therefore \eqref{binBC} follows.
       \\
       \\
       \sta{Let $K_1,K_2 \in \mathcal{K}$. Then
         $C(K_1) \cap A(K_2)= C(K_2) \cap A(K_1)=\emptyset$.
         \label{CinB}}
Let $(A_i,C_i,B_i)$ be the canonical separation associated with $K_i$.
       Suppose that there exists $v \in C_1 \cap A_2$. Then $C_1 \cap B_2=\emptyset$.

       First we show that $b \not \in C_1 \cap C_2$. Suppose it is;
then $b \in K_2$, and $b$ has a neighbor in $A_2$, namely $v$, contrary to the definition of a canonical separation. This proves that $b \not \in C_1 \cap C_2$. Since $C_1 \subseteq C_2 \cup A_2$, and $b \in \beta(D,b)$ by
\eqref{binBC}, it follows that $b \not \in C_1$. By \eqref{binBC},  $b \in B_1$; see Figure \ref{fig:new} (left). 

\begin{figure*}[t!] 
    \centering
    \begin{subfigure}[t]{0.5\textwidth}
        \centering
        \includegraphics[height=2.5in]{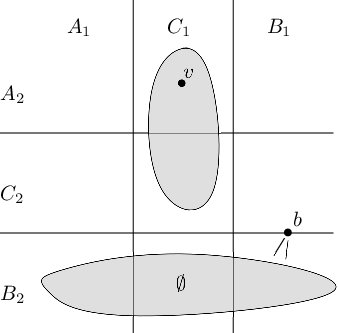}
    \end{subfigure}%
    ~ 
    \begin{subfigure}[t]{0.5\textwidth}
        \centering
        \includegraphics[height=2.5in]{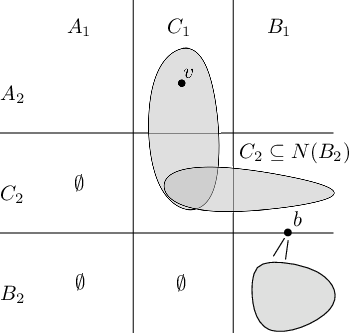}
    \end{subfigure} 
    \caption{Proof of Lemma \ref{laminar}.}
    \label{fig:new}
\end{figure*}

       Since $b \in B_1$, and $\{b\} \cup B_2$ is connected, and $(\{b\} \cup B_2) \cap C_1 = \emptyset$, it follows that $B_2 \subseteq B_1$; see Figure \ref{fig:new} (right). But $C_2 = N(B_2) \subseteq B_1 \cup C_1$, and so  $A_1 \subseteq A_2 \setminus \{v\}$, contrary to the definition of $\mathcal{K}$. This proves \eqref{CinB}.        \\
       \\
       Now let $F$ be a component of $D \setminus \beta(D,b)$.
       Let $K \in \mathcal{K}$ be such that $F \cap A(K) \neq \emptyset$.
       By \eqref{CinB}, $F \cap C(K) = \emptyset$. But now $F \subseteq A(K)$,
       as required. Since $N(F) \subseteq N(A(K)) \subseteq K$, the second assertion follows.
     \end{proof}
       
        \begin{lemma} \label{noncutpt}
          If $K$ is a clique cutset in the graph $\beta(D,b)$, then
          $b \in K$.
                    \end{lemma}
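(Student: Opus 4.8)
The plan is to argue by contradiction: suppose $K$ is a clique cutset of $\beta := \beta(D,b)$ with $b \notin K$, and derive a contradiction with the maximality built into the definition of $\mathcal{K}$. Since $K$ is a cutset of $\beta$, the graph $\beta \setminus K$ has a component $B$ containing $b$ together with at least one further component; let $W$ be the union of the components of $\beta\setminus K$ other than $B$, so that $W$ is a nonempty subset of $\beta$ disjoint from $K \cup B$. Note $K \subseteq \beta \setminus \{b\} \subseteq D \setminus b$, so the $b$-canonical separation $(A(K), C(K), B(K))$ is defined in the first (non-$b$) sense. I will show that $W \subseteq A(K)$; this suffices, because for every $\hat K \in \mathcal{K}$ we have $\beta \subseteq B(\hat K) \cup C(\hat K)$ and hence $\beta \cap A(\hat K) = \emptyset$, while (by finiteness of $D$ together with the definition of $\mathcal{K}$) there is some $\hat K \in \mathcal{K}$ with $A(K) \subseteq A(\hat K)$ --- so $\emptyset \neq W \subseteq A(K) \subseteq A(\hat K)$ would contradict $W \subseteq \beta$ and $\beta \cap A(\hat K) = \emptyset$.

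The core of the argument is the following laminarity observation about $\beta$ inside $D$: \emph{for every path $\pi$ of $D \setminus K$, all vertices of $\pi$ lying in $\beta$ belong to one and the same component of $\beta \setminus K$.} To see this, traverse $\pi$ and consider two successive vertices $u,u'$ of $\pi$ that lie in $\beta$, with all vertices of $\pi$ strictly between them outside $\beta$. If $u$ and $u'$ are adjacent they lie in the same component of $\beta\setminus K$ since $\pi$ avoids $K$. Otherwise the vertices of $\pi$ strictly between $u$ and $u'$ form a connected subset of $D \setminus \beta$, hence lie in a single component $F$ of $D \setminus \beta$; then $u, u' \in N(F)$, and since $N(F)$ is a clique by Lemma~\ref{laminar}, the set $N(F) \setminus K$ is a clique contained in $\beta \setminus K$ and therefore inside one component of $\beta\setminus K$, so again $u$ and $u'$ lie in the same component. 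Chaining these comparisons along $\pi$ proves the observation.

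Applying this to paths starting at $b$, we obtain that the component $B(K)$ of $D \setminus K$ containing $b$ satisfies $B(K) \cap \beta = B$; in particular $W \cap B(K) = \emptyset$. Next, $W$ is disjoint from $C(K) = N(B(K))$: if some $w \in W$ had a neighbour $x \in B(K)$, then either $x \in \beta$, forcing $x \in B$ and hence (as $w \in \beta\setminus K$ is adjacent to $B$) $w \in B$, a contradiction; or $x$ lies in a component $F$ of $D\setminus \beta$, which is then contained in $B(K)$, being connected and disjoint from $K$, so $w \in N(F)$, and since $N(F)\setminus K$ is a clique inside one component of $\beta\setminus K$ that meets $B(K)\cap\beta = B$, we get $w \in B$, again a contradiction. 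Since $W$ is disjoint from $K$ by construction, this gives $W \subseteq D \setminus (B(K) \cup C(K) \cup K) \subseteq A(K)$, completing the argument as explained above.

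The main obstacle is the laminarity observation and the two consequences drawn from it --- that $B(K)$ meets $\beta$ in exactly $B$, and that $W$ avoids $C(K)$; these require carefully exploiting the fact, supplied by Lemma~\ref{laminar}, that each $N(F)$ is a clique, in order to ``contract'' the excursions of a path outside $\beta$ without crossing between components of $\beta\setminus K$. Everything else is bookkeeping with the definitions of the $b$-canonical separation and of $\mathcal{K}$, together with the trivial finiteness remark that $A(K) \subseteq A(\hat K)$ for some $\hat K \in \mathcal{K}$.
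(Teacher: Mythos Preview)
Your proof is correct and follows essentially the same approach as the paper's: both argue by contradiction, use Lemma~\ref{laminar} (that $N(F)$ is a clique for each component $F$ of $D\setminus\beta$) to show that no component of $D\setminus K$ meets both the $b$-side and the other side of $\beta\setminus K$, conclude that the non-$b$ side $W$ lies in $A(K)$, and then invoke maximality of $\mathcal{K}$ to reach a contradiction. The paper compresses your laminarity observation and the two disjointness claims into the single sentence ``$K$ is a clique cutset in $D$ and no component of $D\setminus K$ meets both $D_1$ and $D_2$,'' but the content is the same.
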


        \begin{proof}
          Suppose that $K$ is a clique cutset of $\beta(D,b)$ and $b \not \in K$. Let $D_1$ be the component of $\beta(D,b) \setminus K$,
          with  $b \in D_1$, and let $D_2=\beta(D,b) \setminus (K \cup D_1)$.
          Since by Lemma~\ref{laminar},  $N(F)$ is a clique for every component $F$ of $D \setminus \beta(D,b)$, it follows that $K$ is a clique cutset in
          $D$ and no component of $D \setminus K$ meets both $D_1$ and
          $D_2$. But then $D_2 \cap A(K) \neq \emptyset$.
          It follows that there exists $K' \in \mathcal{K}$
          such that $D_2 \cap A(K') \neq \emptyset$, contrary to the definition of $\beta(D,b)$.
          \end{proof}

Next we define a {\em breaker} in a graph.
Let $G$ be a graph. Let  $X \subseteq V(G)$ and let
$  X_1,X_2,X_3$ be subsets of $X$ such that $|X_1|=|X_2|=|X_3|$ 
and $X_1,X_2,X_3$ are pairwise disjoint and anticomplete to each other.
Write $D=G \setminus X$ and let $b \in D \setminus N[X]$.
Assume that  $N(D)=X$. 

For $x_1,x_2,x_3 \in X$ let  us say  that 
$x_1x_2x_3$ is {\em partitioned} if $x_i \in X_i$ for every $i \in \{1,2,3\}$.
For a partitioned triple $x_1x_2x_3$ we say that it is {\em $b$-separated}
and {\em $(i,j)$-active} if $b$ is non-adjacent to $x_1, x_2, x_3$ and
there is a clique $K \subseteq D$ such that one of the following holds
(For $i \in \{1,2,3\}$, let
   $D_i$ be the  union of components of $D \setminus K$ such that
  $N(x_i) \cap D_i \neq \emptyset$.)
  \label{secondpart}
  \begin{itemize}
  \item $b \not \in K \cup D_i \cup D_j$; or
\item   We have
  $b \not \in N[D_i]$. Moreover, there is a set $D_j' = D_j'(x_1x_2x_3)$ of vertices such that:
  \begin{itemize}
      \item $D_j'$ is not a clique; 
      \item $D_j'$ is complete
  to $K$; 
      \item There is a vertex $q= q(x_1x_2x_3)$ with the following property. Either $b \in K$ and $q = b$; or there exists $k \in \{1, 2, 3\} \setminus \{i\}$ such that $x_k$ is complete to $K \cup D_j'$ and $q = x_k$; 
      \item For every $v \in D_j'$, there is a path $P$ in $D \cup \{x_j\} $ from $b$ to $x_j$ such that $P \cap N(q) \neq \emptyset$ and the $(P, x_j)$-last vertex in $P \cap N(q)$ is $v$;
      and
      \item For every  path $P$ in $D \cup \{x_j\} $ from $b$ to $x_j$, we have $P \cap N(q) \neq \emptyset$, and the $(P, x_j)$-last vertex of $P \cap N(q)$ is complete to $K$. 
\end{itemize}
\end{itemize}

  Under these circumstances we say that
$K$ is a {\em witness} for $x_1x_2x_3$. We say that $x_1x_2x_3$ is of \emph{type 1} if the first bullet of \eqref{secondpart} holds; otherwise, we say that $x_1x_2x_3$ is of \emph{type 2}. We say an $(i,j)$-active triple $x_1x_2x_3$ is of \emph{type 2a} if $q(x_1x_2x_3) = b$; it is of \emph{type 2b} if $q(x_1x_2x_3) = x_j$; and it is of \emph{type 2c} if $q(x_1x_2x_3) \neq b, x_j$. 
A triple is $b$-{\em separated} if it is $b$-separated and $(i,j)$-active for some distinct $i, j \in \{1,2,3\}$.
Let $\delta \in (0,1]$.
We say that $X$ is a {\em $(\delta ,b)$-breaker} in $G$ if
          there exist at least
  $\delta |X|^3$ partitioned $b$-separated triples.

The main result of this section is the following:
  \begin{theorem}
    \label{localglobal}
    Let  $\delta \in (0,1]$ and let
      $\epsilon \leq \frac{\delta^2}{48 \times 192}$.
        Let $G$ be a $C_4$-free graph,
        and let $X$ be a $(\delta, b)$-breaker in $G$.
        Then 
                    there exists  $S \subseteq D \setminus \{b\}$
            with $\kappa(S) \leq (96/\delta)^2$ such that the component $D(b)$
            of $D \setminus S$ with $b \in D(b)$ 
is disjoint from $N(x)$ for
at least $\epsilon|X|$ vertices $x \in X$.
        \end{theorem}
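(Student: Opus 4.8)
The plan is to extract, from the $\delta|X|^3$ partitioned $b$-separated triples, a single ``good'' triple whose witness clique $K$ is, up to bounded $\kappa$, the separator $S$ we seek — and then to argue that the canonical machinery ($\beta(D,b)$, Lemmas~\ref{laminar} and~\ref{noncutpt}) lets us upgrade the separation of individual $x_i$'s into a separation of a positive fraction of $X$.

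\textbf{Step 1: A popular type and a popular clique.} Since every $b$-separated triple is $(i,j)$-active for some ordered pair and falls into one of the types 1, 2a, 2b, 2c, by pigeonhole one type together with one ordered pair $(i,j)$ accounts for at least $\frac{\delta}{24}|X|^3$ triples (there are $3\cdot 2=6$ ordered pairs and $4$ types, so $24$ combinations). Fix that type and $(i,j)$. The key reduction is to the $b$-canonical separations: for each such triple, the witness $K$ yields (via the definitions on p.~\pageref{secondpart}, together with Lemma~\ref{laminar}) a $b$-canonical separation $(A(K),C(K),B(K))$ with $b\in B(K)\cup C(K)$, and with $x_i$ (and in the type-1 case also $x_j$) ``pushed into'' $A(K)$ in the sense that $N(x_i)\cap D_i\subseteq A(K)$ and $x_i\notin N[B(K)]$. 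The first thing I would prove carefully is a normalization lemma: we may assume each witness $K$ is chosen so that $A(K)$ is maximal among witnesses for that triple, hence each $(A(K),C(K),B(K))$ is one of the laminar separations indexed by $\mathcal K$ (this is exactly why the central bag $\beta(D,b)$ was defined).

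\textbf{Step 2: The central bag sees few $x$'s, or we are done.} Consider $\beta=\beta(D,b)$ and $S_0=C(K)$ for witnesses appearing above. Two cases. \emph{Case A:} for the popular type/pair, at least $\frac{\delta}{48}|X|^3$ triples have their witness $K$ with $b\notin K$ and with $N(x_i)\cap\beta=\emptyset$ (i.e.\ $x_i$ is genuinely separated from $b$ by $C(K)\subseteq$ the boundary of $\beta$). Then by Lemma~\ref{laminar} every such $N(x_i)$ avoids the component $D(b)$ of $D\setminus S$ once $S\supseteq$ the relevant bounded-$\kappa$ piece of $C(K)$; a counting argument (each vertex $x\in X$ lies in $X_i$ and the number of triples through a fixed $x_i$ is at most $|X|^2$) shows at least $\frac{\delta}{48}|X|$ distinct vertices $x_i$ are separated. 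Picking one witness $K$ and setting $S=C(K)$ would already work if $\kappa(C(K))$ were bounded; since it need not be, we instead run the pigeonhole one level deeper — among the $\frac{\delta}{48}|X|^3$ triples, some single clique $K^\star$ (there are at most polynomially many maximal cliques since $G$ is $C_4$-free, but we do not even need that: we can iterate the averaging) serves as witness for a $\gtrsim(\delta/96)^2$-fraction, and $S=C(K^\star)$ has $\kappa\le$ a constant because $C(K^\star)=N(B(K^\star))$ is covered by the cliques forced by $C_4$-freeness around $K^\star$. This is the step I expect to require the most care: bounding $\kappa(C(K^\star))$ by $(96/\delta)^2$ exactly, and getting the $\epsilon\le\frac{\delta^2}{48\cdot192}$ bookkeeping right.

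\textbf{Step 3: The $q$-path argument for type 2.} \emph{Case B} is when few triples are ``cleanly'' separated as in Case A — then for most triples the type-2 extra structure ($D_j'$ not a clique, complete to $K$, the $q$-path/last-vertex conditions) must be exploited. Here the plan is to use the last two bullets of the type-2 definition: for the vertex $q=q(x_1x_2x_3)$, \emph{every} $b$--$x_j$ path in $D\cup\{x_j\}$ meets $N(q)$ and its $(P,x_j)$-last such vertex is complete to $K$. This says $N(q)$ together with $K$ ``controls'' all $b$--$x_j$ routes; combined over a popular clique $K$ and a popular $q$ (again pigeonhole; note $q\in\{b\}\cup X$ so there are $\le|X|+1$ choices), one shows that $K\cup(\text{a clique cover of the relevant last-vertex set})$ forms a bounded-$\kappa$ cutset separating a $\gtrsim\epsilon$-fraction of the $x_j$'s from $b$. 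The non-clique set $D_j'$ is used to guarantee the separation is ``real'' (it witnesses that $K$ alone does not already separate, so no degeneracy).

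\textbf{Main obstacle.} The genuine difficulty is not the pigeonholing but proving that the boundary $C(K)$ of a canonical separation has clique cover number bounded by an \emph{absolute} constant (here $(96/\delta)^2$) — $C_4$-freeness gives that the common neighborhood of an edge is a clique, and that $N(a)\cap N(b)$ is a clique, but turning ``$C(K)=N(B(K))$ where $B(K)$ is connected'' into a bounded clique cover requires the structure coming from the witnesses (the $x_i$'s and their stable-triple hypothesis) rather than $C_4$-freeness alone. I would organize the proof so that this clique-cover bound is isolated as the one hard lemma, with everything else being averaging over the $\ge\delta|X|^3$ triples and an application of Lemmas~\ref{laminar} and~\ref{noncutpt} to pass from ``separating individual vertices'' to ``separating a positive fraction via a laminar/central-bag argument.''
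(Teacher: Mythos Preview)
Your proposal has a genuine gap at its core. The plan hinges on finding a single witness clique $K^\star$ that serves many triples (``run the pigeonhole one level deeper''), but there is no useful bound on the number of candidate witness cliques: even using that $C_4$-free graphs have at most $n^2$ maximal cliques, pigeonhole over $\tfrac{\delta}{48}|X|^3$ triples and $n^2$ cliques yields only $\tfrac{\delta|X|^3}{48n^2}$ triples per clique, which can be $o(1)$. Equally fatal is the target you set yourself: bounding $\kappa(C(K^\star))$ by a function of $\delta$ alone. There is no such bound --- $C(K)=N(B(K))$ for a connected set $B(K)$ in a $C_4$-free graph can have arbitrarily large clique cover number, and nothing in the witness hypothesis controls it. So Step~2 as written cannot be completed.

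The paper's proof takes a different route that sidesteps both obstacles. It never tries to find a popular witness clique or to bound $\kappa(C(K))$. Instead, for each $x\in X$ it defines a \emph{projection} $\Proj(x)=N_{\beta(D,b)}(x)\cup\bigcup_{F\in\mathcal F(x)}N(F)$, where $\mathcal F(x)$ is the set of components of $D\setminus\beta(D,b)$ meeting $N(x)$. The key structural lemmas (proved from the witness definition and Lemmas~\ref{laminar},~\ref{noncutpt}) are: for a type-1 $(1,2)$-active triple, $\Proj(x_1)\cup\Proj(x_2)$ is a \emph{single clique}; for a type-2 $(1,2)$-active triple, $\Proj(x_1)\subseteq K(x_1x_2x_3)$. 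The separator $S$ is then taken to be the union of $\Proj(w)$ over a carefully chosen (small) set $W_1\subseteq X_1$, and $\kappa(S)$ is bounded via Wagon's theorem by $\binom{\alpha(S)+1}{2}$. The whole argument is by contradiction: if no such $S$ works, then $\alpha(\bigcup_{w}\Proj(w))$ must be large; one extracts a stable set $J$ of size $\lceil 96/\delta\rceil$ in $S$, and an averaging argument on a bipartite incidence graph between $\{x(j):j\in J\}$ and $X_2\times X_3$ produces two $j_1,j_2\in J$ sharing $\gtrsim\tfrac{\delta^2}{48\cdot 192}|X|^2$ common pairs. From this overlap one manufactures a $C_4$: in the type-1 case via a second application of the projection-clique lemma to vertices of $X_2$; in the type-2 cases by using the last two bullets of the definition (the $(P,x_j)$-last vertex in $N(q)$ is complete to $K$) to exhibit two non-adjacent vertices $p_1,p_2$ complete to $\{j_1,j_2\}$. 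Your Step~3 gestures at this last mechanism but misses that the $C_4$ contradiction, not a direct construction of $S$, is what drives the proof.
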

         \begin{proof} We first show: 
    \begin{align*}
        {\left\lceil \frac{96}{\delta} \right\rceil+1 \choose 2} &= \left(\left\lceil \frac{96}{\delta} \right\rceil\right)\left(\left\lceil \frac{96}{\delta} \right\rceil+1\right)/2\\
        &\leq (96/\delta + 1)(96/\delta+2)/2\\
        &= (96/\delta)^2 + 3\cdot 96/(2\delta) + 1 - (96/\delta)^2/2\\
        &\leq (96/\delta)^2 + 3\cdot 96/(2\delta) + 1 - 48\cdot 96/\delta\\
        &\leq (96/\delta)^2. 
    \end{align*}
    Now suppose that the statement is false.
    For every clique     $K  \subseteq D$,
    let $(A(K), C(K), B(K))$ be the $b$-canonical separation
    for    $K$.
    
                 Let $x_1x_2x_3$ be a $b$-separated triple.
            We fix a witness  $K(x_1,x_2,x_3)$ for $x_1x_2x_3$, 
            where
            $K(x_1,x_2,x_3)$ is chosen such that $x_1x_2x_3$ is of type 1 if possible. 
 Let $\mathcal{F}$ be the  set of  components of $D \setminus \beta(D,b)$.
    For every $x \in X$, let $\mathcal{F}(x)$ be
the set of elements of $\mathcal{F}$ for which $N(x) \cap F \neq \emptyset$.
\\
\\
\sta{Let $x_1x_2x_3$ be a $b$-separated triple; write $K=K(x_1,x_2,x_3)$.
Assume that  the component $D(b)$ of $D \setminus (K \setminus b)$ with $b \in D(b)$
is anticomplete to $x_1$.  Then $x_1$ is anticomplete to
$\beta(D,b) \setminus K$. Moreover, 
let $F \in \mathcal{F}(x_1)$.
Then either $K \cap F \neq \emptyset$, or
$N(F) \subseteq K$. \label{whereisK}}

Since $\beta(D,b) \subseteq D(b) \cup K$, 
it follows that $x_1$ is anticomplete to
$\beta(D,b) \setminus K$.
Next, suppose that $F \cap K = \emptyset$ and that there is a vertex  $p \in N(F) \setminus K$.
By Lemma~\ref{noncutpt}, $b \in \beta(D,b)$ and $\beta(D,b) \setminus (K \setminus b)$ is connected. Let $P$ be a path from $p$ to $b$ with $P^* \subseteq \beta(D,b)$. Let $Q$ be a path from $N(x_1)$ to $p$ with interior in $F$.
Then $R=Q \dd p \dd P \dd p$ is a path from $N(x_1)$ to $b$ with
$R^* \cap K = \emptyset$, a contradiction. This proves \eqref{whereisK}.
\\
\\
It follows from \eqref{whereisK} that:
\\
\\
\sta{If $x_1x_2x_3$ is a $(1,2)$-active triple,
  then $x_1$ is anticomplete to $\beta(D,b) \setminus K(x_1,x_2,x_3)$. Moreover, if $x_1x_2x_3$ is of type 1, then $x_2$ is anticomplete to
  $\beta(D,b) \setminus K(x_1,x_2,x_3)$.
  \label{cleanbeta}}
\\
\\
Let $x \in X$. We define
the {\em projection} of $x$, denoted by $\Proj(x)$, to be
$N_{\beta(D,b)}(x) \cup \bigcup_{F \in \mathcal{F}(x)}N(F)$. Note that since $b$ is non-adjacent to $x_1, x_2, x_3$ for $b$-seperated triples $x_1x_2x_3$, and since $b$ is anticomplete to $A(K)$ for every $K \in \mathcal{K}$, it follows that $b \not\in \Proj(x)$ whenever $x$ is in a $b$-separated triple.
\\
\\
\sta{Let $x_1x_2x_3$ be a $(1,2)$-active triple; write $K=K(x_1,x_2,x_3)$. If $x_1x_2x_3$ is of type 1, then $\Proj(x_1) \cup \Proj(x_2)$
  is a clique. \label{completeproj}}

For $i\in \{1, 2\}$, let $F_i=\bigcup_{F \in \mathcal{F}(x_i)}F$.
Suppose first that $K \cap F_i= \emptyset$ for all $i \in \{1, 2\}$;
then by \eqref{whereisK} $N(F_i) \subseteq K$ for $i\in \{1, 2\}$.
Since by \eqref{cleanbeta}, $N_{\beta(D,b)}(x_i) \subseteq K$ for $i \in \{1, 2\}$,
\eqref{completeproj} holds. Thus we may assume that
there exists $F \in \mathcal{F}(x_1)$ such that $K \cap F \neq \emptyset$.
Let $F' \in (F_1 \cup F_2) \setminus F$. Since $K$ is a clique, it
follows that $F' \cap K = \emptyset$, and by \eqref{whereisK}, $N(F') \subseteq K$.  

Since $K \cap F \neq \emptyset$, we have that $K \cap \beta(D, b) \subseteq N(F)$, and so 
$N(F') \subseteq N(F)$.
Moreover, by \eqref{cleanbeta},  for $i \in \{1, 2\}$,  we have $N_{\beta(D,b)}(x_i) \subseteq K$, and so  $N_{\beta(D,b)}(x_i) \subseteq N(F)$.
$N(F)$ is a clique by Lemma \ref{laminar},  and \eqref{completeproj}
follows.
\\
\\
\sta{Let $x_1x_2x_3$ be a $(1,2)$-active triple which is of type 2; write $K=K(x_1,x_2,x_3)$. Then $\Proj(x_1) \subseteq K$. \label{completeproj2}}

By \eqref{cleanbeta}, we have $N_{\beta(D, b)}(x_1) \subseteq K$. By \eqref{whereisK}, we either have $N(F) \subseteq K$ or $K \cap F \neq \emptyset$ for every $F \in \mathcal{F}(x_1)$. If the former holds for all $F \in \mathcal{F}(x_1)$, then \eqref{completeproj2} holds; so we may assume that there exists $F \in \mathcal{F}(x_1)$ with $K \cap F \neq \emptyset$. By Lemma \ref{laminar}, it follows that $N(F)$ is a clique $K'$. From the definition of $\beta(D, b)$, it follows that $b \not\in K'$. We claim that $K'$ is a witness for $x_1x_2x_3$ that makes
$x_1x_2x_3$ be of type 1 (and therefore contradicts the choice of $K = K(x_1x_2x_3)$). Suppose not; let $P$ be a path from $b$ to $x_i$ for some $i \in \{1, 2\}$ such that $P^* \cap K' = \emptyset$. From the definition of a $b$-separated $(1,2)$-active triple, it follows that $P^* \cap (K \cup D_2'(x_1x_2x_3)) \neq \emptyset$. Since $D_2'(x_1x_2x_3)$ is complete to $K$, it follows that $(K \cup D_2'(x_1x_2x_3)) \setminus K' \subseteq F$. Therefore, $P^* \cap F \neq \emptyset$. Since $P^* \cap N(F) = \emptyset$, it follows that $P \subseteq N[F] \cup \{x_i\}$, contrary to the fact that $b \not\in N[F] \cup \{x_i\}$. This is a contradiction, and proves \eqref{completeproj2}. 

\vspace*{0.3cm}

By permuting the indices if necessary, we may assume that
$\frac{\delta}{6} |X|^3$ separated triples are $(1,2)$-active.
Now, for one of the four possible types (1, 2a, 2b, 2c),  there exist
$\frac{\delta}{24}|X|^3$ distinct $(1,2)$-active triples $x_1x_2x_3$ of this type with respect to $K(x_1x_2x_3)$. Let $l$ be the first entry of the list (1, 2a, 2b, 2c) for which this is the case; let us say that a triple $x_1x_2x_3$ is
{\em manageable} if it is $(1,2)$-active of type $l$.

   Let $Z_1 \subseteq X_1$ be the set of all vertices $y_1 \in X_1$
  such that $$|\{(y_2, y_3) \in X_2 \times X_3 : y_1y_2y_3 \textnormal{ is a manageable triple}\}| \geq \frac{\delta}{48}|X|^2.$$
  
  \sta{$|Z_1| \geq \frac{\delta}{48}|X|$. \label{Z1}}
  
    Suppose not. Each vertex $y_1 \in X_1 \setminus Z_1$ is in fewer than $\frac{\delta}{48}|X|^2$ manageable triples. Therefore, the total number of manageable triples is less than 
    $$|Z_1||X|^2 + |X_1 \setminus Z_1|\frac{\delta}{48}|X|^2 < \frac{\delta}{48}|X|^3 + \frac{\delta}{48}|X|^3 = \frac{\delta}{24}|X|^3,$$
    a contradiction. This proves \eqref{Z1}.
  \\
  \\
  Recall that by \eqref{completeproj},
  $\Proj(w)$ is a clique for every $w \in Z_1$.
  Since $|Z_1|>\epsilon |X|$, we have $\alpha(\bigcup_{w \in Z_1} \Proj(w)) >  \frac{96}{\delta}$.
  
Let $W_1$ be a minimal subset of $Z_1$ such that 
$\alpha(\bigcup_{w \in W_1} \Proj(w)) \geq  \frac{96}{\delta}$.
Then $\alpha(\bigcup_{w \in W_1} \Proj(w)) = \left\lceil \frac{96}{\delta} \right\rceil$.
  Let $S=\bigcup_{w \in W_1} \Proj(w)$.
By a theorem of \cite{wagon} (using that the complements of even-hole-free graphs contain no induced two-edge matching), we have $\kappa(S) \leq {\alpha(S)+1 \choose 2} \leq  {\left\lceil \frac{96}{\delta} \right\rceil+1 \choose 2}$.

   Let $J$ be a stable set of size $\left\lceil \frac{96}{\delta} \right\rceil$ in $S$.
    For every $j \in J$, let $x(j) \in W_1$ be such that
    $j \in \Proj(x(j))$. Since $J$ is a stable set, the elements
    $x(j)$ are pairwise distinct.

    Let $H$ be the bipartite graph with bipartition     $(\{x(j)\}_{j \in J}, X_2 \times X_3)$, such that $x(j)$ is adjacent to
$(y_2, y_3) \in X_2 \times X_3$ if  $x(j)y_2y_3$ is a manageable triple.
\\
\\
\sta{There exist  distinct $j_1,j_2 \in J$ such that
  $|N_H(x(j_1)) \cap N_H(x(j_2))| \geq \frac{\delta^2}{48 \times 192}|X|^2$.
  \label{N2}}

Suppose not. Let $j \in J$. Since $x(j) \in Z_1$, it follows that
$x(j)$ has at least $\frac{\delta}{48}|X|^2$ neighbors in $H$.
For every $j \in J$, let
$M(j)$ be the  set of vertices $y \in Y_2$ such that
$y$ is adjacent to $x(j)$ in $H$, and $y$ is not adjacent in
$H$ to any other vertex $x(j')$ for $j' \neq j$.
  Since  $|N_H(x(j_1)) \cap N_H(x(j_2))| < \frac{\delta^2}{48 \times 192}|X|^2$ for all distinct $j_1, j_2 \in J$,
and since each $x(j)$ has
at least $\frac{\delta}{48}|X|^2$ neighbors in $H$,
it follows that $$|M(j)| > \frac{\delta}{48}|X|^2 - \left(\left\lceil \frac{96}{\delta} \right\rceil-1\right)\frac{\delta^2}{48 \times 192}|X|^2 > \frac{\delta}{96}|X|^2$$  for each
$j \in J$.
But now $\bigcup_{j \in J}|M(j)|>|J| \frac{\delta}{96}|X|^2 \geq |X|^2$, a contradiction.
This proves \eqref{N2}.
\\
\\
Suppose that $l=1$.
Let $j_1,j_2$ be as in \eqref{N2}, and let $$Z_2=\{y_2: (y_2, y_3) \in N_H(x(j_1)) \cap N_H(x(j_2)) \textnormal{ for some } y_3 \in X_3\}.$$
Then $|Z_2| \geq \frac{\delta^2}{48 \times 192}|X|$. Since $|Z_2| \geq \epsilon |X|$, we deduce that
$\kappa (\bigcup_{y \in Z_2} \Proj(y))> {\left\lceil \frac{96}{\delta} \right\rceil+1 \choose 2} > 4$.
Therefore we can choose non-adjacent
$k_1,k_2 \in (\bigcup_{y \in Z_2} \Proj(y)) \setminus (\Proj(x(j_1)) \cup \Proj(x(j_2))).$
For $i \in \{1,2\}$, let $y(k_i) \in Z_2$ be such
$k_i \in \Proj(y_i)$.
It follows that for every $p,q \in \{1,2\}$ there exists
$y_3(p,q) \in Y_3$ such that $x(j_p),y(k_q),y_3(p,q)$ is a
manageable triple.
Now applying \eqref{completeproj} to
$x(j_p)y(k_q)y_3(p,q)$, we deduce  that $j_p$ is 
adjacent to $k_q$; consequently
$\{j_1,j_2\}$ is complete to $\{k_1,k_2\}$.
But then
$j_1 \dd k_1 \dd j_2 \dd k_2 \dd j_1$ is a $C_4$ in $G$, a contradiction.

This proves that $l$ is one of 2a, 2b, 2c. Let $j_1,j_2$ be as in \eqref{N2} and let
$(y_2, y_3) \in N_H(x(j_1)) \cap N_H(x(j_2))$. Note that since $x(j_1)y_2y_3$ and $x(j_2)y_2y_3$ are both $(1, 2)$-active and of the same type in 2a, 2b, 2c, we have $q(x(j_1)y_2y_3) = q(x(j_2)y_2y_3)$. Let us define $q = q(x(j_1)y_2y_3)$. For $i \in \{1, 2\}$, let $K_i = K(x(j_i)y_2y_3)$. By \eqref{completeproj2} we have $\Proj(x(j_1)) \subseteq K_1$ and so $j_1 \in K_1$; likewise, $j_2 \in K_2$.

Let $p_1, p_2 \in D_2'(x(j_1)y_2y_3)$ be non-adjacent. For $i \in \{1, 2\}$, let $P_i$ be a path from $b$ to $y_2$ in $D \cup \{y_2\}$ such that the $(P_i, y_2)$-last vertex in $P_i \cap N(q)$ is $p_i$. It follows that $p_1, p_2$ are complete to $K_1, K_2$. But now $j_1 \dd p_1 \dd j_2 \dd p_2$ is a $C_4$ in $G$, a contradiction.
\end{proof}

  \section{Handling dangerous triples} \label{sec:dangerous}
  Let $G$ be a graph, let $a \in G$ and write $X=N(a)$
and  $D=G \setminus N[a]$.
  Let $X$ be partitioned into three equal-size subsets $X_1,X_2,X_3$ pairwise anticomplete to each other.
We say that the triple $x_1x_2x_3$ with $x_i \in X_i$ is
{\em dangerous with center $x_2$} if
the edge $ax_2$ is a cross-edge of an extended near-prism in the graph
$D \cup \{x_1,x_2,x_3, a\}$.

The goal of this section is to prove that if $G$ contains many dangerous triples with a fixed center, then we can bypass the main argument of Section~\ref{sec:sepab_nohubs} and obtain the desired conclusion directly. The details of this are explained in Section~\ref{sec:sepab_nohubs}. 

We need the following definition:  Let $G=D \cup X \cup \{a\}$ be a graph  where $D$ is connected, $a$ is complete to $X$ and anticomplete to  $D$, and $N(D)=X$. Let $b \in D$. Let us say that $X' \subseteq X$ is {\em pure} if there does not exist a hole $H \subseteq D \cup X' \cup  \{a\}$ such that $a,b \in H$.
\begin{theorem} \label{dangerous}
  Let $\delta \in (0,1]$ and let $\epsilon \leq \frac{1}{8}\delta$.
  Let $G=D \cup X \cup \{a\}$ be a graph  where $D$ is connected, $a$ is complete to $X$ and anticomplete to
  $D$, and $N(D)=X$.
  Assume that $X \cap \Hub(G)= \emptyset$.
  Assume that $X$ is partitioned into three pairwise anticomplete
  sets $X_1,X_2,X_3$ of equal size. Suppose that some  $x_2 \in X_2$
  is a center of  $\delta |X|^2$ dangerous triples.
  Assume also that there is no clique of size $\epsilon |X|$ in $X$.
   Let $b \in D$. Then one of the following holds: 
    \begin{itemize}
 \item there exists $S \subseteq D \setminus \{b\} $ with $\kappa(S) \leq 4$
such that  the component $D(b)$    of $D \setminus S$ with $b \in D(b)$
 is disjoint from $N(x)$ for
 at least $\epsilon|X|$ vertices $x \in X$;
 or
 \item $X$ is not pure and there exists $X' \subseteq X$ with $|X'| \geq \frac{1-4 \epsilon}{2} |X|$
 such that $X'$ is pure.
 \end{itemize}
\end{theorem}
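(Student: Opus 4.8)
The plan is to apply the extended-near-prism machinery from Section~\ref{strips} to the edge $ax_2$ together with the hole-forcing behaviour of many dangerous triples, and then feed the resulting ``local cutset'' information into Theorem~\ref{localglobal}. Concretely, since $x_2$ is the center of $\delta|X|^2$ dangerous triples, for each such triple $x_1x_2x_3$ the edge $ax_2$ is a cross-edge of an extended near-prism in $D \cup \{x_1,x_2,x_3,a\}$. Fix a $J$-strip system $M$ that is optimal for $ax_2$ in $G$, with partition $(\alpha,\beta)$; here we would set up the bipartition so that $a$ is on the ``$\alpha$'' side and $x_2$ on the ``$\beta$'' side. The hypotheses of the theorem are exactly tailored to invoke Theorem~\ref{treestructplus} with $a$ and $x_2$ playing the roles of ``$a$'' and ``$b$'': no vertex is adjacent to both $a$ and $x_2$ (this needs a short argument using that $G$ is $C_4$-free and $X$ has no clique of size $\epsilon|X|$, or one first deletes $N(a)\cap N(x_2)$, which is a clique and hence small), $a,x_2 \notin \Hub(G)$ since $X\cap\Hub(G)=\emptyset$, and $G\setminus N[a] = D$ is connected by assumption. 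Provided $|\beta|\ge 2$, Theorem~\ref{treestructplus} tells us that every component $F$ of $G\setminus(M\cup\{a,x_2\})$ is anticomplete to $a$ and that the attachment of $F$ to $M\cup\{x_2\}$ is local.

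Next I would extract, for a large fraction of the dangerous triples, a clique witness in $D$ in the sense of the definition on page~\ref{secondpart}. The point is that the local structure of the strip system restricts where the neighbours of $x_1$ and $x_3$ can attach: each of $x_1, x_3$ lies outside $V(M)\cup\{a,x_2\}$ (or is absorbed into a strip), and by Theorem~\ref{treestructplus} applied to the component(s) containing their neighbours in $D$, one obtains that the set $M_u\cap M_{uv}$ type cliques of the strip system separate $x_1$ (resp.\ $x_3$) from $b$ in $D$, unless $b$ itself sits in a strip end. In the good case this produces a clique $K=K(x_1x_2x_3)\subseteq D$ realising outcome (1) or (2) of the ``$b$-separated $(i,j)$-active'' definition — with $q=b$ or $q=x_k$ exactly matching the type-2a/2b/2c trichotomy — so that $X$ becomes a $(\delta', b)$-breaker for $\delta' = \Omega(\delta)$; applying Theorem~\ref{localglobal} with $\delta'$ and a suitably small $\epsilon$ then yields the first bullet of the conclusion. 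In the bad case, the strip system's structure forces a hole through $a$ and $b$ using the rungs of two $b$-leaves, i.e.\ it certifies that some large $X'\subseteq X$ is \emph{not} pure; a counting argument (each dangerous triple that is not handled by the first case contributes such a hole, and there are $\ge(1-O(\epsilon))\delta|X|^2$ of them) upgrades this to ``$X$ is not pure and there is a pure $X'$ of size $\ge \frac{1-4\epsilon}{2}|X|$'' — the factor $\tfrac12$ coming from the fact that we can only guarantee purity after discarding one of two symmetric sides of the relevant strip leaf.

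The bookkeeping to pass from ``$\delta|X|^2$ dangerous triples with center $x_2$'' to ``$\frac{\delta}{6}|X|^3$ or so $b$-separated partitioned triples'' is routine pigeonholing of the kind already carried out inside the proof of Theorem~\ref{localglobal} (first fix the center, then average over the two coordinate classes $X_1,X_3$); I would not belabour it. Likewise, disposing of the degenerate cases $|\beta|=1$ or the extended near-prism's tree $J$ having fewer than three vertices reduces to the near-prism being a genuine prism-with-a-chord, for which one reads off a short hole directly, again landing in the ``not pure'' outcome.

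The main obstacle I anticipate is the precise translation between the strip-system language of Theorem~\ref{treestructplus} and the clique-witness language of the $b$-separated/$(i,j)$-active definition: one has to check that the cliques appearing as $M_u\cap M_{uv}$ (which are cliques because of the last bullet in the definition of a $J$-strip system) genuinely separate the $x_i$ from $b$ \emph{inside $D$}, handle the case distinction on which leaf of $J$ the vertex $b$ falls near, and verify the somewhat delicate ``last vertex on every $b$–$x_j$ path'' conditions of outcome~(2)(b). This is exactly where the new work of the section lies; the rest is assembling existing tools.
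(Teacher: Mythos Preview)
Your plan has a genuine gap: routing the argument through Theorem~\ref{localglobal} is the wrong approach and cannot yield the stated conclusion. Theorem~\ref{localglobal} outputs a separator $S$ with $\kappa(S)\le (96/\delta')^2$ for the breaker parameter $\delta'$, not $\kappa(S)\le 4$; the constant $4$ in the first bullet is exactly what the theorem asserts, and it is obtained by a completely different mechanism. In the paper's proof the strip system $M$ (for $ax_2$ in $G'=D\cup X_1\cup X_3\cup\{a,x_2\}$, not in $G$) is used \emph{directly} to build the separator: one partitions $V(G')\setminus\{a,x_2\}$ into pieces $\mu(e),\nu(e),\mu(v),\nu(v)$ indexed by edges and vertices of $J$, observes that each $\mu(v)$ is a clique, locates $b$ in one of these pieces, and then shows that at most four of the cliques $\mu(v)$ separate $b$'s piece from the rest of $D$. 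No ``$b$-separated triple'' witnesses are ever produced, and Theorem~\ref{localglobal} is not invoked at all; in fact Theorem~\ref{dangerous} is used in Section~\ref{sec:sepab_nohubs} precisely to \emph{bypass} the breaker machinery when many dangerous triples are present.

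Your description of the second outcome is also confused. You write that the bad case ``certifies that some large $X'\subseteq X$ is \emph{not} pure'', but the conclusion requires exhibiting a large $X'$ that \emph{is} pure (while $X$ itself is not). In the paper this arises only in one residual subcase of the location-of-$b$ analysis: $b$ sits at the interface of two strips whose far ends are $a$-leaves $t$ and $t'''$, almost all of $X$ lies in $\mu(tt')\cup\mu(t''t''')$, a concrete hole through $a$ and $b$ built from rungs shows $X$ is not pure, and then a separate argument (using that $\mu(t')$ is a clique complete to $M_{t'}\cap M_{tt'}$, together with a theta contradiction) proves that $X\cap\mu(tt')$ is pure. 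There is no counting over dangerous triples here; the dangerous-triple hypothesis is used earlier, via a claim that two vertices in the same leaf region $\mu(tt')\cup\nu(t')$ cannot form a dangerous triple with $x_2$, to force $X$ to spread over at least two $a$-leaves. Finally, note that the fixed center $x_2$ gives only $|X|^2$ pairs $(x_1,x_3)$, so one cannot hope for $\Theta(|X|^3)$ separated triples from this source.
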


\begin{proof}
  Suppose not.
  Let $x_2 \in X_2$.
For $i=1,3$ let $Y_i \subseteq X_i$ be the set of all $y \in X_i$
such that there exist at least $\frac{1}{2} \delta |X|$ elements $z \in X_{4-i}$ for which $yx_2z$ is a dangerous triple with center $x_2$.
\\
\\
\sta {For $i=1,3$,   $|Y_i| \geq \frac{1}{2}\delta|X|$. \label{bigYi}}

  Suppose that $|Y_1| < \frac{1}{2}\delta |X|$. 
  The number of dangerous triples with center $x_2$ and using  an element of $Y_1$ is at
  most $|Y_1| |X| \leq \frac{1}{2} \delta |X|^2$.
  The number of dangerous triples with center $x_2$ and  not using an element of $Y_1$ is less than $|X_1 \setminus Y_1| \times \frac{1}{2}\delta|X| \leq \frac{1}{2} \delta |X|^2$. It follows that the number of dangerous triples with center $x_2$ is
  less than  $\delta|X|^2$, a contradiction.
  This proves~\eqref{bigYi}.
  \\
  \\
  Let $G'=D \cup X_1 \cup X_3 \cup \{a,x_2\}$. Then $ax_2$ is a cross-edge
  of an extended near-prism in $G'$ and  no vertex of $G'$ is
  adjacent to both $a$ and $x_2$.
  Since $N(D)=X$, it follows from Theorem~\ref{hubstarcutset} that $a \not \in \Hub(G)$.
  Applying Theorem~\ref{treestructplus} to $ax_2$ and $G'$,
  we obtain a $J$-strip system $M$ with cross-edge $ax_2$ such that
  for every connected induced subgraph $F$ of $G'\setminus (M \cup \{a,x_2\})$, we have that
  $a$ is anticomplete to $F$,  and the set of vertices in $M \cup \{x_2\}$
with a neighbor in $F$ is local.
\\
\\
\sta{For every
$x \in X_1 \cup X_3$, there exists an $a$-leaf $t$ 
 such that $x \in M_t$. \label{whereisX}}

Since $X_1 \cup X_3 \subseteq N(a)$, Theorem~\ref{treestructplus} implies that $X_1 \cup X_3 \subseteq V(M)$. Now \eqref{whereisX} follows from
the fact that $ax_2$ is a cross-edge for $M$.
\\
\\
Let $v \in V(J)$. Let $e$ be an edge of $J$ incident with $v$. We say that
$e$ is {\em special for $v$} if either $v$ is a leaf, or
the set $M_v \cap M_e$ is not a clique.
Since $G'$ is $C_4$-free and $M_v \cap M_e$ is complete to $M_v \cap M_{e'}$ for distinct edges $e, e'$ incident with $v$, it follows that for every $v \in V(J)$, there is at most one
special edge for $v$.

For $v \in V(J)$, let $F_v$ be, the union  of components $F$ of
$G' \setminus (M \cup \{a,x_2\})$ such that $N(F) \cap M \subseteq M_v$.
For $e \in E(J)$ with ends $u, v$ let $F_e$ be the union of components $F$ of
$G' \setminus (M \cup \{a,x_2\})$ such that $N(F) \cap M \subseteq M_e$, and such that
$N(F) \not\subseteq M_u$ and $N(F) \not\subseteq  M_v$. Since $D$ is connected and disjoint from $N(a)$, it follows that $F_t= \emptyset$ for every $a$-leaf $t$. 

Let $e=uv$ be an edge of $J$. Define $\mu(e)$ as follows.
If $e$ is not special for either $u$ or $v$, let
$\mu(e)=F_e \cup M_e \setminus (M_u \cup M_v)$. If $e$
is special for $u$ and not for $v$, let
$\mu(e)=F_e \cup (M_e \setminus M_v) \cup F_u$. If $e$
is special for $v$ and not for $u$, let
$\mu(e)=F_e \cup (M_e \setminus M_u) \cup F_v$.
If $e$ is special for both $u$ and $v$,
let $\mu(e)=M_e \cup F_e \cup F_u \cup F_v$.
In all cases let $\nu(e)=\emptyset$.
Next, let $v \in V(J)$.
If there is a special edge $e$ for $v$,
let $\mu(v)=M_v \setminus M_e$ and $\nu(v)=\emptyset$.
If no edge is special for $v$,
let $\mu(v)=M_v$ and $\nu(v)=F_v$. It follows that $\mu(v)$ is a clique for every $v \in V(J)$. 
Note that each vertex of $V(G') \setminus \{x_2, a\}$ is in at least one set in $\{\mu(x), \nu(x)\}_{x \in V(J) \cup E(J)}$; the only vertices of $V(G') \setminus \{x_2, a\}$ that are in two such sets are vertices in $M_u \cap M_v$ where $uv$ is not special for $u$ and not special for $v$; they are in both $\mu(v)$ and $\mu(u)$. 
\\
\\
\sta{If $e$ is an edge of $J$ with ends $u,v$ and $b \in \mu(e)$, then
  $|(X_1 \cup X_3) \setminus (\mu(e) \cup \mu(v) \cup \mu(u)) | \leq
  \epsilon |X|$. \label{binedge}} 
Suppose $x \in (X_1 \cup X_3) \setminus (\mu(e) \cup \mu(v) \cup \mu(u))$.
Then $N(x) \cap \mu(e) = \emptyset$. Since
$L=\mu(v) \cup \mu(u)$ separates $\mu(e)$ from
$D \setminus (\mu(e) \cup L)$ in $D$, it follows that
the component of $D \setminus L$ that contains $b$ is disjoint
from $N(x)$. Since $\kappa(L) \leq 2$,  there are at most $\epsilon|X|$ such
vertices $x$, and \eqref{binedge} follows.
\\
\\
\sta{Suppose that $v \in V(J)$ and $b \in \mu(v)$.
  Let $e=vu \in E(J)$ such that $b \in M_e$. Moreover: 
  \begin{itemize}
      \item Define $L_1$ and $C_1$ as follows. Let $f = vw$ be a special edge at $v$ if one exists; in this case, let $L_1 = \mu(w) \cup \mu(f)$ and $C_1 = \mu(w)$; otherwise, $L_1 = C_1 = \emptyset$. 
      \item Define $L_2$ and $C_2$ as follows. If $b \in M_u \cap M_v$ and there is a special edge $uz$ at $u$ with $z \neq v$, let $L_2 = \mu(uz) \cup \mu(z)$ and $C_2 = \mu(z)$. If $b \in M_u \cap M_v$ and there is no special edge at $u$ except possibly $uv$, we let $L_2 = \nu(u)$ and $C_2 = \emptyset$. Finally, if $b \not\in M_u$, let $L_2 = C_2 = \emptyset$.
  \end{itemize}
  Then
    $|(X_1 \cup X_3) \setminus (\mu(e) \cup \mu(v) \cup L_1 \cup L_2\cup \mu (u) \cup \nu(v))| \leq \epsilon|X|$. \label{binpotato}}

Suppose that $x \in (X_1 \cup X_3) \setminus (\mu(e) \cup \mu(v) \cup L_1 \cup L_2\cup \mu (u) \cup \nu(v))$. 
Let $L=((\mu(v) \cup \mu(u) \cup C_1 \cup C_2) \setminus \{b\}) \cap D$.
Then $L$ separates $(\{b\} \cup \mu(e) \cup \nu(v) \cup (L_1 \setminus C_1) \cup (L_2 \setminus C_2)) \cap D$
from the rest of $D$, and in particular, from $D \cap (N(x) \setminus L)$.  Since $\kappa(L) \leq 4$, there are at most $\epsilon|X|$ such vertices $x$, and \eqref{binpotato} follows.
\\
\\
\sta{Suppose that $v \in V(J)$ and $b \in \nu(v)$.
    Then
    $|(X_1 \cup X_3) \setminus (\mu(v) \cup \nu(v)) |   \leq \epsilon|X|$. \label{binnode}}

Suppose that $x \in (X_1 \cup X_3) \setminus (\mu(v) \cup \nu(v))$.
Then $N(x) \cap \nu(v) =\emptyset$.
Let $L=\mu(v)$.
Then $L$ separates $\nu(v)$ from $D \setminus (\nu(v) \cup \mu(v))$.
We deduce  that the component of $D \setminus L$ that contains $b$ is disjoint
from $N(x)$. Since $\kappa(L) =1$, there are at most $\epsilon|X|$ such vertices $x$, and \eqref{binnode} follows.
\\
\\
It follows from \eqref{whereisX}, 
\eqref{binedge}, \eqref{binpotato} and \eqref{binnode} that
there is an $a$-leaf $t$  with $N_J(t)=\{t'\}$ such that
$b \in \mu(t't) \cup M_{t'} \cup \nu(t')$. If $b \in \nu(t')$, then by \eqref{whereisX} and \eqref{binnode}, it follows that $|X \cap \mu(t')| \geq (1-\epsilon) |X|$. But $\mu(t')$ is a clique, and $\epsilon \leq 1/8$, and $X$ contains no clique of size $\epsilon|X|$, a contradiction. It follows that $b \in \mu(t't) \cup M_{t'}$. 

From Theorem \ref{treestructplus}, it follows that: 

\sta{If $x \in N(x_2) \cap M$, then there is a $b$-leaf $q$ such that $x \in M_q \cup F_q$. \label{wherearenbrsofx2}}

Next, we show: 

\sta{Let $t$ be an $a$-leaf, and let $t'$ be the unique neighbor of $t$ in $J$. Let $Z = \mu(tt') \cup \nu(t')$. 
Suppose that $y_1 \in X_1 \cap Z$ and $y_3 \in X_3 \cap Z$. Then $y_1x_2y_3$ is not a dangerous triple. \label{lessdangerous}}

Suppose not. Then, by the definition of a dangerous triple, there exists a path $R$ from $y_1$ to $y_3$ with $x_2 \in R$ and such that
$R \setminus \{y_1,x_2,y_3\} \subseteq D$. By \eqref{wherearenbrsofx2}, it follows that there exist $b$-leaves $q_1, q_3$ such that the neighbor of $x_2$ on the subpath of $R$ from $x_2$ to $y_i$ is in $M_{q_i} \cup F_{q_i}$ for $i \in \{1, 3\}$. Since $\mu(t')$ separates
$Z$ from $F_r \cup M_r$ for all $r \in V(J) \setminus \{t, t'\}$, it follows that the interiors of
both the paths $y_1 \dd R \dd x_2$ and $x_2 \dd R \dd y_3$ meet $\mu(t')$,
contrary to the fact that $\mu(t')$ is a clique. This proves \eqref{lessdangerous}. 
\\

\sta{Let $t$ be an $a$-leaf, and let $t'$ be the unique neighbor of $t$ in $J$. Let $t''$ be a neighbor of $t$ with $t'' \neq t$. Let $Z = \mu(tt') \cup \mu(t't'') \cup \mu(t') \cup \mu(t'')\cup \nu(t') \cup \nu(t'')$. 
Then $|X \cap Z| < (1-\epsilon)|X|. $ \label{oneleaf}}

From the definition of a $J$-strip structure with cross-edge $ax_2$, it follows that $t''$ is not an $a$-leaf. 

Now,
since $X$ contains no clique of size $\epsilon |X|$, it follows that $|X \cap \mu(t') \cup \mu(t'')| \leq 2 \epsilon|X|$. Furthermore, \eqref{whereisX} implies that $X \cap \nu(t') = \emptyset$, $X \cap \nu(t'') = \emptyset$ and $X \cap \mu(t't'') = \emptyset$ (as $t'', t'$ are not $a$-leaves). 
Therefore, $|(X_1 \cup X_3) \cap  \mu(tt')|  \geq (1-3 \epsilon)|X|$. Since $\epsilon < \frac{1}{8} \delta$, it follows from \eqref{bigYi} that
there exist $y_1 \in Y_1 \cap (Z \setminus \mu(t'))$ and $y_3 \in X_3  \cap (Z \setminus \mu(t'))$
such that $y_1x_2y_3$ is a dangerous triple, contrary to \eqref{lessdangerous}. This proves \eqref{oneleaf}. 
\\

To finish the proof of Theorem \ref{dangerous}, we consider three cases. Suppose first that either:
\begin{itemize}
    \item $b \in \mu(tt')$; or
    \item $b\in M_{tt'} \cap M_{t'}$ and no edge is special at $t'$ (except possibly $tt'$). 
\end{itemize}
Let $Z = \mu(tt') \cup \mu(t')$ if $b \in \mu(tt')$, and let $Z = \mu(tt') \cup \mu(t') \cup \nu(t')$ otherwise. By \eqref{binedge} and \eqref{binpotato} and since $\mu(t) = \nu(t) = \emptyset$, it follows that $|X \cap Z| \geq (1-\epsilon)|X|$. This contradicts \eqref{oneleaf} (choosing $t''$ arbitrarily).

Now suppose that $b \in (M_{t'} \cap M_{tt'}) \setminus \mu(tt')$. We may assume that we are not in the first case, and so it follows that there is an edge $t't''$ with $t'' \neq t$ which is special at $t'$. It follows that $t''$ is not an $a$-leaf. Let $Z = \mu(tt') \cup \mu(t') \cup \mu(t't'') \cup \mu(t'') \cup \nu(t')$. 
By \eqref{binpotato} and since $\nu(t) = \mu(t) = \emptyset$, it follows that $|X \cap Z| \geq (1-\epsilon)|X|$. This contradicts \eqref{oneleaf}.

It follows that $b \in M_{t'} \cap M_{t't''}$ for some neighbor $t''$ of $t'$ with $t'' \neq t$. Then $t''$ is not an $a$-leaf. 
Suppose first that either:
\begin{itemize}
    \item $b \not\in M_{t''}$; or
    \item $b \in M_{t''} \cap M_{t'}$ and there is no special edge at $t''$ except possibly $t''t'$. 
\end{itemize}
 Let
$Z= \mu(tt') \cup \mu(t't'') \cup \mu(t') \cup \mu(t'')\cup \nu(t') \cup \nu(t'')$.
 Using \eqref{binpotato} (with $t' = v$; $t = w$; $t'' = u$), we conclude that $|X \cap Z| \geq (1-\epsilon )|X|$. Again, this contradicts \eqref{oneleaf}.

It follows that $b \in M_{t'} \cap M_{t''}$, and there is a special edge $t''t'''$ at $t''$ with $t''' \neq t'$. Let
$Z= \mu(tt') \cup \mu(t't'') \cup \mu(t') \cup \mu(t'')\cup \nu(t') \cup \nu(t'') \cup \mu(t''t''') \cup \mu(t''')$. Using \eqref{binpotato} (with $t' = v$; $t = w$; $t'' = u$; $t''' = z$), we conclude that $|X \cap Z| \geq (1-\epsilon )|X|$. From \eqref{whereisX}, it follows that $X \cap (\nu(t') \cup \nu(t'')) = \emptyset$.  Since $X$ contains no clique of size at least $\epsilon|X|$, it follows that $|X \cap (\mu(t') \cup \mu(t'') \cup \mu(t'''))| < 3 \epsilon |X|$. Neither $t'$ nor $t''$ is a leaf, and so $X \cap \mu(t't'') = \emptyset$ by \eqref{whereisX}.

It follows that $|X \cap (\mu(tt') \cup \mu(t''t'''))| > (1-4\epsilon)|X|$. If $X \cap \mu(t''t''') = \emptyset$, then as before, there exist $y_1 \in \mu(tt') \cap Y_1$ and $y_3 \in \mu(tt') \cap Y_3$ such that $y_1x_2y_3$ is a dangerous triple, contrary to \eqref{lessdangerous}. So $X \cap \mu(t''t''') \neq \emptyset$. It follows that $t'''$ is an $a$-leaf. There is symmetry (switching $t, t', t'', t'''$ with $t''', t'', t', t$), and so $X \cap \mu(tt') \neq \emptyset$. 

Let $x \in X \cap \mu(tt')$ and let $x' \in X \cap \mu(t''t''')$. Then $x \in M_t$ and $x' \in M_{t'''}$ by \eqref{whereisX}. Let $R$ be a $tt'$-rung containing $x$, and let $R'$ be a $t''t'''$-rung containing $x'$. Then $x \dd R \dd b \dd R' \dd x' \dd a \dd x$ is a hole in $D \cup X \cup \{a\}$ containing $a$ and $b$, and so $X$ is not pure. 

By symmetry, we may assume that $|X \cap \mu(tt')| \geq \frac{1-4\epsilon}{2}|X|$. Write $X' = X \cap \mu(tt')$. We claim that $X'$ is pure (and so the second outcome of the theorem holds). Suppose not; let $H$ be a hole containing $a$ and $b$ with $H \setminus a \subseteq X' \cup D$. Then $H$ contains two internally disjoint paths from $b$ to $a$, say $P_1$ and $P_2$. Since $Y = M_{t'} \cap M_{tt'}$ separates $b$ from $X' \setminus Y$ in $D \cup X'$, it follows that $P_1^*$ and $P_2^*$ each contain a vertex in $Y$ (and in particular, $tt'$ is special at $t'$). Since $\mu(t')$ is complete to $Y$, it follows that $H \cap \mu(t') = \{b\}$. Consequently, $H \setminus b \subseteq \mu(tt')$. It follows that $R' \setminus b$ is anticomplete to $H \setminus b$. But now $H \cup R'$ is a theta in $G$ with ends $a, b$, a contradiction. This concludes the proof. 
\end{proof}

    \section{Separating a pair of vertices: the hub-free case}
    \label{sec:sepab_nohubs}

    In this section we set $\epsilon=\frac{1}{4 \times 17^6 \times 48 \times 192}$, $\gamma=\frac{\epsilon(1-4 \epsilon)}{2}$ 
    and
$C=96^2 \times 4 \times 17^6+4$.
The goal of this section is to prove the following:
\begin{theorem}
  \label{ablogn}
  Let $G \in \mathcal{C}$ with $|V(G)|=n$, and let $a,b \in V(G)$ be
  non-adjacent. Assume that $N(a) \cap \Hub(G)=\emptyset$.
  Then there is a set $Z \subseteq V(G) \setminus \{a,b\}$
  with $\kappa(Z) \leq -C \frac{1}{\log(1-\gamma)}  \log n$ and such that every component of
  $G \setminus Z$ contains at most one of $a,b$.
  \end{theorem}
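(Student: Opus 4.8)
The plan is to prove Theorem~\ref{ablogn} by induction on $n$, using the machinery already developed: Theorem~\ref{pyramid_separate} (jumps on pyramids), Theorem~\ref{dangerous} (dangerous triples), Theorem~\ref{localglobal} (local-to-global), Theorem~\ref{hubstarcutset}, and the bisimplicial-vertex fact \cite{bisimplicialnew}. The base case $n$ bounded is trivial. For the inductive step, we first perform some standard cleanup: since $G$ is $C_4$-free, $N(a)\cap N(b)$ is a clique, and adding it to $Z$ we may assume $a,b$ have no common neighbor; we may also assume $G$ is connected and that the component $D$ of $G\setminus N[a]$ containing $b$ satisfies $N(D)=N(a)$ (discard the rest of $N(a)$, which can only help). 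Write $X=N(a)=N(D)$. As long as $X$ contains a clique covering at least an $\epsilon$-fraction of $X$, add it to $Z$; this can happen at most $O(\log n)$ times before it stops, at which point $X$ has no clique of size $\epsilon|X|$. Now split $X$ into three equal pairwise-anticomplete parts $X_1,X_2,X_3$ by deleting a bounded number of vertices and using that $G$ has bounded average degree (as noted in the outline, every induced subgraph has a vertex whose neighborhood is a union of two cliques, hence a stable set meeting a constant fraction of $X$ with bounded-degree vertices inside $G[X]$) — more carefully, one uses Ramsey-type counting so that a constant fraction of partitioned triples are stable.

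The heart of the argument is to show that a constant fraction of the stable partitioned triples $x_1x_2x_3$ in $X$ are $b$-separated, i.e.\ admit a witness clique in the sense of the ``breaker'' definition, \emph{or} some $x_2$ is the center of many dangerous triples. Given a stable triple $x_1x_2x_3$, take an inclusion-minimal connected $D'\subseteq D$ meeting $N(x_1),N(x_2),N(x_3)$, and set $H=\{a,x_1,x_2,x_3\}\cup D'$. A case analysis (as sketched) shows $H$ is a pyramid, a wheel, a loaded pyramid, or an extended near-prism; since $N(a)\cap\Hub(G)=\emptyset$ the $x_i$ are not hubs, so the wheel/loaded-pyramid cases only arise with the hub being $a$ — but $a\notin\Hub(G)$ by Theorem~\ref{hubstarcutset} applied using that $D=G\setminus N[a]$ is connected, which rules these out. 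Thus $H$ is a pyramid (with apex $a$) or $a$ lies on the cross-edge of an extended near-prism, i.e.\ the triple is dangerous. In the pyramid case we apply Theorem~\ref{pyramid_separate}: its hypotheses match exactly (the $Q_i$ are the degree-two subpaths $P_i$ of the pyramid, $b$ is non-adjacent to $x_1,x_2,x_3$, and $a\notin$ any cross-edge of an extended near-prism, which we handle separately). This yields a witness clique $K$, so the triple is $b$-separated.

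Now there are two outcomes. If at least a $\delta$-fraction (for a suitable constant $\delta$) of partitioned triples are $b$-separated, then $X$ is a $(\delta,b)$-breaker and Theorem~\ref{localglobal} gives a set $S\subseteq D\setminus\{b\}$ with $\kappa(S)\le(96/\delta)^2$ whose removal leaves the component $D(b)\ni b$ disjoint from $N(x)$ for at least $\epsilon|X|$ vertices $x\in X$. Otherwise, many triples are dangerous with a common center $x_2$ (by averaging), and Theorem~\ref{dangerous} gives either the same kind of set $S$ with $\kappa(S)\le 4$, or a ``pure'' subset $X'\subseteq X$ with $|X'|\ge\frac{1-4\epsilon}{2}|X|$. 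In the ``pure'' subcase, purity of $X'$ means there is no hole through $a$ and $b$ inside $D\cup X'\cup\{a\}$; since $G$ is $C_4$-free and $X'$ is large this is exactly the structure that lets us recurse on $D\cup X'\cup\{a\}$ and then handle the remaining $X\setminus X'$ vertices (whose removal, together with $a$, disconnects $b$). In the first (non-pure) subcase and in the breaker subcase, we add $S$ (of bounded $\kappa$) together with $a$ to $Z$: in $G\setminus(S\cup\{a\})$, the vertices $x$ with $N(x)\cap D(b)=\emptyset$ are cut off from $b$, so at least $\epsilon|X|$ vertices of $X$ are separated from $b$. We then recurse on the graph where these $\epsilon|X|$ vertices have been removed from $N(a)$; since $|N(a)|$ drops by a factor $(1-\gamma)$ (with $\gamma=\frac{\epsilon(1-4\epsilon)}{2}$ covering both the breaker and the dangerous cases), after $-\frac{\log n}{\log(1-\gamma)}$ iterations $N(a)$ is exhausted and $a$ is separated from $b$; each iteration adds $O(1)$ to $\kappa(Z)$ plus the at-most-$O(\log n)$ clique-covering steps, for a total of $\kappa(Z)\le -C\frac{1}{\log(1-\gamma)}\log n$ with $C=96^2\times4\times17^6+4$.

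The main obstacle I expect is the bookkeeping that makes the iteration actually shrink $N(a)$ by a multiplicative factor at \emph{additive} cost, and in particular correctly combining the three outcomes (breaker, non-pure dangerous, pure dangerous) into a single recursion on $|N(a)|$ — the pure-dangerous case does not directly shrink $N(a)$ but instead shrinks to a pure subinstance, and one must verify that a separator for the pure subinstance plus the deleted vertices plus $a$ is a valid separator of $b$ from the rest, and that the recursion parameters (now possibly on a different quantity than $|N(a)|$) still telescope to the claimed bound. A secondary technical point is the passage from "a stable triple yields a pyramid or a dangerous configuration" to "a constant fraction of partitioned triples are $b$-separated or dangerous," which requires care with the averaging over the three choices of apex/center and the exact constants $\epsilon,\gamma$ so that everything fits inside the stated $C$.
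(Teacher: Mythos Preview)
Your overall strategy matches the paper's: establish a ``one-step'' lemma that shrinks $|N(a)|$ by a constant factor at bounded $\kappa$-cost, then iterate $O(\log n)$ times. Two points need correction, one minor and one genuine.

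The genuine gap is in the pure-dangerous case. You propose to recurse fully on $D\cup X'\cup\{a\}$ and then add $X\setminus X'$ to the separator; but $|X\setminus X'|$ can be roughly $|X|/2$ with unbounded clique-cover number, so the telescoping fails. The paper's fix is to push the pure/non-pure dichotomy \emph{inside} the one-step lemma rather than the outer recursion. The one-step lemma (statement \eqref{onestep} in the paper) is proved under the hypothesis ``either $X$ is pure, or $X$ has no pure subset of size $\ge\frac{1-4\epsilon}{2}|X|$''; under this hypothesis the second outcome of Theorem~\ref{dangerous} (which requires $X$ non-pure with a large pure subset) is impossible, so the one-step always delivers the set $S$ with $\kappa(S)\le C$. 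In the outer induction on $|N(a)|$: if $X$ is non-pure but has a large pure subset $X_0$, restrict to $G_0=G\setminus(X\setminus X_0)$, apply the one-step there (valid since $X_0$ is pure), obtaining $S$ that cuts $D(b)$ off from $\ge\epsilon|X_0|$ vertices of $X_0$, and then simply \emph{reinstate} $X\setminus X_0$ into the neighborhood of $a$ for the next round. The new $|N(a)|$ is at most $(1-\epsilon)|X_0|+|X\setminus X_0|\le|X|-\epsilon|X_0|\le(1-\gamma)|X|$, and only $\kappa(S)\le C$ was spent. This is precisely why $\gamma=\frac{\epsilon(1-4\epsilon)}{2}$.

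The minor point: the three-way anticomplete partition of $X$ comes not from degree arguments but from the fact that $G[X]$ is \emph{chordal}. A hole $H\subseteq X$ would make $(H,a)$ a proper wheel with $H\subseteq N(D)$, contradicting Theorem~\ref{hubstarcutset}; then a chordal graph with no clique of size $\epsilon|X|$ admits a tree decomposition into small-clique bags, and two applications of Theorem~\ref{center} give three pairwise anticomplete parts each of size $\lceil|X|/17\rceil$ after removing two cliques (Lemma~\ref{chordal}). Your degree-based sketch does not obviously yield pairwise anticomplete pieces.
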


We need the following result from \cite{prismfree}.
\begin{lemma}[Abrishami, Chudnovsky, Dibek, Vu\v{s}kovi\'c \cite{prismfree}]\label{minimalconnected}
Let $x_1, x_2, x_3$ be three distinct vertices of a graph $G$. Assume that $H$ is a connected induced subgraph of $G \setminus \{x_1, x_2, x_3\}$ such that $V(H)$ contains at least one neighbor of each of $x_1$, $x_2$, $x_3$, and that $V(H)$ is minimal subject to inclusion. Then, one of the following holds:
\begin{enumerate}[(i)]
\item For some distinct $i,j,k \in  \{1,2,3\}$, there exists $P$ that is either a path from $x_i$ to $x_j$ or a hole containing the edge $x_ix_j$ such that
\begin{itemize}
\item $V(H) = V(P) \setminus \{x_i,x_j\}$; and
\item either $x_k$ has two non-adjacent neighbors in $H$ or $x_k$ has exactly two neighbors in $H$ and its neighbors in $H$ are adjacent.
\end{itemize}

\item There exists a vertex $a \in V(H)$ and three paths $P_1, P_2, P_3$, where $P_i$ is from $a$ to $x_i$, such that 
\begin{itemize}
\item $V(H) = (V(P_1) \cup V(P_2) \cup V(P_3)) \setminus \{x_1, x_2, x_3\}$;  
\item the sets $V(P_1) \setminus \{a\}$, $V(P_2) \setminus \{a\}$ and $V(P_3) \setminus \{a\}$ are pairwise disjoint; and
\item for distinct $i,j \in \{1,2,3\}$, there are no edges between $V(P_i) \setminus \{a\}$ and $V(P_j) \setminus \{a\}$, except possibly $x_ix_j$.
\end{itemize}

\item There exists a triangle $a_1a_2a_3$ in $H$ and three paths $P_1, P_2, P_3$, where $P_i$ is from $a_i$ to $x_i$, such that
\begin{itemize}
\item $V(H) = (V(P_1) \cup V(P_2) \cup V(P_3)) \setminus \{x_1, x_2, x_3\} $; 
\item the sets $V(P_1)$, $V(P_2)$ and $V(P_3)$ are pairwise disjoint; and
\item for distinct $i,j \in \{1,2,3\}$, there are no edges between $V(P_i)$ and $V(P_j)$, except $a_ia_j$ and possibly $x_ix_j$.
\end{itemize}
\end{enumerate}
\end{lemma}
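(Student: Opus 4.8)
The plan is to read the shape of $H$ as an abstract graph off the minimality of $V(H)$, and then to locate the neighbours of $x_1,x_2,x_3$ by the same device. The workhorse is: if $v\in V(H)$ has no neighbour in $\{x_1,x_2,x_3\}$ and $H-v$ is connected, then $H-v$ is a smaller connected induced subgraph of $G\setminus\{x_1,x_2,x_3\}$ still containing a neighbour of each $x_i$, contradicting minimality; so every such $v$ is a cut-vertex of $H$. In particular every leaf of $H$ is adjacent to some $x_i$, and no $x_i$ has two leaf-neighbours (deleting one of them would again shrink $H$), so $H$ has at most three leaves.

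Using this, I would first pin down the shape of $H$ through its block--cut-vertex tree. In a leaf block $B$ with cut-vertex $c_B$, the vertices of $B\setminus\{c_B\}$ are non-cut-vertices of $H$, so each is a leaf of $H$ or is adjacent to some $x_i$; and $B$ cannot contain neighbours of all three $x_i$, since then by minimality $H=B$ would be $2$-connected, contradicting that $c_B$ is a cut-vertex of $H$. A short count (at most one leaf per $x_i$, and each cut-vertex of $H$ not adjacent to any $x_i$ must be forced by a pendant structure hanging off it) then gives that, as an abstract graph, $H$ is one of: an induced path; a subdivision of the claw $K_{1,3}$, i.e.\ three internally disjoint paths from a common vertex to three distinct ends; a triangle; or a triangle with pairwise internally disjoint pendant paths attached at some of its vertices.

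I would then locate $N(x_i)$ in each shape, again by the deletion move. In the claw and triangle shapes, deleting a whole leg shows that each $x_i$ has all its $H$-neighbours on a single leg, deleting a terminal segment of that leg shows $x_i$ is adjacent only to its far end, and any chord between two distinct legs (other than a triangle edge) would likewise allow a deletion; this is exactly outcome (ii) for the claw shape and outcome (iii) for the triangle shapes. For the path shape, minimality forces one end of $H$ to be the unique $H$-neighbour of some terminal and the other end to be the unique $H$-neighbour of another; taking $P$ to join these two terminals through $H$ --- a hole through the edge $x_ix_j$ if they are adjacent in $G$, an induced path otherwise --- the third terminal has either a unique neighbour on $H$ (outcome (ii), with that vertex as apex) or at least two, in which case two of them are adjacent or two are non-adjacent, giving outcome (i).

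The step I expect to be the obstacle is the shape analysis: converting ``every non-leaf vertex of $H$ not adjacent to any $x_i$ is a cut-vertex'' into the explicit four shapes requires ruling out long holes carrying several pendant paths, two separate triangles joined by a path, pendant subtrees that branch, and the like. Each of these is excluded by a counting argument on the block--cut-vertex tree, using repeatedly that at most three vertices of $H$ can carry a terminal (a neighbour of some $x_i$) and each $x_i$ carries at most one. Once the four shapes are in hand, the location of $N(x_i)$ and the split into outcomes (i)--(iii) are routine, since any feature violating (i)--(iii) is a chord or a detour along which some vertex of $H$ can be deleted, contradicting minimality of $V(H)$.
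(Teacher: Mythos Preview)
The paper does not prove this lemma: it is stated with a citation to \cite{prismfree} and used as a black box, so there is no proof in the paper to compare your attempt against.

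That said, your outline is the right kind of argument and would work with some tightening. The cleanest version of your key observation is stronger than what you wrote: for every vertex $v$ of $H$ that is not a cut-vertex of $H$, deleting $v$ leaves $H$ connected, so by minimality $v$ must be the \emph{unique} $H$-neighbour of some $x_i$; hence $H$ has at most three non-cut-vertices (not merely at most three leaves). This immediately forces any $2$-connected block of $H$ to have at most three non-cut-vertices of $H$ in it, which is what rules out long holes, $K_4$, thetas, and the like, and drives the block--cut-vertex analysis down to your four shapes (with the degenerate one- and two-vertex cases folding into outcome (ii)). Your phrasing in terms of ``leaves'' and ``no $x_i$ has two leaf-neighbours'' is weaker and does not by itself handle the $2$-connected cases; once you upgrade to the non-cut-vertex statement, the shape analysis you flagged as the obstacle becomes a short counting argument on the block--cut-vertex tree, and the rest of your plan (locating $N(x_i)$ on each shape by deletions) goes through as you describe.
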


We also need the following; for a proof see, for example, \cite{TWX}:

\begin{theorem}
  \label{center}
  Let $(T,\chi)$ be a tree decomposition of a graph $G$. Then there exist
  a vertex $t_0 \in T$ such that 
  $|D| \leq \frac{1}{2} |V(G)|$ for every component $D$ of $G \setminus \chi(t_0)$.
  \end{theorem}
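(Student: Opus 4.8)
The plan is to run the standard ``balanced bag'' argument for tree decompositions. First I would record two elementary structural facts. For an edge $e = t_1t_2$ of $T$, let $A_e$ and $B_e$ be the vertex sets of the two components of $T \setminus e$, with $t_1 \in A_e$ and $t_2 \in B_e$, and set $\alpha(e) = \bigcup_{t \in A_e}\chi(t)$ and $\beta(e) = \bigcup_{t \in B_e}\chi(t)$. Since every vertex of $G$ lies in some bag, $\alpha(e) \cup \beta(e) = V(G)$; and if $v \in \alpha(e) \cap \beta(e)$ then the connected subtree $\{t : v \in \chi(t)\}$ meets both sides of $e$, hence contains the path through $e$, hence contains $t_1$ and $t_2$, so $\alpha(e)\cap\beta(e) \subseteq \chi(t_1)\cap\chi(t_2)$. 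The second fact is that for any $t_0 \in V(T)$, every component $D$ of $G \setminus \chi(t_0)$ is contained in $\bigl(\bigcup_{t \in T'}\chi(t)\bigr)\setminus \chi(t_0)$ for a single component $T'$ of $T \setminus t_0$: each $v \in D$ has $v \notin \chi(t_0)$, so its subtree avoids $t_0$ and lies inside one component of $T \setminus t_0$, and adjacent vertices of $D$ share a bag distinct from $t_0$ and therefore lie in the same component; connectedness of $D$ finishes it.

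These two facts reduce the theorem to finding $t_0$ such that for every edge $e = t_0t'$ incident to $t_0$, writing $\beta(e)$ for the side of $e$ not containing $t_0$, we have $|\beta(e) \setminus \alpha(e)| \le \tfrac12|V(G)|$. Indeed, such an edge carries the component $T' = B_e$, and by the first fact $\beta(e) \setminus \chi(t_0) \subseteq \beta(e)\setminus(\alpha(e)\cap\beta(e)) = \beta(e)\setminus\alpha(e)$, so the second fact then bounds $|D|$ by $\tfrac12|V(G)|$ for every component $D$.

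To produce such a $t_0$ I would orient the edges of $T$. For an edge $e = t_1t_2$, the identity $|V(G)| = |\alpha(e)\setminus\beta(e)| + |\beta(e)\setminus\alpha(e)| + |\alpha(e)\cap\beta(e)|$ shows that at most one of the two ``exclusive'' counts $|\alpha(e)\setminus\beta(e)|$ and $|\beta(e)\setminus\alpha(e)|$ can exceed $\tfrac12|V(G)|$. If one does, orient $e$ towards the endpoint lying on that side; otherwise leave $e$ unoriented. The oriented edges form a sub-forest of $T$ carrying an orientation, so by acyclicity there is a vertex $t_0$ with no outgoing oriented edge. I claim $t_0$ works: for each incident edge $e$, either $e$ is unoriented, in which case both exclusive counts, in particular the one on the side of $e$ away from $t_0$, are $\le \tfrac12|V(G)|$; or $e$ is oriented towards $t_0$, meaning the $t_0$-side exclusive count exceeds $\tfrac12|V(G)|$, forcing the far-side exclusive count to be $< \tfrac12|V(G)|$ by the displayed identity. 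In both cases the far-side exclusive count is $\le\tfrac12|V(G)|$, which is exactly what the reduction requires. The degenerate case $|V(T)| = 1$ is trivial, since then $G \setminus \chi(t_0)$ is empty.

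I do not expect a genuine obstacle here; the only thing requiring care is the bookkeeping that ties the bag $\chi(t_0)$ to the edge cuts $\alpha(e),\beta(e)$ --- specifically the inclusions $\alpha(e)\cap\beta(e)\subseteq\chi(t_1)\cap\chi(t_2)$ and $\beta(e)\setminus\chi(t_0)\subseteq\beta(e)\setminus\alpha(e)$ --- together with keeping the strict-versus-nonstrict inequalities consistent so that ``at most one heavy side'' genuinely holds and a sink of the partial orientation is genuinely balanced.
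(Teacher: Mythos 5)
Your argument is correct: it is the standard balanced-bag proof, obtained by orienting each tree edge $e$ toward the side whose exclusive part (your $\alpha(e)\setminus\beta(e)$ or $\beta(e)\setminus\alpha(e)$) exceeds $\tfrac12|V(G)|$ and taking a sink of the resulting partial orientation, and the bookkeeping you flag (namely $\alpha(e)\cap\beta(e)\subseteq\chi(t_1)\cap\chi(t_2)$, the fact that each component of $G\setminus\chi(t_0)$ lives in the bags of a single component of $T\setminus t_0$, and the at-most-one-heavy-side count) all goes through, with the non-emptiness of the bag subtrees guaranteed by the paper's definition of a tree decomposition. Note that the paper does not prove Theorem~\ref{center} at all; it only cites \cite{TWX} for a proof, so your write-up supplies exactly the classical argument that the cited reference relies on, and nothing further is needed.
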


We start with a lemma.

\begin{lemma}
  \label{chordal}
  Let $n$ be an integer.
  Let $G$ be a  chordal graph with $n-\epsilon n$ vertices, and
  assume that $G$ has no clique of size $\epsilon n$.
  Then there is $Z \subseteq V(G)$
  such that 
  \begin{itemize}
    \item $\kappa(Z) \leq 2$, and 
\item    there exist subsets $X_1,X_2,X_3$ of $V(G) \setminus Z$,  
  pairwise disjoint and anticomplete to each other, and
  such that $|X_i|=\left\lceil\frac{1}{17} n\right\rceil $ for every $i \in \{1,2,3\}$.
\end{itemize}
  \end{lemma}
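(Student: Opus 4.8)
The plan is to exploit the fact that a chordal graph has a \emph{clique tree} (a tree decomposition all of whose bags are cliques), apply Theorem~\ref{center} to it to remove one or two small cliques so that the remaining graph has no ``large'' component, and then partition the resulting components into three groups by a greedy bin-packing argument. Set $m=|V(G)|=n-\epsilon n$ and $s=\lceil n/17\rceil$; since the hypothesis ``$G$ has no clique of size $\epsilon n$'' is only meaningful when $\epsilon n\ge 1$, we may assume $n\ge 1/\epsilon$, which is enormous, so every inequality below of the shape ``a quantity which is $\Theta(m)$ exceeds $9s+O(\epsilon n)$'' holds with huge room; I will not track these routine estimates. Fix a clique tree $(T,\chi)$ of $G$; each bag has $<\epsilon n$ vertices. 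By Theorem~\ref{center} there is $t_0\in V(T)$ with every component of $G\setminus\chi(t_0)$ having at most $m/2$ vertices. Call a component of a graph on $m$ vertices \emph{big} if it has more than $m/3$ vertices; since three pairwise disjoint vertex sets of size $>m/3$ cannot fit in $m$ vertices, $G\setminus\chi(t_0)$ has at most two big components.

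The combinatorial engine is a pair of elementary greedy facts. (i) If a graph $H$ has components each of size at most $M$ and $M\le (|V(H)|-3s)/2$, then its components can be partitioned into three groups each of total size $\ge s$: greedily fill group $1$ with components until its total first reaches $s$ (it then lies in $[s,s+M)$), do likewise for group $2$ from what remains, and put all remaining components in group $3$, whose total is $>|V(H)|-2(s+M)\ge s$. (ii) Similarly, if $M\le |V(H)|-2s$ then the components of $H$ split into two groups each of total size $\ge s$.

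Granting these, suppose we have found a set $Z\subseteq V(G)$ with $\kappa(Z)\le 2$ such that $G\setminus Z$ has at most one big component; then we are done. Write $m'=|V(G\setminus Z)|\ge m-2\epsilon n$. If $G\setminus Z$ has no big component, every component has size $\le m/3\le (m'-3s)/2$ (routine), so by (i) there are three groups of components, pairwise anticomplete (components of $G\setminus Z$ are pairwise anticomplete) and each of total size $\ge s$; take $X_i$ to be any $s$ vertices of the $i$-th group. If $G\setminus Z$ has exactly one big component $D$, take $X_1\subseteq D$ with $|X_1|=s$ (possible since $|D|>m/3\ge s$); the remaining components have total size $\ge m'-|D|\ge m/2-2\epsilon n$ and each has size $\le m/3\le (m/2-2\epsilon n)-2s$ (routine), so by (ii) they form two anticomplete groups each of size $\ge s$, from which we extract $X_2$ and $X_3$. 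In every case $X_1,X_2,X_3$ are pairwise disjoint, pairwise anticomplete, and of size $s=\lceil n/17\rceil$, as required (with $Z$ of clique cover number at most $2$).

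It remains to produce such a $Z$. If $G\setminus\chi(t_0)$ has at most one big component, take $Z=\chi(t_0)$, so $\kappa(Z)\le 1$. Otherwise $G\setminus\chi(t_0)$ has exactly two big components $D_1,D_2$ with $|D_1|\ge |D_2|$; apply Theorem~\ref{center} to a clique tree of the chordal graph $D_1$ to get a clique $Q\subseteq D_1$ (of size $<\epsilon n$) with every component of $D_1\setminus Q$ of size $\le |D_1|/2\le m/4$. Put $Z=\chi(t_0)\cup Q$, a union of two cliques. As $Q\subseteq D_1$ is disjoint from $D_2$ and from the other components of $G\setminus\chi(t_0)$, the components of $G\setminus Z$ are the components of $D_1\setminus Q$ (each of size $<m/3$) together with $D_2,D_3,\dots$, of which only $D_2$ can be big; so $G\setminus Z$ has at most one big component and the previous paragraph finishes the proof. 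The point needing the most care — and the reason two cliques are genuinely necessary, one not sufficing — is exactly this two-big-component case: $G\setminus\chi(t_0)$ can have two components each of size near $m/2$ (picture two disjoint ``clique paths''), and no single clique can shrink both since they are mutually disconnected; the fix is that shrinking just one of them with the second clique leaves at most one big component, which can then be absorbed wholesale into a single $X_i$. Everything else is routine bin-packing bookkeeping, and the constants line up comfortably because $\epsilon$ is astronomically small while $n\ge 1/\epsilon$.
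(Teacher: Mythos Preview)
Your proof is correct and uses essentially the same approach as the paper: take a clique tree, apply Theorem~\ref{center} (at most twice) to remove one or two clique bags, and then greedily group the resulting small components into three pairwise anticomplete parts of size $\lceil n/17\rceil$. The paper's organization differs only cosmetically---it always applies Theorem~\ref{center} twice, first to carve out $X_1$ from $G$ and then to split a remaining $n/4$-vertex subgraph into $X_2$ and $X_3$---whereas you branch on the number of ``big'' components and make the bin-packing step explicit; the underlying ideas and the constants are the same.
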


\begin{proof}
  Since $G$ is chordal, there is a tree decomposition $(T, \chi)$ of $G$
  such that $\chi(t)$ is a clique for every $t \in T$ \cite{gavril}. 
  By Theorem~\ref{center}, there exists a vertex $t_0 \in T$ such that
  $|D| \leq \frac{n}{2}$ for every component $D$ of $G \setminus \chi(t_0)$.
  Let $X$ be a minimal set of components of $G \setminus \chi(t_0)$
  such that $|\bigcup_{D \in X}D| \geq (\frac{1}{4} -2\epsilon)n$.
  \\
  \\
  \sta{$|G \setminus (\bigcup_{D \in X}D \cup \chi(t_0))| \geq \frac{1}{4}n$.
    \label{otherside}}

  Suppose not. Then $|\bigcup_{D \in X}D \cup \chi(t_0)| > (\frac{3}{4}- \epsilon)n$.
  Since $\chi(t_0)$ is a clique, it follows that $|\chi(t_0)|< \epsilon n$,
  and so $|\bigcup_{D \in X}D| \geq (\frac{3}{4} -2 \epsilon)n$.
  Let $D_0 \in X$. Then $|D_0| \leq \frac{1}{2}n$, and so
  $|\bigcup_{D \in X \setminus \{D_0\}}D| \geq (\frac{1}{4}-2 \epsilon)n$,
  contrary to the minimality of $X$. This proves~\eqref{otherside}.
  \\
  \\
  Let $X_1= \bigcup_{D \in X}D$ and let $Z_1=\chi(t_0)$.
  Let $Y$ be a subset of $G \setminus (X_1 \cup Z_1)$
  with $|Y|=\frac{1}{4} n$, and let $G'=G[Y]$.
  By Theorem~\ref{center}, there exists a vertex $t_0' \in T$ such that
  $|D| \leq \frac{n}{8}$ for every component $D$ of $G' \setminus \chi(t_0')$.
  Let $X'$ be a minimal set of components of $G' \setminus \chi(t_0')$
  such that $|\bigcup_{D \in X'}D| \geq (\frac{1}{16} - 2 \epsilon)n$.
  Write $X_2=\bigcup_{D \in X'}D$ and $Z_2=\chi(t_0')$.
  Let $X_3=G' \setminus (X_2 \cup Z_2)$.
  By \eqref{otherside}, $|X_3| \geq \frac{1}{16}n$.
  Let $Z=Z_1 \cup Z_2$. Then $\kappa(Z)=2$, the sets
  $X_1,X_2,X_3$ are pairwise disjoint and anticomplete to each other, and
  $|X_i| \geq (\frac{1}{16}- 2 \epsilon)n=\frac{1}{17}n$. Now the conclusion
  of the lemma follows.
  \end{proof}

Next we prove the following, which immediately implies Theorem~\ref{ablogn}.

\begin{theorem}
  \label{ablogn_induction}
  Let $G \in \mathcal{C}$ with $|V(G)|=n$, and let $a,b \in V(G)$ be
  non-adjacent. Assume that $N(a) \cap \Hub(G)=\emptyset$ and $N(a) \neq \emptyset$.
  Then there is a set $Z \subseteq V(G) \setminus \{a,b\}$
  with $\kappa(Z) \leq -C \frac{1}{\log(1-\gamma)} (\max (1, \log |N(a)|))$ and such that every component of
  $G \setminus Z$ contains at most one of $a,b$.
\end{theorem}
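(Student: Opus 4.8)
The plan is to induct on $\log|N(a)|$, following the ``central bag'' scheme sketched in the outline but here in the hub-free setting, where everything is cleaner because $N(a)$ contains no hubs. First I would dispose of the base case and small reductions: since $G$ is $C_4$-free, $N(a)\cap N(b)$ is a clique, so after adding it to $Z$ (additive cost $1$ to $\kappa$, but this will be absorbed into the $C$ in front) we may assume $a$ and $b$ have no common neighbour; and if $N(a)$ is very small the bound is immediate. So assume $|N(a)|$ is large. Let $D$ be the component of $G\setminus N[a]$ containing $b$ (if $b\notin N[D]$ then some small separator of $a$ from $\{$the rest$\}$ does the job directly, since then a separator of $a$ from $b$ can be taken inside $N[a]$, a clique-coverable set of size $\le$ a few times a clique — actually $N(a)$ itself works but may be large; here we genuinely need $b\in N[D]$, so assume it). Replace $G$ by $N[D]\cup\{a\}$, set $X=N(D)\subseteq N(a)$, and note $N(D)=X$, $a$ is complete to $X$ and anticomplete to $D$, and $X\cap\Hub=\emptyset$.

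Next I would run the ``peeling off large cliques'' step: as long as $N(D)$ contains a clique covering at least an $\epsilon$-fraction of $N(D)$, add it to $Z$; this can happen at most $O(\log|N(a)|)$ times before $|N(D)|$ drops to zero and $a$ is separated from $b$, at which point we are done. So we may assume $X=N(D)$ has no clique of size $\epsilon|X|$. Now I partition $X$ into three anticomplete equal-size parts $X_1,X_2,X_3$: this is where Lemma~\ref{chordal} is invoked — but Lemma~\ref{chordal} is stated for chordal graphs, so first I must extract a chordal structure. The point is that, since $X\cap\Hub=\emptyset$, neither proper wheels nor loaded pyramids are centered in $X$; combined with Theorem~\ref{pyramid_separate}/Theorem~\ref{localglobal} and Theorem~\ref{dangerous}, most stable triples $x_1x_2x_3$ in $N(D)$ yield a bounded-$\kappa$ clique cutset $S$ in $D$ such that the component $D(b)$ of $D\setminus S$ containing $b$ misses $N(x)$ for an $\epsilon$-fraction of $x\in X$; adding $S$ to $Z$ (additive cost $O(1)$) and recursing on the smaller $N(a)$ gives the induction step. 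The dangerous-triple case is handled by Theorem~\ref{dangerous}: either we again get such an $S$, or we get a large \emph{pure} $X'\subseteq X$, meaning no hole through $a$ and $b$ lives in $D\cup X'\cup\{a\}$ — and then $a$ is already separated from $b$ by $X\setminus X'$ inside this ``$X'$-cleaned'' graph plus a short argument, closing that branch.

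To make the triple-counting work I would argue as follows: form a stable triple $x_1x_2x_3\subseteq N(D)$ and an inclusion-minimal connected $D'\subseteq D$ meeting $N(x_1),N(x_2),N(x_3)$; by Lemma~\ref{minimalconnected}, $\{a,x_1,x_2,x_3\}\cup D'$ is a pyramid, a wheel, or a prism-like object. Since $X$ is hub-free, the wheel outcomes force the center to be in $D$ (not in $X$), so such wheels still give star cutsets inside $D$ via Theorem~\ref{hubstarcutset}. Pyramids that don't grow to an extended near-prism feed into Theorem~\ref{pyramid_separate}, producing a witness clique $K$; this makes $x_1x_2x_3$ a $b$-separated $(i,j)$-active triple. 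If a $\delta$-fraction of partitioned stable triples are $b$-separated, Theorem~\ref{localglobal} hands us the global separator $S$ with $\kappa(S)\le(96/\delta)^2$, and we recurse. The only escape is: many stable triples are ``dangerous'' (grow to an extended near-prism with center in $X$), which is exactly Theorem~\ref{dangerous}'s hypothesis — so that theorem either gives us $S$ or a large pure subset; the constants $\epsilon,\gamma,C$ at the start of the section are chosen precisely so the fractions multiply through ($\tfrac1{17}$ from Lemma~\ref{chordal}, a factor $\tfrac{\delta^2}{48\cdot192}$ from Theorem~\ref{localglobal}, etc.) and the recursion depth stays $O(\log|N(a)|)$ with $O(1)$ additive $\kappa$-cost per level, yielding the claimed $-C\frac{1}{\log(1-\gamma)}\log|N(a)|$ bound.

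\textbf{The main obstacle} I expect is the bookkeeping of the ``pure $X'$'' alternative from Theorem~\ref{dangerous}: when we land in that case we do not immediately finish, we instead pass to the induced subgraph on $D\cup X'\cup\{a\}$ and must check that separating $a$ from $b$ there (together with the discarded $X\setminus X'$, whose $\kappa$ we must control via the no-large-clique assumption — but $X\setminus X'$ need not be small!) still suffices. The resolution is that purity of $X'$ means a hole through $a,b$ cannot use $X'$, so the ``effective'' neighborhood of $a$ relevant to separating from $b$ has shrunk; one then re-enters the recursion with a strictly smaller parameter. Getting the precise invariant that decreases — it is essentially ``$|N(a)|$ after cleaning'' rather than $|N(a)|$ — and verifying the additive cost bookkeeping through this branch, is the delicate part; everything else is assembling the already-proved Theorems~\ref{hubstarcutset}, \ref{pyramid_separate}, \ref{localglobal}, \ref{dangerous} and Lemmas~\ref{minimalconnected}, \ref{chordal} in the right order.
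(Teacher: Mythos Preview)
Your overall scheme matches the paper's: prove a one-step lemma producing $S$ with $\kappa(S)\le C$ such that the $b$-component of $D\setminus S$ sees at most a $(1-\epsilon)$-fraction of $X=N(a)$, then induct on $|N(a)|$. The ingredients you name (Lemma~\ref{chordal}, Lemma~\ref{minimalconnected}, Theorems~\ref{pyramid_separate}, \ref{localglobal}, \ref{dangerous}) are the right ones.

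Two local confusions. Chordality of $X$ is immediate and does not need anything subtle: a hole $H\subseteq X=N(D)$ makes $(H,a)$ a proper wheel with $H\subseteq N[D]$ for the connected set $D$, contradicting Theorem~\ref{hubstarcutset}. And in the Lemma~\ref{minimalconnected} analysis there is no ``wheel with center in $D$'' case to handle: outcome~(ii) would give a theta with ends $a$ and the spider-centre (forbidden in $\mathcal{C}$), and outcome~(i) with $x_k$ having two non-adjacent neighbours would make $x_k\in X$ a proper wheel center (forbidden since $X\cap\Hub(G)=\emptyset$). So every non-dangerous stable partitioned triple yields a pyramid with apex $a$, and Theorem~\ref{pyramid_separate} applies to it directly.

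The genuine gap is the pure-$X'$ branch. Your worry about $\kappa(X\setminus X')$ points at the right spot, but your proposed resolution (``purity shrinks the effective neighbourhood'') does not work as stated, and $X\setminus X'$ is \emph{never} added to $Z$. The paper's move is: temporarily delete $X\setminus X'$ from the graph, obtaining $G_0=D\cup X'\cup\{a\}$. In $G_0$ the set $X'$ is pure, which is exactly the hypothesis that forces Theorem~\ref{dangerous} to return only its first outcome; hence the one-step lemma applies in $G_0$ and yields $S$ with $\kappa(S)\le C$ such that the $b$-component of $D\setminus S$ sees at most $(1-\epsilon)|X'|$ vertices of $X'$. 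Now reinstate $X\setminus X'$ into the \emph{graph} (not into $Z$) and recurse: the resulting graph $G_2$ satisfies
\[
|N_{G_2}(a)|\le |X\setminus X'|+(1-\epsilon)|X'|=|X|-\epsilon|X'|\le(1-\gamma)|X|,
\]
using $|X'|\ge\tfrac{1-4\epsilon}{2}|X|$ and $\gamma=\tfrac{\epsilon(1-4\epsilon)}{2}$. Thus each inductive step adds at most $C$ to $\kappa(Z)$ and shrinks $|N(a)|$ by a factor $(1-\gamma)$, giving the stated bound. The invariant that decreases is plainly $|N(a)|$ in the current graph; no ``cleaned'' variant is needed.
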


\begin{proof}
  We may assume that $|N(a)| > -C \frac{1}{\log(1-\gamma)}$.
  Let $D$ be the component of $G \setminus N[a]$
  such that $b \in D$. We may assume that $G=D \cup N[a]$.
  Write $X=N(a)$.
  Our first goal is to  show the following:
  \\
  \\
  \sta{Assume that either $X$ is pure, or  there does not
    exist $X' \subseteq X$ with $|X'| \geq \frac{1-4 \epsilon}{2} |X|$  such that $X'$ is pure. Then there exists  $S \subseteq X \cup (D \setminus b)$
            with $\kappa(S) \leq C-2$ such the component $D(b)$
            of $D \setminus S$ with $b \in D(b)$ meets $N(x)$ for 
                        at most $(1-\epsilon) |X|$ vertices
            $x \in X \setminus S$.
\label{onestep}}
  \\
  \\
  The proof proceeds in several steps.
  \\
  \\
  \sta{$X$ is chordal. \label{Xchordal}}

    Suppose that there is a hole $H \subseteq X$. Then $(H,a)$ is a wheel.
    But $D$ is connected and $H \subseteq N(D)$, contrary to
    Theorem~\ref{hubstarcutset}. This proves~\eqref{Xchordal}.
     \\
    \\
    \sta{     If there is a clique
      $K$ of size $\epsilon |X|$ in $X$, then \eqref{onestep} holds. \label{noclique}}.

    Suppose such a clique $K$ exists. Now setting
    $S=K$, it follows that \eqref{onestep} holds. This proves~\eqref{noclique}.
    \\
    \\
    Let $Z'=N(b) \cap X'$. Since $G$ is $C_4$-free, it follows that
$Z'$ is a clique. By \eqref{noclique}, we can apply Lemma~\ref{chordal} to $X \setminus Z'$; 
    let $Z,X_1,X_2,X_3 \subseteq X \setminus Z'$ as in the conclusion of the lemma.    

    Let $\delta=\frac{1}{6 \times 17^2}$.
    A triple $x_1x_2x_3$ is {\em partitioned} if $x_i \in X_i$.
    We remind the reader that dangerous triples were defined in Section~\ref{sec:dangerous}.
    \\
    \\
    \sta{Under the assumptions in \eqref{onestep}, if for some $i \in \{1,2,3\}$, some  $x_i \in X_i$
      is a center of  $\delta |X|^2$ dangerous triples, then \eqref{onestep} holds.
      \label{fewdangerous}}
    
    Since $\epsilon < \frac{1}{8}\delta$,
    it follows from \eqref{noclique} and Theorem~\ref{dangerous} that
  there exists    $S' \subseteq D \setminus b$ with $\kappa(X) \leq 4$ such that
  the component $D(b)$
            of $D \setminus S'$ with $b \in D(b)$ 
is disjoint from $N(x)$ for
at least $\epsilon|X|$ vertices $x \in X$.
Setting $S=S'  \cup Z'$, \eqref{onestep} holds. This
proves~\eqref{fewdangerous}.
\\
\\
In view of \eqref{fewdangerous}, in order to prove \eqref{onestep}  we may assume that  for every $i \in \{1,2,3\}$,
every $x_i \in X_i$
is a center fewer that   $\delta |X|^2$ dangerous triples.
\\
\\
\sta{At least $\frac{1}{2}\left\lceil\frac{1}{17} |X|\right\rceil^3$ of the partitioned triples
  are not dangerous. \label{manysafe}}

Since there are $\left\lceil\frac{1}{17} |X|\right\rceil^3$ partitioned triples, 
it is enough to prove that at most  $\left\lceil\frac{1}{17} |X|\right\rceil^3/2$ of the partitioned triples are  dangerous.
Let $i \in \{1,2,3\}$.
By \eqref{fewdangerous}, the number of dangerous triples with center
in $X_i$ is at most
$$|X_i| \times  \delta |X|^2 \leq =\left\lceil\frac{1}{17} |X|\right\rceil \cdot \frac{1}{6 \times 17^2}|X| \leq \frac{1}{6}\left\lceil\frac{1}{17} |X|\right\rceil^3.$$
Since every dangerous triple has a center in one of the sets
$X_1,X_2,X_3$, it follows that
the total number of dangerous triples is at most
$$3 \times \frac{1}{6}\left\lceil\frac{1}{17} |X|\right\rceil^3 \leq  \frac{1}{2}\left\lceil\frac{1}{17} |X|\right\rceil^3,$$
as required. This proves \eqref{manysafe}.
\\
\\
\sta{Every partitioned triple that is not dangerous is $b$-separated.
  \label{septriple}}

Let $x_1x_2x_3$ be a partitioned triple that is not dangerous.
Let $H=D \cup \{x_1,x_2,x_3,a\}$.
Let $F$ be a minimal connected subgraph of $D$ such that each of
$x_1,x_2,x_3$ has a neighbor in $F$. Since $X \cap \Hub(G)=\emptyset$,
it follows from Lemma~\ref{minimalconnected}
that $\Sigma=F \cup \{x_1,x_2,x_3,a\}$  is a  pyramid with apex $a$.
For $i \in \{1,2,3\}$, let $Q_i=a \dd x_i$.
Since $x_1x_2x_3$ is not a dangerous triple, we deduce that $a$
is not contained in a cross-edge of an extended near-prism in  $H$. Now
by Theorem~\ref{pyramid_separate} applied to $\Sigma, Q_1,Q_2,Q_3,$ and $H$,
it follows that the triple $x_1x_2x_3$ is $b$-separated, and
\eqref{septriple} follows.
\\
\\
  \sta{$X$ is a $(\frac{1}{2 \times 17^3},b)$-breaker in $G$.
    \label{getbreaker}}

By \eqref{manysafe}, at least $\frac{1}{2}\left\lceil\frac{1}{17} |X|\right\rceil^3$ of the partitioned triples  are not dangerous. Now by \eqref{septriple} at least 
$\frac{1}{2}\left\lceil\frac{1}{17} |X|\right\rceil^3 \geq \frac{1}{2 \times 17^3}|X|^3$ of the partitioned triples
$x_1x_2x_3$ are $b$-separated, and \eqref{getbreaker} follows.
\\
\\

By Theorem~\ref{localglobal},
                    there exist  $S' \subseteq D \setminus b$
            with $\kappa(S) \leq C-4$ such that the component $D(b)$
            of $D \setminus S$ with $b \in D(b)$ 
is disjoint from $N(x)$ for
at least $\epsilon|X|$ vertices $x \in X$. Setting $S= Z' \cup S$, 
\eqref{onestep} follows.
\\
\\
We complete the proof of Theorem~\ref{ablogn_induction}
by induction on $|N_G(a)|$.
If $X$ is pure, let $X_0=X$.
If $X$ is not pure and there does not
exist $X' \subseteq X$ with $|X'| \geq \frac{1-4 \epsilon}{2} |X|$  such that $X'$ is pure, let $X_0=X$.
If $X$ is not pure and there exists $X' \subseteq X$ with $|X'| \geq \frac{1-4 \epsilon}{2} |X|$ such that $X'$ is pure, let $X_0$ be a pure subset of $X$ with
$|X_0| \geq \frac{1-4 \epsilon}{2} |X|$.
Let $G_0=G \setminus (X \setminus X_0)$.
Note that $X_0=N_{G_0}(a)=N_{G_0}(D)$.
Apply \eqref{onestep} in $G_0$ (and with   $X=X_0)$.
Let $S$ be as in \eqref{onestep}.
Let  $D_1$ be the component of
$D \setminus S$ with $b \in D_1$, let $X_1=N_{G_0}(D_1)$ and let
$G_1=D_1 \cup X_1 \cup \{a\}$. By \eqref{onestep}
$|N_{G_1}(a)| \leq (1-\epsilon)|N_{G_0}(a)| \leq (1-\epsilon)|X_0|$.
Let $G_2=G[V(G_1) \cup (X \setminus X_0)]$.
Since  $N_{G_2}(a) = (X  \setminus X_0) \cup N_{G_1}(a)$,
it follows that
$$|N_{G_2}(a)| \leq (1-\epsilon)|X_0|+|X|-|X_0| \leq |X| - \epsilon |X_0|
\leq (1-\gamma)|X|.$$ 
If $|N_{G_2}(a)| \leq 1$, let $Z_1=N_{G_2}(a)$. Otherwise, $\log |N_{G_2}(a)| \geq 1$ and inductively,  there is a set $Z_1 \subseteq V(G_2) \setminus \{a,b\}$
  with $\kappa(Z) \leq -C \frac{1}{\log (1-\gamma)}(\log |N_{G_2}(a)|)$ and such that every component of
  $G_2 \setminus Z_1$ contains at most one of $a,b$.
In the latter case, $\kappa(Z_1) \leq -C \frac{1}{\log (1-\gamma)} \log (|N_G(a)|) - C$.
Since $\kappa(S) \leq C$, the set
$Z_1 \cup S$ satisfies the conclusion of the theorem. In the former case, $|Z_1 \cup S| \leq C$ and $Z_1 \cup S$ satisfies this conclusion of the theorem.  
\end{proof}

\section{Stable   sets  of safe hubs} \label{sec:centralbag_banana}
As we discussed in Section~\ref{sec:outline}, the in the course of the proof of
Theorem~\ref{bananaclique}, we will repeatedly   decompose the graph by star cutsets
arising from a stable set of appropriately chosen hubs (using Theorem~\ref{hubstarcutset}).
In this section we prepare the tools for handling one such step: one
stable set of safe hubs.

Let $d$ be an integer.
In this section we again set $\epsilon=\frac{1}{4 \times 17^6 \times 48 \times 192}$, $\gamma=\frac{\epsilon(1-4 \epsilon)}{2}$ and 
$C=96^2 \times 4 \times 17^6+4$.
Let $a,b \in V(G)$ be non-adjacent and such that no subset $Z$ of $G$
with $\kappa(Z) \leq  -C \frac{1}{\log(1-\gamma)}  \log n +d$  separates $a$ from $b$.
Following \cite{TWXV}, we say that
    a vertex $v$ is {\em $d$-safe} if $|N(v) \cap \Hub(G)| \leq d$.
        As in \cite{TWXV}, the goal of the next  lemma is to classify $d$-safe vertices into
``good ones'' and ``bad ones'', and show that the bad ones are rare.
        A vertex $v \in G$ is $ab$-cooperative if there exists a
    component $D$ of $G \setminus N[v]$ such that $a,b \in N[D]$.

    \begin{lemma}
      \label{abnoncoop}
      If $v \in G$  is  $d$-safe and not $ab$-cooperative, then
      $v$ is adjacent to both $a$ and $b$. In particular, the set of
      vertices that are not $ab$-cooperative is a clique.
    \end{lemma}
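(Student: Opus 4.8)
The plan is to argue by contradiction. Assume $v$ is $d$-safe, is not $ab$-cooperative, but is not adjacent to both $a$ and $b$. Using the symmetry of the standing hypothesis of this section in $a$ and $b$, I may assume $v$ is not adjacent to $b$; the goal then becomes to produce a set $Z\subseteq V(G)\setminus\{a,b\}$ with $\kappa(Z)\le -C\frac{1}{\log(1-\gamma)}\log n+d$ that separates $a$ from $b$, contradicting that hypothesis. A short preliminary disposes of $v\in\{a,b\}$: if $v=b$ then, since $v$ is $d$-safe, deleting the (at most $d$) hubs of $G$ lying in $N(b)$ and invoking Theorem~\ref{ablogn} in the resulting graph for the non-adjacent pair $(b,a)$ yields, after restoring the deleted hubs, exactly a forbidden separator (and $v=a$ is symmetric); so henceforth $v\notin\{a,b\}$ and $b\notin N[v]$.

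Next I would set up the natural separator. Let $D$ be the component of $G\setminus N[v]$ containing $b$. Two observations drive everything: first, $N(D)\subseteq N(v)$, because a component of $G\setminus N[v]$ has all its neighbours in $N[v]$ and none of them equal to $v$ — in particular $v$ is complete to $N(D)$ and anticomplete to $D$; second, since $v$ is not $ab$-cooperative, $a\notin N[D]$, so together with $b\in D$ the set $N(D)$ already separates $a$ from $b$ and is disjoint from $\{a,b\}$. Write $N(D)=H_D\cup M_D$ with $H_D=N(D)\cap\Hub(G)$ and $M_D=N(D)\setminus\Hub(G)$; from $N(D)\subseteq N(v)$ and $d$-safety we get $|H_D|\le d$, hence $\kappa(H_D)\le d$. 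The only trouble is that $M_D$ may be huge.

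The heart of the argument — and the step I expect to be the main obstacle — is to trade $M_D$ for a set of clique cover number at most $-C\frac{1}{\log(1-\gamma)}\log n$ by invoking the hub-free case. For this I would pass to $G'=G[D\cup M_D\cup\{v\}]$, which lies in $\mathcal{C}$. In $G'$ we have $N_{G'}(v)=M_D$ (since $v$ is complete to $N(D)$ and anticomplete to $D$), the vertices of $M_D$ are non-hubs of $G$, and $\Hub(G')\subseteq\Hub(G)$ because being a proper wheel center or a loaded pyramid corner is inherited by induced subgraphs; therefore $N_{G'}(v)\cap\Hub(G')=\emptyset$, and $v,b$ are non-adjacent in $G'$. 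Applying Theorem~\ref{ablogn} to $G'$ with the pair $(v,b)$ gives $Z_0\subseteq V(G')\setminus\{v,b\}$ with $\kappa(Z_0)\le -C\frac{1}{\log(1-\gamma)}\log|V(G')|\le -C\frac{1}{\log(1-\gamma)}\log n$ such that $v$ and $b$ lie in different components of $G'\setminus Z_0$. Put $Z=Z_0\cup H_D$; then $\kappa(Z)\le\kappa(Z_0)+\kappa(H_D)\le -C\frac{1}{\log(1-\gamma)}\log n+d$, and $a\notin Z$ because $Z\subseteq N[D]\cup\{v\}$ while $a\notin N[D]\cup\{v\}$, and $b\notin Z$ because $Z_0$ excludes $b$ and $H_D\subseteq N(D)$ is disjoint from $D\ni b$.

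What remains is the routine verification that $Z$ separates $a$ from $b$ in $G$. Given an $a$--$b$ path $P$, it meets $N(D)$; let $w$ be the last vertex of $P$ (reading from $a$) not in $D$, so $w$ exists (as $a\notin D$), $w\ne b$, and the successor of $w$ on $P$ lies in $D$, forcing $w\in N(D)$. If $w\in H_D$ we are done; otherwise $w\in M_D$, and prepending $v$ to the portion of $P$ from $w$ to $b$ gives a $v$--$b$ walk all of whose vertices lie in $\{v\}\cup M_D\cup D$ and avoid $H_D$ (using $v\ne w$, that $v$ is adjacent to $w\in N(D)\subseteq N(v)$, and that $v$ is not on the portion from $w$ to $b$ since that portion lies in $D$); such a walk meets $Z_0$ at a vertex $\ne v$, hence at a vertex of $P$. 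Either way $P$ meets $Z$, the desired contradiction, so $v$ is adjacent to both $a$ and $b$. Finally, for the ``in particular'': two $d$-safe vertices that are not $ab$-cooperative both lie, by the above, in $N(a)\cap N(b)$, which is a clique because $G$ is $C_4$-free (two non-adjacent common neighbours of $a,b$ would induce a $C_4$ with $a$ and $b$); hence these vertices form a clique.
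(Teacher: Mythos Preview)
Your proposal is correct and follows essentially the same approach as the paper: pass to $G'=D\cup (N(D)\setminus\Hub(G))\cup\{v\}$, apply Theorem~\ref{ablogn} to the pair $(v,b)$ there, and union the resulting separator with the at most $d$ hubs in $N(v)$ to obtain a forbidden $a$--$b$ separator in $G$. Your version is in fact slightly more careful than the paper's---you explicitly verify $\Hub(G')\subseteq\Hub(G)$ before invoking Theorem~\ref{ablogn}, and you dispose of the degenerate cases $v\in\{a,b\}$ which the paper's proof tacitly ignores.
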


    \begin{proof}
      Suppose $v$ is non-adjacent to $b$ and $v$ is not $ab$-cooperative.
      Let $D$ be the component of $G \setminus N[v]$ such that $b \in D$.
      Let $X=N(D) \setminus \Hub(G)$. 
      Let $G'=D \cup X \cup \{v\}$. Then $a \not \in G'$.
      We apply Theorem~\ref{ablogn} to the vertices $v,b$ in $G'$ to
      obtain a subset
            $Z \subseteq G' \setminus \{v,b\}$ 
      with $\kappa(Z) \leq  -C \frac{1}{\log(1-\gamma)}  \log n$ 
      and such that every component of
  $G' \setminus Z$ contains at most one of $v,b$.
  We claim that $Z'=Z \cup (N(v) \cap \Hub(G))$ separates $a$ from $b$
  in $G$.
  Suppose that $P$ is a path from $a$ to $b$ with $P^* \cap Z'=\emptyset$.
  Since $b \in D$ and $a \not \in D$, there is a vertex $x \in P$ such that
  $b \dd P \dd x \subseteq D$ and $x \not \in D$. Then $x \in X$.
But now
$b \dd P \dd x \dd v$ is a path from
$b$ to $v$ in $G' \setminus Z$, a contradiction.
This proves the claim that $Z'$ separates $a$ from $b$ in $G$.
But
$\kappa(Z') \leq  -C \frac{1}{\log(1-\gamma)}  \log n +d$,
a contradiction. This  proves Lemma~\ref{abnoncoop}.
\end{proof}

    Let $S'$ be a stable set of hubs of $G$ with $S' \cap \{a,b\}=\emptyset$, and assume that every $s \in S'$ is $d$-safe.
    Let $S'_{bad}=S' \cap N(a) \cap N(b)$. Since $G$ is $C_4$-free, it follows that $|S'_{bad}| \leq 1$. Let $S=S' \setminus S'_{bad}$.
    By  Lemma \ref{abnoncoop}, every vertex in $S$ is $ab$-cooperative.

  A {\em separation} of  $G$ is a triple $(X,Y,Z)$ of pairwise disjoint
  subsets of $G$ with $X \cup Y \cup Z = G$ such that $X$ is anticomplete to
  $Z$. We are now ready to move on to star cutsets.  
  As in other papers on the subject, we associate a certain unique star separation to every  vertex of $S$. The choice of the separation is the same as in \cite{TWXV}.

  Let $v \in S$. Since $v$ is $ab$-cooperative,
    there is a  component
  $D$ of $G \setminus N[v]$ with $a,b \in N[D]$.
  Since $v \not \in S'_{bad}$, it follows that
  $v$ is not complete to $\{a,b\}$; consequently $D \cap \{a,b\} \neq \emptyset$, and so the component $D$ is unique.
  Let $B(v) = D$, let $C(v)=N(B(v)) \cup \{v\}$, and let $A(v)=G \setminus (B(v) \cup C(v))$. Then $(A(v), C(v),B(v))$ is the 
    {\em canonical star separation of $G$ corresponding to $v$}.
    
    As in \cite{TWXV}, we observe:
\begin{lemma} \label{nohub}
  The vertex $v$ is not a hub of $G\setminus A(v)$.
    \end{lemma}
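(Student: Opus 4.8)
The plan is to deduce this from Theorem~\ref{hubstarcutset}, exactly as in the analogous step of~\cite{TWXV}. First I would recall the setup: $v \in S$ is a hub of $G$ which is $ab$-cooperative, so there is a (unique) component $D = B(v)$ of $G \setminus N[v]$ with $a,b \in N[D]$, and $C(v) = N(B(v)) \cup \{v\}$, $A(v) = G \setminus (B(v) \cup C(v))$. Note that $B(v)$ is connected by definition, and $v \notin B(v)$, and $N_{B(v)}(v) \neq \emptyset$ (since $D$ is a component of $G \setminus N[v]$, and $G$ is connected — or at least $v$ has a neighbor reaching $D$; more carefully, $C(v) \setminus \{v\} = N(B(v))$, and every vertex of $N(B(v))$ is in $N(v)$ since $B(v)$ is a component of $G \setminus N[v]$). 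The key observation is that in the graph $G' := G \setminus A(v)$, the set $B(v)$ is precisely a component of $G' \setminus N_{G'}[v]$: indeed $V(G') = B(v) \cup C(v)$, and $N_{G'}[v] = C(v)$ because $N_{G}(v) \cap B(v) = \emptyset$ (as $B(v)$ avoids $N[v]$) and $N_G(v) \cap A(v)$ has been deleted, so $N_{G'}(v) = C(v) \setminus \{v\} = N(B(v)) \subseteq C(v)$; hence $G' \setminus N_{G'}[v] = B(v)$, which is connected, i.e. it is a single component $D'$ of $G' \setminus N_{G'}[v]$ with $B(v) \subseteq N_{G'}[D'] = G'$.

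Now suppose for contradiction that $v$ is a hub of $G'$, i.e. $v$ is a proper wheel center or a loaded pyramid corner in $G'$. Since $G' \in \mathcal{C}$ (being an induced subgraph of $G \in \mathcal{C}$, and $\mathcal{C}$ is closed under induced subgraphs as it is defined by forbidden induced subgraphs), Theorem~\ref{hubstarcutset} applies: there is no component $D''$ of $G' \setminus N_{G'}[v]$ such that the relevant hole (or loaded pyramid vertex set) $H \subseteq N_{G'}[D'']$. But $G' \setminus N_{G'}[v] = B(v)$ is a single connected component $D'$, and $N_{G'}[D'] = V(G')$ contains all of $G'$, in particular it contains $H$ (since $H \subseteq V(G')$ when the wheel/loaded pyramid witnessing that $v$ is a hub lives in $G'$). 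This contradicts Theorem~\ref{hubstarcutset}, completing the proof.

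The main point to get right — and the only place any care is needed — is the identity $N_{G'}(v) = N(B(v))$, equivalently that deleting $A(v)$ removes exactly the neighbors of $v$ outside $C(v)$ and leaves $B(v)$ connected with no new separations; once that is established the contradiction with Theorem~\ref{hubstarcutset} is immediate. I do not expect any genuine obstacle here; this is a short bookkeeping argument, identical in structure to the corresponding lemma in~\cite{TWXV}.
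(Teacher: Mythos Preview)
Your proposal is correct and follows essentially the same approach as the paper: both observe that $G\setminus A(v)=B(v)\cup C(v)$ with $B(v)$ the unique component of $(G\setminus A(v))\setminus N[v]$, and then derive a contradiction from Theorem~\ref{hubstarcutset}. The only cosmetic difference is that you apply Theorem~\ref{hubstarcutset} inside $G'=G\setminus A(v)$ while the paper applies it in $G$ (noting that any wheel or loaded pyramid in $G'$ is one in $G$, and $B(v)$ is already a component of $G\setminus N[v]$); one small slip is your claim $N_{G'}[B(v)]=V(G')$, which should be $V(G')\setminus\{v\}$ since $v$ has no neighbor in $B(v)$, but this does not affect the argument and matches the paper's own level of precision here.
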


    \begin{proof}
      Suppose
      that $(H,v)$ is a proper wheel or a loaded pyramid in $G \setminus A(v)$.
      Then $H \subseteq N[B(v)]$, contrary to Theorem \ref{hubstarcutset}.
               \end{proof}

Let $\mathcal{O}$ be a linear order on $S \cap \Hub(G)$.  Following \cite{wallpaper}, we say
     that two vertices of $S \cap \Hub(G)$ are {\em star twins} if $B(u)= B(v)$, $C(u) \setminus \{u\} = C(v) \setminus \{v\}$, and
    $A(u) \cup \{u\} = A(v) \cup \{v\}$. 

Let $\leq_A$ be a relation on $S \cap \Hub(G)$ defined as follows: 
\begin{equation*}
\hspace{2.5cm}
x \leq_A y \ \ \ \text{ if} \ \ \  
\begin{cases} x = y, \text{ or} \\ 
\text{$x$ and $y$ are star twins and $\mathcal{O}(x) < \mathcal{O}(y)$, or}\\ 
\text{$x$ and $y$ are not star twins and } y \in A(x).\\
\end{cases}
\end{equation*} 
Note that if $x \leq_A y$, then either $x = y$, or $y \in A(x).$

The following two results were  proved in \cite{TWXV} with a slightly different setup: The set of hubs is defined differently. However, the proofs do not use the definition of the set of hubs. 
  
\begin{lemma}[Chudnovsky, Gartland, Hajebi, Lokshtanov, Spirkl \cite{TWXV}, Lemma 4.8] \label{Aorder}
      $\leq_A$ is a partial order on $S \cap \Hub(G)$. 
\end{lemma}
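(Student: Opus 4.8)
The plan is to verify the three defining properties of a partial order --- reflexivity, antisymmetry, and transitivity --- directly from the definition of $\leq_A$ and the basic structure of the canonical star separations $(A(v),C(v),B(v))$. Reflexivity is immediate since $x \leq_A x$ is built into the first case of the definition. For antisymmetry, suppose $x \leq_A y$ and $y \leq_A x$ with $x \neq y$. Then by the remark following the definition, $y \in A(x)$ and $x \in A(y)$. The first step is to observe that if $x$ and $y$ are star twins this forces $\mathcal{O}(x) < \mathcal{O}(y)$ and $\mathcal{O}(y) < \mathcal{O}(x)$, a contradiction; so they are not star twins. Then one must derive a contradiction from $y \in A(x)$ and $x \in A(y)$ simultaneously. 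The key structural fact to invoke here is that $v \in C(v)$ for every $v \in S$ (so $v \notin A(v)$), together with the fact that $B(x)$ is a connected subset anticomplete to $A(x)$ and $C(x) = N(B(x)) \cup \{x\}$; having $y \in A(x)$ and $x \in A(y)$ should contradict the uniqueness/maximality of these components or the fact that $a,b \in N[B(x)] \cap N[B(y)]$. This is essentially the argument that appears in the laminarity-style lemmas (cf.\ Lemma~\ref{laminar} and Lemma~\ref{noncutpt}); the cleanest route is probably to show directly that $y \in A(x)$ implies $A(y) \cup \{y\} \subsetneq A(x) \cup \{x\}$ or $B(x) \subseteq B(y)$ with a strict containment somewhere, so the two relations cannot both hold.

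For transitivity, suppose $x \leq_A y$ and $y \leq_A z$; I want $x \leq_A z$. If any two of $x,y,z$ coincide the claim is trivial or reduces to one of the hypotheses, so assume $x,y,z$ pairwise distinct. Then $y \in A(x)$ and $z \in A(y)$. I would split into cases according to whether the pairs are star twins. If $x,y$ are star twins and $y,z$ are star twins, then $x,z$ are star twins (star-twinship is an equivalence relation on the parts it's defined on, since $A(\cdot)\cup\{\cdot\}$, $B(\cdot)$, $C(\cdot)\setminus\{\cdot\}$ agree), and $\mathcal{O}(x) < \mathcal{O}(y) < \mathcal{O}(z)$ gives $x \leq_A z$. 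If $x,y$ are star twins but $y,z$ are not, then $A(x)\cup\{x\} = A(y)\cup\{y\}$, so $z \in A(y) \subseteq A(x)\cup\{x\}$, and since $x,z$ are then not star twins (as $y,z$ aren't and $x,y$ are) and $z \neq x$, we get $z \in A(x)$, hence $x \leq_A z$. The symmetric case ($x,y$ not star twins, $y,z$ star twins) is handled the same way. The remaining case is where neither pair is a star twin pair: then $y \in A(x)$ and $z \in A(y)$, and I need $z \in A(x)$ (after checking $x,z$ are not star twins). The crucial containment to establish here is $A(y) \subseteq A(x) \cup \{x\}$ whenever $y \in A(x)$ --- equivalently, that the canonical star separations are nested in the natural way along $A$. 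Given that, $z \in A(y) \subseteq A(x)\cup\{x\}$ and $z \neq x$ gives $z \in A(x)$; and $x,z$ cannot be star twins since $x \in C(x)\setminus A(x)$ while $z \in A(x)$, so $A(z)\cup\{z\} \neq A(x)\cup\{x\}$ (for instance $x$ would have to lie in $A(z)$, contradicting what antisymmetry-type reasoning already rules out).

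The main obstacle will be establishing the nesting property ``$y \in A(x) \Rightarrow A(y) \subseteq A(x)\cup\{x\}$'' (and its consequences), which is the structural heart of the argument; this is exactly the kind of laminarity statement proved for similar canonical separations in earlier papers in this series and in Lemma~\ref{laminar} here, and the proof should follow the same template: using that $B(y)$ is connected and $a,b\in N[B(y)]$ while $C(x)$ separates $B(x)$ from $A(x)$, argue that $B(y)$ must lie entirely on one side and deduce the containment from the maximality built into the choice of the component. Since the excerpt explicitly notes that this lemma ``was proved in \cite{TWXV} with a slightly different setup'' and ``the proofs do not use the definition of the set of hubs,'' I would, in the interest of brevity, present the case analysis above in full and either reproduce the short laminarity argument or cite \cite[Lemma 4.8]{TWXV} for the nesting step, noting that it depends only on the definition of the canonical star separation and not on which vertices are designated hubs.
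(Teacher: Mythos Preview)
The paper does not prove this lemma: it is stated with a citation to \cite[Lemma~4.8]{TWXV} and the remark that the proof there does not rely on the particular definition of $\Hub(G)$, so it transfers verbatim to the present setting. There is therefore no ``paper's own proof'' to compare against.

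That said, your plan is correct and is essentially what the proof in \cite{TWXV} does. The structural core you isolate is exactly right: for $x,y\in S$ non-adjacent (which holds since $S$ is stable), $y\in A(x)$ forces $N[y]\cap B(x)=\emptyset$ (because $y\notin N(B(x))$), hence $B(x)$ is a connected subset of $G\setminus N[y]$; since at least one of $a,b$ lies in $B(x)$ and that vertex cannot lie in $N(y)$, it lies in $B(y)$, giving $B(x)\subseteq B(y)$. From this single containment, antisymmetry follows (if also $x\in A(y)$ then $B(y)\subseteq B(x)$, whence $B(x)=B(y)$ and $x,y$ are star twins), and transitivity follows by chaining $B(x)\subseteq B(y)$ with the analogous consequence of $z\in A(y)$ to get $z\notin B(x)\cup N(B(x))\cup\{x\}=B(x)\cup C(x)$. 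Your case analysis for the star-twin subcases is fine; the only place to tighten is that the ``nesting'' statement you want is most cleanly phrased as $y\in A(x)\Rightarrow B(x)\subseteq B(y)$ (rather than $A(y)\subseteq A(x)\cup\{x\}$, which is a consequence but slightly more awkward to verify directly). With that formulation the whole argument is three short paragraphs and needs nothing beyond the definition of the canonical star separation and the stability of $S$; your instinct to either reproduce it or cite \cite{TWXV} is the right call.
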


Let  $\Core(S')$ be a the set of all $\leq_A$-minimal elements of $S \cap \Hub(G)$.

  \begin{lemma}[Chudnovsky, Gartland, Hajebi, Lokshtanov, Spirkl \cite{TWXV}, Lemma 4.9]\label{looselylaminar}
    Let $u,v \in \Core(S')$. Then
    $A(u) \cap C(v)=C(u) \cap A(v)=\emptyset$.
  \end{lemma}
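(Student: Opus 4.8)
The statement to prove is: for $u,v \in \Core(S')$, we have $A(u) \cap C(v) = C(u) \cap A(v) = \emptyset$. By symmetry it suffices to prove $A(u) \cap C(v) = \emptyset$, and since $\Core(S')$ consists of $\leq_A$-minimal elements, I may assume $u \ne v$ and that neither $u \leq_A v$ nor $v \leq_A u$; in particular $u \notin A(v)$, $v \notin A(u)$, and $u,v$ are not star twins. The plan is to argue by contradiction: suppose there is a vertex $w \in A(u) \cap C(v)$. Recall $(A(v),C(v),B(v))$ is the canonical star separation corresponding to $v$, so $C(v) = N(B(v)) \cup \{v\}$ and $A(v)$ is anticomplete to $B(v)$; the same holds with $u$ in place of $v$. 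The key structural fact to exploit is that $b$ (or $a$) lies in $B(u)$ and in $B(v)$ (since $u,v$ are $ab$-cooperative and by uniqueness of the component, $\{a,b\} \cap B(v) \ne \emptyset$ and $\{a,b\} \cap B(u) \ne \emptyset$ — one should check both ends land in the same component, which follows because $v \notin N(a) \cap N(b)$ and $G$ is $C_4$-free forces the component containing one of $a,b$ to contain the other, as in the discussion preceding Lemma~\ref{nohub}).

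The heart of the argument is a ``laminarity'' style case analysis, essentially the same as the proof of \eqref{CinB} inside Lemma~\ref{laminar}: starting from $w \in A(u) \cap C(v)$, I would first show $C(v) \cap B(u) = \emptyset$ (using that $w \in A(u)$ means $A(u)$ meets $C(v)$, and $A(u)$ is anticomplete to $B(u)$, so no vertex of $C(v) = N(B(v))\cup\{v\}$ adjacent to things in $B(u)$ can sit across the separation without contradiction — more precisely one shows that if $C(v)$ met both $A(u)$ and $B(u)$ then $C(v)$, being $N(B(v))\cup\{v\}$, would have to be ``split'' by the $u$-separation in a way that forces the common vertex $b$ into an inconsistent position). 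Then, since $\{b\}\cup B(v)$ is connected and disjoint from $C(v)\cap B(u)=\emptyset$... (wait — rather) since $\{b\} \cup B(v)$ is connected, avoids $C(u)$ appropriately, and $b \in B(u)$, one deduces $B(v) \subseteq B(u)$. But then $C(v) = N(B(v)) \subseteq B(u) \cup C(u)$, hence $A(u) \subseteq A(v)$; combined with $w \in A(u) \cap C(v)$ we get $w \in A(v)$, contradicting $w \in C(v)$. The symmetric computation (swapping roles) together with the star-twin tie-breaking via $\mathcal{O}$ rules out the degenerate case $A(u) = A(v)$, $B(u)=B(v)$.

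The main obstacle is getting the containment chain $B(v) \subseteq B(u) \Rightarrow A(u) \subseteq A(v)$ exactly right while simultaneously handling the three clauses in the definition of $\leq_A$: the star-twin case must be excluded before running the connectivity argument (otherwise $A(u) \subseteq A(v)$ and $A(v) \subseteq A(u)$ would be consistent and no contradiction arises), and one must verify that $b$ genuinely lies in $B(u) \cap B(v)$ rather than, say, $a$ lying in one component and $b$ in the other — this is where the $C_4$-freeness and $v \notin S'_{bad}$ hypotheses are used. I expect the cleanest writeup mirrors \eqref{CinB} almost verbatim, replacing ``$K_1, K_2 \in \mathcal{K}$'' and ``$b$'' by ``$u,v \in \Core(S')$'' and ``the common endpoint in $B(u)\cap B(v)$'', and then citing minimality of $u,v$ in $\leq_A$ to derive the contradiction $A(u) \subsetneq A(v)$ (or the star-twin/$\mathcal{O}$ contradiction). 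Since the excerpt explicitly says this was proved in \cite{TWXV} with the identical argument, I would keep the proof short and point to that source, supplying only the adaptation of \eqref{CinB}.
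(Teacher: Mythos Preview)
Your plan has a genuine gap: both intermediate claims you aim for are false. You propose to show $C(v)\cap B(u)=\emptyset$ and then $B(v)\subseteq B(u)$, but in fact $v\in C(v)\cap B(u)$ and $u\in B(v)\setminus B(u)$. The analogy with \eqref{CinB} in Lemma~\ref{laminar} breaks precisely here: in that lemma $C(K)$ is a clique (it is contained in the clique $K$), so a single vertex $w\in C(K_1)\cap A(K_2)$ drags all of $C(K_1)\subseteq N[w]$ away from $B(K_2)$. In the star-separation setting $C(v)\subseteq N[v]$ is \emph{not} a clique, and there is no reason for $C(v)\subseteq N[w]$; your parenthetical justification (``no vertex of $C(v)$ adjacent to things in $B(u)$ can sit across the separation'') does not go through.

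The missing idea is that $S$ is a \emph{stable} set, a hypothesis you never invoke. Since $u,v\in\Core(S')\subseteq S$ are distinct and hence non-adjacent, $v\notin N[u]\supseteq C(u)$; you already have $v\notin A(u)$ from $\le_A$-minimality (and the star-twin tie-break), so $v\in B(u)$. Then $C(v)\subseteq N[v]\subseteq B(u)\cup N(B(u))\subseteq B(u)\cup C(u)$, whence $C(v)\cap A(u)=\emptyset$. This is the whole proof; the detour through a containment $B(v)\subseteq B(u)$ and the role of $a,b$ in $B(u)\cap B(v)$ is unnecessary. (The paper itself does not reprove the lemma but cites \cite{TWXV}, noting that the argument there does not depend on the specific definition of hubs.)
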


  As in \cite{TWXV}, we define the \emph{central bag}
 $$\beta^A(S')=\left(\bigcap_{v \in \Core(S')} (B(v) \cup C(v))\right) \setminus S'_{bad}.$$
 The next result describes  important properties of
 $\beta^A(S')$.

\begin{theorem} 
  \label{A_centralbag}
    The following hold:
    \begin{enumerate}
    \item For every $v \in \Core(S')$, we have $C(v) \subseteq \beta^{A}(S')$. \label{A-1}
        \item For every component $D$ of $G  \setminus (\beta^{A}(S') \cup S'_{bad})$, there exists $v \in \Core(S')$ such that $D \subseteq A(v)$. Further, if $D$ is a component of $G \setminus (\beta^A(S') \cup S'_{bad})$ and $v \in \Core(S')$ such that $D \subseteq A(v)$, then $N(D) \subseteq C(v) \cup S'_{bad}$. \label{A-3}
      \item $S' \cap \Hub(\beta^A(S'))=\emptyset$. \label{A-4}
    \end{enumerate}
\end{theorem}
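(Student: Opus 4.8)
The plan is to prove the three items essentially in the order listed, mimicking the structure of Lemma~\ref{laminar} and Lemma~\ref{noncutpt} but now with the ``looselylaminar'' statement (Lemma~\ref{looselylaminar}) playing the role of~\eqref{CinB}. For item~\eqref{A-1}, fix $v\in\Core(S')$ and let $c\in C(v)$; we must show $c\in B(u)\cup C(u)$ for every $u\in\Core(S')$ and that $c\notin S'_{bad}$. The latter is immediate: since $c\in C(v)=N(B(v))\cup\{v\}$ and $v$ is a hub whose closed neighborhood is a cutset (so $v\notin S'_{bad}$ by the setup, as $S=S'\setminus S'_{bad}$ and $v\in S$), and since no vertex of $B(v)$ can lie in $S'_{bad}$ either because $a,b\in N[B(v)]$ — actually it is cleanest to note $C(v)\cap S'_{bad}=\emptyset$ because $S'_{bad}\subseteq N(a)\cap N(b)$ has size at most one and if $s\in S'_{bad}$ then $s$ is complete to $\{a,b\}$ so $s\notin N[B(v)]$. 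For the former, suppose $c\in A(u)$ for some $u\in\Core(S')$; then $c\in C(v)\cap A(u)$, contradicting Lemma~\ref{looselylaminar}. Hence $C(v)\subseteq(B(u)\cup C(u))\setminus S'_{bad}$ for all $u$, giving~\eqref{A-1}.

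For item~\eqref{A-3}, let $D$ be a component of $G\setminus(\beta^A(S')\cup S'_{bad})$. Pick any $u\in\Core(S')$ with $D\cap A(u)\neq\emptyset$ (such $u$ exists: if $D$ met $B(u)\cup C(u)$ for every $u$, then $D\subseteq\beta^A(S')\cup S'_{bad}$, which is false since $D$ is a nonempty component outside that set). By Lemma~\ref{looselylaminar}, for this $u$ we have $C(v)\cap A(u)=\emptyset$ for all $v$, but more directly: since $D$ is connected, disjoint from $\beta^A(S')$, and $C(u)$ separates $A(u)$ from $B(u)$ in $G$ (because $C(u)=N(B(u))\cup\{u\}$ and $A(u)=G\setminus(B(u)\cup C(u))$ is anticomplete to $B(u)$), and since $D\cap C(u)=\emptyset$ because $C(u)\subseteq\beta^A(S')$ by~\eqref{A-1}, the component $D$ lies entirely inside $A(u)$. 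This proves the first sentence. The ``Further'' part then follows because $N(D)\subseteq N(A(u))\subseteq C(u)\cup S'_{bad}$ — more precisely $N(A(u))\subseteq C(u)$ in $G$, so adjusting for the removed set $S'_{bad}$ we get $N(D)\subseteq C(u)\cup S'_{bad}$, though since $D\subseteq A(u)$ and $A(u)$ is anticomplete to $B(u)$, actually $N_G(D)\subseteq A(u)\cup C(u)$, and combining with $D$ being a full component of $G\setminus(\beta^A(S')\cup S'_{bad})$ and $C(u)\subseteq\beta^A(S')$ we conclude $N(D)\subseteq C(u)\cup S'_{bad}$.

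For item~\eqref{A-4}, let $v\in S'$ and suppose for contradiction that $v$ is a hub of $\beta^A(S')$, witnessed by a proper wheel or loaded pyramid $(H,v)$ with $H\subseteq\beta^A(S')$. First, $v\in S'_{bad}$ is impossible since $S'_{bad}$ is deleted from the central bag, so $v\in S=S'\setminus S'_{bad}$, and $v\in\Core(S')$ or $v\in(S\cap\Hub(G))\setminus\Core(S')$; in either case there is $u\in\Core(S')$ with $v\leq_A u$ reversed, i.e. $u\leq_A v$, so $v\in A(u)\cup\{u\}$. If $v=u$: by~\eqref{A-1}, $\beta^A(S')\supseteq C(v)\ni$ the closed neighborhood structure, and by Lemma~\ref{nohub} $v$ is not a hub of $G\setminus A(v)$; since $\beta^A(S')\subseteq B(v)\cup C(v)=G\setminus A(v)$, it follows $v$ is not a hub of $\beta^A(S')$, contradiction. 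If $v\neq u$, then $v\in A(u)$, so $v\notin B(u)\cup C(u)\supseteq\beta^A(S')$ — but $v$ being a hub of $\beta^A(S')$ requires $v\in\beta^A(S')$, contradiction. This completes~\eqref{A-4}.

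The main obstacle I anticipate is the bookkeeping around $S'_{bad}$ and around which $u\in\Core(S')$ one gets to use: one must be careful that the separation properties of $(A(u),C(u),B(u))$ in $G$ survive the deletion of $S'_{bad}$ (which is harmless since $|S'_{bad}|\le 1$ and its unique possible element is complete to $\{a,b\}$ hence outside every relevant $N[B(u)]$), and that ``$v$ is a hub of $\beta^A(S')$'' genuinely forces $v\in\beta^A(S')$ so that the case split $v=u$ versus $v\in A(u)$ is exhaustive for $v\in S\cap\Hub(G)$. All three items are then short consequences of Lemma~\ref{looselylaminar}, Lemma~\ref{nohub}, and the definition of $\beta^A(S')$.
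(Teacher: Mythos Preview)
Your approach is essentially the same as the paper's: Lemma~\ref{looselylaminar} gives~\eqref{A-1}, connectivity plus~\eqref{A-1} gives~\eqref{A-3}, and Lemma~\ref{nohub} together with the minimality defining $\Core(S')$ gives~\eqref{A-4}. The organization differs only cosmetically (you case-split on $v\in\Core(S')$ versus $v\notin\Core(S')$ in~\eqref{A-4}, whereas the paper argues directly that $u\notin\Core(S')$), and both work.

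There is, however, a genuine slip in your $S'_{bad}$ bookkeeping for~\eqref{A-1}. You write that if $s\in S'_{bad}$ then $s$ is complete to $\{a,b\}$, ``so $s\notin N[B(v)]$''. This is backwards: since at least one of $a,b$ lies in $B(v)$ (as noted in the setup, $v\notin S'_{bad}$ forces $B(v)\cap\{a,b\}\neq\emptyset$), and $s$ is adjacent to that vertex, in fact $s\in N[B(v)]$. The correct one-line argument for $C(v)\cap S'_{bad}=\emptyset$ is: $C(v)\setminus\{v\}=N(B(v))\subseteq N(v)$ (because $B(v)$ is a component of $G\setminus N[v]$), and $S'$ is stable with $v\in S'$, so $S'_{bad}\cap N(v)=\emptyset$; combined with $v\in S=S'\setminus S'_{bad}$ this gives $C(v)\cap S'_{bad}=\emptyset$. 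With this fix your proof of~\eqref{A-1} goes through, and the rest is fine.
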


\begin{proof}
    \eqref{A-1} is immediate
    from Lemma  \ref{looselylaminar}.

Next we prove  \eqref{A-3}. Let $D$ be a component of
$G \setminus (\beta^A(S') \cup S'_{bad})$. Since  $G \setminus (\beta^A(S') \cup S'_{bad})=\bigcup_{v \in \Core(S')}A(v)$, there exists
    $v \in \Core(S')$ such that $D \cap A(v) \neq \emptyset$.
    If $D \setminus A(v) \neq \emptyset$, then, since $D$ is connected, it follows that $D \cap N(A(v)) \neq \emptyset$; but then $D \cap C(v) \neq \emptyset$, contrary to \eqref{A-1}. Since $N(D) \subseteq \beta^A(S')  \cup S'_{bad}$ and $N(D) \subseteq A(v) \cup C(v)  \cup S'_{bad}$, it follows that $N(D) \subseteq C(v)  \cup S'_{bad}$. 
    This proves \eqref{A-3}.

    To prove \eqref{A-4}, let $u \in S' \cap \Hub(\beta^A(S'))$.
    Since $\beta^A(S') \cap S'_{bad}=\emptyset$, we deduce that $u \not \in S'_{bad}$,
    and so   $u \in S \cap \Hub(G)$.
       By  Lemma \ref{nohub}, it follows that
$\beta^A(S') \not \subseteq B(u) \cup C(u)$, and therefore $u \not \in \Core(S')$.
But then $u \in A(v)$ for some $v \in \Core(S')$, and so $u \not \in \beta^A(S')$,
a contradiction. This proves \eqref{A-4} and completes the proof of
 Theorem \ref{A_centralbag}.
  \end{proof}

In the course of the proof of Theorem \ref{bananaclique}, we will inductively obtain
a small  cutset separating $a$ from $b$ in
$\beta^A(S')$, using that the vertices in $S'$ are not hubs in $\beta^A(S')$. The next theorem lets us lift this cutset
into a cutest that separates $a$ from $b$ in $G$.

\begin{theorem}
  \label{smallsepinG}
Let $(X,Y,Z)$ be a separation of $\beta^A(S')$ such that $a \in X$ and
$b \in Z$. Then there exists a set $Y' \subseteq V(G)$ 
such that 
\begin{enumerate}
  \item $Y'$
    separates $a$ from $b$ in $G$, and
  \item $\kappa(Y') \leq \kappa(Y)+|Y \cap \Core(S')|(-C \frac{1}{\log(1-\gamma)}    \log n +d) + 1$.
 
    \end{enumerate}
\end{theorem}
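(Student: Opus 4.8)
The plan is to build $Y'$ by adjoining to $Y$ three kinds of sets: $S'_{bad}$ (which has at most one vertex, as $G$ is $C_4$-free); for each $v\in Y\cap\Core(S')$, the set $H_v:=N_G(v)\cap\Hub(G)$ (of size at most $d$, since $v\in S'$ is $d$-safe); and for each such $v$, a separator $Z_v$ coming from Theorem~\ref{ablogn}. To get $Z_v$, fix $t_v\in\{a,b\}$ with $v\not\sim t_v$ — one exists since $v\notin S'_{bad}=S'\cap N(a)\cap N(b)$, and in fact $t_v\in B(v)$. The key observation is that $v$ is adjacent to no hub of $G\setminus H_v$: any induced proper wheel or loaded pyramid of $G\setminus H_v$ has its vertex set disjoint from $H_v$, hence is an induced proper wheel or loaded pyramid of $G$, so $\Hub(G\setminus H_v)\subseteq\Hub(G)$, and therefore $N_{G\setminus H_v}(v)\cap\Hub(G\setminus H_v)\subseteq(N_G(v)\setminus H_v)\cap\Hub(G)=\emptyset$. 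Thus Theorem~\ref{ablogn} applies to $G\setminus H_v$ with the non-adjacent pair $v,t_v$ and gives $Z_v\subseteq V(G)\setminus(H_v\cup\{v,t_v\})$ with $\kappa(Z_v)\le -C\frac{1}{\log(1-\gamma)}\log n$ such that $Z_v$ separates $v$ from $t_v$ in $G\setminus H_v$; hence $Y_v:=Z_v\cup H_v$ separates $v$ from $t_v$ in $G$ and $\kappa(Y_v)\le -C\frac{1}{\log(1-\gamma)}\log n+d$.

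Set $Y':=Y\cup S'_{bad}\cup\bigcup_{v\in Y\cap\Core(S')}Y_v$. Since a union of clique covers is a clique cover, $\kappa(Y')\le\kappa(Y)+\kappa(S'_{bad})+\sum_v\kappa(Y_v)$, which gives conclusion (2) because $\kappa(S'_{bad})\le 1$. For conclusion (1), suppose for contradiction that $P$ is a path from $a$ to $b$ in $G$ avoiding $Y'$. Recall $V(G)$ is the disjoint union of $X$, $Y$, $Z$, $S'_{bad}$, and the vertex sets of the components of $G\setminus(\beta^A(S')\cup S'_{bad})$; since $P$ avoids $Y\cup S'_{bad}$, every vertex of $P$ lies in $X$, in $Z$, or in one such component. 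Tracing $P$ from the $a$-end, let $w$ be its first vertex in $Z$ and $w^-$ its predecessor on $P$; since $X$ is anticomplete to $Z$, $w^-\notin X$, so $w^-$ lies in some component $D$ of $G\setminus(\beta^A(S')\cup S'_{bad})$, and by Theorem~\ref{A_centralbag}(2) there is $v\in\Core(S')$ with $D\subseteq A(v)$ and $N(D)\subseteq C(v)\cup S'_{bad}$. Let $x^*$ be the vertex of $P$ immediately before the maximal run of $P$ inside $D$ containing $w^-$; as components are pairwise anticomplete, $x^*$ lies in no component of $G\setminus(\beta^A(S')\cup S'_{bad})$, so $x^*\in X$.

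Next I would verify $v\in Y$. By Theorem~\ref{A_centralbag}(1), $v\in C(v)\subseteq\beta^A(S')=X\cup Y\cup Z$. Both $x^*$ and $w$ lie in $N(D)\subseteq C(v)\cup S'_{bad}$ and, being in $X$ and $Z$ respectively, avoid $S'_{bad}$; hence $x^*\in C(v)\cap X$, $w\in C(v)\cap Z$, and both are adjacent to $v$ because $C(v)\setminus\{v\}\subseteq N(v)$. Then $v\in X$ is impossible ($v\sim w\in Z$) and $v\in Z$ is impossible ($v\sim x^*\in X$), so $v\in Y$, i.e. $v\in Y\cap\Core(S')$. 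Since $x^*,w\in P$ and $P$ avoids $Y'\supseteq Y_v$, we have $x^*,w\notin Z_v\cup H_v$. Now form a path from $v$ to $t_v$: if $t_v=b$, take $v$–$x^*$–(the run of $P$ in $D$, ending at $w^-$)–$w$, followed by the portion of $P$ from $w$ to $b$; if $t_v=a$, take $v$–$x^*$–(the portion of $P$ from $x^*$ back to $a$). In either case this is a genuine path (no vertex repeats: $v\notin P$, and the two pieces of $P$ used are disjoint sub-paths of $P$) that avoids $Z_v\cup H_v=Y_v$, contradicting that $Y_v$ separates $v$ from $t_v$ in $G$. This completes the proof.

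The main obstacle is conceptual rather than computational: one must recognize that the detours of a hypothetical $a$–$b$ path through the regions $A(v)$ outside the central bag are killed not by directly separating $C(v)\cap X$ from $C(v)\cap Z$ (which is awkward to do within the clique-cover budget), but by separating each bad hub $v\in Y\cap\Core(S')$ from a fixed endpoint $t_v\in\{a,b\}$, exploiting that $G\setminus H_v$ is hub-free around $v$ so that Theorem~\ref{ablogn} applies. The remaining care is in the bookkeeping of which of the pieces $X,Y,Z,S'_{bad}$ or which component a given vertex of $P$ belongs to (handled by Theorem~\ref{A_centralbag}), and in choosing $t_v=a$ versus $t_v=b$ so that the detour exhibited inside $P$ genuinely produces a $v$–$t_v$ path avoiding $Y_v$.
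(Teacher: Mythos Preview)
Your proof is correct and follows essentially the same approach as the paper's: build $Y'$ from $Y$, $S'_{bad}$, the hub-neighbourhoods $H_v=N(v)\cap\Hub(G)$, and a Theorem~\ref{ablogn} separator $Z_v$ for each $v\in Y\cap\Core(S')$, then derive a contradiction from a hypothetical $a$--$b$ path by locating a $v\in Y\cap\Core(S')$ and exhibiting a $v$--$t_v$ path that avoids $Y_v$. The only cosmetic difference is that you apply Theorem~\ref{ablogn} in $G\setminus H_v$ whereas the paper applies it in the smaller graph $B(v)\cup(C(v)\setminus\Hub(G))\cup\{v\}$; your choice makes the step ``$Y_v$ separates $v$ from $t_v$ in $G$'' slightly more immediate.
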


\begin{proof}
  Let $s \in Y \cap \Core(S')$.
    Let $X=C(s)  \setminus \Hub(G)$. 
    Let $G'=B(s) \cup X \cup \{s\}$.
    Suppose first that $s$ is non-adjacent to $b$.
      We apply Theorem~\ref{ablogn} to the vertices $s,b$ in $G'$ to
      obtain a subset
            $Z(s) \subseteq G' \setminus \{s,b\}$ 
      with $\kappa(Z) \leq -C \frac{1}{\log(1-\gamma)} \log n$ and such that every component of
  $G' \setminus Z(s)$ contains at most one of $s,b$.
  Now suppose that $s$ is adjacent to $b$. Since $s$ is cooperative,
  it follows that $s$ is non-adjacent to $a$.
Now we apply Theorem~\ref{ablogn} to the vertices $s,a$ in $G'$ to
      obtain a subset
            $Z(s) \subseteq G' \setminus \{s,a\}$ 
      with $\kappa(Z(s)) \leq -C \frac{1}{\log(1-\gamma)} \log n$ and such that every component of
  $G' \setminus Z(s)$ contains at most one of $s,a$.
      We deduce:
      \\
      \\
      \sta{For every $s \in \Core(S')$, $Z(s)$ separates $s$ from at least one of $a,b$. \label{Z(s)}}
        \\
        \\
  Now let
  $$Y'=Y \cup \bigcup_{s \in Y \cap \Core(S')}Z(s) \cup \bigcup_{s \in Y \cap \Core(S')}
  (N(s) \cap \Hub(G)) \cup S'_{bad}.$$
  Since every vertex of $S'$ is safe, we have that
  $\kappa(Y') \leq \kappa(Y)+|Y \cap \Core(S')| (-C \frac{1}{\log(1-\gamma)}\log n +d)$.

  We show that $Y'$ separates $a$ from $b$ in $G$.
  Suppose not.
  Let $D_b$ be the component of $\beta^A(S') \setminus Y$ such that
  $b \in D_b$, an let $D_a$ be the component of $\beta^A(S') \setminus Y$ such that
  $a \in D_a$. Let $D_b'=D_b \cup \bigcup_{s \in D_b \cap \Core(S')}A(s)$,
  and let $D_a'=D_a \cup \bigcup_{s \in D_a \cap \Core(S')}A(s)$.
Let $P$ be a path from $b$ to $a$ in $G \setminus Y'$.
  Since $b \in D_b'$ and $a \not \in D_b'$, there is $x \in P$
  such that $b \dd P \dd x \subseteq D_b'$ 
  and the neighbor $y$ of $x$ in the path $x \dd P \dd a$ does
  not belong to $D_b'$.
  Since for every  $s \in D_b \cap \Core(S')$, $N_{G}(A(s)) \cap \beta^A(S') \subseteq C(s) \subseteq D_b \cup Y$, and since  $Y \subseteq Y'$, it follows that $y \not \in \beta^A(S')$.
  Let 
   $D'$ be the component of   $G \setminus (\beta^A(S') \cup S'_{bad})$
  such that $y \in D'$.
    By Theorem~\ref{A_centralbag}\eqref{A-3} there is an
  $s \in \Core(S')$ such that $D' \subseteq A(s)$ and
  $N(D') \subseteq C \cup S'_{bad}$; consequently by
  Theorem~\ref{A_centralbag}\eqref{A-1},
  $N(D') \subseteq  N_{\beta^A(S')}(s) \cup S'_{bad}$.
  In particular, $x \in N_{\beta^A(S')}(s)$. Since $y \not \in D'_b$, it follows that
  $s \not \in D_b$.
Since 
$N_{\beta^A(S')}(s) \cap D_b \neq \emptyset$ and $s \not \in D_b$,
it follows that $s \in Y$. Consequently, $P \cap Z(s)=\emptyset$.
Since $y \in A(s)$ and $a \in \beta^A(S') \subseteq B(s) \cup C(s)$, there exists $x' \in y \dd P \dd a$ such that $x' \in C(s)$
and $s$ has no other neighbors in the path $x' \dd P \dd a$.
Now $b \dd P \dd x \dd s$ and $a \dd P \dd x' \dd s$ are paths
from $b$ to $s$ and from $a$ to $s$, respectively, and both are
disjoint from $Z(s)$, contrary to \eqref{Z(s)}.
  \end{proof}

  \section{Bounding the number of non-hubs}
  \label{sec:boundhubs}

For $X \subseteq V(G)$, a component
$D$ of $G \setminus X$ is {\em full for $X$} if $N(D)=X$.
$X \subseteq V(G)$ is a {\em minimal separator} in $G$ if
there exist two distinct full components for $X$.
In this section we again set $\epsilon=\frac{1}{4 \times 17^6 \times 48 \times 192}$, $\gamma=\frac{\epsilon(1-4 \epsilon)}{2}$  and $C=96^2 \times 4 \times 17^6+4$, and let
$D=-C \frac{1}{\log(1-\gamma)}$.
    Let $a,b \in V(G)$ be non-adjacent.
  The goal of this section is to start with a minimal separator in $G$ separating $a$ from $b$, and turn it into a separator that interfaces well with
  Theorem~\ref{smallsepinG}.
  Let $d$ be an integer and let $S_1 \subseteq V(G) \setminus \Hub(G)$ be a stable set of $d$-safe vertices.
    For  a set $U \subseteq V(G)$ we denote by
  $\mu_d(U)$ the set $U \cap S_1$.
  We will prove the following:
  \begin{theorem}
    \label{boundhubs}
    Let $d$ be an integer.
    Let $G$ be an even-hole-free graph  and let $a,b \in V(G)$ be non-adjacent.
    Let $|V(G)|=n$.
    Let $Y$ be a minimal separator in $G$ such that $a$ and $b$ belong
    to different components of $G \setminus Y$.
    Then there exists a set $Y' \subseteq V(G) \setminus \{a,b\}$ such that
    \begin{itemize}
    \item $Y'$ separates $a$ from $b$; 
 \item $|\kappa(Y' \setminus Y)| \leq D (D\log n+d) \log n$; and
         \item $|\mu_d(Y')| \leq  D(D \log n+d) \log n$. 
           \end{itemize}
    \end{theorem}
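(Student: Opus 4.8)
The plan is to mimic the ``pivot'' step sketched in Section~\ref{sec:outline}, iterating the hub-free machinery (Theorem~\ref{ablogn}) together with the local-to-global transition (Theorem~\ref{localglobal}) to replace $Y$ by a separator that meets $S_1$ only a bounded number of times. Start with the minimal separator $Y$ separating $a$ from $b$, and let $D_a$, $D_b$ be the two full components of $G\setminus Y$ containing $a$ and $b$ respectively; so $N(D_a)=N(D_b)=Y$. The idea is to work inside, say, the graph $G'=D_b\cup Y\cup\{a'\}$ obtained by contracting $D_a\cup\{a\}$ to a single vertex $a'$ complete to $Y$ (legitimate since $N(D_a)=Y$ and $G$ is $C_4$-free forces $N(a')\cap N(b)$ to be a clique, which we may peel off at additive cost $\le D\log n$). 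In $G'$ we have the clean situation $N(D_b)=Y=N_{G'}(a')$, $b\in D_b$, $a'$ anticomplete to $D_b$. Now the quantity we want to drive down is $\mu_d(Y)=|Y\cap S_1|$; if this is already $\le D(D\log n+d)\log n$ we are done (taking $Y'=Y$ plus the peeled clique), so assume it is large.

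First I would set up the iteration exactly as in the proof of Theorem~\ref{ablogn_induction}: apply step~\eqref{onestep}-type reasoning inside $G'$, i.e.\ either find a clique of size $\epsilon|Y|$ in $Y$ (then we can cut there), or split $Y\setminus N(b)$ into anticomplete triples $X_1,X_2,X_3$ via Lemma~\ref{chordal} (using that $Y$ is chordal, since a hole in $Y$ would give a wheel $(H,a')$ with $D_b$ connected and $H\subseteq N(D_b)$, contradicting Theorem~\ref{hubstarcutset}), and then either many partitioned triples are dangerous --- handle via Theorem~\ref{dangerous} --- or many are $b$-separated via Lemma~\ref{minimalconnected} and Theorem~\ref{pyramid_separate} so that $Y$ is a $(\delta,b)$-breaker and Theorem~\ref{localglobal} yields a set $S\subseteq D_b\setminus\{b\}$ with $\kappa(S)\le C$ whose removal separates $b$ from $N(x)$ for at least $\epsilon|Y|$ vertices $x\in Y$. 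The point, as in Section~\ref{sec:outline}, is that the new ``frontier'' is $N_{G'_{\mathrm{new}}}(a')\le(1-\gamma)|Y|$ after adding $S$ (and the peeled clique and the at most one bad vertex) to the cutset. Crucially the purity dichotomy of Theorem~\ref{dangerous} must be fed back: when we are forced into the ``$X'$ pure'' branch we pass to the pure subset exactly as in Theorem~\ref{ablogn_induction}, which costs only a constant fraction of $|Y|$ and is what makes $\gamma<1$.

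Iterating this $O(\log n)$ times shrinks the frontier from $|Y|\le n$ down to $O(1)$ vertices. At each of the $\le\frac{1}{-\log(1-\gamma)}\log n=D'\log n$ steps (absorb the constant into $D$) we add to the cutset a set whose clique cover number is at most $C=O(1)$, each clique of which --- when it sits inside some $\Proj(x)$ as in Theorem~\ref{localglobal} or as an output of Theorem~\ref{ablogn} applied to separate a $\le_A$-minimal hub neighbour --- contributes at most $D\log n+d$ to both $\kappa(\cdot\setminus Y)$ and to $\mu_d(\cdot)$; here the bound $\mu_d(S)=|S\cap S_1|\le\kappa(S)\cdot(\text{clique size})$ uses that $S_1$ is stable so each clique of $S$ meets $S_1$ at most once, but when $S$ comes out of Theorem~\ref{ablogn} we must instead bound $|\cdot\cap S_1|$ by $\kappa\le D\log n$ directly. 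Summing over the $\le D'\log n$ iterations gives $\kappa(Y'\setminus Y)\le C\cdot D'\log n\cdot(D\log n+d)\le D(D\log n+d)\log n$ and likewise $|\mu_d(Y')|\le D(D\log n+d)\log n$, after enlarging the universal constant $D$, which proves the theorem. (The set $Y'$ still separates $a$ from $b$ since at each step we only ever add vertices of $D_b\setminus\{b\}$ or of the peeled clique, and the final $O(1)$-size frontier together with Theorem~\ref{ablogn} on the residual graph --- which now has $O(1)$ neighbours of $a'$ --- finishes the separation.)

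The main obstacle I anticipate is the bookkeeping in the inductive step analogous to \eqref{onestep}: one must verify that feeding back the ``pure subset'' outcome of Theorem~\ref{dangerous} does not reset progress, i.e.\ that the purity structure is stable under the component $D(b)$ restriction and that the accumulated cost over all iterations telescopes rather than compounds multiplicatively. Concretely, the delicate point is ensuring that when Theorem~\ref{dangerous} returns its second outcome in iteration $i$, the pure subset $X_0^{(i)}$ of the current frontier really does shrink the frontier by a $(1-\gamma)$ factor \emph{and} that a hole through $a$ and $b$ inside $X_0^{(i)}\cup D_b$ cannot reappear --- this is precisely the theta/prism argument at the end of the proof of Theorem~\ref{dangerous}, and I would need to re-run it in the present (now separator-based rather than neighbourhood-based) setting. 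The other subtlety is that Theorem~\ref{ablogn} is applied to residual graphs $G'$ that are induced subgraphs of $G$ but with a contracted vertex $a'$; one must check $G'\in\mathcal C$, which follows because contracting the full component $D_a$ to a dominating vertex preserves the absence of $C_4$, thetas, prisms and even wheels --- again a short but necessary verification.
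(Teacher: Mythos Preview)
Your plan has two genuine gaps, and the paper's proof takes a substantially different route to get around them.

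\textbf{The contraction fails.} Your graph $G'=D_b\cup Y\cup\{a'\}$ need not lie in $\mathcal C$. Take any two non-adjacent vertices $y_1,y_2\in Y$ with a common neighbour $z\in D_b$; such a configuration is perfectly consistent with $G$ being even-hole-free (the hole through $y_1,z,y_2$ and a path in $D_a$ can have odd length), yet in $G'$ you get the four-cycle $a'\dd y_1\dd z\dd y_2\dd a'$. So the ``short but necessary verification'' you flag at the end is not short---it is false. Once $G'\notin\mathcal C$, none of Theorems~\ref{pyramid_separate}, \ref{dangerous}, \ref{localglobal}, \ref{ablogn} are available.

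\textbf{The hub-free hypothesis is missing.} Even setting the $C_4$ issue aside, you apply the \eqref{onestep} machinery to all of $Y=N_{G'}(a')$. But Theorem~\ref{pyramid_separate} requires $\{x_1,x_2,x_3\}\cap\Hub=\emptyset$ and Theorem~\ref{ablogn} requires $N(a)\cap\Hub=\emptyset$; there is no reason $Y$ is hub-free. Restricting to $\mu_d(Y)=Y\cap S_1$ fixes this (by hypothesis $S_1\cap\Hub(G)=\emptyset$), but then you no longer have a single apex adjacent to all of $X=\mu_d(Y)$, and the whole \eqref{onestep} template collapses.

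What the paper actually does (Lemma~\ref{onestep_sep} and Theorem~\ref{boundhubs_induction}) is work only with $X=\mu_d(Y)$---which is stable, hub-free and $d$-safe---and exploit \emph{both} full components $D_1,D_2$. For each stable triple $x_1x_2x_3\subseteq X$ it applies Lemma~\ref{minimalconnected} in $D_1$ and in $D_2$ separately, and a short case analysis shows the triple must be ``$a$-triangular'' or ``$b$-triangular''. For a $b$-triangular triple it then manufactures a pyramid not by contraction but by adding a fresh apex $v$ joined to each $x_i$ by a private path of length $2$ or $3$, the parity chosen to match the $D_2$-side; even-hole-freeness of this auxiliary graph $G''$ is checked using ``acceptable'' paths on the $D_1$-side. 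Only then does Theorem~\ref{pyramid_separate} apply, feeding Theorem~\ref{localglobal}. The iteration is on $|\mu_d(Y)|$, not on $|Y|$, and each round costs $\kappa\le D\log n+d$ (the $D\log n$ coming from invocations of Theorem~\ref{ablogn} at individual vertices of $D_1\cup D_2$, the $d$ from the $d$-safe hypothesis), which over $O(\log n)$ rounds gives the stated bound.
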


    The main ingredient of the proof is  the    following:

    \begin{lemma}
      \label{onestep_sep}
      Let  $d$ be an integer and let $G$ be an even-hole-free graph
      where $G=D_1 \cup D_2 \cup X$, $X$ is a minimal
    separator in $G$, and $D_1, D_2$ are full components for $X$. Let $a \in D_1$ and $b \in D_2$.
    Assume that $X$ is a stable set, $X \cap \Hub(G)=\emptyset$, and that every vertex of $X$ is
    $d$-safe. Then there exists $Z \subseteq V(G) \setminus \{a,b\}$
with $\kappa(Z) \leq  D \log n + d$ 
    such that either
    \begin{itemize}
\item the component $D(b)$
            of $D_2 \setminus Z$ with $b \in D(b)$ 
contains a neighbor of $x$ for at most $(1 - \epsilon)|X|$
          vertices $x \in X \setminus Z$,
or
\item the component $D(a)$
            of $D_1 \setminus Z$ with $a \in D(a)$ 
contains a neighbor of $x$ for at most $(1 - \epsilon)|X|$
          vertices $x \in X \setminus Z$.
\end{itemize}
\end{lemma}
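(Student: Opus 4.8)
The plan is to reduce Lemma~\ref{onestep_sep} to the hub-free separation result Theorem~\ref{ablogn} together with the ``central-bag''-style reasoning already developed, but applied symmetrically to the two full components $D_1$ and $D_2$. First I would fix the minimal separator $X$ and note that, since $X$ is stable, $X \cap \Hub(G) = \emptyset$, and every vertex of $X$ is $d$-safe, we are exactly in the situation where the machinery of Sections~\ref{sec:localglobal}, \ref{sec:dangerous} and \ref{sec:sepab_nohubs} can be invoked. The key point is that both $D_1 \cup X \cup \{$apex-like vertex$\}$ and $D_2 \cup X \cup \{\cdot\}$ look like the configuration $G = D \cup X \cup \{a\}$ with $N(D) = X$ analysed in Theorem~\ref{ablogn_induction}. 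Since there is no genuine apex vertex here (we have two components rather than one vertex $a$ with $N(a) = X$), I would instead work directly: contract, or rather treat, $a$ as a proxy ``apex'' by considering $G[D_1 \cup X]$ and observing that a small clique-cover-bounded separator inside $D_2$ that cuts $b$ off from a constant fraction of $X$ is precisely the ``$(\delta,b)$-breaker'' conclusion. The natural route is to try to run the argument of \eqref{onestep} (inside the proof of Theorem~\ref{ablogn_induction}) on the pair $(X, b)$ in $G[D_2 \cup X]$, where $X$ plays the role of $N(a)$; the only thing lost is the vertex $a$ itself, but its sole role there was to certify that $N(D) = X$ and to generate pyramids, and that role can be played by choosing any fixed vertex of $D_1$ adjacent to many vertices of $X$, or by adding a dummy vertex complete to $X$ (which does not change $\mathcal C$-membership issues because we only use $C_4$-freeness and the forbidden-structure lemmas that are preserved under adding such a vertex — this must be checked carefully).

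The ordered steps I would carry out: (1) Add (or identify) a vertex $a^*$ complete to $X$ and anticomplete to $D_2$, sitting ``where $D_1$ is'', so that $G^* = D_2 \cup X \cup \{a^*\}$ is an even-hole-free (or at least $\mathcal C$) graph with $N_{G^*}(D_2) = X$ and $N_{G^*}(a^*) = X$; verify that the relevant forbidden configurations (thetas, prisms, even wheels, $C_4$, extended near-prisms, loaded pyramids with corner in $X$) are not created, using that $X$ is stable and hub-free. (2) Apply \eqref{onestep} from the proof of Theorem~\ref{ablogn_induction} — equivalently, the combination of Lemma~\ref{chordal}, Theorem~\ref{dangerous}, Theorem~\ref{pyramid_separate}, \eqref{septriple}, \eqref{getbreaker} and Theorem~\ref{localglobal} — to $G^*$ with the pair $(X, b)$: this yields $S \subseteq X \cup (D_2 \setminus b)$ with $\kappa(S) \le C$ such that the component of $D_2 \setminus S$ containing $b$ meets $N(x)$ for at most $(1-\epsilon)|X|$ vertices $x \in X \setminus S$; alternatively, in the ``purity'' branch of Theorem~\ref{dangerous}, one obtains a large pure subset $X'$, and then one should symmetrically redo the analysis on the $D_1$-side — this is why the lemma's conclusion is a disjunction over $D_1$ and $D_2$. (3) Bound $\kappa(Z) \le C \le D\log n + d$ for $n$ large enough (and handle small $n$ trivially, since $|X| \le n$). (4) Remove the dummy vertex $a^*$ from $Z$ if present; this only decreases $\kappa$.

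The main obstacle I expect is step~(1) — justifying that we may pass to a configuration with a genuine apex vertex without destroying even-hole-freeness or, more importantly, without destroying the hub-freeness of $X$. Adding $a^*$ complete to $X$ could in principle turn some $x \in X$ into a hub (a center of a proper wheel through $a^*$) or create an extended near-prism with cross-edge at $a^*$; one must argue that any such new configuration is ``harmless'' for the argument, or restrict $a^*$ to be adjacent to only a carefully chosen subset of $X$, or — most likely what the authors actually do — avoid the dummy vertex entirely and instead observe that the proof of Theorem~\ref{ablogn_induction} never really uses $a$ except through $X = N(a)$ being a hub-free stable-ish set with $N(D) = X$, so that the identical argument applies verbatim to $(X, D_2, b)$, with the role of ``dangerous triple'' (cross-edge of an extended near-prism at $a$) replaced by ``there is an extended near-prism cross-edge joining two vertices of $X$ through $D_2$'', and the purity/non-purity dichotomy in Theorem~\ref{dangerous} forcing us over to the $D_1$ side in the bad case. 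The second subtlety is that Theorem~\ref{dangerous}'s second outcome gives a pure subset $X'$ of $X$ but not yet a small separator; to convert the pure-$X'$ case into the $D_1$-conclusion one must argue that if a large part of $X$ is ``pure'' from $D_2$'s perspective then on the $D_1$-side there is no obstruction and \eqref{onestep} applies cleanly there — tracking exactly which fraction survives ($\frac{1-4\epsilon}{2}$, matching $\gamma$) is the delicate bookkeeping, but it is routine once the structural setup is in place.
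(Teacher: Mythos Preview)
Your proposal has a genuine gap at its central step. Adding a dummy vertex $a^*$ complete to the stable set $X$ does \emph{not} preserve even-hole-freeness: for any two vertices $x,y \in X$ joined by an even-length induced path through $D_2$ (and such paths certainly exist, since $D_2$ is a full component), the cycle $a^*\dd x\dd \cdots \dd y \dd a^*$ is an even hole. You anticipate trouble with hubs and extended near-prisms, but the more basic failure---loss of membership in $\mathcal{C}$---already blocks every downstream application of Theorem~\ref{pyramid_separate} and Theorem~\ref{localglobal}. Your fallback, that the proof of Theorem~\ref{ablogn_induction} ``never really uses $a$ except through $X = N(a)$'', is also incorrect: in \eqref{septriple} the vertex $a$ is literally the apex of the pyramid $\Sigma = F \cup \{x_1,x_2,x_3,a\}$ produced via Lemma~\ref{minimalconnected}, and without a genuine apex there is no pyramid and Theorem~\ref{pyramid_separate} does not apply.

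The paper's proof exploits the two-sided structure in a way your plan does not. It applies Lemma~\ref{minimalconnected} to a triple $x_1x_2x_3$ inside \emph{both} $D_1$ and $D_2$, and shows (since $X$ is stable and hub-free) that on at least one side the minimal connected subgraph contains a triangle $h_1h_2h_3$ with paths $P_i$ to the $x_i$; say this happens in $D_2$ (``$b$-triangular''). It also arranges, via a separate reduction, that for most triples there exist paths $x_i\dd x_j$ through $D_1 \setminus N(x_k)$ (``acceptable''). The key trick is then to build an auxiliary graph $G''$ by attaching to $D_2 \cup \{x_1,x_2,x_3\}$ a new vertex $v$ joined to each $x_i$ by a path $Q_i$ of length $2$ or $3$, \emph{with the parity chosen to match that of $P_i$}. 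One checks that $G''$ is even-hole-free precisely because any putative even hole through $v$ can be rerouted, via the acceptable path in $D_1$, to an even hole in $G$. Now $\Sigma = \bigcup Q_i \cup \bigcup P_i \cup \{h_1,h_2,h_3\}$ is a pyramid in $G''$ with apex $v$, Theorem~\ref{pyramid_separate} applies, and Theorem~\ref{localglobal} finishes. The $D_1$/$D_2$ symmetry in the conclusion comes from the $a$-triangular versus $b$-triangular dichotomy, not from the purity branch of Theorem~\ref{dangerous} (which is not used here at all).
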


\begin{proof}
We may assume that $|X|> D \log n + d$.
\\
\\
\sta{If some $v \in D_1$ has $\epsilon|X|$ neighbors in $X$, then the theorem holds. \label{onevertex}}

  Suppose such $v$ exists. Let $G'=D_2 \cup N_{G}[v]$. Apply Theorem~\ref{ablogn} to $v,b$ in
  $G'$ (where $v$ plays the role of $a$) to obtain a set $Z$ as in the conclusion of
  the theorem. Let $D(b)$ be the component of $D_2 \setminus Z$ with $b \in D(b)$.
  Then $N_{G'}[v] \setminus Z$ is anticomplete to $D(b)$ in $G'$ and therefore in $G$.
  Since $|N_{G'}(v)| \geq \epsilon|X|$, \eqref{onevertex} follows.
  \\
  \\
  In view of \eqref{onevertex} (using the symmetry between $a$ and $b$) from now on we assume that  no $v \in D_1 \cup D_2$ has $\epsilon|X|$ neighbors in $X$. 
  \\
  \\
  \sta{If for some $x \in X \setminus N(a)$ at least $\epsilon |X|$ vertices of $X$ are anticomplete to
    the component $D(a)$  of $D_1 \setminus N(x)$ with $a \in D(a)$, then the theorem holds.
    \label{nbrsofone}}

 Suppose that such a vertex $x$ exists. Let $D'=D(a)$, let $X'=(N(x) \cap N(D(a)) \setminus \Hub(G)$ and let
  $G'=D' \cup X' \cup \{x\}$. Apply Theorem~\ref{ablogn} to $x,a$ in
  $G'$ (where $x$ plays the role of $a$, and $a$ plays the role of $b$) to obtain a set $Z$ as in the conclusion of
  the theorem. Let $Z'=Z \cup (N(x) \cap \Hub(G))$. Since $x$ is $d$-safe, it follows that
  $\kappa(Z') \leq D\log n + d$. Let $D'(a)$ be the component of $D_1 \setminus Z'$     with $a \in D'(a)$.
  Since $Z$ separates $x$ from $a$ in $G'$, it follows that
  $N[x] \setminus Z'$ is anticomplete to $D(a) \setminus Z'$,
  and therefore  $D'(a) \subseteq D(a)$. 
  
  Consequently, if $x' \in X$ is anticomplete to $D(a)$, then $x'$ is anticomplete to $D'(a)$.  Since
  least $\epsilon |X|$ vertices of $X$ are anticomplete to $D(a)$, \eqref{nbrsofone} follows.
\\
\\
  In view of \eqref{nbrsofone}, from now on we assume that for every $x \in X \setminus N(a)$ fewer than  $\epsilon |X|$ vertices of $X$ are anticomplete to
  the component $D(a)$  of $D_1 \setminus N(x)$ with $a \in D(a)$, and
  for every $x \in X \setminus N(b)$ fewer than  $\epsilon |X|$ vertices of $X$ are anticomplete to
  the component $D(b)$  of $D_2 \setminus N(x)$ with $b \in D(b)$.
    Let $X'=X \setminus (N(a) \cup N(b))$. 
  Then $|X'| \geq (1-2\epsilon)|X|$. Since $X$ is a stable set, we can choose
  disjoint and anticomplete subsets  $X_1,X_2,X_3$  of $X'$ such that
  $|X_1|=|X_2|=|X_3|=\left\lceil\frac{1}{4}|X|\right\rceil$.
  
  Let us say that a partitioned triple $x_1x_2x_3$  is {\em $b$-triangular} if there is a
  minimal connected subgraph $H$ of $D_2$ containing neighbors of $x_1,x_2,x_3$ such that
  either
  \begin{itemize}
    \item $H$
      satisfies the third outcome to Lemma~\ref{minimalconnected}, or
    \item (possibly with the roles of $x_1,x_2,x_3$ exchanged) $H$ is the interior of a path from  $x_1$ to $x_3$, and $x_2$ has exactly two neighbors in $H$
      and they are adjacent.
      \end{itemize}
  We define {\em $a$-triangular} triples similarly.
  \\
  \\
  \sta{Every partitioned triple is either $a$-triangular or $b$-triangular.
        \label{onlygoodtriples}}

  Let $x_1x_2x_3$ be a partitioned triple, and suppose that it is neither
  $b$-triangular nor $a$-triangular. For $i \in \{1,2\}$,  let $H_i$
    be a minimal connected induced subgraph of $D_i$ such that
    each of $x_1,x_2,x_3$ has a neighbor in $H_i$. We apply
    Lemma~\ref{minimalconnected} to $H_1$ and $H_2$.
    If the second outcome of the lemma holds for both $H_1$ and $H_2$,
    then $H_1 \cup H_2 \cup \{x_1,x_2,x_3\}$ is a theta, a contradiction.
    Thus we may assume that $H_1$ is the interior of a path from $x_1$
    to $x_3$ and $x_2$ has at least two non-adjacent neighbors in $H_1$.
If $H_2$ is  the interior of a path from $x_1$
to $x_3$ and $x_2$ has a  neighbor in $H_2$,
then $(H_1 \cup H_2, x_2)$ is a wheel in $G$, contrary to the fact that
$x_2 \not \in \Hub(G)$. Thus $H_2$ is not  the interior of a path from $x_1$
to $x_3$ such that $x_2$ has a  neighbor in $H_2$.
Next suppose that the second outcome of
Lemma~\ref{minimalconnected} holds for $H_2$. Let $P_1,P_2,P_3$ be as in
the  the second outcome of
Lemma~\ref{minimalconnected}, and let $a_2=P_1 \cap P_2 \cap P_3$.
Then $x_2$ is non-adjacent to $a_2$, and so $H_1 \cup H_2 \cup \{x_1,x_2,x_3\}$
contains a theta with ends $a_2,x_2$, a contradiction. 
It follows that
the first outcome of Lemma~\ref{minimalconnected} holds for $H_2$, and, by symmetry between $x_1$ and $x_3$, we may assume that
$H_2$ is the interior of a path from $x_1$ to $x_2$, and $x_3$ has two
non-adjacent neighbors in $H_2$. Now we get a theta with ends
$x_2,x_3$ and paths $x_2 \dd H_1 \dd x_3$, $x_2 \dd H_2 \dd x_3$, and
$x_2 \dd H_1 \dd x_1 \dd H_2 \dd x_3$, again a contradiction.
 This proves~\eqref{onlygoodtriples}.
   \\
   \\
   In view of \eqref{onlygoodtriples},
     by switching the roles of $D_1$ and $D_2$ if necessary, 
 we may assume that at least
  $\frac{1}{2}\left\lceil\frac{1}{4}|X|\right\rceil^3$ of the partitioned triples are $b$-triangular.
  \\
  \\
 Let us say that the triple $x_1x_2x_3$ is {\em acceptable} if
 it is partitioned and 
for every $\{i,j,k\}=\{1,2,3\}$ there is path $P_{ij}$
from $x_i$ to $x_j$ with interior in $D_1 \setminus N(x_k)$.
\\
\\
\sta{At most $ 6 \epsilon \left\lceil\frac{1}{4}|X|\right\rceil^2 |X|$ partitioned  triples are not
  acceptable.
  \label{manyacceptable}}

Let  $x_1 \in X_1$ and let $D(a)$ be the component of $D_1 \setminus N(x_1)$
such that $a \in D(a)$. If $x_1x_2x_3$ is a partitioned triple such that
there is no path from $x_2$ to $x_3$  in $D_1 \setminus N(x_1)$, then
at least one of $x_2,x_3$ is anticomplete to $D(a)$.
By the assumption following \eqref{nbrsofone},  there are fewer than
$2\epsilon|X| \times \left\lceil\frac{1}{4}|X|\right\rceil$ such  pairs $x_2x_3$,
and therefore at most
$$\left(2\epsilon|X| \times\left\lceil\frac{1}{4}|X|\right\rceil\right) \times\left \lceil\frac{1}{4}|X|\right\rceil  \leq  2 \epsilon \left\lceil\frac{1}{4}|X|\right\rceil^2 |X|$$
such triples.
Repeating this argument with $x_2$ and $x_3$ playing the role of $x_1$,
we get that there are at most $ 6 \epsilon \left\lceil\frac{1}{4}|X|\right\rceil^2 |X|$
partitioned triples that are not acceptable, and
\eqref{manyacceptable} follows.
\\
\\
  Let $G'=D_2 \cup X$. Our next goal is to show the following:
  \\
  \\
  \sta{$X$ is a $(\frac{1}{256},b)$-breaker in $G'$.
    \label{getbreaker2}}

  By \eqref{manyacceptable}, there are fewer than
  $ 6 \epsilon \left\lceil\frac{1}{4}|X|\right\rceil^2 |X|$ partitioned triples that are not acceptable.
   Since the total number of $b$-triangular triples at least   $\frac{1}{2}\left\lceil\frac{1}{4}|X|\right\rceil^3$,
   we deduce that 
  there are at least $\left(\frac{1}{2} - 24 \epsilon\right)\left\lceil\frac{1}{4}|X|\right\rceil^3$ triples that are both
  acceptable and $b$-triangular.

  Let $x_1x_2x_3$ be a $b$-triangular acceptable triple.
  Let $H$ be a minimal connected subgraph of $D_2$ as in the definition of
  $b$-triangular.
 Then there exists a triangle  $h_1h_2h_3$ in $H \cup \{x_1,x_2,x_3\}$ and three paths $P_1, P_2, P_3$, where $P_i$ is a path from $h_i$ to $x_i$ (possibly of length zero), such that:
\begin{itemize}
\item $V(H) = (V(P_1) \cup V(P_2) \cup V(P_3)) \setminus \{x_1, x_2, x_3\} $; 
\item the sets $V(P_1)$, $V(P_2)$ and $V(P_3)$ are pairwise disjoint; and
\item for distinct $i,j \in \{1,2,3\}$, there are no edges between $V(P_i)$ and $V(P_j)$, except $h_ih_j$.
\end{itemize}
  If the path $P_{i}$ has  even length, let $q_{i}=2$, and if the path
  $P_{i}$ has  odd length, let $q_{i}=3$.
  Let $G''$ be the graph obtained from $D_2 \cup \{x_1,x_2,x_3\}$
  by adding  a new vertex $v$ and paths $Q_i$ from $v$ to $x_i$
  such that
  \begin{itemize}
  \item $Q_i$ has length $q_i$.
    \item $(Q_1 \cup Q_2 \cup Q_3) \setminus \{x_1, x_2, x_3\}$ is anticomplete to $D_2$.
  \item       $Q_1 \setminus v$, $Q_2 \setminus v$ , $Q_3 \setminus v$
    are pairwise disjoint and anticomplete to each other.
  \end{itemize}

  \sta {$G''$ is even-hole-free. \label{G''ehf}}
    
  Suppose not, and let $H$ be an even hole in $G''$. Since $H$ is not an
  even hole in $G$, and since all internal vertices of each $Q_i$ have
  degree two, we may assume that $Q_1 \cup Q_2 \subseteq H$.
  Then $H \setminus (Q_1^* \cup Q_2^*)$ is a path from $x_1$ to
  $x_2$ with interior in $D_2 \cup \{x_3\}$
  and whose length has the same parity as $q_1+q_2$.
  Since the triple $x_1x_2x_3$ is acceptable, there is a path $R$
  from $x_1$ to $x_2$ with $R^* \subseteq D_1 \setminus N(x_3)$.
  Since $(H \setminus (Q_1 \cup Q_2)) \cup R$ is not an even hole in
  $G$, it follows that the length of $R$ has the same parity as $q_1+q_2+1$.
  But now $x_1  \dd P_1 \dd h_1 \dd h_2 \dd P_2 \dd x_2 \dd R \dd x_1$
  is an even hole in $G$, a contradiction. This proves
  \eqref{G''ehf}.
  \\
  \\
 By \eqref{G''ehf} and since $X \cap \Hub(G) = \emptyset$, the assumptions of Theorem~\ref{pyramid_separate} are satisfied. Now
      Theorem~\ref{pyramid_separate} applied in $G''$ implies that 
      the triple      $x_1x_2x_3$ is $b$-separated in $G'$.

 Since there are
 at least $\left(\frac{1}{2} - 24 \epsilon\right)\left\lceil\frac{1}{4}|X|\right\rceil^3$ acceptable $b$-triangular triples, and since $\epsilon \leq 1/96$, 
\eqref{getbreaker2} follows.
\\
\\
By Theorem~\ref{localglobal} applied in $G'$,
                    there exists  $Z \subseteq D_2 \setminus b$
            with $\kappa(Z) \leq 96^2 \cdot 256^2 \leq C - 4$ such the component $D(b)$
            of $D \setminus S$ with $b \in D(b)$ 
is disjoint from $N(x)$ for
at least $\epsilon|X|$ vertices $x \in X$.
This completes the proof.
    \end{proof}

Next we prove the following, which immediately implies Theorem~\ref{boundhubs}.

 \begin{theorem}
    \label{boundhubs_induction}
    Let $d$ be an integer.
    Let $G$ be an even-hole-free graph  and let $a,b \in V(G)$ be non-adjacent.
    Let $|V(G)=n$.
        Let $Y$ be a minimal separator in $G$ such that $a$ and $b$ belong
        to different full components $D_1$ and $D_2$ of $G \setminus Y$.
        Assume that $\mu_d(Y) \neq \emptyset$.
        Then there exists a set $Y' \subseteq V(G) \setminus \{a,b\}$ such that
    \begin{itemize}
    \item $Y'$ separates $a$ from $b$; 
      \item $\kappa(Y' \setminus Y) \leq  D (D \log n+d) \max (1, \log |\mu_d(Y)|)$; and 
      \item $|\mu_d(Y')| \leq  D (D \log n+d) \max (1, \log |\mu_d(Y)|)$. 
              \end{itemize}
            \end{theorem}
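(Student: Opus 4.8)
The proof proceeds by induction on $|\mu_d(Y)|$, exactly as the title ``\_induction'' suggests. The plan is to apply Lemma~\ref{onestep_sep} to obtain a single-step improvement, and then iterate. More precisely, first I would reduce to the ``clean'' situation that Lemma~\ref{onestep_sep} requires: $Y$ is a minimal separator with full components $D_1 \ni a$ and $D_2 \ni b$, but $Y$ need not be a stable set, need not be disjoint from $\Hub(G)$, and its vertices need not be $d$-safe. So I would first delete from $Y$ all vertices of $\Hub(G)$ and all vertices that are not $d$-safe; since $S_1 \subseteq V(G) \setminus \Hub(G)$ consists of $d$-safe vertices, this removes no vertex of $\mu_d(Y) = Y \cap S_1$. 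Then, to make the remaining set stable, I would use that $G$ is even-hole-free (hence $C_4$-free, hence has bounded clique cover number on neighborhoods via the bisimplicial structure), but actually the cleanest route is: the remaining vertices of $Y$ that are \emph{not} in $S_1$ can simply be kept in $Y'$ at no cost to the $\mu_d$-count and no cost to $\kappa(Y' \setminus Y)$ (they are already in $Y$). What genuinely needs the structural work is handling $Y \cap S_1$: since $S_1$ is stable, $Y \cap S_1$ is already a stable set, and I would restrict attention to the induced subgraph on (the closure of) $D_1 \cup D_2 \cup (Y \cap S_1)$ after contracting away the rest. This matches the hypotheses of Lemma~\ref{onestep_sep} with ``$X$'' playing the role of $Y \cap S_1 = \mu_d(Y)$.

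Next I would apply Lemma~\ref{onestep_sep} to get a set $Z$ with $\kappa(Z) \le D\log n + d$ such that, say (by the symmetry in the two bullets of the lemma), the component $D(b)$ of $D_2 \setminus Z$ containing $b$ meets $N(x)$ for at most $(1-\epsilon)|\mu_d(Y)|$ vertices $x \in \mu_d(Y) \setminus Z$. Now I would form the new minimal separator: let $D(b)$ be that component, let $Y_1 = N(D(b))$, and observe $Y_1$ is a minimal separator separating $a$ from $b$ (it separates $b$ from $a$ since $a \notin N[D(b)]$, after adding $Z$ and a handful of deleted vertices to the separator). Crucially, $\mu_d(Y_1) \subseteq (\mu_d(Y) \cup \mu_d(Z)) \cap N(D(b))$. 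The vertices of $\mu_d(Y)$ surviving in $Y_1$ number at most $(1-\epsilon)|\mu_d(Y)| + (\text{a few from }Z)$; since $\kappa(Z) \le D\log n + d$ and $\mu_d(Z) \le \alpha(Z) \cdot \kappa(Z)$-type bounds are not quite what we want, I would instead just absorb $Z \cap S_1$ directly: put $Z$ entirely into the running separator so that those vertices never need to be counted by future applications of $\mu_d$. Thus the new separator is $Y_1' = Y_1 \cup Z \cup (\text{deleted hub/unsafe vertices of }Y)$, and $\mu_d(Y_1') \le (1-\epsilon)|\mu_d(Y)| + |Z \cap S_1|$. To make the recursion clean I would recurse on $Y_1$ (the minimal separator) while carrying $Z$ along as a ``budget'' set; each recursive step adds $\le D\log n + d$ to $\kappa$ of the accumulated budget and at most halves — or rather multiplies by $(1-\epsilon)$ — the quantity $|\mu_d|$ of the minimal separator being recursed on.

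Unwinding the recursion: after $k$ steps the minimal-separator part has $\mu_d$-value at most $(1-\epsilon)^k |\mu_d(Y)|$, which drops below $1$ after $k = O(\log|\mu_d(Y)|)$ steps (here the constant is $1/\log(1/(1-\epsilon))$, which is absorbed into $D = -C/\log(1-\gamma)$ since $\gamma \le \epsilon$), at which point $\mu_d$ of the minimal separator is empty and we stop — this is exactly the base case $\mu_d(Y) = \emptyset$ excluded from the ``\_induction'' statement but handled trivially by taking $Y' = Y$. Accumulating costs: $\kappa(Y' \setminus Y) \le \sum_{i} \kappa(Z_i) \le O(\log|\mu_d(Y)|) \cdot (D\log n + d)$, giving the claimed $D(D\log n + d)\max(1,\log|\mu_d(Y)|)$ bound (the outer $D$ swallowing the $O(\cdot)$ constant), and similarly $|\mu_d(Y')| \le \sum_i |Z_i \cap S_1| \le O(\log|\mu_d(Y)|)(D\log n + d)$ since each $Z_i \cap S_1$, being a subset of a set with $\kappa \le D\log n + d$ and being stable (subset of $S_1$), has size at most $D\log n + d$.

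The main obstacle I anticipate is \textbf{bookkeeping the recursion so that the $\mu_d$ and $\kappa$ counts genuinely add rather than multiply}. The subtlety is that Lemma~\ref{onestep_sep} only shrinks the \emph{minimal separator's} $\mu_d$-value; the cutset $Z$ it produces is added on top, and $Z$ itself could contain vertices of $S_1$. One must be careful to route those $Z \cap S_1$ vertices into the permanently-committed part of the separator (so they are never re-processed), and to verify that $N(D(b))$, the new minimal separator, really is a \emph{minimal} separator (needs a full component on the $a$-side too — this follows because $D_1$, or a component of it after removing the cut, is anticomplete to $D(b)$ and reaches all of $N(D(b))$, using connectivity arguments like those in Lemma~\ref{laminar}). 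A secondary annoyance is the reduction to the hypotheses of Lemma~\ref{onestep_sep}: one must check that restricting to the minimal separator $\mu_d(Y)$ together with its two full components, and deleting hubs/unsafe vertices, preserves even-hole-freeness (it does, as an induced subgraph) and preserves the property that the two components are full for the stable separator $\mu_d(Y)$ — this last point may require slightly enlarging or re-choosing the component, and should be stated carefully before invoking the lemma.
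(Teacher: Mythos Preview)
Your proposal is correct and follows essentially the same approach as the paper: apply Lemma~\ref{onestep_sep} to the induced subgraph $D_1 \cup D_2 \cup \mu_d(Y)$ with $X=\mu_d(Y)$, absorb the resulting $Z$ into the separator, and recurse on a new minimal separator whose $\mu_d$-value has shrunk by a factor $(1-\epsilon)$. The paper's execution is a bit more streamlined than yours: there is no need to ``delete hubs/unsafe vertices from $Y$'' at all, since one works from the start only with $\mu_d(Y)=Y\cap S_1$ (already stable, hub-free, and $d$-safe), and the rest of $Y$ never re-enters; also $D_1,D_2$ are automatically full for $\mu_d(Y)\subseteq Y$, so the ``enlarging or re-choosing the component'' concern you flag does not arise. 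The minimality of the new separator is handled not via Lemma~\ref{laminar}-style connectivity arguments but by a simple two-step refinement: set $Y_1=N_{G\setminus Z}(D(b))\subseteq Y\setminus Z$, then let $D(a)$ be the component of $(G\setminus Z)\setminus Y_1$ containing $a$ and set $Y_2=N_{G\setminus Z}(D(a))\subseteq Y_1$; then $Y_2$ is a minimal separator in $G''=D(a)\cup D(b)\cup Y_2$ with both sides full, and one recurses there.
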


 \begin{proof}
   We may assume that $|\mu_d(Y)| \geq  D (D \log n+d)$.
              By Lemma~\ref{onestep_sep} applied to the graph $D_1 \cup D_2 \cup \mu_d(Y)$
              with $X=\mu_d(Y)$,
           we may assume that there exists $Z \subseteq V(G) \setminus \{a,b\}$
           with $\kappa(Z) \leq D \log n + d$ such that 
          the component $D(b)$
            of $D_2 \setminus Z$ with $b \in D(b)$ 
meets $N(x)$ for at most $(1-\epsilon)|X|$ vertices $x \in X \setminus Z$.
            Let $G'=G \setminus Z$.
            Let $Y_1=N_{G'}(D(b))$. Then $Y_1 \subseteq Y \setminus Z$,
            and $\mu_d(Y_1) \subseteq X \setminus Z$; consequently
            $|\mu_d(Y_1)| \leq (1-\epsilon)\mu_d(Y)$.
Let $D(a)$ be the component of $G' \setminus Y_1$ such that
$a \in D(a)$. Let $Y_2=N_{G'}(D(a))$. Then $Y_2 \subseteq Y_1$,
and therefore  $|\mu_d(Y_2)| \leq (1-\epsilon)|\mu_d(Y)|$.
Let $G''=D(a) \cup D(b) \cup Y_2$.
Then $Y_2$ is a minimal separator in $G''$ where $D(a)$ and $D(b)$ are full components for $Y_2$. If $|Y_2| \leq 1$, let $Y''=Y_2$.
Now  assume that $|Y_2|>1$.
Inductively,  there is a set $Y'' \subseteq V(G'') \setminus \{a,b\}$ such that
\begin{itemize}
    \item $Y''$ separates $a$ from $b$; 
      \item $|\kappa(Y'' \setminus Y_2)| \leq D (D\log n+d) \log |\mu_d(Y_2)|$; and
      \item $|\mu_d(Y'') | \leq D (D\log n+d) \log |\mu_d(Y_2)|$
              \end{itemize}

In both cases, let  $Y'=Y'' \cup Z$. Then
$Y' \setminus Y \subseteq (Y'' \setminus Y_2) \cup Z$
and $\mu_d(Y') \subseteq  \mu_d(Y'') \cup Z$.
Since $|\mu_d(Y_2)| \leq (1-\epsilon) |\mu_d(Y)|$  
and $\kappa(Z) \leq   D \log n + d$, it follows that $|Y'|$ satisfies the conclusion of the theorem.
\end{proof}

            \section{Separating a pair of vertices: a bound using clique size}
              \label{sec:bananaclique}
              We can now prove our first main result. We follow the
              outline of   the proof of Theorem 1.3  from          \cite{TWX}, using bounds from earlier sections of the present paper.
              As in the earlier sections, let
              $\epsilon=\frac{1}{4 \times 17^6 \times 48 \times 192}$,
$\gamma=\frac{\epsilon(1-4 \epsilon)}{2}$ 
              and $C=96^2 \times 4 \times 17^6+4$, and let
$D=-C \frac{1}{\log(1-\gamma)}$.
\begin{theorem}
  \label{bananaclique}
Let $t$ be an integer.
  Let $G$ be an even-hole-free graph  with $|V(G)|=n$ and with no clique of size $t+1$, and let $a,b \in V(G)$ be
  non-adjacent. 
  Then there is a set $Z \subseteq V(G) \setminus \{a,b\}$
  with
  $$\kappa(Z) \leq D \log n +2D (D \log n+ 8t)^2 2t \log^2 n$$
  and such that every component of
    $G \setminus Z$ contains at most one of $a,b$.
  \end{theorem}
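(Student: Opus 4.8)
The plan is to follow the ``central bag'' strategy outlined in Section~\ref{sec:outline}, and carry out an induction on $|\Hub(G)|$ (together with the outer structure of the proof of Theorem~1.3 of \cite{TWX}). First I would dispose of the trivial reductions: since $G$ is $C_4$-free, $N(a)\cap N(b)$ is a clique of size at most $t$, so after deleting it we may assume $a$ and $b$ have no common neighbour; and if $N(a)\cap\Hub(G)=\emptyset$ we are immediately done by Theorem~\ref{ablogn} (the hub-free case), which gives a separator $Z$ with $\kappa(Z)\le D\log n$. So the interesting case is when $\Hub(G)$ is large. Here I would use the fact that every even-hole-free graph has a vertex whose neighbourhood is the union of two cliques, so every induced subgraph of $G$ has average degree $O(t)$; hence $\Hub(G)$ contains a stable set $S'$ of size $\Omega(|\Hub(G)|/t)$ every vertex of which is $8t$-safe (has at most $8t$ hub-neighbours), using that $G$ has no clique of size $t+1$ so $\alpha\cdot t \gtrsim |\Hub(G)|$.

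Next I would set up the central bag. If some $d$-safe $v\in S'$ is not $ab$-cooperative, Lemma~\ref{abnoncoop} and the hub-free case already finish (separate $a$ from $b$ by first deleting the $\le 8t$ hub-neighbours of $v$ and then applying Theorem~\ref{ablogn} in $G\setminus N[v]$, whose cost is $O(D\log n + 8t)$, too small to matter). So we may assume every $v\in S'$ is $ab$-cooperative and form $\Core(S')$ and the central bag $\beta^A(S')$ as in Section~\ref{sec:centralbag_banana}. By Theorem~\ref{A_centralbag}\eqref{A-4}, $S'\cap\Hub(\beta^A(S'))=\emptyset$, so $|\Hub(\beta^A(S'))| \le |\Hub(G)| - |S'| \le (1-\Omega(1/t))|\Hub(G)|$. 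I would then apply Theorem~\ref{boundhubs} to a minimal $a$-$b$ separator $Y$ inside $\beta^A(S')$ (obtained from the inductive hypothesis) to ``pivot'' it into a separator $Y'$ with both $\kappa(Y'\setminus Y)$ and $|\mu_{8t}(Y')|=|Y'\cap S'|$ bounded by $D(D\log n+8t)\log|\mu_{8t}(Y)| = D(D\log n+8t)\,O(\log n)$; then apply the inductive hypothesis \emph{inside} $\beta^A(S')$ (which has fewer hubs) to separate $a$ from $b$ there with a small cutset $Y$, and finally use Theorem~\ref{smallsepinG} to lift the resulting separation of $\beta^A(S')$ back to a separation of $G$, at an additive cost of $|Y'\cap\Core(S')|\cdot(D\log n+8t) + 1$.

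Tracking the recursion: the depth of the induction on $\log|\Hub(G)|$ is $O(\log n)$, and each step of lifting through the central bag costs at most $|Y'\cap S'|\cdot(D\log n+8t)$ where, crucially, after the pivot step $|Y'\cap S'| \le D(D\log n+8t)\cdot O(\log n)$ is only polylogarithmic (this is exactly why the pivot step of Theorem~\ref{boundhubs} is needed — without it $|Y\cap S'|$ could be a constant fraction of $|Y|$, producing a multiplicative blow-up). So each level adds an additive term of roughly $2D(D\log n+8t)^2\cdot 2t\log n$ (the factor $2t$ accounting for deleting hub-neighbours via $8t$-safety, plus the internal costs), and over $O(\log n)$ levels this sums to $2D(D\log n+8t)^2\,2t\log^2 n$; adding the base $D\log n$ from the hub-free applications gives the claimed bound $\kappa(Z)\le D\log n + 2D(D\log n + 8t)^2\,2t\log^2 n$.

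The main obstacle I expect is the careful bookkeeping to keep \emph{every} cost additive rather than multiplicative across the induction — in particular, verifying that after the Theorem~\ref{boundhubs} pivot the quantity $|Y'\cap S'|$ is genuinely polylogarithmic (not a fraction of $|Y'|$), and that the hub-free separations invoked when lifting (one per vertex of $Y'\cap\Core(S')$) really have cost $O(D\log n + t)$ each, so that the total lifting cost is $|Y'\cap\Core(S')|\cdot O(D\log n+t)$, which is polylogarithmic times $t$. A secondary technical point is ensuring the central bag $\beta^A(S')$ strictly decreases $|\Hub|$ by a $(1-\Omega(1/t))$ factor (needing the average-degree bound and the stable-set extraction to cooperate), so that the induction terminates after $O(t\log n)$ or fewer levels while the per-level cost stays controlled; handling the edge cases where $|\mu_d(Y)|$ or $|Y_2|$ drops to $\le 1$ is routine but must be stated.
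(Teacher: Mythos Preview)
Your high-level strategy is exactly the paper's, but the paper packages the induction differently, and your bookkeeping contains two errors that happen to cancel.

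The paper does not induct on $|\Hub(G)|$. Instead it first invokes Theorem~\ref{logncollections} (a corollary of the bisimplicial-vertex theorem, Theorem~\ref{bisimplicialnew}) to partition $\Hub(G)\setminus\{a,b\}$ \emph{once and for all} into stable sets $S_1,\dots,S_k$ with $k\le 2t\log n$, where every $v\in S_i$ has degree at most $8t$ in $G\setminus\bigcup_{j<i}S_j$. It then defines the \emph{hub-dimension} $\hdim(G,ab)$ as the minimum such $k$ and proves the slightly stronger Theorem~\ref{diminduction_banana} by induction on $\hdim$. The per-step argument is exactly what you describe (form $\beta^A(S_1)$; apply induction there since $\hdim$ drops by one; pass to a minimal separator; pivot via Theorem~\ref{boundhubs}; lift via Theorem~\ref{smallsepinG}), but the fixed layering makes the depth \emph{exactly} $\hdim\le 2t\log n$ and makes $8t$-safety of the next layer automatic inside each successive central bag.

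Your version re-extracts a fresh stable $S'$ at every level. That is fine in principle, but the reduction you obtain is only $|\Hub(\beta^A(S'))|\le(1-\Omega(1/t))\,|\Hub(G)|$, so the recursion depth is $\Theta(t\log n)$, not $O(\log n)$ as you write. Correspondingly, the per-level additive cost is $O\!\bigl(D(D\log n+8t)^2\log n\bigr)$ --- one application of Theorem~\ref{boundhubs} plus one of Theorem~\ref{smallsepinG} --- with no further factor of $2t$; the phrase ``the factor $2t$ accounting for deleting hub-neighbours via $8t$-safety'' is misplaced, since that cost is already the ``$+\,8t$'' inside $(D\log n+8t)$. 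The $2t$ in the final bound comes from the number of levels, not from any single level. Your two mistakes cancel and you land on the right expression, but you should locate the constants correctly.

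Finally, the order of operations in your inductive step is garbled: you describe applying Theorem~\ref{boundhubs} to a separator ``obtained from the inductive hypothesis'' and then say ``then apply the inductive hypothesis inside $\beta^A(S')$''. The correct order (and the paper's) is: (i) apply the inductive hypothesis in $\beta^A(S')$ to get a separator $Y_1$; (ii) shrink to a minimal separator $Y_2\subseteq Y_1$; (iii) apply Theorem~\ref{boundhubs} to $Y_2$ (with $S_1=S'\cap\beta^A(S')$, which is now a stable set of non-hubs there) to get $Y$ with $|\mu_d(Y)|\le D(D\log n+8t)\log n$; (iv) apply Theorem~\ref{smallsepinG} to lift $Y$ to a separator of $G$.
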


We will need the main result of \cite{bisimplicialnew}.

\begin{theorem}[Chudnovsky, Seymour \cite{bisimplicialnew}] \label{bisimplicialnew}
  Every even-hole-free graph has a vertex $v$ such that
  $\kappa(N(v)) \leq 2$. 
\end{theorem}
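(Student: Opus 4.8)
The plan is to reconstruct the argument of Addario-Berry, Chudnovsky, Havet, Reed and Seymour, completed by the tree-strip-system structure theory of Section~\ref{strips} (this is exactly the route of~\cite{bisimplicialnew}). Call a vertex $v$ \emph{bisimplicial} if $\kappa(N(v))\le 2$, and suppose the theorem fails; let $G$ be an even-hole-free graph with no bisimplicial vertex on a minimum number of vertices. First reduce by clique-cutset surgery: if $K$ is a clique cutset of $G$ and $D$ is a component of $G\setminus K$, then $G[D\cup K]$ is a smaller even-hole-free graph and hence has a bisimplicial vertex, and any such vertex lying in $D$ has the same neighbourhood in $G$ as in $G[D\cup K]$, so it is bisimplicial in $G$; iterating the clique-cutset decomposition down to its atoms shows that such a vertex must eventually occur. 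Thus we may assume $G$ is connected and has no clique cutset.

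Now invoke the decomposition theorem for even-hole-free graphs (Conforti--Cornu\'ejols--Kapoor--Vu\v{s}kovi\'c, refined by da Silva--Vu\v{s}kovi\'c): a connected even-hole-free graph with no clique cutset is either \emph{basic} (a member of a short explicit list) or admits a $2$-join or a star cutset. For the basic graphs one exhibits a bisimplicial vertex by direct inspection. For a star cutset one argues locally, much as in the clique-cutset case. For a $2$-join the idea is to collapse one side of the join to a short path whose length is chosen so that the parities of all paths through the join are preserved -- which is precisely what keeps the resulting smaller graph $G'$ even-hole-free -- then apply minimality to obtain a bisimplicial vertex $v$ of $G'$, and check that $v$ may be taken on the untouched side, away from the gadget and the attachment cliques, so that $N_G(v)=N_{G'}(v)$ and $v$ is bisimplicial in $G$. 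The combinatorial content here is showing the gadget can always be set up so that such a $v$ exists.

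The step I expect to be the main obstacle -- and the configuration for which the Chudnovsky--Seymour refinement~\cite{bisimplicialnew} was needed -- is the case where $G$ contains an extended near-prism whose presence defeats all of the decompositions above. Here one applies the tree-strip-system analysis of Section~\ref{strips}: after the preliminary reductions (in particular arranging that no vertex is complete to both ends of the relevant cross-edge), fix $(J,M)$ optimal for a cross-edge $ab$ of a maximal extended tree line-graph in $G$, and use Theorem~\ref{treestructplusminus} to show that every component of $G\setminus(M\cup\{a,b\})$ attaches only locally -- to a single strip, a single node, or a single leaf-strip. This yields a tree-decomposition-like picture of $G$ around $M$, and one then locates a bisimplicial vertex ``at a leaf'': a vertex at the far end of a rung of a leaf-strip, whose neighbourhood is forced into the union of the (clique-like) attachment set of that strip and at most one further clique. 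The delicate part is ruling out every way such a vertex could fail to be bisimplicial -- i.e.\ carry a triad or an odd antihole in its neighbourhood -- and this is where the strip-structure theorem does its real work. Having contradicted minimality in every branch, the proof is complete.
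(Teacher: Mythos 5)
This statement is not proved in the paper at all: it is quoted verbatim from Chudnovsky--Seymour \cite{bisimplicialnew} and used as a black box (to get the degeneracy-type bound behind Theorem~\ref{logncollections}, and in the introduction), so there is no in-paper proof for your attempt to be measured against. What you have written is a reconstruction plan for the cited paper's proof rather than a proof, and it is also close to circular in the present context: Section~\ref{strips}, which you invoke for the hard case, is itself a summary of machinery imported from \cite{bisimplicialnew} (Theorem~\ref{treestructplusminus} is Theorem~4.2 of that paper), so leaning on it amounts to assuming the key technical content of the very result you are trying to establish. Moreover, the present paper gives no indication that the proof in \cite{bisimplicialnew} follows the Conforti--Cornu\'ejols--Kapoor--Vu\v{s}kovi\'c decomposition route (basic classes, $2$-joins, star cutsets) that your outline is organized around; the only ingredient it imports is the strip-system statement, which is consistent with a more direct argument.

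Beyond that, the sketch has concrete gaps at exactly the places where the difficulty lives. First, the clique-cutset reduction is incomplete: minimality gives a bisimplicial vertex of the block $G[D\cup K]$, but that vertex may lie in $K$, where its neighbourhood in $G$ can be strictly larger, and ``iterating down to atoms'' does not repair this, since an atom's only bisimplicial vertices may all sit in its attachment clique; one needs a strengthened inductive statement (e.g.\ a bisimplicial vertex avoiding a prescribed clique) which you neither state nor prove. Second, in the $2$-join case the assertion that one can always collapse one side to a parity-preserving gadget and find a bisimplicial vertex on the untouched side, away from the attachment cliques, is precisely the kind of claim on which the original Addario-Berry et al.\ argument foundered; declaring that ``the combinatorial content is showing the gadget can always be set up'' names the obligation without discharging it. Third, the final step --- locating a bisimplicial vertex ``at a leaf'' of the strip system and ruling out a triad in its neighbourhood --- is acknowledged by you as the delicate part and is left entirely open. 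So the proposal is a plausible high-level roadmap, but it neither constitutes a proof nor corresponds to anything proved in this paper, which simply cites the result.
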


Following the proof of Theorem 7.1 of \cite{TWIII}, using Theorem~\ref{bisimplicialnew} we deduce:

\begin{theorem}
  \label{logncollections}
  Let $t \in \mathbb{N}$, and let $G$ be an even-hole-free graph with no
  clique of size $t+1$ and    with $|V(G)|=n$.
  There exist 
    a partition
  $(S_1, \dots, S_k)$   of $V(G)$ with the following properties:
  \begin{enumerate}
  \item $k \leq   2t \log n$.
  \item $S_i$ is a stable set for every $i \in \{1, \dots, k\}$.
  \item For every $i \in \{1, \dots, k\}$ and $v \in S_i$ we have
    $\deg_{G \setminus \bigcup_{j <i}S_j}(v)  \leq 8t$. \label{hubsequence-3}
  \end{enumerate}
\end{theorem}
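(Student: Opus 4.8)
The plan is to deduce Theorem~\ref{logncollections} from Theorem~\ref{bisimplicialnew} by a standard layered low-degree peeling argument, exactly as in the proof of Theorem~7.1 of \cite{TWIII}. The key observation is that Theorem~\ref{bisimplicialnew} is hereditary in a useful form: since every induced subgraph of an even-hole-free graph is again even-hole-free, every induced subgraph $H$ of $G$ has a vertex $v$ with $\kappa(N_H(v)) \leq 2$, i.e. $N_H(v)$ is the union of at most two cliques. Since $G$ (hence $H$) has no clique of size $t+1$, such a vertex has $\deg_H(v) \leq 2t$. Averaging over $H$, this gives that every induced subgraph $H$ of $G$ has a vertex of degree at most $2t$ in $H$; equivalently, $G$ is $2t$-degenerate. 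Actually we want a slightly stronger statement about stable sets, so the right thing to extract is: every induced subgraph $H$ of $G$ has an independent set $I$ with $|I| \geq |V(H)|/(4t+1)$ consisting of vertices of degree at most $4t$ in $H$ (the ``low-degree vertices'' $L = \{v : \deg_H(v) \leq 4t\}$ satisfy $|L| \geq |V(H)|/2$ by a Markov-type bound using $2t$-degeneracy — in fact average degree $\leq 4t$ — and then a greedy/Turán argument inside $H[L]$, which is also $2t$-degenerate hence $(2t+1)$-colorable, yields an independent subset of $L$ of size $\geq |L|/(2t+1) \geq |V(H)|/(2(2t+1))$). This gives a stable set of size $\Omega(|V(H)|/t)$ whose vertices all have degree $\leq 4t$ in $H$; one may need to tune constants so that the degree bound comes out as $8t$ and the fraction as $1/(2t)$ rather than $1/(4t+2)$, which is a routine calculation and is where the $8t$ in outcome~\eqref{hubsequence-3} comes from.

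Next I would iterate. Set $H_1 = G$. Having defined $H_i$, apply the above to $H_i$ to obtain a stable set $S_i \subseteq V(H_i)$ with $|S_i| \geq |V(H_i)|/(2t)$ (after tuning constants) such that $\deg_{H_i}(v) \leq 8t$ for every $v \in S_i$; then set $H_{i+1} = H_i \setminus S_i$. Since $H_i = G \setminus \bigcup_{j < i} S_j$, the degree condition is precisely $\deg_{G \setminus \bigcup_{j<i} S_j}(v) \leq 8t$ for $v \in S_i$, which is outcome~\eqref{hubsequence-3}, and each $S_i$ is stable, which is outcome~(2). The process terminates when $V(H_{k+1}) = \emptyset$; since $|V(H_{i+1})| \leq (1 - \tfrac{1}{2t})|V(H_i)|$, after $k$ steps we have $|V(H_{k+1})| \leq (1 - \tfrac{1}{2t})^k n \leq e^{-k/(2t)} n$, so $|V(H_{k+1})| < 1$ once $k \geq 2t \ln n$. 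Being slightly careful with the constant (and using $\log$ base $2$ as the paper does, noting $\ln 2 < 1$ so $2t\ln n \leq 2t \log n$, or absorbing the constant into the tuned fraction), this gives $k \leq 2t\log n$, which is outcome~(1). The tuples $(S_1,\dots,S_k)$ form a partition of $V(G)$ by construction, so all the claimed properties hold.

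The main obstacle, such as it is, is purely bookkeeping of constants: one has to choose the low-degree threshold (here $8t$) and the guaranteed independent-set fraction (here $1/(2t)$) consistently so that (a) the Markov bound on the number of vertices of bounded degree leaves enough room, (b) the colouring/greedy step inside the low-degree part produces an independent set of the claimed size, and (c) the resulting shrinkage rate $(1-\tfrac{1}{2t})$ gives termination within $2t\log n$ steps. None of these steps is conceptually hard — each is a one-line application of degeneracy, Turán's bound (or greedy colouring), and the inequality $1 - x \leq e^{-x}$ — but they must be aligned. I would present the argument exactly as the cited proof of Theorem~7.1 of \cite{TWIII} does, with Theorem~\ref{bisimplicialnew} supplying the base fact that replaces whatever hereditary bounded-degree-type input that proof used, and with the clique bound $t+1$ converting the ``neighbourhood is two cliques'' conclusion into the numeric degree bound.
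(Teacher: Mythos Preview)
Your proposal is correct and is exactly the approach the paper intends: the paper gives no proof of its own but simply invokes Theorem~7.1 of \cite{TWIII} with Theorem~\ref{bisimplicialnew} as the hereditary input, which is precisely the degeneracy-based layered peeling you describe. One small correction worth making when you write it up: the Markov step needs the threshold set at roughly twice the average degree, so threshold $8t$ (not $4t$) is what guarantees $|L|\geq |V(H)|/2$; you already arrive at $8t$ as the final bound, and noting that the degeneracy is in fact $2t-2$ (since $\{v\}\cup C$ is a clique for any clique $C\subseteq N(v)$, forcing $|C|\le t-1$) gives just enough slack to hit the stated constants.
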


For the remainder of this section, let us fix $t \in \mathbb{N}$.
Let $G$ an even-hole-free  graph with no clique of size $t+1$, and let $a,b \in V(G)$. A
{\em hub-partition with respect to $ab$} of $G$ is a partition
$S_1, \dots, S_k$ of $\Hub(G) \setminus \{a,b\}$ as in
Theorem \ref{logncollections};
we call $k$ the {\em order} of the partition.
We call the {\em hub-dimension} of $(G,ab)$ (denoting it by
$\hdim(G,ab)$) the smallest $k$ such that $G$ has a hub-partition of order $k$
with respect to $ab$.

Since, in view of Theorem~\ref{logncollections}, we have 
$\hdim(G,ab) \leq  2t \log n$ for every $a,b \in V(G)$,
Theorem~\ref{bananaclique} follows immediately from the next result:

\begin{theorem}
  \label{diminduction_banana}
   Let $G$ be an even-hole-free graph  with $|V(G)|=n$ and with no clique of size $t+1$, and let $a,b \in V(G)$ be
  non-adjacent. 
  Then there is a set $Z \subseteq V(G) \setminus \{a,b\}$
  with
  $$\kappa(Z) \leq D \log n +2D (D \log n+ 8t)^2  \hdim(G,ab)  \log n $$
  and such that every component of
    $G \setminus Z$ contains at most one of $a,b$.
  \end{theorem}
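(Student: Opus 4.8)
The plan is to prove Theorem~\ref{diminduction_banana} by induction on $\hdim(G,ab)$, using the ``central bag'' apparatus assembled in Sections~\ref{sec:sepab_nohubs}--\ref{sec:boundhubs}. First I would dispose of trivial cases: if $\hdim(G,ab)=0$ then $\Hub(G)\setminus\{a,b\}=\emptyset$, so in particular $N(a)\cap\Hub(G)=\emptyset$, and Theorem~\ref{ablogn} produces a separator $Z$ with $\kappa(Z)\le D\log n$, which is within the claimed bound. For the inductive step, fix a hub-partition $S_1,\dots,S_k$ of $\Hub(G)\setminus\{a,b\}$ of order $k=\hdim(G,ab)$, and focus on $S_1$. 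By Theorem~\ref{logncollections}\eqref{hubsequence-3}, every vertex of $S_1$ has degree at most $8t$ in $G$, so in particular $|N(v)\cap\Hub(G)|\le 8t$ for each $v\in S_1$; thus every vertex of $S_1$ is $d$-safe with $d=8t$. Now I would invoke the machinery: if no set $Z$ with $\kappa(Z)\le D\log n + d$ separates $a$ from $b$ (otherwise we are immediately done, since $D\log n+8t$ is well within the claimed bound), then by Lemma~\ref{abnoncoop} every $d$-safe vertex that is not $ab$-cooperative is complete to $\{a,b\}$, so we may pass to the subset $S=S_1\setminus(N(a)\cap N(b))$, which by $C_4$-freeness loses at most one vertex, and every vertex of $S$ is $ab$-cooperative.

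Next I would form the central bag $\beta^A(S_1)$ as in Section~\ref{sec:centralbag_banana}, together with its core $\Core(S_1)$, and apply Theorem~\ref{A_centralbag}\eqref{A-4}: no vertex of $S_1$ is a hub of $\beta^A(S_1)$. Hence in the graph $\beta^A(S_1)$ the set $\Hub(\beta^A(S_1))$ avoids $S_1$ entirely, so a hub-partition of $\beta^A(S_1)$ with respect to $ab$ can be obtained by restricting $S_2,\dots,S_k$ (some of these may shrink but they remain stable sets with the bounded-degree property in $\beta^A(S_1)$, and the degree bound only improves when passing to an induced subgraph), giving $\hdim(\beta^A(S_1),ab)\le k-1$. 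Applying the induction hypothesis inside $\beta^A(S_1)$ yields a separation $(X,Y,Z)$ of $\beta^A(S_1)$ with $a\in X$, $b\in Z$ and $\kappa(Y)\le D\log n+2D(D\log n+8t)^2(k-1)\log n$. However, before feeding $Y$ into Theorem~\ref{smallsepinG} one must control $|Y\cap\Core(S_1)|$, because the lifting cost in Theorem~\ref{smallsepinG} is $\kappa(Y)+|Y\cap\Core(S_1)|(D\log n+d)+1$, and a priori $Y\cap\Core(S_1)$ could be almost all of $Y$. This is exactly what Theorem~\ref{boundhubs} is for: I would first choose $Y$ to be (refined to) a \emph{minimal} separator separating $a$ from $b$ inside $\beta^A(S_1)$ — a minimal separator contained in any given separator always exists — and then apply Theorem~\ref{boundhubs} with $d=8t$ and $S_1$ as the stable set of safe vertices, obtaining $Y'$ separating $a$ from $b$ with $\kappa(Y'\setminus Y)\le D(D\log n+8t)\log n\cdot\log n$ and $|\mu_d(Y')|=|Y'\cap S_1|\le D(D\log n+8t)\log^2 n$.

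Now $Y'$ is a separator in $\beta^A(S_1)$ with $|Y'\cap\Core(S_1)|\le|Y'\cap S_1|\le D(D\log n+8t)\log^2 n$, and $\kappa(Y')\le\kappa(Y)+\kappa(Y'\setminus Y)$. Applying Theorem~\ref{smallsepinG} to the separation of $\beta^A(S_1)$ induced by $Y'$ lifts it to a separator $Y^{\mathrm{fin}}$ of $a$ from $b$ in $G$ with
$$\kappa(Y^{\mathrm{fin}})\le \kappa(Y')+|Y'\cap\Core(S_1)|(D\log n+8t)+1.$$
Substituting the bounds — $\kappa(Y')\le D\log n+2D(D\log n+8t)^2(k-1)\log n + D(D\log n+8t)\log^2 n$ and $|Y'\cap\Core(S_1)|(D\log n+8t)\le D(D\log n+8t)^2\log^2 n$ — one checks that the total is at most $D\log n+2D(D\log n+8t)^2 k\log n$, completing the induction (the two ``extra'' terms beyond the $(k-1)$-term, each of order roughly $D(D\log n+8t)^2\log^2 n$, are absorbed into the increment from $k-1$ to $k$, since $\log n\ge\log^2 n/\log n$ handles the bookkeeping as long as one is slightly generous with constants — this is the routine arithmetic I would not grind through here). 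The main obstacle is precisely this bookkeeping interface between the three black boxes: one must be careful that the separator fed to Theorem~\ref{boundhubs} really is a minimal separator with two \emph{full} components $D_1\ni a$, $D_2\ni b$ (which requires trimming $Y$ appropriately and arguing that trimming does not increase $\kappa$ or $|Y\cap S_1|$), that the hub-partition genuinely restricts to $\beta^A(S_1)$ with the order dropping by one (using Theorem~\ref{A_centralbag}\eqref{A-4} and the fact that induced subgraphs only decrease degrees), and that the per-step additive cost $O(D(D\log n+8t)^2\log^2 n)$ telescopes against the $\hdim$-indexed term without the constant blowing up geometrically.
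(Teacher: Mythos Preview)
Your overall architecture is exactly the paper's: induct on $\hdim(G,ab)$, handle the base case via Theorem~\ref{ablogn}, form the central bag $\beta^A(S_1)$ with $d=8t$, observe via Theorem~\ref{A_centralbag}\eqref{A-4} that the hub-dimension drops, apply induction inside $\beta^A(S_1)$, refine to a minimal separator, pivot with Theorem~\ref{boundhubs}, and lift with Theorem~\ref{smallsepinG}. All the conceptual pieces are in the right place and in the right order.

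There is one concrete slip that does break the arithmetic as you have written it. You quote Theorem~\ref{boundhubs} as giving $\kappa(Y'\setminus Y)\le D(D\log n+8t)\log^2 n$ and $|\mu_d(Y')|\le D(D\log n+8t)\log^2 n$, but the actual bound in Theorem~\ref{boundhubs} is $D(D\log n+d)\log n$, not $\log^2 n$ --- you have an extra factor of $\log n$. With your misquoted bound the two ``extra'' terms are each of order $D(D\log n+8t)^2\log^2 n$, whereas the per-step budget (the increment from $k-1$ to $k$) is only $2D(D\log n+8t)^2\log n$; your sentence ``$\log n\ge\log^2 n/\log n$'' is a tautology and does not rescue this. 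With the correct $\log n$ bound the extra cost is $D(D\log n+d)\log n + D(D\log n+d)^2\log n + 1$, which genuinely fits inside $2D(D\log n+d)^2\log n$, and the induction closes exactly as in the paper. So: fix the exponent on $\log n$ when invoking Theorem~\ref{boundhubs} and redo the final tally, and your argument is the paper's argument.
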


\begin{proof}
  Let $a,b \in G$ be non-adjacent and suppose that no such set $Z$ exists.
  We will get a contradiction by induction on $\hdim(G,ab)$.
  Suppose that  $\hdim(G,ab)=0$. Then $\Hub(G) \subseteq \{a,b\}$ and  by Theorem \ref{ablogn},  there is a set $Z \subseteq V(G) \setminus \{a,b\}$
  with $\kappa(Z) \leq D  \log n$ and such that every component of
  $G \setminus Z$ contains at most one of $a,b$.
      Thus we may assume $\hdim(G,ab)>0$.
    
    Let $S_1, \dots, S_k$ be a hub-partition of $G$ with respect to
    $ab$ and with $k=\hdim(G,ab)$.
  We now use notation and terminology from Section \ref{sec:centralbag_banana}.
Write $d=8t$.
  It follows from the definition of $S_1$ that every vertex in $S_1$
is $d$-safe. Let 
$\beta^A(S_1)$ be as in Section \ref{sec:centralbag_banana}; then
$a,b \in \beta^A(S_1)$.
By Theorem \ref{A_centralbag}\eqref{A-4}, we have that   $S_1 \cap \Hub(\beta^A(S_1))=\emptyset$ and
   $S_2 \cap \Hub(\beta^A(S_1)), \dots, S_k \cap \Hub(\beta^A(S_1))$ is a hub-partition of
   $\beta^A(S_1)$ with respect to $ab$. 
It follows that $\hdim(\beta^A(S_1),ab) \leq k-1$.
Inductively  there exists
a set $Y_1 \subseteq \beta^A(S_1) \setminus \{a,b\}$
  with
  $$\kappa(Y_1) \leq D \log n +2D (D \log n+ d)^2  (k-1) \log n $$
  and such that every component of
    $\beta^A(S_1) \setminus Y_1$ contains at most one of $a,b$.
  Let $D(b)$ be the component of $G \setminus Y_1$ such that $b \in D$,
  and let $D(a)$ be the component of $\beta^A(S_1) \setminus N(D(b))$ with $a \in D(a)$.
  Write $N_{\beta^A(S_1)}(D(a))=Y_2$. Then $Y_2 \subseteq Y_1$ and $Y_2$ is a minimal separator
  in $\beta^A(S_1)$ where $D(a)$ and the component of $\beta^A(S_1) \setminus Y_2$ containing $D(b)$ are two distinct full
  components for $Y_2$.
  By Theorem~\ref{boundhubs} applied in $\beta^A(S_1)$ and using $S_1$
  to define $\mu_d(Y_2)$, 
    there exists a set $Y \subseteq \beta^A(S_1) \setminus \{a,b\}$ such that
    \begin{itemize}
    \item $Y$ separates $a$ from $b$ in $\beta^A(S_1)$, and
 \item $|\kappa(Y \setminus Y_2)| \leq D (D\log n+d) \log n$, and
         \item $|\mu_d(Y)| \leq  D(D \log n+d) \log n$. 
           \end{itemize}
        It follows that
    \begin{align*}
        \kappa(Y) &\leq D \log n +2D (D \log n+ d )^2  (k-1) \log n  + D ( D \log n+ d)  \log n\\
        &\leq D (\log n) (1 + 2(D \log n + d)^2(k-1) + D \log n + d).
    \end{align*}
    
        Since $\Core(S_1) \cap Y \subseteq  \mu_d(Y)$,
                we deduce that
     $|\Core(S_1) \cap Y| \leq D(D\log n+d) \log n$.
     Now  applying Theorem~\ref{smallsepinG} to $Y$ we obtain a set $Y'$ such that
     \begin{itemize}
 \item $Y'$
    separates $a$ from $b$ in $G$; and
    \item $\kappa(Y') \leq \kappa(Y)+|Y \cap \Core(S')|(D \log n +d)+1$.
     \end{itemize}
     Consequently,
     \begin{align*}
         \kappa(Y') &\leq \kappa(Y)+|Y \cap \Core(S')|(D \log n +d)+1\\
         &\leq D (\log n) (1 + 2(D \log n + d)^2(k-1) + D \log n + d) +|Y \cap \Core(S')|(D \log n +d)+1\\
         &\leq D (\log n) (1 + 2(D \log n + d)^2(k-1/2) + D \log n + d) +1\\
         &\leq D (\log n) (1 + 2(D \log n + d)^2k).
     \end{align*}
     as required.
\end{proof}

\section{The proof of Theorem~\ref{banana}}
              \label{sec:banana}

              We can finally prove Theorem~\ref{banana}.
    We will need a theorem from \cite{KLSSX}.
    
    \begin{theorem}  [Korchemna, Lokshtanov, Saurabh, Surianarayanan, Xue \cite{KLSSX}]
      \label{cliqueMenger}
      Let $G$ be a graph with $|V(G)|=n$, $A,B \subseteq V(G)$, $\mathcal{F}$ a family of
      cliques of $G$, and $f$ an integer. Then one of the following holds:
      \begin{itemize}
      \item There exists $S \subseteq V(G)$ such that $S$ separates $A$ from $B$
        and $\kappa(S) \leq f \log^2 n$.
      \item There exist an integer $t \leq 2 \log (|\mathcal{F}|)$ and
        $t \times f$ paths $P_1, \dots, P_{t \times f}$ from $A$ to $B$ such that for every
        $K \in \mathcal{F}$, $K \cap P_i \neq \emptyset$ for fewer than
        $4t$ values of $i$.
      \end{itemize}
      \end{theorem}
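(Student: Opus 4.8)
The plan is to prove Theorem~\ref{cliqueMenger} by linear programming duality, turning each of the two LP outcomes into one of the two alternatives by a rounding argument. First I would set up the natural pair of LPs for ``clique-capacitated flow''. On the packing side, the variables $x_P\ge 0$ range over simple paths $P$ from $A$ to $B$, we maximise $\sum_P x_P$ subject to $\sum_{P:\,P\cap K\neq\emptyset}x_P\le 1$ for every $K\in\mathcal F$; call the optimum $\lambda^\ast$. (If some $A$--$B$ path avoids all of $\mathcal F$ the packing LP is unbounded and the second outcome is trivial; if there is no $A$--$B$ path, $S=\emptyset$ gives the first outcome. So assume every $A$--$B$ path meets some $K\in\mathcal F$.) Its dual is a fractional clique cut: variables $y_K\ge 0$ for $K\in\mathcal F$, minimise $\sum_K y_K$ subject to $\sum_{K:\,K\cap P\neq\emptyset}y_K\ge 1$ for every $A$--$B$ path $P$; by strong duality the minimum equals $\lambda^\ast$. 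I would then dichotomise on whether $\lambda^\ast$ is large or small, with the threshold chosen (of the form $c_0 f\log n$, and with an auxiliary comparison of $\log|\mathcal F|$ against $\log n$) so that the two sides come out exactly as stated; I would not grind through that bookkeeping here.

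\smallskip
\emph{The flow-rich case (large $\lambda^\ast$).} Normalise an optimal fractional solution to a probability distribution $\mu$ over $A$--$B$ paths, $\mu(P)=x_P/\lambda^\ast$; then $\Pr_{P\sim\mu}[P\cap K\neq\emptyset]\le 1/\lambda^\ast$ for every $K\in\mathcal F$. Put $t=\lceil 2\log|\mathcal F|\rceil$ and sample $tf$ paths $P_1,\dots,P_{tf}$ independently from $\mu$ (oversampling by a constant factor and discarding repeats if one insists on distinct paths). For a fixed $K$, the number of sampled paths meeting $K$ is a sum of independent indicators of mean at most $tf/\lambda^\ast\le 1$, so a multiplicative Chernoff bound gives $\Pr[\#\{i:P_i\cap K\neq\emptyset\}\ge 4t]\le (e/4t)^{4t}\le |\mathcal F|^{-2}$ for $t\ge 1$; a union bound over $\mathcal F$ then shows that with positive probability every $K\in\mathcal F$ meets fewer than $4t$ of the $P_i$, which is exactly the second outcome. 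This direction is routine; the only design choice is $t=\Theta(\log|\mathcal F|)$, forced by the union bound.

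\smallskip
\emph{The cut-rich case (small $\lambda^\ast$).} Here the dual yields a fractional clique cut $y$ with $\sum_K y_K=\lambda^\ast$ small, and the task is to round it to an honest family $S\subseteq\bigcup\mathcal F$ of cliques separating $A$ from $B$, with $|S|$ (hence $\kappa(S)$) at most $f\log^2 n$. Naive set-cover rounding is too lossy since the universe of $A$--$B$ paths is exponential, so I would use a region-growing argument in the style of Leighton--Rao and Garg--Vazirani--Yannakakis. Regard $y$ as assigning ``length'' to cliques: for a vertex $v$ let $d(v)$ be the least $y$-length of an $A$--$v$ path, where the $y$-length of a path $Q$ is $\sum_{K:\,K\cap Q\neq\emptyset}y_K$; then $d$ vanishes near $A$ and is $\ge 1$ on $B$. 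Grow the ball $\{v:d(v)\le r\}$ and run the standard volume argument — the total ``volume'' (normalising term plus $\sum_K y_K$) is $O(\lambda^\ast)$ while $|V(G)|\le n$ — to find a radius $r\in(0,1)$ at which the set of cliques crossing the sphere $\{d=r\}$ has $y$-weight at most $O(\log n)$ times its local volume; taking those crossing cliques (iterating, or arguing directly on one well-chosen level set) produces an integral clique separator $S$ of $A$ from $B$ with $|S|=O(\log n)\cdot\lambda^\ast\le f\log^2 n$ after absorbing constants, giving the first outcome.

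\smallskip
The routine ingredients are the LP duality setup and the Chernoff/union-bound rounding of the flow. The main obstacle is the region-growing rounding of the fractional clique cut: unlike the classical edge-length setting, a single clique can straddle the boundary of a growing ball and can be arbitrarily large, so one must be careful how crossing cliques are charged against volume, and one must control the ``width'' — a path may use many vertices of one clique, and a ball may contain many vertices of one clique — so that the overhead stays $O(\log n)$ rather than $O(\log(\text{size}))$. A secondary technical point, which I would handle last, is calibrating the LP threshold and all constant factors so that the cut side lands on $f\log^2 n$ and the flow side on $t\le 2\log|\mathcal F|$ paths with every clique meeting fewer than $4t$ of them.
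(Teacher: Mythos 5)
First, a point of comparison: the paper does not prove Theorem~\ref{cliqueMenger} at all --- it is imported verbatim from \cite{KLSSX} --- so your proposal can only be judged on its own terms rather than against an in-paper argument. On those terms, the LP-duality framing and the flow-rich case are essentially routine: sampling $tf$ paths from the normalized optimal fractional packing and applying Chernoff plus a union bound over $\mathcal{F}$ works. (Your calibration is off as written, though: ``mean at most $tf/\lambda^{\ast}\le 1$'' forces the dichotomy threshold to be about $f\log|\mathcal{F}|$ with $t=\Theta(\log|\mathcal{F}|)$, not $c_0 f\log n$, and since the theorem allows $|\mathcal{F}|$ super-polynomial in $n$ this is incompatible with landing the cut case inside $f\log^2 n$. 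The fix is that mean at most $t$ already suffices against the threshold $4t$, since $(e/4)^{4t}\le 2^{-2t}\le|\mathcal{F}|^{-2}$ for $t\ge\log|\mathcal{F}|$, so $\lambda^{\ast}\ge f$ is enough for the sampling case; but as stated your two cases do not fit together.)

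The genuine gap is the cut-rich case, which you yourself identify as the main obstacle and then do not resolve. What is needed is a rounding lemma: from a fractional clique cut $y$ of value $\lambda^{\ast}$ (every $A$--$B$ path meets cliques of total $y$-weight at least $1$), produce a separator $S$ with $\kappa(S)=O(\lambda^{\ast}\log n)$ (or at least $O(\lambda^{\ast}\log^2 n)$ after fixing the threshold). Your region-growing sketch does not deliver this, and as written it proves the wrong inequality: you bound the \emph{$y$-weight} of the cliques crossing a well-chosen level set by $O(\log n)$ times the local volume, but $\kappa(S)$ counts the \emph{number} of cliques, and a bound on their total $y$-weight says nothing about their cardinality, since individual $y_K$ may be arbitrarily small. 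To run a GVY-type charging with unit costs, each crossing clique must accrue volume at rate comparable to $1$ over a crossing window of radii of length at most $y_K$; but here the ``length'' of a path is the weight of the \emph{set} of cliques it meets, each counted once, so the induced distance is not additive along the path: for $u,w$ in a common clique $K$ the inequality $d(w)\le d(u)+y_K$ fails (extending the path to $w$ may newly pay for many other cliques containing $w$), and a clique can straddle the ball boundary over a window far longer than $y_K$, exactly the issue you flag. It is also not even clear that an $O(\log n)$ integrality gap holds for this covering LP by ball growing; the $\log^2 n$ in the statement suggests the actual argument in \cite{KLSSX} absorbs an extra logarithmic loss elsewhere. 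Without a proof of this rounding step, the first alternative of the theorem --- which is its heart --- is unsupported, so the proposal is a plausible strategy but not a proof.
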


    We also need the following result of \cite{Alekseev} and \cite{Farber}:
    \begin{theorem}[Alekseev \cite{Alekseev}, Farber \cite{Farber}]
      \label{cliques}
      An $n$-vertex $C_4$-free graphs has at most $n^2$ maximal cliques.
      \end{theorem}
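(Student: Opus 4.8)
The plan is to exploit the one structural feature that ``no induced $C_4$'' provides: any two non-adjacent vertices have a common neighbourhood that is a clique. I would first record this. Let $G$ be $C_4$-free and let $x,y$ be distinct non-adjacent vertices. If $w_1,w_2$ were two distinct, non-adjacent common neighbours of $x$ and $y$, then $\{x,w_1,y,w_2\}$ would induce a $C_4$ in $G$: the four edges $xw_1,w_1y,yw_2,w_2x$ are present, while the two diagonals $xy$ (by hypothesis) and $w_1w_2$ (by assumption) are absent. Hence $N(x)\cap N(y)$ is a clique. Equivalently, $\overline{G}$ has no induced $2K_2$, so bounding the number of maximal cliques of $G$ is the same as bounding the number of maximal stable sets of a $2K_2$-free graph.

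With this lemma available, the plan is to count the maximal cliques following Farber's argument. The maximal cliques of size at most $1$ are easy to control: each is either a single isolated vertex, or (only when $V(G)=\emptyset$) the empty set, so there are at most $n$ of them. For the maximal cliques $Q$ with $|Q|\ge 2$, I would fix a linear order on $V(G)$ and attach to each such $Q$ a ``signature'' consisting of at most two vertices, chosen by a short deterministic rule; the point is to show, using the lemma above, that distinct maximal cliques receive distinct signatures (more precisely, that the number of cliques with a given signature is bounded), since two maximal cliques sharing a signature would, by the clique-versus-common-neighbour dichotomy, force a forbidden induced $C_4$. This bounds the number of maximal cliques of size at least $2$ by $\binom{n}{2}$, and altogether $G$ has at most $\binom{n}{2}+n=\frac{n^2+n}{2}\le n^2$ maximal cliques for every $n\ge 1$.

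The structural lemma is immediate; the real work --- and the step I expect to be the main obstacle --- is the counting, namely choosing the signature so that the collision argument genuinely closes. It is worth noting that the most naive hope, ``a maximal clique is determined by any two of its vertices'', is false for $C_4$-free graphs: take two adjacent vertices $x,y$ together with arbitrarily many pairwise non-adjacent vertices each adjacent to both $x$ and $y$ (this graph is $C_4$-free and has many maximal cliques, all meeting the edge $xy$). So the signature must be chosen with care, and this is precisely what is done in \cite{Farber} and independently in \cite{Alekseev}. The bound $n^2$ produced this way is far from tight, but it is exactly what is needed later, e.g.\ to control the size of the family $\mathcal{F}$ when Theorem~\ref{cliqueMenger} is applied to an even-hole-free graph.
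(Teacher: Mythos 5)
This statement is quoted in the paper as an external result of Alekseev and Farber; the paper gives no proof of it, so the only question is whether your write-up is itself a proof. It is not: there is a genuine gap, and you essentially acknowledge it. Your two concrete ingredients are correct but trivial --- that in a $C_4$-free graph the common neighbourhood of two non-adjacent vertices is a clique (equivalently, the complement is $2K_2$-free, so maximal cliques of $G$ are maximal stable sets of a $2K_2$-free graph), and that size-$\leq 1$ maximal cliques number at most $n$. The entire content of the theorem is the counting step for cliques of size at least $2$, and there your argument consists only of the announcement that one should attach to each maximal clique a ``signature'' of at most two vertices ``chosen by a short deterministic rule'' with few collisions. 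No rule is ever specified, no collision bound is proved, and the one candidate rule you do discuss (two vertices of the clique) is refuted by your own counterexample. Saying that a bad rule would ``force a forbidden induced $C_4$'' is vacuous until a rule is on the table, and the final appeal to ``this is precisely what is done in \cite{Farber} and independently in \cite{Alekseev}'' is a citation, not a proof; as written, nothing in the proposal establishes any polynomial bound on the number of maximal cliques.

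To close the gap you would have to actually carry out an argument of the Farber/Alekseev type: for instance, an injection (or boundedly-many-to-one map) from maximal cliques of size at least $2$ into pairs of vertices in which the two coordinates play different roles (typically one vertex inside the clique and one carefully chosen vertex outside it, or an induction deleting a vertex and controlling how many maximal cliques are created), with $2K_2$-freeness of the complement used to rule out two distinct cliques receiving the same pair. The structural lemma you prove does not by itself interact with any such map, so the heart of the theorem is still missing from your proposal. (For the purposes of this paper the precise constant is irrelevant --- only $|\mathcal{F}| \leq n^2$ is used before Theorem~\ref{cliqueMenger} --- but that does not excuse omitting the argument.)
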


    We now prove Theorem~\ref{banana}.
    \begin{proof}
Let $D$ be as in Section~\ref{sec:boundhubs} and let $c=16 \times 256 D^3$.
      Let $a,b \in G$ and let $G'=G \setminus \{a,b\}$.
      Let $A=N_G(a)$ and $B=N_G(b)$.
      Let $\mathcal{F}$ be the set of all maximal cliques in $G'$.
      By Theorem~\ref{cliques}, we have  $|\mathcal{F}| \leq n^2$.
      Let $f=c \log^6 n$. We apply Theorem~\ref{cliqueMenger}
      to $G',A,B, \mathcal{F}$ and $f$.
      We may assume that the statement of the first bullet does not hold,
      and so there exists  an integer $t \leq 2 \log (|\mathcal{F}|)$ and
        $t \times f$ paths $P_1, \dots, P_{t \times f}$ from $A$ to $B$ such that for every
        $K \in \mathcal{F}$, $K \cap P_i \neq \emptyset$ for at most
      $4t$ values of $i$.
      We may assume that the paths $P_1, \dots, P_{t \times f}$ are induced.
      Let $G''=\bigcup_{i=1}^{t \times f}P_i \cup \{a,b\}$.
      Since every clique of $G'' \setminus \{a,b\}$ is contained in an
      element of $\mathcal{F}$, we deduce that
      $G''$ does not have a clique of size $8t+1 \leq 1+  16\log n$.
      By Theorem~\ref{bananaclique} there exists a
       set $Z \subseteq V(G'') \setminus \{a,b\}$
  with
  \begin{align*}
      \kappa(Z) &\leq D \log n +2D (D \log n+ 8\times 16 \log n)^2 2 \times 16 \log n \log^2 n \\
      &< 256 D^3 \log^5 n. 
  \end{align*}
  and such that every component of
  $G \setminus Z$ contains at most one of $a,b$.
  Since   for every
  $K \in \mathcal{F}$, $K \cap P_i \neq \emptyset$ for at most $4t$ values of $i$,
  it follows that the number of values of $i$ for which $K \cap P_i \neq \emptyset$ is less
     than $4 t \kappa(Z) \leq c \log^6 n$.
   But this contradicts the fact that $Z$ separates
  $a$ from $b$ in $G''$.
    \end{proof}

    \section{From pairs of vertices to tree decompositions.}
    \label{sec:domsep}
    
    In this section we prove our main result, following the outline of the
    last few sections of \cite{TI2}.
    We need a theorem from  \cite{TWXV}:
    \begin{theorem}[Chudnovsky, Gartland, Hajebi, Lokshtanov, Spirkl \cite{TWXV}]
      \label{domsep}
There is an integer  $d$ with the following property.
Let $G \in \mathcal {C}$ and let $w$ be a normal weight function on $G$.
Then there exists $Y \subseteq V(G)$ such that
\begin{itemize}
\item $|Y| \leq d$, and
\item $N[Y]$ is a $w$-balanced separator in $G$.
\end{itemize}
    \end{theorem}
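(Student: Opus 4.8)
The plan is to prove Theorem~\ref{domsep} by induction on $|V(G)|$, pushing the combinatorial work onto the structural decomposition available for graphs in $\mathcal{C}$. Normalize so that $w(V(G))=1$, and call $S\subseteq V(G)$ a $w$-balanced separator if every component of $G\setminus S$ has $w$-weight at most $\tfrac12$. Two elementary remarks drive everything: enlarging a $w$-balanced separator leaves it $w$-balanced, so it suffices to find \emph{some} $w$-balanced separator inside $N[Y]$ for a small set $Y$; and a clique $K$ satisfies $K\subseteq N[v]$ for every $v\in K$, so a $w$-balanced \emph{clique} separator already yields the conclusion with $|Y|=1$. First I would remove clique cutsets by the standard reduction: if $K$ is a clique cutset and every component of $G\setminus K$ is $w$-light then $K$ itself is $w$-balanced and we are done; otherwise there is a unique heavy component $C$, and we recurse inside $G[C\cup K]$ after moving the (total, hence $<\tfrac12$) weight of the remaining components onto one vertex of $K$. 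Since $K$ is a clique, $K\setminus N[Y]$ stays inside a single component, so a dominated $w'$-balanced separator of the smaller graph lifts back to $G$. Hence we may assume $G$ has no clique cutset.

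Next I would invoke the decomposition theorem for $C_4$-free odd-signable graphs (the basic-versus-star-cutset-versus-$2$-join dichotomy of \cite{decompPart1}, in the refined form used in \cite{TWI, TWXV}). In the basic cases, which are restricted line graphs and a few elementary classes, a bounded-domination balanced separator can be produced directly. Otherwise $G$ has a star cutset $S\subseteq N[x]$, dominated by a single vertex, or a proper $2$-join, whose cutset lies in the union of its four attachment sets, each a clique of a $C_4$-free graph and hence dominated by at most four vertices. Either way one obtains a \emph{cutset} $T$ with a domination set of size $O(1)$; if $T$ is $w$-balanced we stop, and otherwise $T$ has a unique heavy side, so we discard the light sides (their total weight is below $\tfrac12$) and recurse inside the heavy side together with $T$, from then on only ever needing to balance against a single shrinking region. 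Accumulating the dominating vertices over the whole process produces $Y$ and the constant $d$.

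\textbf{The main obstacle} is keeping $d$ \emph{uniformly} bounded: a naive iteration of the decomposition could contribute a fresh dominating vertex at each of $\Omega(\log n)$ rounds, giving only $d=O(\log n)$. The fix, which is the technical heart of the argument, is to not iterate but to locate in a single pass a \emph{minimal} $w$-balanced separator $S$ lying strictly between two heavy full components $A$ and $B$, and then bound the domination number of $S$ by an absolute constant using $C_4$-freeness and odd-signability. Concretely, for non-adjacent $s,s'\in S$ the shortest $s$-$s'$ paths through $A$ and through $B$ form a hole, which odd-signability forbids from being even; pushing this through independent triples and quadruples inside $S$, and using the star-cutset consequences of proper wheels and loaded pyramids (Theorem~\ref{hubstarcutset}) to rule out large ``wheel-like'' configurations, forces every large independent subset of $S$ to be covered by $O(1)$ closed neighborhoods. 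This is exactly the phenomenon displayed by the Sintiari--Trotignon layered wheels, whose large $\Omega(\log n)$-treewidth separators are nonetheless dominated by a bounded number of hub vertices; making that bound effective is where essentially all the difficulty lies.
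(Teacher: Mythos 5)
First, a point of context: the paper does not prove Theorem~\ref{domsep} at all -- it is imported verbatim from \cite{TWXV} (see the remark in Section~\ref{sec:outline} that the ``dominated balanced separators'' step of \cite{TWXV} does not use $K_t$-freeness and can be reused here), and the proof there is a long argument built on the machinery of that series, not something this paper reconstructs. So your attempt has to stand on its own, and it does not: there is a genuine gap at exactly the point you yourself flag as ``the technical heart.'' Your proposed fix for the $\Omega(\log n)$-rounds problem is to locate ``a minimal $w$-balanced separator $S$ lying strictly between two heavy full components $A$ and $B$'' and then bound its domination number by a constant. Both halves fail as stated. A minimal (i.e.\ inclusion-wise irredundant) $w$-balanced separator need not be a minimal separator between any two sets at all: minimality only says that for each $v\in S$ some component of $G\setminus(S\setminus\{v\})$ is heavy, and there is no reason for two heavy components, let alone two \emph{full} ones with $N(A)=N(B)=S$, to exist -- balanced separators typically have many light components attached to different parts of $S$. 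And even granting that geometry, the claim that $C_4$-freeness and odd-signability force every such $S$ to be dominated by $O(1)$ vertices is precisely the theorem to be proved; ``pushing this through independent triples and quadruples inside $S$'' and invoking Theorem~\ref{hubstarcutset} is an assertion, not an argument -- the parity/hole observation for nonadjacent pairs of $S$ gives no control on how many closed neighborhoods are needed to cover $S$.

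The surrounding scaffolding also has inaccuracies that would need repair even if the core step were supplied. In a $2$-join of a $C_4$-free graph the four attachment sets are not all cliques (only that no two opposite sides both contain a non-edge); the bounded domination of the associated cutset comes instead from the complete-to relation across the join, so the count ``at most four'' needs a correct justification. The clique-cutset weight-shifting recursion must address the case where the proxy vertex of $K$ lands in $N[Y]$, and the ``basic'' classes of the decomposition theorem of \cite{decompPart1} require an actual construction of bounded-domination balanced separators, which you only assert. None of these smaller issues is fatal, but without a real argument replacing the iteration by a single bounded-size dominating set, the proposal does not prove Theorem~\ref{domsep}; the correct route, as in \cite{TWXV}, is substantially more involved than the sketch suggests.
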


    We also need some terminology and two results from \cite{TI2}.
        Let $L,d,r$ be integers. We say that an $n$-vertex graph $G$ is {\em $(L,d,r)$-breakable}
  if
  \begin{enumerate}
    \item
      for every two disjoint and anticomplete cliques 
      $H_1,H_2$ of $G$ with $|H_1| \leq r$ and $|H_2| \leq r$,  there is a set
  $X \subseteq G \setminus (H_1 \cup H_2)$ with $\alpha(X) \leq L$ separating $H_1$ from $H_2$, and
  \item  for every normal weight function $w$ on $G$ and for every induced subgraph
    $G'$ of $G$ there  exists a set $Y \subseteq V(G')$ with $|Y| \leq d$
    such that for every component $D$ of $G'  \setminus N[Y]$,
    $w(D) \leq \frac{1}{2}$.
\end{enumerate}

  \begin{theorem} [Chudnovsky, Hajebi, Lokshtanov, Spirkl \cite{TI2}]
  \label{domtosmall}
  For every  integer $d>0$ there is an integer $C(d)$ with the
  following property.
  Let $L, n, r >0$ be integers such that $r \leq d(2+\log n)$ and let $G$ be an
  $n$-vertex $(L,d,r)$-breakable
  theta-free graph.
  Then there exists a $w$-balanced separator $Y$ in $G$ such that
  $\alpha(Y) \leq C(d)  \lceil \frac{d (2+\log n)} {r} \rceil (2+\log n) L$.
\end{theorem}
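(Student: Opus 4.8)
The plan is to convert the dominated balanced separator supplied by condition (2) of $(L,d,r)$‑breakability into one of bounded independence number by an iterative refinement of depth $O(2+\log n)$, each round of which removes a set of independence number $O_d\!\big(\lceil d(2+\log n)/r\rceil\,L\big)$. Fix a normal weight function $w$. First apply condition (2) to $G$ (with $G'=G$) to obtain $Y_0\subseteq V(G)$ with $|Y_0|\le d$ such that every component of $G\setminus N[Y_0]$ has weight at most $\tfrac12$. The obstruction is that $N[Y_0]$ itself may have unbounded independence number, so it cannot be kept. Keep $Y_0$ itself (its independence number is at most $d$) and concentrate on the ``ring'' $R=N(Y_0)\setminus Y_0$, which is all that re‑connects the components $D_1,D_2,\dots$ of $G\setminus N[Y_0]$ to each other in $G\setminus Y_0$.

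The one‑round lemma I would isolate is: working inside an induced subgraph $G'$ of $G$ (which remains theta‑free and $(L,d,r)$‑breakable) with a normalized weight function, produce $Z\subseteq V(G')$ with $\alpha(Z)\le O_d\!\big(\lceil d(2+\log n)/r\rceil\,L\big)$ such that every component of $G'\setminus Z$ has weight at most $1-\Theta\!\big(\tfrac1{2+\log n}\big)$. To do this I would call condition (2) to get $Y_0$ and $R$ as above, note that the ``heavy'' components of $G'\setminus N[Y_0]$ (those of weight exceeding $\Theta(\tfrac1{2+\log n})$) number only $O(2+\log n)$, and then — this is where theta‑freeness does the work — argue that the attachment of each heavy component $D$ to $R$ and to the other heavy components is rigid: three pairwise‑anticomplete connecting paths through distinct heavy pieces or through $R$ would produce a theta, and a minimal connected subgraph realising three such attachments is governed by Lemma~\ref{minimalconnected}. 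This rigidity lets one ``represent'' a heavy component, together with its interface in $R$, by a clique of size at most $r$, and lets one batch the heavy components into $O\!\big(\lceil d(2+\log n)/r\rceil\big)$ groups each represented by such a clique. For every pair of groups, invoke condition (1) to get a separating set of independence number at most $L$; set $Z$ to be the union of $Y_0$, all these separating sets, and a bounded number of further dominated balanced separators obtained by re‑applying condition (2) to the (few) heavy pieces to absorb their internal weight. Then $\alpha(Z)=O_d\!\big(\lceil d(2+\log n)/r\rceil\,L\big)$ and, by construction, no component of $G'\setminus Z$ can contain two heavy pieces or too much of the residual weight.

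Iterating this lemma $O(2+\log n)$ times, always re‑normalizing and re‑applying it to the current heaviest component (and carrying along the already‑chosen vertices as a fixed deletion set, so that the regions handled at a given level are controlled in number), drives the weight of the heaviest component below $\tfrac12$; the output $Y$ is the union of all the $Z$'s, and summing the per‑round bounds gives $\alpha(Y)\le C(d)\,\lceil d(2+\log n)/r\rceil\,(2+\log n)\,L$, with the constant $C(d)$ absorbing the $d$‑dependent overheads (the factor $|Y_0|\le d$ and the number of heavy pieces at each round). The hypothesis $r\le d(2+\log n)$ guarantees the ceiling is meaningful and that the batching into groups of clique‑size at most $r$ is never wasteful.

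The main obstacle I expect is exactly the structural core of the one‑round lemma: showing with \emph{only} theta‑freeness (not prism‑ or pyramid‑freeness) that the heavy components of $G'\setminus N[Y_0]$ and their re‑attachments through $N(Y_0)\setminus Y_0$ are captured by $O_d\!\big(\lceil d(2+\log n)/r\rceil\big)$ clique‑pairs of size at most $r$ rather than the naive $\Theta\!\big((2+\log n)^2\big)$ vertex‑pairs — in particular getting the dependence on $r$ right so that a larger $r$ genuinely buys fewer invocations of condition (1). This is where I would expect to spend most of the effort, carefully bounding how many pairwise ``independent'' attachment patterns a theta‑free graph can display around a $d$‑dominated set, using Lemma~\ref{minimalconnected} and the fact that anticomplete path systems are forbidden.
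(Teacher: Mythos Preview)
The paper does not prove Theorem~\ref{domtosmall}; it is quoted from~\cite{TI2} and used as a black box in Section~\ref{sec:domsep}. There is therefore no proof in this paper to compare your proposal against.

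As to your sketch on its own merits: the overall scaffolding --- iterate the dominated balanced separator from condition~(2), then invoke the clique-separator guarantee of condition~(1) to control how the heavy pieces re-attach --- is the right shape, but the step you yourself flag as the ``main obstacle'' is a genuine gap, and the arithmetic built on top of it does not close. Two concrete problems. First, you assert that theta-freeness lets you represent the interface of each heavy component $D$ in $N(Y_0)\setminus Y_0$ by a clique of size at most $r$, and then batch the $O(2+\log n)$ heavy components into $k=O\!\big(\lceil d(2+\log n)/r\rceil\big)$ clique-represented groups. Theta-freeness forbids three internally anticomplete paths between two \emph{fixed} ends, but it does not force $N(D)\cap N(y)$ to be a near-clique: outcomes~(iii) and the ``two adjacent neighbours'' sub-case of~(i) in Lemma~\ref{minimalconnected} yield pyramids and prisms, not thetas, so three pairwise non-adjacent attachment vertices can survive. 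The passage from ``no theta'' to ``interface is a clique of size $\le r$'' is exactly the missing lemma, and nothing in your outline supplies it. Second, even granting the batching into $k$ groups, you write ``for every pair of groups, invoke condition~(1)'': that is $\binom{k}{2}$ invocations, so $\alpha(Z)$ picks up a term of order $k^2L$ per round, not the $kL$ you claim. After $O(2+\log n)$ rounds you overshoot the target bound $C(d)\,k\,(2+\log n)\,L$ by a factor of $k$. To recover the stated bound you would need a linear or tree-like order on the groups so that separating only \emph{consecutive} groups suffices --- and establishing that order from theta-freeness alone is again the unproved structural step.
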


  \begin{lemma} [Chudnovsky, Hajebi, Lokshtanov, Spirkl \cite{TI2}]
    \label{lemma:bs-to-treealpha} 
  Let $G$ be a graph, let $c \in [\frac{1}{2}, 1)$, and let $d$ be a positive integer. If for every normal weight function $w$ on $G$, there is a
    $(c,w)$-balanced separator $X_w$ with $\alpha(X_w) \leq d$, then 
        the tree independence number of $G$ is at most $\frac{3-c}{1-c}d$. 
\end{lemma}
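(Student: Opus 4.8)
The statement is the familiar principle that bounded balanced separators force a tree decomposition of bounded width — here with ``size'' replaced by ``independence number'' — so the plan is to build the tree decomposition recursively via balanced separators, tracking independence numbers instead of sizes. The natural object to recurse on is a \emph{bordered piece} $(W,Q)$ with $Q\subseteq W\subseteq V(G)$ and $N_G(W\setminus Q)\subseteq Q$ (so $Q$ seals $W\setminus Q$ off from the rest of $G$), carrying the invariant $\alpha(G[Q])\le\frac{1+c}{1-c}d$; the goal of the recursion is a tree decomposition of $G[W]$ whose root bag contains $Q$ and all of whose bags have independence number at most $\frac{3-c}{1-c}d$, and applying it to $(V(G),\emptyset)$ proves the lemma. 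To process $(W,Q)$ when $W\setminus Q\neq\emptyset$, I would feed the hypothesis the weight function $w$ that is uniform on $W\setminus Q$ and $0$ elsewhere (this is normal, and the hypothesis is only asked about weight functions on $G$, so the zero-padding is exactly what is needed), obtaining a $(c,w)$-balanced separator $X$ of $G$ with $\alpha(G[X])\le d$. Since a component of $G[W]\setminus(Q\cup X)$ lies inside a single component of $G\setminus(X\cap W)\subseteq G\setminus X$, replacing $X$ by $X\cap W$ keeps it a separator of $G[W]$ for which each component of $G[W]\setminus(Q\cup X)$ has at most $c\lvert W\setminus Q\rvert$ vertices. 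The root bag is $B=Q\cup X$, with $\alpha(G[B])\le\alpha(G[Q])+\alpha(G[X])\le\frac{1+c}{1-c}d+d=\frac{2}{1-c}d$; and at the bottom of the recursion, when $W\setminus Q=\emptyset$, we output the single bag $W$, which is fine since $\alpha(G[W])=\alpha(G[Q])\le\frac{1+c}{1-c}d<\frac{3-c}{1-c}d$.

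For each component $D$ of $G[W]\setminus B$, set $Q_D:=N_{G[W]}(D)\subseteq B$ and $W_D:=D\cup Q_D$, so $W_D\setminus Q_D=D$ has at most $c\lvert W\setminus Q\rvert$ vertices; recursing on the pieces $(W_D,Q_D)$ therefore drives $\lvert W\setminus Q\rvert$ down by the factor $c$ at every level and terminates after $O(\log\lvert V(G)\rvert)$ levels. The obstruction — and the heart of the proof — is that $\alpha(G[Q_D])$ may be as large as $\alpha(G[B])=\frac{2}{1-c}d$, violating the border invariant $\frac{1+c}{1-c}d$ needed to recurse; naively re-borrowing the whole of $B$ as the next border would add $d$ to the independence number at each of the $\Theta(\log n)$ levels and give only an $O(d\log n)$ bound. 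The fix is to interpose a \emph{border-repair} step before recursing: $Q_D$ is covered by the old-border fragment $Q\cap N(D)$ (independence number $\le\frac{1+c}{1-c}d$) and the fresh-separator fragment $X\cap N(D)$ (independence number $\le d$), and applying the hypothesis once more inside $G[W_D]$ to a weight function that balances the heavier fragment inserts a single extra bag of the shape (border fragment)$\cup$(new separator), of independence number at most $\frac{2}{1-c}d+d=\frac{3-c}{1-c}d$, below which the border is genuinely back down to $\frac{1+c}{1-c}d$. Thus two families of bags appear: recursion bags with independence number $\le\frac{2}{1-c}d$ and repair bags with independence number $\le\frac{3-c}{1-c}d$; these are exactly the fixed points of the two recurrences ``border $\mapsto$ border $+d$'' and ``border $\mapsto c\cdot(\text{border})+d$'', which solve to border value $\frac{1+c}{1-c}d$ and worst bag value $\frac{3-c}{1-c}d$, and this geometric cancellation (rather than an additive blow-up) is where the $c$-dependence of the stated bound comes from.

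Finally I would check that gluing the recursively built decompositions of the pieces $(W_D,Q_D)$ (and of the repair gadgets) to the node carrying $B$, with each child's root made adjacent to it, yields a valid tree decomposition of $G[W]$: every edge of $G[W]$ lies inside $B$ or inside some $W_D$, and for each $v$ the set $\{t:v\in\chi(t)\}$ is connected because $N_{G[W]}(D)\subseteq B$ and every border vertex of a piece is placed in that piece's root bag. The step I expect to be the real work is making the border-repair precise — exhibiting the auxiliary weight function and verifying that a bounded amount of extra bagging always restores the invariant — together with the arithmetic bookkeeping that pins the constant to $\frac{3-c}{1-c}$; the balanced-separator application, the geometric decrease, termination, and the tree-decomposition axioms are then routine.
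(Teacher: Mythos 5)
Your overall architecture (recursive construction from balanced separators, a border invariant, geometric bookkeeping) is the right shape, but the proposal is missing the one idea that makes the independence-number version work, and the place where you hide it — the ``border-repair'' step — does not actually go through. A $(c,w)$-balanced separator only controls the \emph{$w$-weight} of components; it gives no control over the \emph{independence number} of the border's intersection with a component. Your main step (weight uniform on $W\setminus Q$) therefore gives no control at all on $\alpha$ of the children's borders, and your repair step (``a weight function that balances the heavier fragment'') is exactly the point where a mechanism is needed and none is supplied: balancing a fragment, however you spread weight on it, does not bound $\alpha(\text{fragment}\cap C)$ for components $C$. The missing mechanism is to put the weight uniformly on a \emph{maximum} independent set $I$ of the current border $S$ and then use an exchange argument: if $J$ is independent in $S\cap C'$ for a component $C'$ of $G\setminus X$, then $(I\setminus(C'\cup N(J)))\cup J$ is independent in $S$, and since $N(J)\setminus C'\subseteq X$, maximality of $I$ gives $|J|\le |I\cap C'|+|I\cap X|\le c\,\alpha(S)+d$. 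This is what converts ``few $I$-vertices per component'' into ``$\alpha$ of the border inside a component shrinks,'' and it is what produces the constant: with it, children's borders satisfy $\alpha\le(c\beta+d)+d$, the invariant $\beta=\frac{2d}{1-c}$ is self-sustaining, and every bag is $S\cup X$ with $\alpha\le\beta+d=\frac{3-c}{1-c}d$ — no repair step is needed. (The paper does not reprove this lemma; it imports it from~\cite{TI2}, whose argument is of this form.)

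Even granting you the exchange argument, your specific numbers do not close. With your invariant $\alpha(Q)\le\frac{1+c}{1-c}d$, the child border $Q_D$ has $\alpha\le\frac{2}{1-c}d$, and the best the repair can achieve (weight on a maximum independent set of $Q_D$, exchange argument) is grandchild borders of independence number at most $c\cdot\frac{2}{1-c}d+2d=\frac{2}{1-c}d$, not $\frac{1+c}{1-c}d$; weighting only the ``heavier fragment'' is worse, since the untouched fragment $X\cap N(D)$ re-enters the next border and you get $c\cdot\frac{1+c}{1-c}d+3d>\frac{1+c}{1-c}d$. So the invariant you claim is ``genuinely back down'' is never restored, and the recursion as designed does not terminate with the stated bag bound. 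Your identification of the two recurrences and their fixed points is numerology rather than a verified mechanism; the fix is to drop the two-phase scheme, adopt the invariant $\frac{2}{1-c}d$, and prove the exchange lemma above (plus a small argument for termination in the degenerate case where the separator misses the interior of the piece).
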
 
  We now prove:

  \begin{theorem}
    \label{smallsep}
    There exists an integer $M$ with the following property.
    Let $G$ be an even-hole-free graph with $n$ vertices and let $w$ be a normal function on $G$.
     Then there exists a $w$-balanced separator $Y$ in $G$ such that
  $\alpha(Y) \leq M \log^{10} n$.
\end{theorem}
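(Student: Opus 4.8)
The plan is to assemble Theorem~\ref{smallsep} from the machinery developed in the excerpt, essentially by combining Theorem~\ref{banana} (small clique-cover-number separators between pairs of non-adjacent vertices), Theorem~\ref{domsep} (dominated balanced separators in $\mathcal{C}$), and the transfer lemmas Theorem~\ref{domtosmall} and Lemma~\ref{lemma:bs-to-treealpha} imported from \cite{TI2}. First I would recall that every even-hole-free graph lies in $\mathcal{C}$ and is in particular theta-free, so Theorem~\ref{domtosmall} applies once we verify the $(L,d,r)$-breakability hypothesis. Fix the constant $d$ from Theorem~\ref{domsep}; condition~(2) of $(L,d,r)$-breakability is then immediate for every induced subgraph $G'$ of $G$, since even-hole-free graphs are closed under taking induced subgraphs and Theorem~\ref{domsep} produces a set $Y$ with $|Y|\le d$ and $N[Y]$ a $w$-balanced separator.

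The substantive point is condition~(1): given two disjoint, anticomplete cliques $H_1,H_2$ of $G$ with $|H_1|,|H_2|\le r$, we must separate them by a set of bounded independence number. Here I would choose $r = d(2+\log n)$ so that the hypothesis $r\le d(2+\log n)$ of Theorem~\ref{domtosmall} holds with equality, making the factor $\lceil d(2+\log n)/r\rceil = 1$. To build the separator I would pick one vertex $h_i\in H_i$ for each $i$; if $h_1h_2\notin E(G)$, apply Theorem~\ref{banana} to $h_1,h_2$ to obtain $X_0$ with $\kappa(X_0)\le c\log^8 n$ separating them, and then take $X = X_0 \cup (H_1\setminus\{h_1\}) \cup (H_2\setminus\{h_2\})$; this separates $H_1$ from $H_2$ and has $\kappa(X) \le c\log^8 n + 2r = c\log^8 n + O(\log n)$, hence $\alpha(X)\le\kappa(X)\le c'\log^8 n$ for a suitable constant. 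If $h_1h_2\in E(G)$ one handles the (then necessarily small) overlap directly, or simply notes we may assume $H_1,H_2$ anticomplete as in the statement so this does not arise. Thus $G$ is $(L,d,r)$-breakable with $L = c'\log^8 n$ and $r = d(2+\log n)$.

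Now apply Theorem~\ref{domtosmall}: for the given normal weight function $w$ there is a $w$-balanced separator $Y$ with
\[
\alpha(Y) \le C(d)\left\lceil \frac{d(2+\log n)}{r}\right\rceil (2+\log n)\,L = C(d)\,(2+\log n)\,c'\log^8 n \le M\log^{10} n
\]
for a suitable constant $M$ depending only on $d$ and $c,c'$ (absorbing the $(2+\log n)$ factor and the lower-order terms into the exponent $10$, which is comfortably larger than $9$). This is exactly the conclusion of Theorem~\ref{smallsep}. I would state $M$ explicitly in terms of $C(d)$, $c'$, and absolute constants at the end.

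The main obstacle I anticipate is the bookkeeping around condition~(1) of breakability: Theorem~\ref{banana} separates two \emph{vertices}, whereas breakability asks to separate two \emph{cliques} of size up to $r$, and it separates only non-adjacent vertices, so one must check that padding the separator by $H_1\cup H_2$ minus one vertex each genuinely yields a set that disconnects the two cliques and that the resulting $\kappa$ (hence $\alpha$) bound stays polylogarithmic — this forces the particular choice $r = d(2+\log n)$ so that the additive $2r$ term is swallowed by $\log^8 n$ and the final exponent stays at $10$. A secondary point is to confirm that even-hole-free graphs satisfy all standing hypotheses of the imported theorems (membership in $\mathcal{C}$, theta-freeness, closure under induced subgraphs, and the availability of Theorem~\ref{domsep}), all of which are recorded earlier in the excerpt and are routine to cite.
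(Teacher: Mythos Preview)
Your overall strategy is exactly the paper's: verify $(L,d,r)$-breakability via Theorem~\ref{domsep} (for condition~(2)) and Theorem~\ref{banana} (for condition~(1)), then invoke Theorem~\ref{domtosmall}. The paper does precisely this, but with $r=1$ rather than $r=d(2+\log n)$, and then the $\lceil d(2+\log n)/r\rceil$ factor contributes the second logarithm, giving $\log^{10} n$.

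The gap in your version is in condition~(1). The definition of $(L,d,r)$-breakable requires the separator $X$ to satisfy $X \subseteq G \setminus (H_1 \cup H_2)$; your set $X = X_0 \cup (H_1\setminus\{h_1\}) \cup (H_2\setminus\{h_2\})$ does not, since it contains vertices of $H_1\cup H_2$. Nor can you simply take $X_0 \setminus (H_1\cup H_2)$: if some $h_1'\in H_1\cap X_0$, then a path $h_1$--$h_1'$--$\cdots$--$h_2$ may survive. One could repair this by taking the union of separators $X_{h_1,h_2}$ over all $|H_1|\cdot|H_2|\le r^2$ pairs and then deleting $H_1\cup H_2$ (a shortest surviving $H_1$--$H_2$ path has interior disjoint from $H_1\cup H_2$, hence meets the relevant $X_{h_1,h_2}$), but this costs an extra $r^2 = O(\log^2 n)$ factor in $L$ and ultimately yields a worse exponent than the paper's. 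The clean fix is simply to take $r=1$: then $H_1,H_2$ are single non-adjacent vertices, Theorem~\ref{banana} gives $X\subseteq V(G)\setminus\{h_1,h_2\}$ directly, and the bookkeeping you flagged as the ``main obstacle'' disappears.
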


  \begin{proof}
    Let $d$ be as in Theorem~\ref{domsep} and let $c$ be as in
    Theorem~\ref{banana}.
    By Theorem~\ref{domsep} and Theorem~\ref{banana}, it follows that
    $G$ is $(c \log^8n,d,1)$-breakable.
    Now the result follows from Theorem~\ref{domtosmall}.
    \end{proof}
  Theorem~\ref{main} now follows immediately from Theorem~\ref{smallsep}
  and Lemma~\ref{lemma:bs-to-treealpha}.





\end{document}